\documentclass[a4paper]{article}
\usepackage{amsmath,amsthm,mathtools, amsfonts,amssymb}
\usepackage[utf8]{inputenc}
\usepackage[dvipsnames]{xcolor}
\usepackage{graphicx}
\usepackage{graphicx}
\usepackage{color}
\usepackage{tikz}
\usepackage{tikz-cd}
\usepackage[all]{xy}
\usetikzlibrary{cd,positioning}
\usepackage{hyperref}
\usepackage{extarrows}
\usepackage{ytableau}
\usepackage{float}

\usepackage[mathscr]{euscript}

\usepackage{amsmath}
\allowdisplaybreaks[4] 
\numberwithin{equation}{section}

\title{
Type $B$- and $C$- 
trigonometric weight functions
}
\author{Kohei Motegi}
\date{\today}

\newcommand{\Inv}{\operatorname{Inv}}

\newtheorem{thm}{Theorem}[section]
\newtheorem{lem}[thm]{Lemma}
\theoremstyle{definition}
\newtheorem{Definition}[thm]{Definition}
\newtheorem{prop}[thm]{Proposition}
\newtheorem{corollary}[thm]{Corollary}

\begin{document}

\maketitle
\begin{abstract}
We introduce and study trigonometric weight functions of type $B$
and type $C$ arising from integrable lattice models.
These functions give $q$-deformations of the factorial
$K$-theoretic Schur $P$- and $Q$-functions of Ikeda and Naruse,
with the $P$-functions specialized to the type $B$ case.
We realize them as specializations of partition functions
of a six-vertex model with triangular boundary conditions
and derive their algebraic properties using the Yang--Baxter
equation and properties of the $R$- and $K$-matrices.
We introduce normalized weight functions and establish
diagonal evaluation formulas, divisibility properties,
and bounds on Newton polytopes analogous to those in type $A$,
together with recursive relations and stability.
We further derive branching formulas relating these functions
to type $A$ weight functions, and prove equal-cardinality
orthogonality and all-cardinality biorthogonality relations.
\end{abstract}

\section{Introduction}

Trigonometric weight functions arise in quantum integrable systems as
off-shell Bethe wavefunctions and as ingredients of integral solutions of
the quantum Knizhnik--Zamolodchikov equations
\cite{Mimachi,TV,TVNested,FR}.
They admit realizations as partition functions of vertex models, in which
the Yang--Baxter equation provides a method for deriving their algebraic
properties \cite{Bethe,Baxterbook,KBI,Slavnov}.
For type $A$, these constructions are based on the $R$-matrices of quantum
affine algebras \cite{Drinfeld,Jimbo,RTF}.
The resulting functions also connect integrable models with Schubert
calculus: suitable $q=0$ degenerations give Grothendieck polynomials
\cite{LS,FK,RTV}, and their lattice-model realizations degenerate to
five-vertex models \cite{MS,GK,WZ}.

A geometric interpretation of type $A$ weight functions is provided by
the theory of stable envelopes \cite{MO,Okounkov,AO}.
Rim\'anyi--Tarasov--Varchenko \cite{RTV} identified normalized
trigonometric weight functions with restrictions of $K$-theoretic stable
envelopes for cotangent bundles of type $A$ partial flag varieties.
Their characterization involves diagonal values, divisibility conditions,
and bounds on Newton polytopes.
Thus, weight functions provide explicit formulas whose algebraic
properties reflect geometric structures.
For elliptic extensions and their relations to trigonometric weight
functions, see \cite{FelderVarchenko,RTV2,Konno2017}.

For orthogonal and symplectic Grassmannians, the factorial
$K$-theoretic Schur $P$- and $Q$-functions introduced by
Ikeda and Naruse \cite{IkedaNaruse}, called the $GP$-
and $GQ$-functions for short, represent equivariant
$K$-theoretic Schubert classes.
The type $B$ specialization of the $GP$-functions is
denoted by $GB$.
The type $A$ theory suggests seeking trigonometric deformations
of these functions, together with suitable normalizations
that yield diagonal evaluations expressed as products over roots,
divisibility properties, and bounds on Newton polytopes.
The trigonometric lattice model provides a natural algebraic
construction of such deformations through its partition functions
and their symmetrization formulas.
Such a construction also requires understanding the algebraic
relations arising from the boundary conditions.

In this paper, we introduce and study type $B$ and type $C$
trigonometric weight functions as $q$-deformations of the
factorial $GB$- and $GQ$-functions, respectively.
Our construction starts from the six-vertex model with triangular
boundary conditions studied in \cite{Motegi}.
We denote its partition function by
$F_{N,n,r}(\mathbf u|\mathbf w|\mathbf I)$.
An explicit symmetrization formula for this partition function was
obtained in \cite{Motegi}.
Two choices of boundary parameters, together with a boundary scalar
factor in type $B$, give the weight functions considered here.
We introduce normalizations adapted to types $B$ and $C$ and establish
their algebraic properties using the Yang--Baxter equation,
properties of the $R$- and $K$-matrices, and equivariant $K$-theory.
For related boundary partition functions and their applications to
alternating sign matrices and probability, see \cite{BBCW,ABF,BFK,GGMW}.

Our main results are as follows.
\begin{itemize}
\item
We establish triangular vanishing, type $B$ and type $C$ diagonal
evaluation formulas, divisibility, and Newton smallness for the normalized weight functions
at equal-cardinality specializations.
Together with exchange congruences, these properties characterize the
equal-cardinality specializations uniquely.

\item
We derive recursive relations and stability.
These recursive relations include relations between weight
functions with different numbers of spectral variables
and different cardinalities of the indexing subsets.

\item
We obtain explicit branching formulas for specializations of arbitrary
cardinality in terms of normalized type $A$ weight functions.
We prove support conditions, Laurent polynomiality, divisibility, and
Newton degree bounds for these specializations.

\item
We derive equal-cardinality orthogonality and all-cardinality
biorthogonality relations, as well as inversion formulas for the
branching coefficients.
\end{itemize}

The relation with type $A$ weight functions provides a starting
point for the equal-cardinality results.
To relate different cardinalities, we derive additional branching
and inversion formulas, which lead to the all-cardinality
biorthogonality relation.
A detailed study of the relation to motivic Chern classes and
stable bases is reserved for subsequent work.

The paper is organized as follows.
In Section~2, we introduce the $R$- and $K$-matrices, recall the
partition function and its symmetrization formula, and define the
type $B$ and type $C$ weight functions and their normalized versions.
We state the equal-cardinality properties and their uniqueness
characterizations, derive algebraic relations from the lattice model,
and establish the relation with type $A$ weight functions.
In Section~3, we prove the stated properties of the normalized weight
functions and derive their recursive relations, stability, and
two-variable reduction formulas.
In Section~4, we study specializations of arbitrary cardinality,
prove the branching formulas and their consequences, and establish
orthogonality, all-cardinality biorthogonality, and branching inversion.
Section~5 contains concluding remarks.
Appendix~A recalls the type $A$ results used in the proofs.
We generally present the type $C$ arguments in detail, followed by
their type $B$ counterparts.

\section{Six-vertex model and type $B$ and $C$ trigonometric weight functions}

In this section, we introduce trigonometric weight functions of type \(B\) and \(C\) via six-vertex model partition functions and derive their properties.

\subsection{$R$-matrix, $K$-matrix and partition functions}

Let $V$ be a two-dimensional complex vector space with basis $\{e_0,e_1\}$.
For $n \ge 1$, we write $V_i \cong V$ for the $i$-th copy of $V$, and consider
$V_1 \otimes \cdots \otimes V_n$.

We define the $R$-matrix $R(u,v)$ (see Figure~\ref{fig:rmatrix} for a graphical description) as a linear operator on $V \otimes V$ by
\begin{equation}\label{eq:R-def}
R(u,v)(e_i \otimes e_j)
= \sum_{k,l=0}^1 [R(u,v)]_{ij}^{kl} \, e_k \otimes e_l.
\end{equation}
The matrix elements $[R(u,v)]_{ij}^{kl}$ are explicitly given by
(Figure~\ref{fig:sixvertex})
\begin{equation}\label{eq:R-components}
[R(u,v)]_{ij}^{kl} =
\begin{cases}
\;1-qv/u & \text{if } (i,j,k,l) = (0,0,0,0), \\
\;q(1-v/u) & \text{if } (i,j,k,l) = (0,1,0,1), \\
\;(1-q)v/u & \text{if } (i,j,k,l) = (0,1,1,0), \\
\;1-q & \text{if } (i,j,k,l) = (1,0,0,1), \\
\;1-v/u & \text{if } (i,j,k,l) = (1,0,1,0), \\
\;1-qv/u & \text{if } (i,j,k,l) = (1,1,1,1), \\
\;0 & \text{otherwise}.
\end{cases}
\end{equation}

For tensor products of $n$ factors ($n \ge 2$), we define
$
R_{ab}(u,v) \in \operatorname{End}(V_1 \otimes \cdots \otimes V_n)
$
to be the operator acting as $R(u,v)$
on the $a$-th and $b$-th components and as the identity on the others.

\begin{figure}[!htbp]
  \centering
  \includegraphics[width=0.5\linewidth]{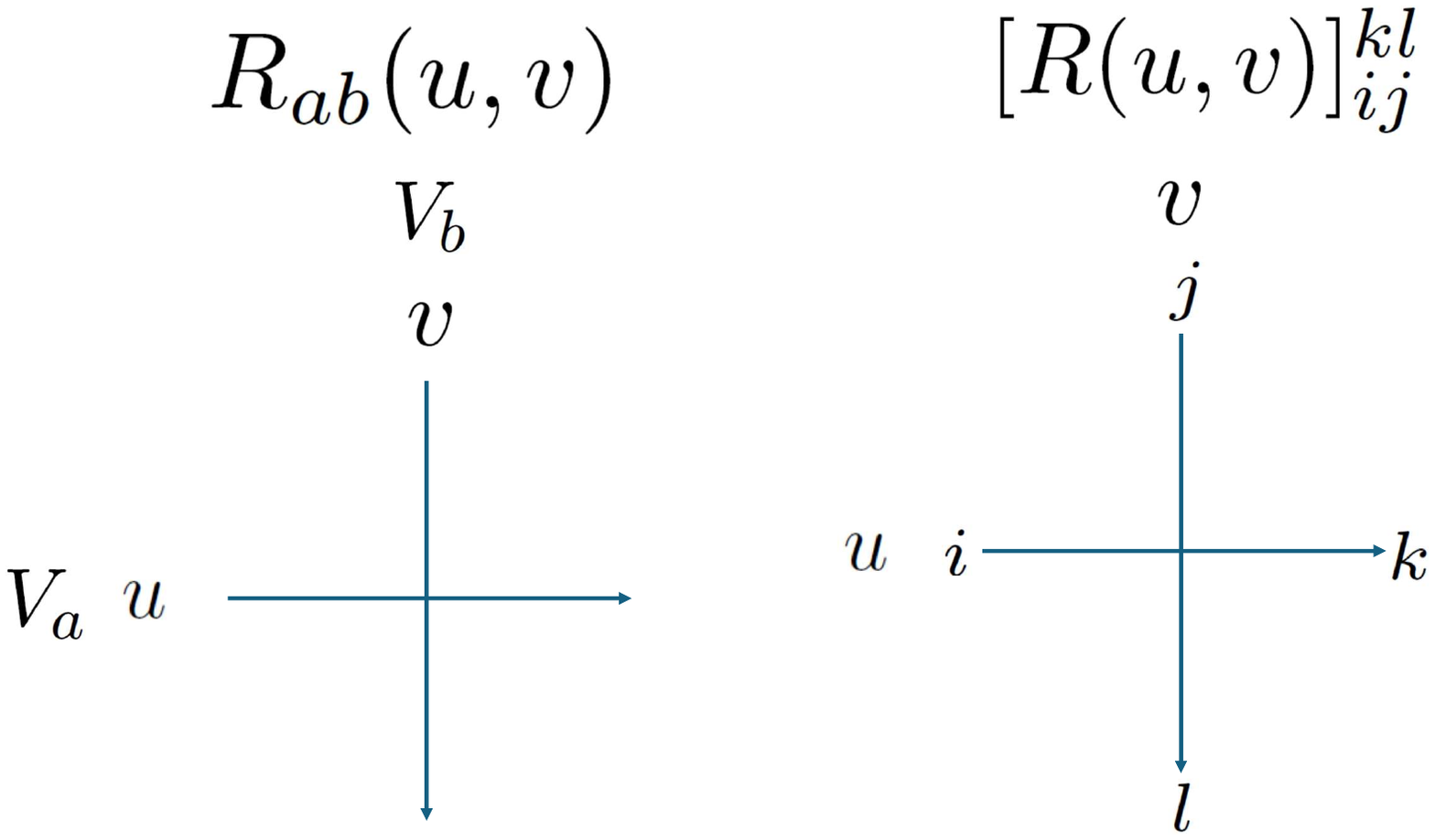}
  \caption{
  A graphical description of the $R$-matrix $R_{ab}(u,v)$, built from
$R(u,v)$ whose action is given in \eqref{eq:R-def} and whose matrix elements
$[R(u,v)]_{ij}^{kl}$ are given in \eqref{eq:R-components}.
Each line represents a vector space. The spectral parameters $u$ and $v$
are associated with the lines corresponding to $V_a$ and $V_b$, respectively.}
  \label{fig:rmatrix}
\end{figure}

\begin{figure}[!htbp]
  \centering
  \includegraphics[width=0.5\linewidth]{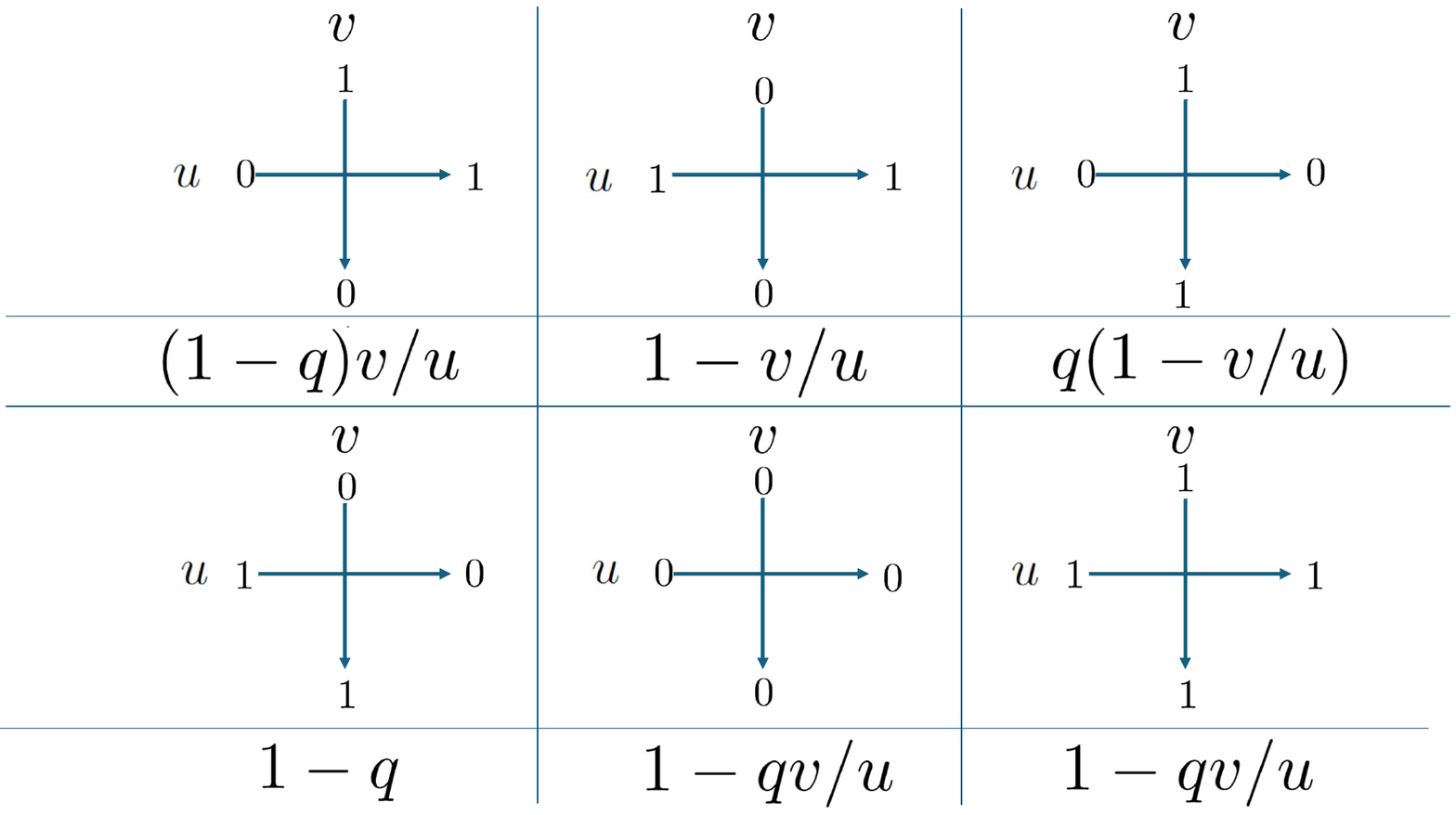}
  \caption{Nonzero configurations and corresponding elements of the $R$-matrix $R(u,v)$ for the six-vertex model.}
  \label{fig:sixvertex}
\end{figure}

The $R$-matrix $R(u,v)$ used here is essentially the so-called stochastic $R$-matrix
(see \cite{BP} for example),
obtained as a gauge transformation of the standard quantum $R$-matrix \cite{Drinfeld,Jimbo,RTF}. 
We adopt this normalization because the degeneration at $q=0$ is particularly simple.

The $R$-matrix satisfies the Yang--Baxter equation \cite{Yang,Baxter} (Figure~\ref{fig:yangbaxter})
\begin{equation}\label{eq:YBE}
R_{12}(u,v)\, R_{13}(u,w)\, R_{23}(v,w)
=
R_{23}(v,w)\, R_{13}(u,w)\, R_{12}(u,v),
\end{equation}
as an identity in $\operatorname{End}(V_1 \otimes V_2 \otimes V_3)$.

\begin{figure}[!htbp]
  \centering
  \includegraphics[width=0.5\linewidth]{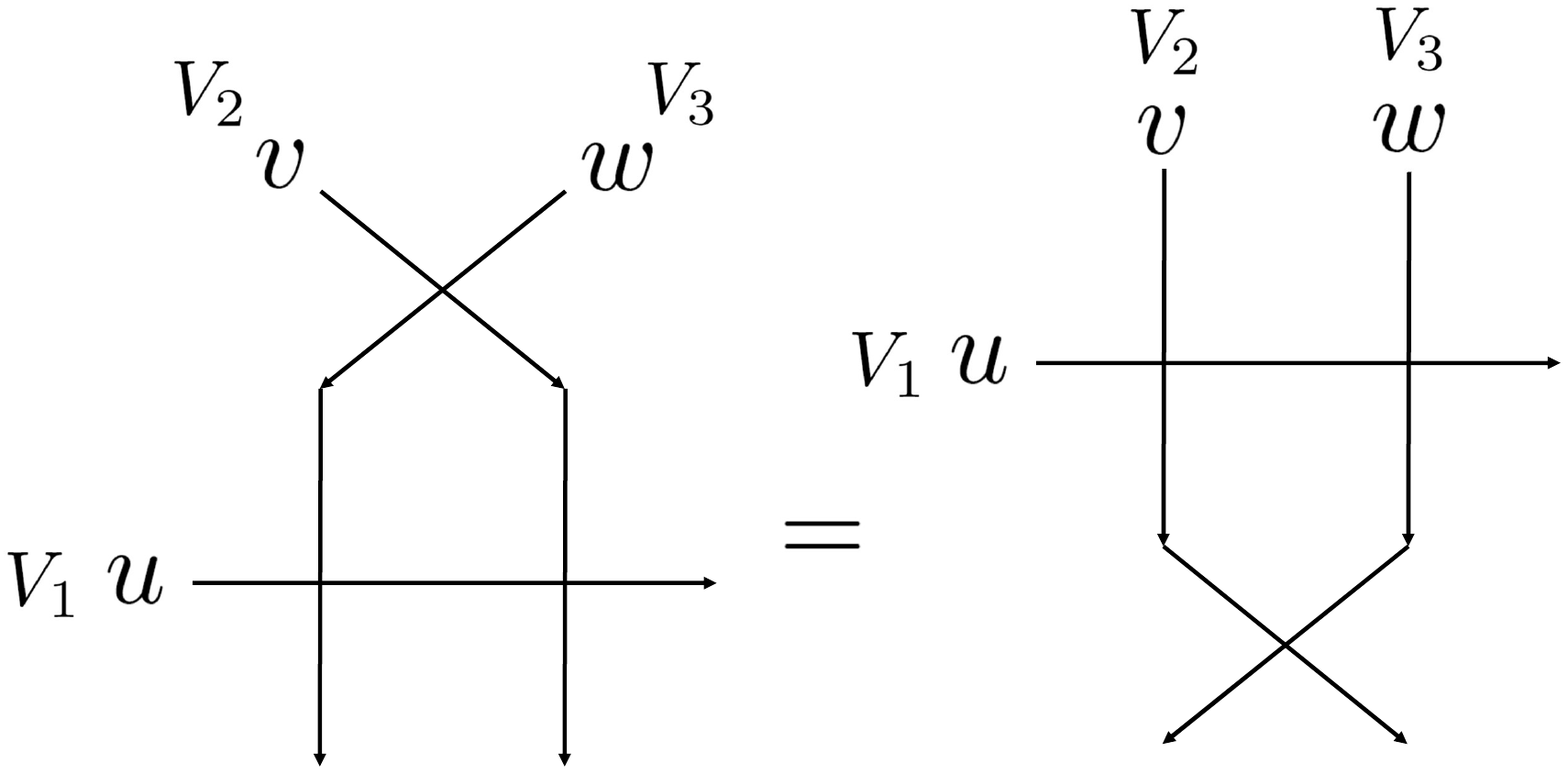}
  \caption{The Yang--Baxter equation \eqref{eq:YBE}.}
  \label{fig:yangbaxter}
\end{figure}

\begin{figure}[!htbp]
  \centering
  \includegraphics[width=0.5\linewidth]{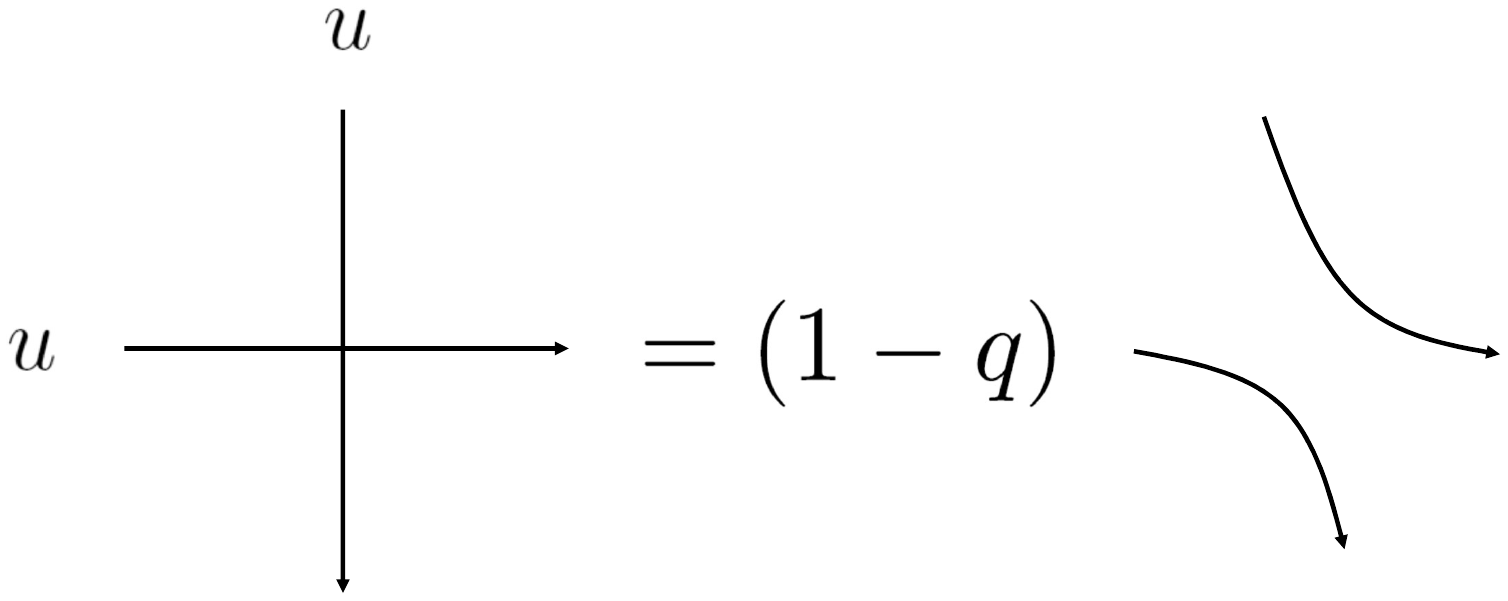}
  \caption{Reduction to the permutation operator \eqref{eq:Ruu}.}
  \label{fig:reducepermutation}
\end{figure}

We also list other properties used in this paper.
At $u = v$, the $R$-matrix reduces to
\begin{equation}\label{eq:Ruu}
R(u,u) = (1 - q)\, P,
\end{equation}
where $P \in \operatorname{End}(V \otimes V)$ is the permutation operator defined by
$
P(e_i \otimes e_j) = e_j \otimes e_i
$, $i,j=0,1$.
See Figure~\ref{fig:reducepermutation}.

The unitarity relation is also a fundamental property (Figure~\ref{fig:unitarity})
\begin{equation} \label{eq:unitarity}
    R_{12}(u,v)R_{21}(v,u)=(1-qu/v)(1-qv/u) \mathrm{Id}.
\end{equation}

\begin{figure}[!htbp]
  \centering
  \includegraphics[width=0.5\linewidth]{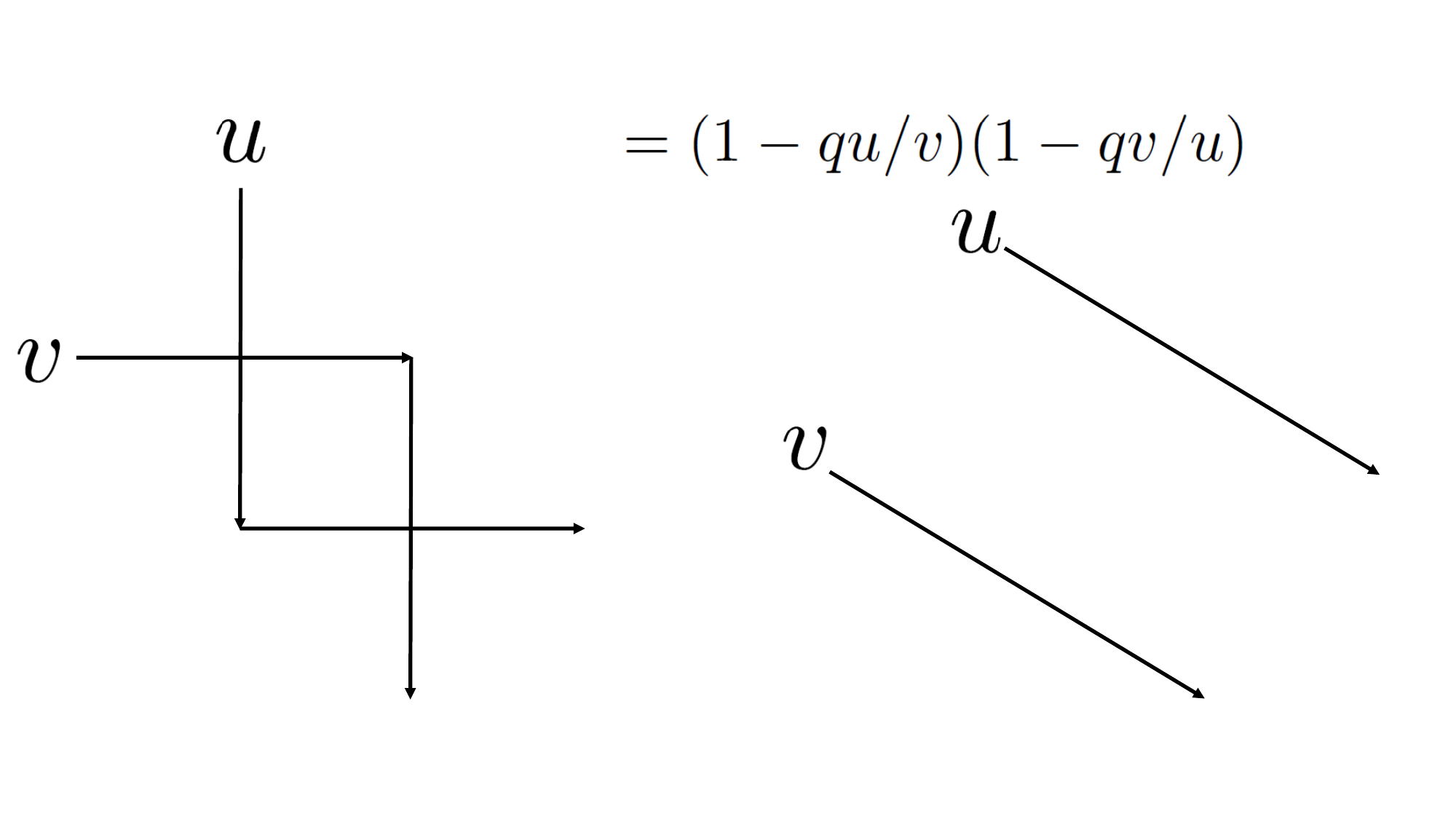}
  \caption{The unitarity relation \eqref{eq:unitarity}.}
  \label{fig:unitarity}
\end{figure}

Moreover, the $R$-matrix satisfies the ice rule, namely,
\begin{equation}\label{eq:ice-rule}
[R(u,v)]_{ij}^{kl} = 0 \quad \text{unless } i + j = k + l.
\end{equation}

\begin{figure}[!htbp]
  \centering
  \includegraphics[width=0.5\linewidth]{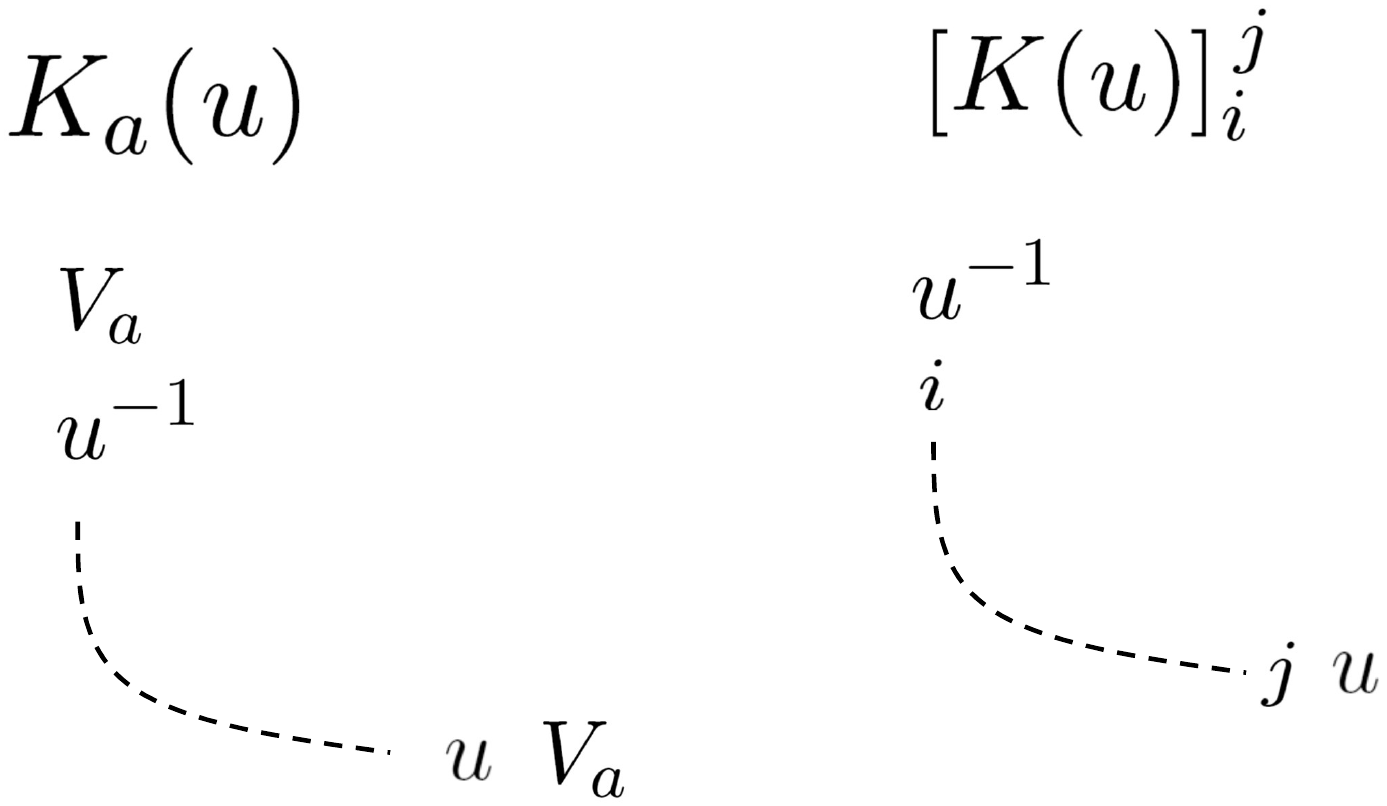}
  \caption{A graphical description of the operator $K_a(u)$ induced by the
local $K$-matrix $K(u)$. The action of $K(u)$ is given in
\eqref{eq:K-def}, and its matrix elements $[K(u)]_i^{\,j}$ are given
explicitly in \eqref{eq:Ktri-components}.}
  \label{fig:kmatrix}
\end{figure}

\begin{figure}[!htbp]
  \centering
  \includegraphics[width=0.5\linewidth]{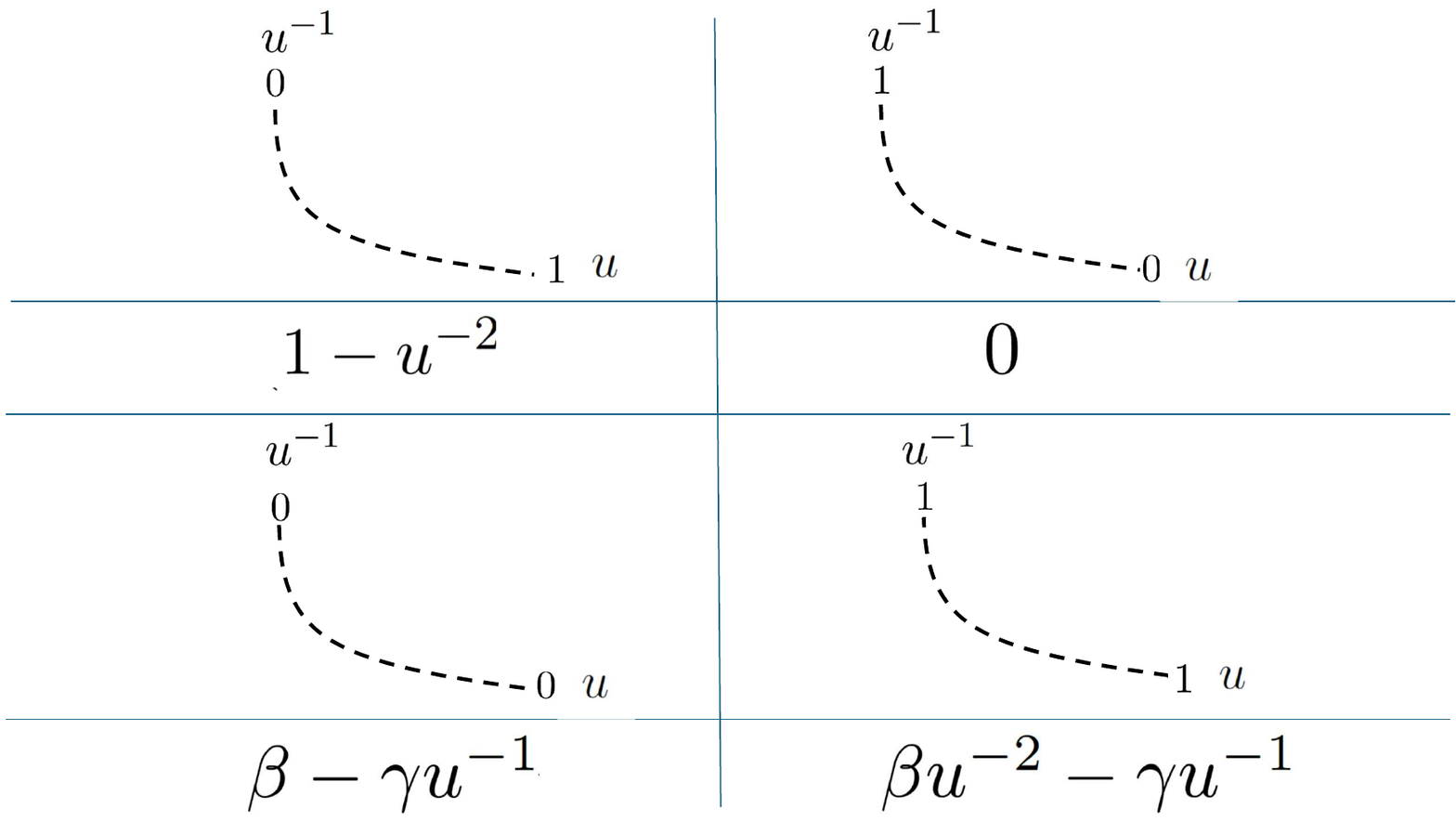}
  \caption{The triangular $K$-matrix $K(u)=K^{\mathrm{tri}}(u)$ \eqref{eq:Ktri-components}.}
  \label{fig:triangularkmatrix}
\end{figure}

We introduce the triangular $K$-matrix $K(u)=K^{\mathrm{tri}}(u)$, together with two specializations $K^C(u)$ and $K^B(u)$.
The reason for the superscripts $C$ and $B$ will become clear later from the correspondence with the \(GQ\)- and \(GB\)-functions.

More precisely, $K(u)$ is the linear operator on $V$ defined by (Figure~\ref{fig:kmatrix})
\begin{equation}\label{eq:K-def}
K(u)e_i = \sum_{j =0}^1 [K(u)]_i^{\,j} \, e_j.
\end{equation}
Its matrix elements are given by (Figure~\ref{fig:triangularkmatrix})
\begin{equation}\label{eq:Ktri-components}
[K(u)]_i^{\,j} =
\begin{cases}
\;\beta-\gamma u^{-1} & \text{if } (i,j) = (0,0), \\
\;1-u^{-2} & \text{if } (i,j) = (0,1), \\
\;0 & \text{if } (i,j) = (1,0), \\
\; \beta u^{-2}-\gamma u^{-1} & \text{if } (i,j) = (1,1),
\end{cases}
\end{equation}
where $\gamma,\beta$ are complex parameters.
For comparison, the generic stochastic boundary $K$-matrix in \cite[(2.12)--(2.15)]{GGMW} becomes triangular in the limit $c\to\infty$.  After setting $a=\beta/\gamma$ and changing the overall scalar normalization, its diagonal weights agree with those of \eqref{eq:Ktri-components}; the remaining off-diagonal normalization differs, in general, by a constant diagonal gauge.

The matrix elements of $K^C(u)$ are given by
\begin{equation}\label{eq:KC-components}
[K^C(u)]_i^{\,j} =
\begin{cases}
\; 1 & \text{if } (i,j) = (0,0), \\
\; 1-u^{-2} & \text{if } (i,j) = (0,1), \\
\;0 & \text{if } (i,j) = (1,0), \\
\; u^{-2} & \text{if } (i,j) = (1,1).
\end{cases}
\end{equation}

Similarly, the matrix elements of $K^B(u)$ are given by
\begin{equation}\label{eq:KB-components}
[K^B(u)]_i^{\,j} =
\begin{cases}
\;1 & \text{if } (i,j) = (0,0), \\
\;1-u^{-1} & \text{if } (i,j) = (0,1), \\
\;0 & \text{if } (i,j) = (1,0), \\
\;u^{-1} & \text{if } (i,j) = (1,1).
\end{cases}
\end{equation}

Note the following relation
\begin{align}
K^C(u)&=K(u)|_{\gamma=0,\beta=1}, \label{triandc} \\
K^B(u)&=(1+u^{-1})^{-1} K(u)|_{\gamma=-1,\beta=1}, \label{triandb}
\end{align}
between the $K$-matrices.

\begin{figure}[!htbp]
  \centering
  \includegraphics[width=0.5\linewidth]{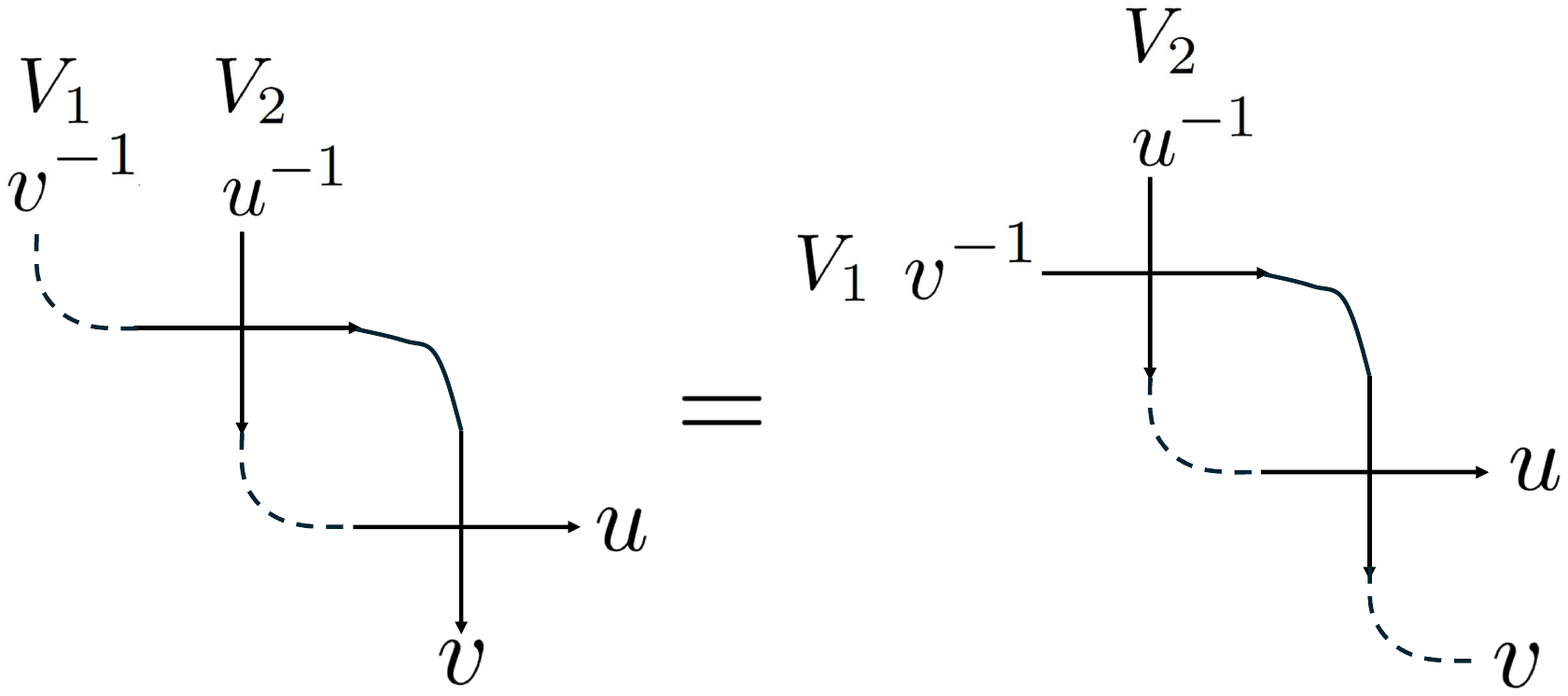}
  \caption{The reflection equation \eqref{eq:reflection}.}
  \label{fig:reflection}
\end{figure}

The $K$-matrix $K(u)$ satisfies the reflection equation \cite{Cherednik,Sklyanin} (Figure~\ref{fig:reflection})
\begin{equation}\label{eq:reflection}
R_{21}(u,v)\, K_2(u)\, R_{12}(v,u^{-1})\, K_1(v)
=
K_1(v)\, R_{21}(u,v^{-1})\, K_2(u)\, R_{12}(v^{-1},u^{-1}),
\end{equation}
as an identity in $\operatorname{End}(V_1 \otimes V_2)$.
Here $K_a(u)$ denotes the operator acting non-trivially on the $a$-th component as $K(u)$.
By \eqref{triandc} and \eqref{triandb}, the specializations $K^C(u)$ and $K^B(u)$ satisfy the same reflection equation.

We note the symmetry
$
R(u,v) = R(v^{-1},u^{-1}),
$
under which \eqref{eq:reflection} can be rewritten as
\begin{equation*}
R_{21}(u,v)\, K_2(u)\, R_{12}(u,v^{-1})\, K_1(v)
=
K_1(v)\, R_{21}(u,v^{-1})\, K_2(u)\, R_{12}(u,v).
\end{equation*}

We next introduce the partition functions of the six-vertex model,
constructed from the $R$-matrices $R(u_i,w_j)$ $(1 \le i \le n,\ 1 \le j \le N)$,
$R(u_i,u_j^{-1})$ $(1 \le i < j \le n)$,
and the $K$-matrices $K(u_j)$ $(1 \le j \le n)$,
as depicted in Figure~\ref{fig:partitionfunction}.
Here, $N,n,r$ are nonnegative integers with
$r\le\min(n,N)$.
We use the empty-product convention and set
$F_{N,0,0}=WC_{N,0,0}=WB_{N,0,0}=1$.
The normalized functions for $n=0$ are defined by the same
normalization formulas as below.

\begin{figure}[!htbp]
  \centering
  \includegraphics[width=0.7\linewidth]{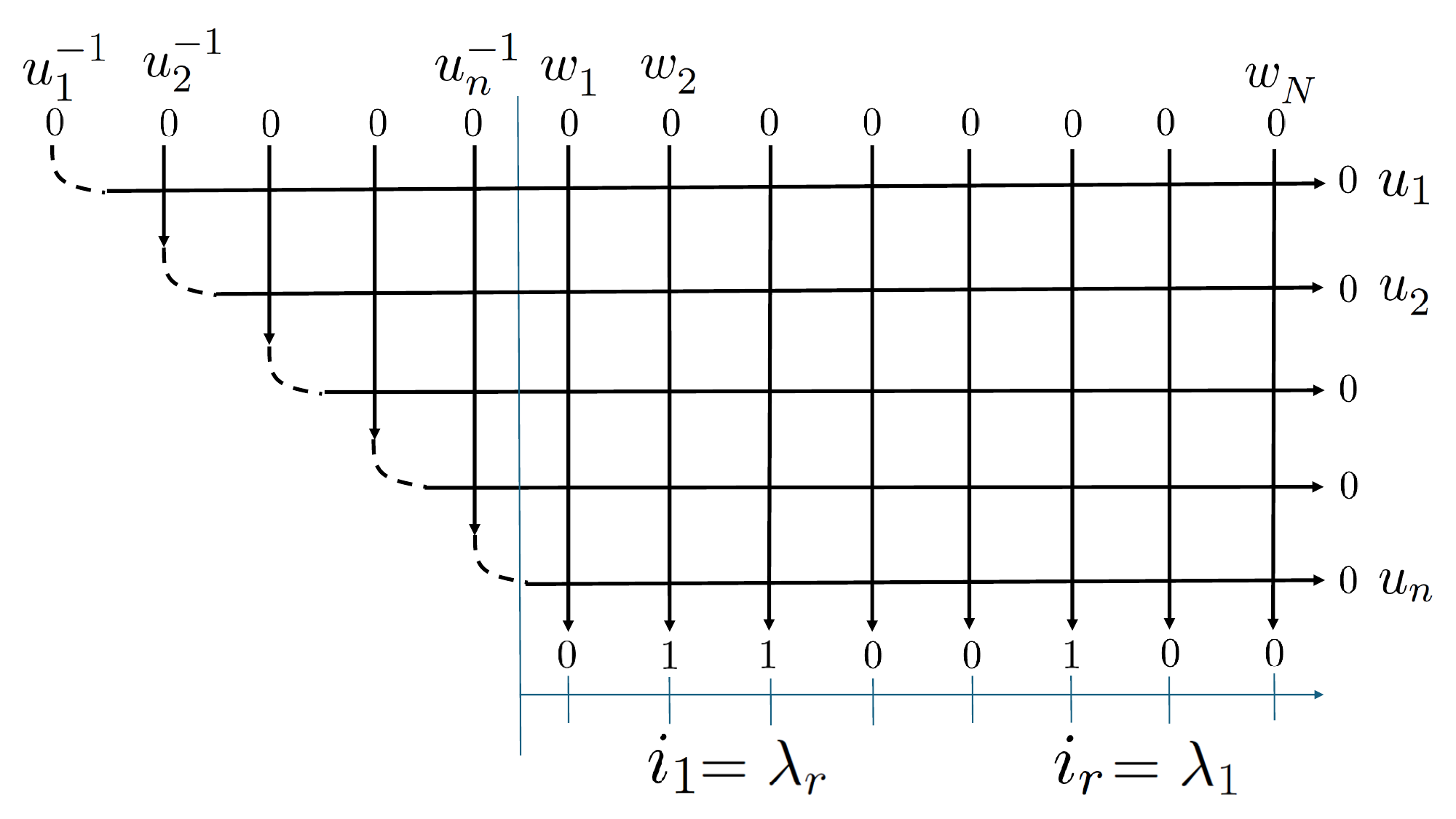}
 \caption{Partition functions corresponding to the $K$-matrices 
$K(u)=K^{\mathrm{tri}}(u)$, $K^{C}(u)$, and $K^{B}(u)$. 
In this paper, they are denoted by 
$F_{N,n,r}(\mathbf{u}|\mathbf{w}|\mathbf{I})$,
$WC_{N,n,r}(\mathbf{u}|\mathbf{w}|\mathbf{I})$, and
$WB_{N,n,r}(\mathbf{u}|\mathbf{w}|\mathbf{I})$, respectively.}
  \label{fig:partitionfunction}
\end{figure}

We impose boundary conditions such that the top and right boundaries consist entirely of \(0\)'s.
We label a binary sequence of \(0\)'s and \(1\)'s on the bottom boundary by the positions of the \(1\)'s counted from the left:
\[
N \ge i_r > \cdots > i_2 > i_1 \ge 1.
\]
Alternatively, it can be encoded by a strict partition \(\lambda = (\lambda_1, \lambda_2, \dots, \lambda_r)\) such that
\[
N \ge \lambda_1 > \lambda_2 > \cdots > \lambda_r \ge 1,
\]
where the two parametrizations are related by
$
i_j = \lambda_{r+1-j} \ (j=1,\dots,r).
$

We denote by $F_{N,n,r}(\mathbf{u}|\mathbf{w}|\mathbf{I})$ 
the partition function corresponding to the triangular $K$-matrix $K(u)$ \eqref{eq:Ktri-components}.

Here,
\[
\mathbf{u} = (u_1,u_2,\dots,u_n), \quad 
\mathbf{w} = (w_1,w_2,\dots,w_N), \quad
\mathbf{I} = (i_1,i_2,\dots,i_r)
\]
are ordered sequences (not sets). 
We remark that $F_{N,n,r}(\mathbf{u}|\mathbf{w}|\mathbf{I})$
is symmetric in the variables $\mathbf{u}$,
which can be proved by using the Yang--Baxter equation \eqref{eq:YBE} and the reflection equation \eqref{eq:reflection}.

\subsection{Type $B$ and $C$ trigonometric weight functions and their normalized versions}
\label{subsec:weight-functions-and-normalizations}

\subsubsection{Unnormalized weight functions and specializations}

The partition function $F_{N,n,r}(\mathbf{u}|\mathbf{w}|\mathbf{I})$ admits an explicit symmetrization formula.

\begin{thm}[\cite{Motegi}, Theorem 3.3]\label{explicitFformula}
We have
\begin{align}
&F_{N,n,r}(\mathbf{u}|\mathbf{w}|\mathbf{I} ) \nonumber \\
 =& \frac{\prod_{k=1}^{n} u_{k}^{-(N+n)}}{(n-r)!} \sum_{\sigma \in S_{n}} \sigma  \Biggl[ \prod_{j=1}^{r} \Bigg\{ (1 - q)(u_{j}^{2} - 1) \prod_{k=i_{j}+1}^{N} (u_{j} - q w_{k}) \prod_{k=1}^{i_{j}-1} (u_{j} - w_{k}) \Bigg\} \nonumber \\
&\quad \times \prod_{1 \le j < k \le r} \frac{q u_{j} - u_{k}}{u_{j} - u_{k}} (u_{j} u_{k} - 1) \prod_{j=r+1}^{n} \prod_{k=1}^{N} (u_{j} - q w_{k}) \nonumber \\
&\quad \times \prod_{j=r+1}^{n} \prod_{k=1}^{r} \frac{q u_{j} - u_{k}}{u_{j} - u_{k}} (u_{j} u_{k} - 1) \prod_{r+1 \le j < k \le n} (u_{j} u_{k} - q) \prod_{j=r+1}^{n} (\beta u_{j} - \gamma) \Biggr]. \label{Motherfunction}
\end{align}
Here, \(\sigma \in S_n\) acts by permuting the variables \(\mathbf{u}\), and \(S_n\) is the symmetric group.
\end{thm}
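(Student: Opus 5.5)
We would prove the symmetrization formula \eqref{Motherfunction} by the standard Izergin--Korepin type strategy: characterize both sides by a set of properties that determines them uniquely, and check these properties separately for the partition function and for the explicit sum. Denote the right-hand side of \eqref{Motherfunction} by $G_{N,n,r}(\mathbf u|\mathbf w|\mathbf I)$. The natural induction parameter is $N$, the number of $w$-lines, since removing the rightmost column $w_N$ relates $F_{N,n,r}$ to partition functions with $N-1$ columns. The induction also runs in $n$, for the case $r=0$ or when the last row is frozen.

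\textbf{Properties of $F$.} First, $F_{N,n,r}$ is symmetric in $\mathbf u$, as remarked after Figure~\ref{fig:partitionfunction}. This follows by attaching $R(u_j,u_{j+1})$ at the boundary and pushing it through the lattice with \eqref{eq:YBE} and \eqref{eq:reflection}. Then it is removed using the ice rule \eqref{eq:ice-rule} on the all-$0$ top/right boundary, and the unitarity \eqref{eq:unitarity} cancels the scalar factors.

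Second, after multiplying by $\prod_k u_k^{N+n}$, the function is a polynomial in each $u_j$ of bounded degree. We read this degree off the weights \eqref{eq:R-components}, \eqref{eq:Ktri-components}, counting the $N$ vertices with the $w$'s, the $n-1$ crossings with the $u_k^{-1}$ lines, and the $K$-matrix.

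Third, there is a recursion: the column for $w_N$ is frozen by the boundary. If $i_r=N$, the column must carry the $1$ from the bottom to one row and then turn. If $i_r<N$, the column is entirely $0$ and each vertex contributes $1-qw_N/u_j$. This gives $F_{N,n,r}(\mathbf I)=\prod_j(u_j-qw_N)u_j^{-1}F_{N-1,n,r}(\mathbf I)$ when $i_r<N$.

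For $i_r=N$ we specialize $u_n=w_N$. By \eqref{eq:Ruu} the $R$-vertex at $(u_n,w_N)$ becomes a permutation, which freezes the $n$-th row and its $K$-matrix corner. This yields $F_{N,n,r}|_{u_n=w_N}=(\text{explicit factor})\,F_{N-1,n-1,r-1}(u_1,\dots,u_{n-1}|\dots|i_1,\dots,i_{r-1})$. The explicit factor is a product of weights $(1-q)$, the factors $(u_j-qw_N)$, and the factors $(u_ju_n-1)$ or $(u_ju_n-q)$ coming from the crossings with the $u_n^{-1}$ line.

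\textbf{Properties of $G$.} Symmetry of $G$ in $\mathbf u$ is manifest from the sum over $S_n$. The degree bound follows termwise. The $1/(n-r)!$ compensates for the symmetry of the summand in the $u_{r+1},\dots,u_n$ block, which is symmetric there once one notes that $\prod_{r+1\le j<k\le n}(u_ju_k-q)$ is symmetric.

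The case $i_r<N$ is immediate: every summand contains $\prod_{j}(u_j-qw_N)$, coming either from $\prod_{k>i_j}$ or from the free block.

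For the specialization $u_n=w_N$ with $i_r=N$, any summand with $\sigma^{-1}(n)\le r-1$ contains the factor $(u_n-w_N)$ from $\prod_{k<i_j}$ and vanishes. The same holds for summands with $\sigma^{-1}(n)\ge r+1$, once we use the factor $\prod_{k<i_r}(u-w_k)$. Only $\sigma(r)=n$ survives, and the remaining sum is over $S_{n-1}$, reproducing $G_{N-1,n-1,r-1}$ times the same explicit factor.

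The base cases are $N=0$, where necessarily $r=0$, and $n=0$. These are computed directly, the $K$-matrix diagonal giving $\prod_j(\beta u_j-\gamma)$ together with the pairwise $(u_ju_k-q)$ weights.

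\textbf{Uniqueness.} By symmetry, the value of $F$ at $u_j=w_N$ is known for every $j$. Together with the degree bound, Lagrange interpolation in one variable then pins down $F$ once enough evaluation points are available. When these $n$ points are insufficient, we add values at $u_j=\pm1$, which are controlled by the $(1-u^{-2})$ entry of $K$, or the limits $u_j\to\infty$.

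\textbf{Main obstacle.} The main obstacle is the degree count, which must match the number of available interpolation points, together with the $i_r=N$ reduction for $G$ in the case $\sigma^{-1}(n)>r$. There the cancellation is not obviously termwise, since $\prod_{j>r}\prod_k(u_j-qw_k)$ does not vanish at $u_n=w_N$. If it fails, we would instead prove the recursion as a full column-transfer identity. We would expand the last column via the $B$-operator commutation relations, using \eqref{eq:YBE} and \eqref{eq:reflection}, and match the resulting sum over which row absorbs the $1$ against the symmetrization directly.
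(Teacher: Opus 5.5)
The paper does not prove this statement; it quotes it from \cite{Motegi}, so I judge your plan on its own terms. Your framework is viable: symmetry in $\mathbf u$, peeling off column $N$, and induction on $N$ from $N=0$. Your case $i_r<N$ is also correct. The problem is the reduction for $i_r=N$, which is placed at the wrong point and is false as stated.

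Use symmetry to put $u_n$ in the top row. The corner vertex $(u_n,w_N)$ then has boundary $0$'s on its top and right edges. At $u_n=w_N$, \eqref{eq:Ruu} only kills the two configurations in which a single $1$ passes straight through. Both are already excluded at this corner by the boundary, so nothing freezes. The formula side fails too: $i_j\le N-1$ for $j<r$, so $\prod_{k=1}^{i_j-1}(u-w_k)$ never contains $u-w_N$, and no summand vanishes.

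Concretely, take $N=1$, $n=2$, $r=1$, $\mathbf I=\{1\}$. At $u_2=w_1$ the two terms of \eqref{Motherfunction} are nonzero multiples of $\beta w_1-\gamma$ and $\beta u_1-\gamma$ respectively. Their sum is therefore not a $(\beta,\gamma)$-free factor times $F_{0,1,0}(u_1)=\beta-\gamma u_1^{-1}$.

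The right point is $u_n=qw_N$, the zero of the weight $1-qw_N/u_n$ of the all-$0$ corner configuration. There the corner is forced to turn, which freezes two pieces:
the whole top row, with weights $[K(u_n)]_0^{\,1}=1-u_n^{-2}$, $1-u_k^{-1}u_n^{-1}$ at the triangle crossings, and $1-w_k/u_n$ in the rectangle;
and column $N$, with turning weight $1-q$ and then $q(1-w_N/u_k)$.
What remains is exactly $F_{N-1,n-1,r-1}(u_1,\dots,u_{n-1}|w_1,\dots,w_{N-1}|i_1,\dots,i_{r-1})$.

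On the formula side, $u_n-qw_N$ divides every summand in which $u_n$ sits in a position $j\ne r$. It comes from $\prod_{k=i_j+1}^{N}$ if $j<r$, and from $\prod_{k=1}^{N}$ if $j>r$. So only $\sigma(r)=n$ survives. The quotients $(qu-u_n)/(u-u_n)$ then turn the remaining factors $u-qw_N$ into $q(u-w_N)$, matching the column weights.

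Your uniqueness step also does not close as set up. As a function of $u_n$, $F$ is a polynomial in $u_n^{-1}$ of degree about $N+n$. The recursion gives only the single node $u_n=qw_N$; symmetry in $\mathbf u$ just moves that node to the other variables. The points $u_n=\pm1$, $u_n=u_k^{-1}$ and $u_n\to\infty$ add only about $n+2$ more. Values at $u_n=qw_k$ with $k<N$ are not accessible, because $F$ is not symmetric in $\mathbf w$ (compare Proposition~\ref{diddiffbeforenorm}).

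The count closes if you interpolate in $w_N$ instead. When $i_r=N$, only column $N$ involves $w_N$, and its turning vertex has the $w_N$-independent weight $1-q$. So $F_{N,n,r}$ has degree at most $n-1$ in $w_N$. The right-hand side does too, since each summand of \eqref{Motherfunction} contains exactly $n-1$ factors $u-qw_N$ and no other occurrence of $w_N$. Symmetry in $\mathbf u$ turns the single recursion at $u_n=qw_N$ into values at the $n$ distinct nodes $w_N=q^{-1}u_1,\dots,q^{-1}u_n$. These determine both sides, and the induction on $N$ then goes through.
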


The equation \eqref{Motherfunction} without the overall factor 
$\prod_{k=1}^{n} u_{k}^{-(N+n)}$ was introduced in \cite[(3.19)]{Motegi}.

We introduce two specializations of $F_{N,n,r}(\mathbf{u}|\mathbf{w}|\mathbf{I} )$.
\begin{Definition}
We define \(WC_{N,n,r}(\mathbf{u}|\mathbf{w}|\mathbf{I})\)
and \(WB_{N,n,r}(\mathbf{u}|\mathbf{w}|\mathbf{I})\) by:
\begin{align}
&WC_{N,n,r}(\mathbf{u}|\mathbf{w}|\mathbf{I} )
 :=  F_{N,n,r}(\mathbf{u}|\mathbf{w}|\mathbf{I} )|_{\gamma=0,\beta=1}
\label{WCfunction}
\end{align}

and
\begin{align}
&WB_{N,n,r}(\mathbf{u}|\mathbf{w}|\mathbf{I} )
:= \left( \prod_{k=1}^{n}(1+u_k^{-1})^{-1} \right) F_{N,n,r}(\mathbf{u}|\mathbf{w}|\mathbf{I} )|_{\gamma=-1,\beta=1}
\label{WBfunction}
\end{align}

\end{Definition}

By \eqref{triandc} and \eqref{triandb}, these specializations have direct lattice-model realizations.  Namely,
$WC_{N,n,r}(\mathbf{u}|\mathbf{w}|\mathbf{I})$ is the partition function obtained by replacing $K(u)$ with $K^C(u)$, while
$WB_{N,n,r}(\mathbf{u}|\mathbf{w}|\mathbf{I})$ is the partition function obtained by replacing $K(u)$ with $K^B(u)$.
The factor $\prod_{k=1}^{n}(1+u_k^{-1})^{-1}$ in the definition of $WB_{N,n,r}$ comes from the scalar normalization in \eqref{triandb}.

We call
\(WC_{N,n,r}(\mathbf{u}|\mathbf{w}|\mathbf{I})\)
and \(WB_{N,n,r}(\mathbf{u}|\mathbf{w}|\mathbf{I})\)
\emph{trigonometric weight functions of type $C$ and type $B$}, respectively.
One of the reasons for this naming is that, in the type \(A\) case,
trigonometric weight functions reduce at \(q=0\) to Grothendieck polynomials.
Moreover, the type \(C\) and type \(B\) trigonometric weight functions reduce at \(q=0\) to
the factorial \(GQ\)- and \(GB\)-functions, respectively \cite{IkedaNaruse},
where the factorial \(GB\)-functions form a specialization of the factorial \(GP\)-functions.

To state this degeneration precisely, we recall the definitions that will be used below.
Let \(\lambda=(\lambda_1,\dots,\lambda_r)\) be the strict partition corresponding to \(\mathbf{I}\) as above.
We use the definitions in \cite{IkedaNaruse} with the $K$-theoretic parameter specialized to $\beta=-1$.
Here the $K$-theoretic parameter $\beta$ is unrelated to the parameter $\beta$ used for the $K$-matrix in this paper.
Thus, we define
\begin{align}
x \oplus y:=x+y-xy, \ \ x \ominus y:=\frac{x-y}{1-y}. \notag
\end{align}

\begin{Definition} \cite[Definition 2.1]{IkedaNaruse}
The factorial $K$-theoretic Schur $Q$- and $P$-functions are defined by
\begin{align}
GQ_\lambda(\mathbf{x}| \mathbf{b}) &:= \frac{1}{(n-r)!} \sum_{\sigma \in S_n} \sigma \left[ \prod_{i=1}^r (x_i \oplus x_i) \prod_{k=1}^{\lambda_i-1} (x_i \oplus b_k) \prod_{i=1}^r \prod_{j=i+1}^n \frac{x_i \oplus x_j}{x_i \ominus x_j} \right], \label{eq:GQexp} \\
GP_\lambda(\mathbf{x}| \mathbf{b}) &:= \frac{1}{(n-r)!} \sum_{\sigma \in S_n} \sigma \left[ \prod_{i=1}^r  \prod_{k=1}^{\lambda_i} (x_i \oplus b_k) \prod_{i=1}^r \prod_{j=i+1}^n \frac{x_i \oplus x_j}{x_i \ominus x_j} \right]. \label{eq:GPexp}
\end{align}
\end{Definition}

\begin{Definition} \cite[(6.1), (6.2)]{IkedaNaruse}
We set
\begin{align}
GC_\lambda(\mathbf{x}|\mathbf{b}) &:=GQ_\lambda(\mathbf{x}|\mathbf{b}), \notag \\
GB_\lambda(\mathbf{x}, \mathbf{b}) &:=GP_\lambda(\mathbf{x},\mathbf{b})|_{b_1=0,b_i \mapsto b_{i-1} (i \ge 2)}
\nonumber \\
&=
\frac{1}{(n-r)!} \sum_{\sigma \in S_n} \sigma \left[ \prod_{i=1}^r x_i  \prod_{k=1}^{\lambda_i-1} (x_i \oplus b_k) \prod_{i=1}^r \prod_{j=i+1}^n \frac{x_i \oplus x_j}{x_i \ominus x_j} \right]. \label{eq:GBexp}
\end{align}
\end{Definition}

A direct computation gives the following relations.

\begin{prop} \label{reductionwbwc}
We have the following relations:
    \begin{align}
    &WC_{N,n,r}(\mathbf{u}|\mathbf{w}|\mathbf{I})|_{q=0}=GC_\lambda(\mathbf{x}|\mathbf{b})=GQ_\lambda(\mathbf{x}|\mathbf{b}), \\
      &WB_{N,n,r}(\mathbf{u}|\mathbf{w}|\mathbf{I})|_{q=0}=GB_\lambda(\mathbf{x}|\mathbf{b})
      =GP_\lambda(\mathbf{x},\mathbf{b})|_{b_1=0,b_i \to b_{i-1} (i \ge 2)}
      .
    \end{align}
    Here, the correspondence between the variables and labels is given by
    $x_i = 1 - u_i^{-1}$ ($i=1,\dots,n$), $b_k = 1 - w_k$ ($k=1,\dots,N$), and $i_j = \lambda_{r+1-j}$ ($j=1,\dots,r$).
\end{prop}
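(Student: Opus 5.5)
Substitute $q=0$ into the symmetrization formula of Theorem~\ref{explicitFformula}, specialize $(\gamma,\beta)$ as in \eqref{WCfunction} and \eqref{WBfunction}, and rewrite each factor of the summand in the variables $x_i=1-u_i^{-1}$, $b_k=1-w_k$. The plan is to show that the $\sigma$-summand of \eqref{Motherfunction} times the prefactor $\prod_k u_k^{-(N+n)}$ equals, up to a \emph{symmetric} factor that can be pulled out of the sum, the summand of \eqref{eq:GQexp} or \eqref{eq:GBexp}. Since $\sigma$ acts on $\mathbf u$ and on $\mathbf x$ compatibly, equality of summands gives the claim.

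The basic dictionary at $q=0$ is as follows.
\begin{itemize}
\item The factor $u_j-w_k$ becomes $u_j(1-u_j^{-1}w_k)=u_j(x_j\oplus b_k)$, since $x\oplus b=1-(1-x)(1-b)$.
\item The factor $u_j-qw_k$ becomes $u_j$.
\item The factor $(1-q)(u_j^2-1)$ becomes $u_j^2(1-u_j^{-2})=u_j^2(x_j\oplus x_j)$.
\item The ratio $\frac{qu_j-u_k}{u_j-u_k}(u_ju_k-1)$ becomes $\frac{-u_k(u_ju_k-1)}{u_j-u_k}$. On the other side,
\[
\frac{x_j\oplus x_k}{x_j\ominus x_k}=\frac{1-u_j^{-1}u_k^{-1}}{(u_k^{-1}-u_j^{-1})/u_k^{-1}}=\frac{u_ju_k-1}{u_j-u_k}\cdot u_k\cdot\frac{u_j}{u_ju_k}.
\]
Simplifying, the two sides match up to the monomial $-u_k$ versus $1$, plus powers of $u$ that I will track.
\item For $r<j<k\le n$, the factor $u_ju_k-q$ becomes $u_ju_k$.
\item The factor $\beta u_j-\gamma$ becomes $u_j$ for type $C$ and $u_j+1$ for type $B$. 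The $u_j+1$ will cancel against part of the prefactor $\prod_k(1+u_k^{-1})^{-1}=\prod_k u_k/(1+u_k)$.
\end{itemize}

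The key step is the exponent bookkeeping. For each variable index I will count the total power of $u$ carried by each summand, including the signs from the $(qu_j-u_k)$ factors. I must check that the leftover monomial is $\prod_k u_k^{N+n}$ (times a sign) for every $\sigma$, so that it cancels the prefactor. The count of powers attached to a given position differs between $j\le r$ and $j>r$. To handle this I will check that the total degree is symmetric: the leftover monomial should be the same symmetric monomial $\prod_k u_k^{N+n}$ in all variables, not position-dependent.

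For $j\le r$, the position-$j$ contributions are:
\begin{itemize}
\item $2$ from $u_j^2-1$;
\item $N-i_j$ from the $u_j-qw_k$ factors;
\item $i_j-1$ from the $u_j-w_k$ factors;
\item the contributions from the cross-ratio factors.
\end{itemize}
These give about $N+1$ plus cross terms, and those cross terms need careful pairing.

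For type $B$, the difference is the factor $x_j$ in place of $x_j\oplus x_j$ and $\lambda_i-1$ in place of $\lambda_i$. I expect $x_j=1-u_j^{-1}$ to arise from $(u_j^2-1)/(u_j(1+u_j))\cdot$ (the normalizer), that is, $(u_j-1)/u_j$. This explains the $b_1=0$ shift.

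The main obstacle is the sign and power bookkeeping for the cross products. I will handle it by converting every ratio into the normalized form $\frac{x_j\oplus x_k}{x_j\ominus x_k}$ times an explicit monomial $u_k^{a}u_j^{b}$, and then verifying by direct counting that these monomials, multiplied together over all pairs, combine with the single-variable monomials into exactly $\prod_k u_k^{N+n}$ with sign $+1$. If a residual sign or monomial turns out to be position-dependent, I will absorb it by reorganizing the product over $j>r$. That product contains only symmetric factors in those variables, so the reorganization does not change the symmetrized sum.
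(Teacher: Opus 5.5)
Your strategy (set $q=0$ in \eqref{Motherfunction} and compare the symmetrized summands factor by factor) is what the paper's ``direct computation'' amounts to, and your one-variable dictionary is right. However, the summands do \emph{not} match under the identity correspondence (slot $j$ of $\mathbf u$ against slot $j$ of $\mathbf x$), and the repair you propose cannot fix this.

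\textbf{Where the slot-by-slot matching fails.} In \eqref{Motherfunction}, slot $j\le r$ carries $\prod_{k=1}^{i_j-1}(u_j-w_k)$ with $i_1<\cdots<i_r$. In \eqref{eq:GQexp}, slot $j$ carries $\prod_{k=1}^{\lambda_j-1}(x_j\oplus b_k)$ with $\lambda_j=i_{r+1-j}$. So the one-variable factors already disagree. Your cross-ratio computation also drops a factor; in fact
\[
\frac{x_j\oplus x_k}{x_j\ominus x_k}=\frac{u_ju_k-1}{u_k(u_j-u_k)}.
\]
For $1\le j<k\le r$, the lattice factor $\frac{qu_j-u_k}{u_j-u_k}(u_ju_k-1)$ becomes $\frac{-u_k(u_ju_k-1)}{u_j-u_k}$ at $q=0$, which is $-u_k^2$ times the $GQ$ factor. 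These pairs therefore leave the residual $(-1)^{\binom r2}\prod_{k=1}^r u_k^{2(k-1)}$. This residual is signed and not symmetric, and it lives entirely in the first $r$ slots, so reorganizing the part with $j>r$ does nothing to it. The mixed pairs $k\le r<j$ are fine: there $q$ multiplies the later variable, and the ratio to the $GQ$ factor is $u_ju_k$.

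\textbf{The missing step: reverse the first block.} Since the sum runs over all of $S_n$, you may reindex it by the fixed permutation $\pi$ with $\pi(j)=r+1-j$ for $j\le r$ and $\pi(j)=j$ for $j>r$. That is, compare the lattice summand at $(u_1,\ldots,u_n)$ with the $GQ$ summand whose $x$-variables come from $(u_r,\ldots,u_1,u_{r+1},\ldots,u_n)$. Then:
\begin{itemize}
\item $u_j$ sits in the $GQ$ slot with label $\lambda_{r+1-j}=i_j$, as required.
\item A within-block pair gives $\frac{u_k(u_ju_k-1)}{u_k-u_j}$ against $\frac{u_ju_k-1}{u_j(u_k-u_j)}$, a ratio of $u_ju_k$.
\item Each slot $\le r$ gives $u^{N+1}(x\oplus x)\prod_{k<\lambda}(x\oplus b_k)$. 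In type $B$ it gives $u^{N+1}x\prod_{k<\lambda}(x\oplus b_k)$, since $(1-u^{-2})/(1+u^{-1})=1-u^{-1}$.
\item Each slot $>r$ gives $u^{N+1}$. In type $B$ this is $u^N(u+1)\cdot u/(u+1)=u^{N+1}$.
\item Each of the $\binom n2$ pairs gives $u_au_b$.
\end{itemize}
The leftover is therefore exactly $\prod_k u_k^{N+1}\prod_{a<b}u_au_b=\prod_k u_k^{N+n}$ with sign $+1$, which cancels the prefactor. The factors $1/(n-r)!$ agree. The final equality $GB_\lambda=GP_\lambda|_{b_1=0,\,b_i\mapsto b_{i-1}}$ is then just $x\oplus 0=x$ together with the index shift.
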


We introduce a sequence of integers
\[
\mathbf{M}=(m_1,\ldots,m_s),\qquad
0\le s\le \min(n,N),\qquad
1\le m_1<\cdots<m_s\le N,
\]
and define the substitution of variables $\mathbf{u}=\mathbf{w}_{\mathbf{M}}$ by
\[
u_j=
\begin{cases}
w_{m_j},&1\le j\le s,\\
1,&s+1\le j\le n.
\end{cases}
\]
For $s=0$, this means $u_1=\cdots=u_n=1$.

We define a partial order on increasing sequences of integers, possibly of different lengths, by
\[
\mathbf{M}\ge \mathbf{I}
\quad\Longleftrightarrow\quad
s\ge r
\ \text{and}\
m_{s-r+j}\ge i_j
\quad\text{for all }1\le j\le r.
\]
When $s=r$, this reduces to the usual componentwise order
\[
\mathbf{M}\ge \mathbf{I}
\quad\Longleftrightarrow\quad
m_j\ge i_j
\quad\text{for all }1\le j\le r.
\]

We show the following properties
which are analogous to those of the type $A$ trigonometric weight functions,
also known as type $A$ $K$-theoretic stable bases \cite{RTV,RTV2}.

\begin{thm} \label{ffunctionproperties}
$\mathrm{(i)}$ We have
\begin{align}
F_{N,n,r}(\mathbf{w}_{\mathbf{M}}|\mathbf{w}|\mathbf{I})=0,
\end{align}
unless $\mathbf{M}\ge \mathbf{I}$, and
\begin{align}
&F_{N,n,r}(\mathbf{w}_{\mathbf{I}}|\mathbf{w}|\mathbf{I} )
= (1 - q)^{\frac{(n-r)(n-r-1)}{2}} (\beta - \gamma)^{n-r} \nonumber \\
& \times \left( \prod_{j=1}^{r} w_{i_j}^{-(N+n)} \right) \prod_{j=1}^{r}
\left\{ (1 - q)(w_{i_j}^{2} - 1) \prod_{k=i_{j}+1}^{N} (w_{i_j} - q w_{k}) \prod_{k=1}^{i_{j}-1} (w_{i_j} - w_{k}) \right\} \nonumber \\
& \times \prod_{1 \le j < k \le r} \frac{q w_{i_j} - w_{i_k}}{w_{i_j} - w_{i_k}} (w_{i_j} w_{i_k} - 1)  \prod_{k=1}^N (1 - q w_k)^{n-r}  \prod_{k=1}^r (w_{i_k} - q) ^{n-r}.
\label{diagonalFfunction}
\end{align}

$\mathrm{(ii)}$
$(1-q)^{-r} F_{N,n,r}(\mathbf{w}_{\mathbf{M}}|\mathbf{w}|\mathbf{I})$ is divisible by $(1-q)$ for every $\mathbf{M}\neq\mathbf{I}$, with no restriction that $|\mathbf{M}|=|\mathbf{I}|$.
\end{thm}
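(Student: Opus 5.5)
Everything is read off from the symmetrization formula \eqref{Motherfunction} of Theorem~\ref{explicitFformula}. The plan is to substitute $\mathbf u=\mathbf w_{\mathbf M}$ and analyse, permutation by permutation, which summands survive.

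For a permutation $\sigma$, the summand places the variables $u_{\sigma(1)},\dots,u_{\sigma(r)}$ in the ``occupied'' slots $j\le r$. The factor $\prod_{k=1}^{i_j-1}(u_{\sigma(j)}-w_k)$ vanishes when $u_{\sigma(j)}=w_m$ with $m<i_j$. The factor $(u_{\sigma(j)}^2-1)$ vanishes when $u_{\sigma(j)}=1$. Hence a nonzero summand requires:
\begin{itemize}
\item the occupied slots to receive only genuine variables $w_{m_a}$, so $s\ge r$;
\item $w_{m_{a_j}}$ in slot $j$ to satisfy $m_{a_j}\ge i_j$.
\end{itemize}

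Next I would handle the unoccupied slots $j>r$. These carry $\prod_{k}(u_j-qw_k)$, which does not vanish, together with the cross factors $\frac{qu_j-u_k}{u_j-u_k}(u_ju_k-1)$ for $k\le r$ and $(u_ju_k-q)$. The key point is that if an unoccupied slot receives some $w_{m_b}$ while an occupied slot $k$ receives $w_{m_a}$ with $a>b$, the summand should vanish. I expect this to come from the interplay of the factor $(u_{\sigma(k)}-w_{m_b})$ in the occupied product with the denominators $u_j-u_k$. One must be careful here: the coincident variables $u=1$ make the denominators singular, so the substitution should be interpreted as a limit.

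To avoid singularities, I would first clear denominators by writing the sum as an antisymmetrization divided by the Vandermonde $\prod(u_j-u_k)$. Alternatively, I would substitute the $w_{m_a}$ first, keeping $u_{s+1},\dots,u_n$ generic, and set them to $1$ at the end. This yields the standard type-$A$-like conclusion: surviving terms use occupied slots filled by an increasing choice $m_{a_1}<\dots<m_{a_r}$ with $m_{a_j}\ge i_j$. Pushing the choice to the top indices then gives the criterion $m_{s-r+j}\ge i_j$, i.e.\ $\mathbf M\ge\mathbf I$. This elimination of non-order-preserving permutations, where the $(1-q)$-weighted ratio factors must be tracked, is the main obstacle. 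I would try to settle it by induction on $n$ via the lattice recursion obtained by peeling the last row, using \eqref{eq:Ruu} at $u=w$, if the direct combinatorics is messy.

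For the diagonal value $\mathbf M=\mathbf I$ (so $s=r$), only permutations fixing the occupied assignment $u_j=w_{i_j}$ survive. The remaining $n-r$ variables all equal $1$ and are permuted freely among the unoccupied slots. After the limit, their symmetrized contribution should produce $(n-r)!$ times the $q$-factorial constant. That constant I would obtain by computing $\sum_{\tau\in S_{n-r}}\tau\bigl[\prod_{r<j<k}\frac{qu_j-u_k}{u_j-u_k}\bigr]$, which by the Hall–Littlewood identity equals $\prod_{m=1}^{n-r}\frac{1-q^m}{1-q}$. After this reorganisation, the evaluation at $u=1$ should give the $(1-q)^{(n-r)(n-r-1)/2}$ factor. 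Then I would read off the remaining factors at $u=1$: $(\beta-\gamma)^{n-r}$, $\prod_k(1-qw_k)^{n-r}$, and $\prod_k (w_{i_k}-q)^{n-r}$, the last coming from $\frac{q-w_{i_k}}{1-w_{i_k}}(w_{i_k}-1)$. Together with the prefactor $\prod_j w_{i_j}^{-(N+n)}$, this matches \eqref{diagonalFfunction}.

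For (ii), I would set $q=1$ formally after removing the explicit $(1-q)^r$ from the occupied product. At $q=1$, every ratio $\frac{qu_j-u_k}{u_j-u_k}$ equals $-1$ up to sign, and the factors $\prod_{k>i_j}(u_j-w_k)\prod_{k<i_j}(u_j-w_k)$ combine to $\prod_{k\ne i_j}(u_j-w_k)$. This vanishes at $u_j=w_m$ unless $m=i_j$. Hence any surviving term at $\mathbf w_{\mathbf M}$ forces $\{m_{a_j}\}=\mathbf I$ exactly. When $s>r$, the extra factor $(u_ju_k-q)|_{q=1}$ or the symmetrizer vanishing shows the value is $0$ mod $(1-q)$. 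So the value is divisible by $(1-q)$ for all $\mathbf M\ne\mathbf I$.
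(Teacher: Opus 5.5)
Your overall route, reading everything off \eqref{Motherfunction} at $\mathbf u=\mathbf w_{\mathbf M}$, is the paper's. Your two necessary conditions for a surviving summand are also correct: occupied slots carry genuine variables, and slot $j$ carries some $w_m$ with $m\ge i_j$. However, the step you single out as the main obstacle is false, and the diagonal computation uses a factor that is not in the formula.

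\textbf{The ``key point'' in (i) is false and unnecessary.} Summands in which an occupied slot receives $w_{m_a}$ and an unoccupied slot receives $w_{m_b}$ with $a>b$ do \emph{not} vanish in general. Neither do summands whose occupied slots are filled in non-increasing order. For $N=n=2$, $r=1$, $\mathbf I=\{1\}$, $\mathbf M=\{1,2\}$, the transposition gives $(1-q)(w_2^2-1)(w_2-qw_2)\frac{qw_1-w_2}{w_1-w_2}(w_1w_2-1)(w_1-qw_1)(w_1-qw_2)(\beta w_1-\gamma)\ne0$. For $N=4$, $\mathbf I=\{1,2\}$, $\mathbf M=\{3,4\}$, the filling $(w_4,w_3)$ of the occupied slots survives. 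So no induction will prove this claim. It is also not needed. Let $\mathbf K=\{k_1<\dots<k_r\}\subset\mathbf M$ be the indices placed in the occupied slots, with slot $j$ carrying $w_{k_{\tau(j)}}$ and $k_{\tau(j)}\ge i_j$ for all $j$. Then the $r-j+1$ distinct values $k_{\tau(j)},\dots,k_{\tau(r)}$ are all $\ge i_j$, so $k_j\ge i_j$. Since $\mathbf K\subset\mathbf M$, also $k_j\le m_{s-r+j}$. This counting step is exactly the paper's argument.

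\textbf{The diagonal value has no Hall--Littlewood sum.} For $\mathbf M=\mathbf I$ the counting step forces $\tau=\mathrm{id}$. Formula \eqref{Motherfunction} contains no ratio $\frac{qu_j-u_k}{u_j-u_k}$ for $r<j<k$, only the symmetric factor $u_ju_k-q$. The $(n-r)!$ surviving permutations give identical regular values at $u_{r+1}=\dots=u_n=1$, which cancels the prefactor $1/(n-r)!$. The factor $(1-q)^{(n-r)(n-r-1)/2}$ is simply $\prod_{r<j<k}(u_ju_k-q)$ evaluated at $u=1$. Your Hall--Littlewood step would introduce an extra factor $\frac{1}{(n-r)!}\prod_{m=1}^{n-r}\frac{1-q^m}{1-q}\ne1$ and give the wrong value.

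\textbf{The case $\mathbf I\subseteq\mathbf M$, $s>r$ in (ii) is not handled.} Here $(u_ju_k-q)|_{q=1}=u_ju_k-1$ vanishes only when two unoccupied slots both carry $1$, which need not happen. For $n=s=2$, $r=1$, $\mathbf I=\{1\}$, $\mathbf M=\{1,2\}$ there is no such factor at all, and ``symmetrizer vanishing'' is not an argument. The factor that does the work is the unoccupied product $\prod_{k=1}^N(u_j-qw_k)$, which at $q=1$ becomes $\prod_{k=1}^N(u_j-w_k)$. The occupied slots must carry exactly $w_{i_1},\dots,w_{i_r}$, so some $w_m$ with $m\in\mathbf M\setminus\mathbf I$ sits in an unoccupied slot, and this product contains $w_m-w_m=0$. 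Note also that at $q=1$ each ratio equals exactly $1$, not $\pm1$, so no denominators remain.

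\textbf{What works.} Your worry about $0/0$ summands when several $u$'s equal $1$ is legitimate; the paper's restriction of the sum passes over it. Your remedy works: along $u_{s+c}=1+c\epsilon$ with distinct $c>0$, a summand with $p\ge1$ ones in occupied slots has exactly $p$ more vanishing numerator factors (the $u^2-1$) than vanishing denominators, so it tends to $0$. Done this way, it also gives the $s<r$ case directly. The paper instead uses Lemma~\ref{onevariablesp} and Lemma~\ref{factorizationlemma} there.
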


\begin{proof}
We first prove $\mathrm{(i)}$.
Suppose first that $s\ge r$.
From the factor
$\displaystyle \sigma \Bigg( \prod_{j=1}^{r}(u_j^2-1) \Bigg)$
in the explicit formula \eqref{Motherfunction} for
$F_{N,n,r}(\mathbf{u}|\mathbf{w}|\mathbf{I})$,
the summation after the substitution
$\mathbf{u}=\mathbf{w}_{\mathbf{M}}$ can be restricted to permutations
$\sigma\in S_n$ satisfying
$\{\sigma(1),\ldots,\sigma(r)\}\subseteq\{1,\ldots,s\}$.
Indeed, if $\sigma(j)>s$ for some $1\le j\le r$, then
$u_{\sigma(j)}=1$, and hence $u_{\sigma(j)}^2-1=0$.

For such a permutation, let
$K=(k_1,\ldots,k_r)$, where
$1\le k_1<\cdots<k_r\le N$, be determined by
$
\{u_{\sigma(1)},\ldots,u_{\sigma(r)}\}
=
\{w_{k_1},\ldots,w_{k_r}\}.
$
Then $K$ is an $r$-element subsequence of $\mathbf{M}$.
There exists a permutation $\tau\in S_r$ such that
$u_{\sigma(j)}=w_{k_{\tau(j)}}$ for $1\le j\le r$.
The factor
$\displaystyle
\sigma\left(
\prod_{j=1}^{r}\prod_{a=1}^{i_j-1}(u_j-w_a)
\right)
=
\prod_{j=1}^{r}\prod_{a=1}^{i_j-1}
(u_{\sigma(j)}-w_a)
$
shows that a nonzero summand must satisfy
$k_{\tau(j)}\ge i_j$ for $1\le j\le r$.

For each $j$, the $r-j+1$ distinct integers
$k_{\tau(j)},k_{\tau(j+1)},\ldots,k_{\tau(r)}$
are all greater than or equal to $i_j$, since
$i_j\le i_{j+1}\le\cdots\le i_r$.
Thus at most $j-1$ elements of $K$ are smaller than $i_j$,
and hence $k_j\ge i_j$.
Since $K$ is an $r$-element subsequence of
$\mathbf{M}=(m_1,\ldots,m_s)$, we have
$k_j\le m_{s-r+j}$ for $1\le j\le r$.
Consequently,
$
m_{s-r+j}\ge i_j
\qquad (1\le j\le r),
$
and hence $\mathbf{M}\ge\mathbf{I}$. Therefore,
$F_{N,n,r}(\mathbf{w}_{\mathbf{M}}|\mathbf{w}|\mathbf{I})=0$
unless $\mathbf{M}\ge\mathbf{I}$.

We next evaluate the diagonal specialization $\mathbf{M}=\mathbf{I}$.
In this case $s=r$. A surviving summand must satisfy
$u_{\sigma(j)}=w_{i_{\tau(j)}}$ and
$i_{\tau(j)}\ge i_j$ for $1\le j\le r$, for some
$\tau\in S_r$.
Since $i_1<\cdots<i_r$, these inequalities force
$\tau=\mathrm{id}$. It follows that
$\sigma(j)=j$ for $1\le j\le r$, while
$
\{\sigma(k):r+1\le k\le n\}
=
\{r+1,\ldots,n\}.
$
Since $u_{r+1}=\cdots=u_n=1$, the $(n-r)!$ permutations
of these variables give the same contribution.
Thus the factor $(n-r)!$ produced by the sum cancels the factor
$1/(n-r)!$ in \eqref{Motherfunction}.
To make the remaining evaluation explicit, for each $b$ with
$r+1\le b\le n$ we have
$
\displaystyle
\left.\prod_{k=1}^N(u_b-qw_k)\right|_{u_b=1}
=
\prod_{k=1}^N(1-qw_k).
$
For $1\le a\le r$ and $r+1\le b\le n$,
$\displaystyle
\left.
\frac{qu_b-u_a}{u_b-u_a}(u_bu_a-1)
\right|_{u_b=1,\,u_a=w_{i_a}}
=
\frac{q-w_{i_a}}{1-w_{i_a}}(w_{i_a}-1)
=w_{i_a}-q.
$
For every pair $r+1\le a<b\le n$, the factor $u_au_b-q$
becomes $1-q$, and each factor $\beta u_b-\gamma$ becomes $\beta-\gamma$.
Therefore the factors involving $u_{r+1},\ldots,u_n$ give
$\displaystyle
(1-q)^{\frac{(n-r)(n-r-1)}{2}}(\beta-\gamma)^{n-r}
\prod_{k=1}^N(1-qw_k)^{n-r}
\prod_{a=1}^r(w_{i_a}-q)^{n-r}.
$
Multiplying this by the remaining factors in the identity summand,
with $u_a=w_{i_a}$ for $1\le a\le r$, gives exactly
\eqref{diagonalFfunction}. Thus no additional factor is hidden in the
cancellation of the $(n-r)!$ identical summands with the prefactor
$1/(n-r)!$.

Finally, suppose that $s<r$.
Repeated application of Lemma~\ref{onevariablesp}
which will be shown later
gives
$
F_{N,n,r}(\mathbf{w}_{\mathbf{M}}|\mathbf{w}|\mathbf{I})
=
C_{N,n,r,\mathbf{M}}
F_{N,r,r}
(
w_{m_1},\ldots,w_{m_s},
\underbrace{1,\ldots,1}_{r-s}
|\mathbf{w}|\mathbf{I}
)
$
where $C_{N,n,r,\mathbf{M}}$ is the product of the factors obtained
from Lemma~\ref{onevariablesp}.

For $n=r$, the factor
$\displaystyle \sigma\Biggl(\prod_{j=1}^{r}(u_j^2-1)\Biggr)
=\allowbreak\prod_{j=1}^{r}(u_j^2-1)$
appears in every summand.
Since $s<r$, the factorization in Lemma~\ref{factorizationlemma}
shows that the specialization vanishes:
$F_{N,r,r}\bigl(w_{m_1},\ldots,w_{m_s},\allowbreak
1,\ldots,1|\allowbreak\mathbf w|\mathbf I\bigr)=0$.
It follows that
$F_{N,n,r}(\mathbf{w}_{\mathbf{M}}|\mathbf{w}|\mathbf{I})=0$.
Since $\mathbf{M}\not\ge\mathbf{I}$ when $s<r$, this completes the proof of
$\mathrm{(i)}$. \\
\\
 We next prove $\mathrm{(ii)}$.
The factor $(1-q)^r$ is explicit in \eqref{Motherfunction}, so
$(1-q)^{-r}F_{N,n,r}$ is regular as a polynomial in $q$.
It is therefore enough to set $q=1$ before making the
specialization $\mathbf{u}=\mathbf{w}_{\mathbf{M}}$. At $q=1$, all quotients
$(qu_j-u_k)/(u_j-u_k)$ become $1$, and every summand contains the factor
$\displaystyle
\prod_{j=1}^{r}
\Bigg\{
(u_{\sigma(j)}^2-1)
\prod_{\substack{k=1\\k\ne i_j}}^N
(u_{\sigma(j)}-w_k)
\Bigg\}.
$
After the specialization, this factor can be nonzero only if
$u_{\sigma(j)}=w_{i_j}$ for every $j=1,\ldots,r$.
Hence every element of $\mathbf{I}$ must occur in $\mathbf{M}$.
If $s<r$, or if $\mathbf{I}\not\subseteq\mathbf{M}$, every summand is zero.
Suppose next that $\mathbf{I}\subseteq\mathbf{M}$ and $s>r$.
Then one of the values $w_m$ with
$m\in\mathbf{M}\setminus\mathbf{I}$ must occur among
$u_{\sigma(r+1)},\ldots,u_{\sigma(n)}$.
For that variable, say $u_{\sigma(b)}=w_m$ with $r+1\le b\le n$, the factor
$\displaystyle
\prod_{k=1}^N(u_{\sigma(b)}-w_k)
$
contains $w_m-w_m=0$.
Thus the value at $q=1$ is zero for every
$\mathbf{M}\ne\mathbf{I}$, independently of the cardinality of
$\mathbf{M}$. This proves divisibility by $(1-q)$.
\end{proof}

$WC_{N,n,r}(\mathbf{u}|\mathbf{w}|\mathbf{I} )$ and
$WB_{N,n,r}(\mathbf{u}|\mathbf{w}|\mathbf{I} )$ are specializations
of $F_{N,n,r}(\mathbf{u}|\mathbf{w}|\mathbf{I})$ and hence satisfy the
same properties as Theorem~\ref{ffunctionproperties}.
Theorem~\ref{ffunctionproperties} (i) is a generalization of
\cite[Proposition 7.1]{IkedaNaruse}.

\subsubsection{Common notation for the normalized weight functions}

We next introduce common notation and root-theoretic conventions used for the
normalized type $C$ and type $B$ weight functions and their main vanishing and
evaluation properties under the specializations $\mathbf{u}=\mathbf{w}_{\mathbf{M}}$. Their proofs use the lattice-model identities
established in Subsection~2.3 together with the relation with the type $A$ weight
functions established in Subsection~2.4, and are given in
Section~\ref{sec:normalized-algebraic-properties}. Specializations of
arbitrary cardinality are studied in
Section~\ref{sec:normalized-specializations}.

The following notation and root-theoretic interpretation will be used for the
normalized type $C$ and type $B$ weight functions.
Let
$\displaystyle
L_N=\bigoplus_{j=1}^{N}\mathbb{Z}t_j
$
be the lattice in which the corresponding root systems are realized, and
introduce the multiplicative variables
$
w_j=e^{t_j}
\
(1\leq j\leq N).
$
Throughout this section, we write
\[
\mathbf{I}=(i_1<i_2<\cdots<i_r),
\qquad
1\leq i_1<i_2<\cdots<i_r\leq N,
\]
as an index set. We also use $\mathbf{I}$ to denote its
underlying subset of $\{1,\dots,N\}$ when no confusion can arise.
This correspondence was introduced above, but we recall it here because it will be important below.
The strict partition corresponding to $\mathbf{I}$ is
\[
\lambda=\lambda(\mathbf{I})
=(\lambda_1>\lambda_2>\cdots>\lambda_r>0),
\qquad
\lambda_j=i_{r+1-j}.
\]
Thus,
$
\lambda=(i_r>i_{r-1}>\cdots>i_1)
$
and
$
\{\lambda_1,\dots,\lambda_r\} = \{i_1,\dots,i_r\}.
$
We use $\mathbf{I}$ when writing the weight functions and their
specializations, and use $\lambda$ when describing the Weyl
group action and the Bruhat--Chevalley order.

For $X=C,B$, let $W(X_N)$ be the Weyl group of type $X_N$.
As a Coxeter group, $W(X_N)$ is generated by
$
s_0,s_1,\dots,s_{N-1},
$
subject to the relations
\begin{align}
s_i^2&=1
&& (0\leq i\leq N-1), \notag \\
s_is_{i+1}s_i&=s_{i+1}s_is_{i+1}
&& (1\leq i\leq N-2), \notag \\
s_is_j&=s_js_i
&& (1\leq i,j\leq N-1,\ |i-j|\geq2), \notag \\
s_0s_1s_0s_1&=s_1s_0s_1s_0, \notag \\
s_0s_i&=s_is_0
&& (2\leq i\leq N-1). \notag
\end{align}

Let
\[
\mathcal{SP}_N = \{\emptyset\} \cup \left\{ (\mu_1>\mu_2>\cdots>\mu_{l}>0) \mathrel{}\middle|\mathrel{} \mu_1\leq N \right\}.
\]
For $\mu\in\mathcal{SP}_N$, its shifted Young diagram is
\[
S(\mu) = \left\{ (a,b)\in\mathbb{Z}_{\geq1}^{2} \mathrel{}\middle|\mathrel{} 1\leq a\leq\ell(\mu),\ a\leq b<a+\mu_a \right\}.
\]
We set $S(\emptyset)=\emptyset$.
The content of a box $\mathfrak b=(a,b)$ is defined by
$
c(\mathfrak b)=b-a.
$
Let $0\leq i\leq N-1$.
A box $\mathfrak b\in S(\mu)$ is called \emph{$i$-removable} if
$
c(\mathfrak b)=i
$
and $S(\mu)\setminus\{\mathfrak b\}$ is the shifted Young diagram of a strict
partition in $\mathcal{SP}_N$.
A box $\mathfrak b\notin S(\mu)$ is called \emph{$i$-addable} if
$
c(\mathfrak b)=i
$
and $S(\mu)\cup\{\mathfrak b\}$ is the shifted Young diagram of a strict
partition in $\mathcal{SP}_N$.
We define $s_i\mu$ as follows.
If $\mu$ has an $i$-removable box $\mathfrak b$, then $s_i\mu$ is determined
by
$
S(s_i\mu)=S(\mu)\setminus\{\mathfrak b\}.
$
If $\mu$ has an $i$-addable box $\mathfrak b$, then $s_i\mu$ is determined by
$
S(s_i\mu)=S(\mu)\cup\{\mathfrak b\}.
$
If $\mu$ has neither an $i$-removable nor an $i$-addable box, then
we set
$
s_i\mu=\mu.
$
These operators define the usual action of the Weyl group $W(X_N)$,
where $X=C$ or $B$, on $\mathcal{SP}_N$.
Moreover, with respect to the Bruhat--Chevalley order,
$
s_i\mu<\mu
$
if and only if $\mu$ has an $i$-removable box, whereas
$
s_i\mu>\mu
$
if and only if $\mu$ has an $i$-addable box \cite[Proposition~4.3]{IkedaNaruse}.
All inequalities between strict partitions below are understood with
respect to this order.

For a Laurent polynomial
$\displaystyle f(\mathbf{w})=\sum_{\nu\in\mathbb{Z}^{N}}c_{\nu}(q) w_{1}^{\nu_{1}}\cdots w_{N}^{\nu_{N}} $
where $c_{\nu}(q) \in \mathbb{C}(q)$,
we define its Newton polytope by
\[ \mathcal{N}(f)=\operatorname{Conv}\{\nu\in\mathbb{Z}^{N} \mid c_{\nu}(q)\ne0\}\subset\mathbb{R}^{N}. \]
We set $\mathcal N(0)=\emptyset$.
For a subset
$  \mathbf{M}=\{m_{1}<\cdots<m_{s}\}\subset\{1,\ldots,N\} $
of arbitrary cardinality $s$, define the linear functional
$ \varphi_{\mathbf{M}}:\mathbb{R}^{N}\longrightarrow\mathbb{R} $
by
\[
\varphi_{\mathbf{M}}(\nu)
=\sum_{j\in \mathbf{M}}\nu_{j}+2\sum_{j\notin \mathbf{M}}\nu_{j},
\qquad
\nu=(\nu_{1},\ldots,\nu_{N})\in\mathbb{R}^{N}.
\]
For $f\neq0$, the image $\varphi_{\mathbf M}(\mathcal N(f))$ is a closed interval. We call it the \emph{degree interval} of $f$ with respect to $\varphi_{\mathbf M}$, and call its lower and upper endpoints the \emph{lower Newton degree} and \emph{upper Newton degree} of $f$, respectively.

\subsubsection{Normalized type $C$ weight function}

The positive root system of the type $C_N$ is 
\[
\Delta^{+}
=
\{\, t_k \pm t_j \mid 1 \le j < k \le N \,\}
\;\cup\;
\{\, 2t_j \mid 1 \le j \le N \,\}.
\]

The simple roots are
\[
\alpha_0=2t_1,
\qquad
\alpha_i=t_{i+1}-t_i
\quad
(1\leq i\leq N-1).
\]

For each positive root $\alpha\in\Delta^+$, let $s_\alpha\in W(X_N)$
denote the reflection corresponding to $\alpha$. If
$
\alpha=w(\alpha_i)
$
for some $w\in W(X_N)$ and some simple root $\alpha_i$, then
$
s_\alpha=ws_iw^{-1}.
$
We let $s_\alpha$ act on
$\lambda\in\mathcal{SP}_N$ through the Weyl group action defined above.

The inversion set
$
\Inv(\lambda):= \{ \alpha \in \Delta^+ \mid s_\alpha \lambda < \lambda \},
$
is explicitly written as follows:
\begin{align}
\Inv(\lambda) 
&= \{\, 2t_k \mid k \in \mathbf{I} \,\} \cup \{\, t_j + t_k \mid 1 \le j < k \le N, \, j \in \mathbf{I}, \, k \in 
\mathbf{I} \,\} \nonumber \\
&\quad \cup \{\, t_k - t_j \mid 1 \le j < k \le N, \, j \notin \mathbf{I}, \, k \in \mathbf{I} \,\}. \notag
\end{align}
Therefore,
\begin{align}
\prod_{\alpha \in \Inv(\lambda)} (1-e^{-\alpha}) 
=& \prod_{j=1}^r (1 - w_{\lambda_j}^{-2}) \prod_{1 \le j < k \le r} (1 - w_{\lambda_j}^{-1} w_{\lambda_k}^{-1}) \prod_{j=1}^r \prod_{k \in \{1,\dots,\lambda_j-1\} \setminus \mathbf{I}} (1 - w_{\lambda_j}^{-1} w_k) \nonumber \\
=& \prod_{1 \le j \le k \le r} (1-w_{i_j}^{-1} w_{i_k}^{-1})  \prod_{j \in \{1, 2, \dots, N\} \setminus \mathbf{I}} \prod_{l: i_l > j} (1-w_{i_l}^{-1} w_j).
\label{typecfirstblock}
\end{align}

On the other hand, the ascent set
$
T(\lambda) := \{ \alpha \in \Delta^+ \mid s_\alpha \lambda > \lambda \},
$
is
\begin{align}
T(\lambda) 
=& \{\, 2t_k \mid k \in \{1, 2, \dots, N\} \setminus \mathbf{I} \,\} \cup \{\, t_j + t_k \mid 1 \le j < k \le N, \, j \notin \mathbf{I}, \, k \notin \mathbf{I} \,\} \nonumber \\
& \cup \{\, t_k - t_j \mid 1 \le j < k \le N, \, j \in \mathbf{I}, \, k \notin \mathbf{I} \,\}, \notag
\end{align}
Similarly,
\begin{align}
\prod_{\alpha \in T(\lambda)} (1 - q e^{+\alpha}) 
&= \prod_{k \in \{1, 2, \dots, N\} \setminus \mathbf{I}} (1 - q w_k^2) \prod_{\substack{j < k \\ j, k \in \{1, 2, \dots, 
N\} \setminus \mathbf{I}}} (1 - q w_j w_k) \nonumber \\
&\quad \times \prod_{k \in \{1, 2, \dots, N\} \setminus \mathbf{I}} \prod_{\substack{l=1 \\ \lambda_l < k}}^r (1 - q w_k w_{\lambda_l}^{-1}) \nonumber \\
&= \prod_{\substack{j \le k \\ j, k \in \{1, 2, \dots, N\} \setminus \mathbf{I}}} (1 - q w_j w_{k})  \prod_{j \in \{1, 2, \dots, N\} \setminus \mathbf{I}} \prod_{l: i_l < j} (1-q w_{i_l}^{-1} w_j).
\label{typecsecondblock}
\end{align}

\begin{Definition}
We define type $C$ $\mathbf{M}$-smallness only in the equal-cardinality case $s=r$.  Thus, for an $r$-element subset $\mathbf{M}=\{m_{1}<\cdots<m_{r}\}\subset\{1,\ldots,N\}$, we define the Laurent polynomial $f$ to be type $C$ \emph{$\mathbf{M}$-small} if
\begin{align}
\varphi_{\mathbf{M}}(\mathcal{N}(f)) \subset [-r(r+1), (N-r)(2N-r+2)-1].
\end{align}
\end{Definition}

\begin{Definition}

We introduce the normalization factor $\mathcal{E}^{C}_{N,n,r}(\mathbf{u}, \mathbf{w})$ and define the normalized weight function as follows:
\begin{equation}
\widetilde{WC}_{N,n,r}(\mathbf{u}|\mathbf{w}|\mathbf{I}) := WC_{N,n,r}(\mathbf{u}|\mathbf{w}|\mathbf{I}) \cdot \mathcal{E}^C_{N,n,r}(\mathbf{u}, \mathbf{w})^{-1}.
\label{typeCnormalizeddefinition}
\end{equation}
Here, the normalization factor $\mathcal{E}_{N,n,r}^C(\mathbf{u}, \mathbf{w})$, which is symmetric with respect to $\mathbf{u}$ and independent of $\mathbf{I}$, is defined as follows:
\begin{equation}
\mathcal{E}_{N,n,r}^C(\mathbf{u}, \mathbf{w}) =  \frac{1}{(1-q)^{n-r}} \frac{ \prod_{j,k=1}^n (1 - q u_j^{-1} u_k) \prod_{j=1}^n \prod_{k=1}^N (1 - q u_j w_k) }{ \prod_{1 \le j < k \le n} (1 - q u_j u_k) \prod_{1 \le j \le k \le N} (1 - q w_j w_k) }.
\label{typeCnormalizationfactor}
\end{equation}

\end{Definition}

\begin{thm} \label{typeCmainproperties}
Let $|\mathbf{M}|=|\mathbf{I}|=r$ and $\mathbf{M}=\{m_1<\cdots<m_r\}\subset\{1,\dots,N\}$. Then
$\widetilde{WC}_{N,n,r}
(\mathbf{w}_{\mathbf{M}}|\mathbf{w}|\mathbf{I})$ satisfies the following properties. 

$\mathrm{(i)}$
We have
\begin{align}
\widetilde{WC}_{N,n,r}(\mathbf{w}_{\mathbf{M}}|\mathbf{w}|\mathbf{I})=0,
\end{align}
unless $\mathbf{M} \ge \mathbf{I}$, and
\begin{equation}
\widetilde{WC}_{N,n,r}(\mathbf{w}_{\mathbf{I}}|\mathbf{w}|\mathbf{I}) = \prod_{\alpha \in \Inv(\lambda)} (1 - e^{-\alpha}) \prod_{\alpha \in T(\lambda)} (1 - q e^{+\alpha}). \label{normalizationWC}
\end{equation}

$\mathrm{(ii)}$
$\widetilde{WC}_{N,n,r}
(\mathbf{w}_{\mathbf{M}}|\mathbf{w}|\mathbf{I})$
is divisible by
\begin{align}
&\prod_{1\le j\le k\le r}
(1-w_{m_j}^{-1}w_{m_k}^{-1})
\prod_{\substack{j \le k \\ j, k \in \{1, 2, \dots, N\} \setminus \mathbf{M}}} (1 - q w_j w_{k}) 
\prod_{j \in \{1, 2, \dots, N\} \setminus \mathbf{M}} \prod_{l: m_l < j} (1-q w_{m_l}^{-1} w_j).
\label{typeCfullDivisibility}
\end{align}

$\mathrm{(iii)}$
$\widetilde{WC}_{N,n,r}
(\mathbf{w}_{\mathbf{M}}|\mathbf{w}|\mathbf{I})$ is type $C$ $\mathbf{M}$-small if $\mathbf{M} \neq \mathbf{I}$.
\end{thm}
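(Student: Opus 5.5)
The plan is to work directly from the symmetrization formula \eqref{Motherfunction} at $\gamma=0,\beta=1$, combined with Theorem~\ref{ffunctionproperties}. For (i), I first check that $\mathcal{E}^C_{N,n,r}(\mathbf{w}_{\mathbf{M}},\mathbf{w})$ is a well-defined, nonzero rational function of $(q,\mathbf{w})$ when $|\mathbf{M}|=r$. The numerator factors $1-qu_j^{-1}u_k$ specialize to $1-q$ (when $j=k$, or both indices exceed $r$), to $1-qw_{m_a}^{\pm1}$, or to $1-qw_{m_a}^{-1}w_{m_b}$. The denominator factors specialize to $1-qw_{m_a}w_{m_b}$, $1-qw_{m_a}$, $1-q$, or $1-qw_jw_k$, and none of these vanishes identically. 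Hence the vanishing unless $\mathbf{M}\ge\mathbf{I}$ transfers immediately from Theorem~\ref{ffunctionproperties}(i). For the diagonal value, I divide \eqref{diagonalFfunction} (with $\beta-\gamma=1$) by $\mathcal{E}^C_{N,n,r}(\mathbf{w}_{\mathbf{I}},\mathbf{w})$ and do the bookkeeping, using three rewritings:
\[
w_{i_j}^{-2}(w_{i_j}^2-1)=1-w_{i_j}^{-2},\qquad
w^{-1}(w_{i_j}-w_k)=1-w_{i_j}^{-1}w_k,\qquad
w^{-1}(w_{i_j}-qw_k)=1-qw_{i_j}^{-1}w_k .
\]
The powers of $(1-q)$ cancel: the factor $(1-q)^{(n-r)(n-r-1)/2}$ and the $(1-q)^r$ in \eqref{diagonalFfunction} are absorbed by the $(1-q)$'s in $\mathcal{E}^C$. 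The cross terms with $u_b=1$ cancel against $\prod_k(1-qw_k)^{n-r}$ and $\prod(w_{i_k}-q)^{n-r}$. The factors $\prod_{k>i_j}(1-qw_{i_j}^{-1}w_k)$ and $\frac{qw_{i_j}-w_{i_k}}{w_{i_j}-w_{i_k}}$ cancel against the $u_j^{-1}u_k$ and $u_jw_k$ parts of $\mathcal{E}^C$. What remains is
\[
\prod_{j\le k,\ j,k\notin\mathbf{I}}(1-qw_jw_k)\cdot\prod_{j\notin\mathbf{I}}\ \prod_{l:\,i_l<j}(1-qw_{i_l}^{-1}w_j),
\]
which comes from the denominator of $\mathcal{E}^C$. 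Comparing with \eqref{typecfirstblock} and \eqref{typecsecondblock} then gives \eqref{normalizationWC}.

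For (ii), I split \eqref{typeCfullDivisibility} into a $q$-independent block and two $q$-dependent blocks. The block $\prod_{j\le k}(1-w_{m_j}^{-1}w_{m_k}^{-1})$ comes directly from the lattice formula. When $s=r$, every surviving $\sigma$ sends $\{1,\dots,r\}$ onto the positions carrying $w_{m_1},\dots,w_{m_r}$. Consequently every surviving summand contains
\[
\prod_{a}(w_{m_a}^2-1)\prod_{a<b}(w_{m_a}w_{m_b}-1),
\]
and none of these factors is cancelled by $\mathcal{E}^C$. The two $q$-dependent blocks are harder to obtain. They come partly from the denominator $\prod_{j\le k}(1-qw_jw_k)$ of $\mathcal{E}^C$: restricting to $j,k\notin\mathbf{M}$, this gives the first $q$-block provided the numerator of $\mathcal{E}^C$ at $\mathbf{w}_{\mathbf{M}}$ involves no such factors, which it does not. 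The second block, $\prod_{j\notin\mathbf{M}}\prod_{m_l<j}(1-qw_{m_l}^{-1}w_j)$, must instead divide the sum over $\sigma$ itself. In a summand with $u_{\sigma(a)}=w_{m_{\tau(a)}}$, it arises from the factors $u_{\sigma(a)}-qw_k$ with $k>i_a$ and $k\notin\mathbf{M}$. It is not present termwise whenever $\tau\neq\mathrm{id}$, so a term-by-term argument fails.

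To handle this block and part (iii), I would invoke the relation with type $A$ weight functions from Subsection~2.4. After specializing $u_{r+1}=\cdots=u_n=1$, the function $\widetilde{WC}_{N,n,r}(\mathbf{w}_{\mathbf{M}}|\mathbf{w}|\mathbf{I})$ should factor as an explicit product times a normalized type $A$ weight function in the doubled variables $(w_1^{-1},\dots,w_N^{-1},w_1,\dots,w_N)$, or equivalently a type $A$ function attached to the subset $\mathbf{M}$. The Rim\'anyi--Tarasov--Varchenko divisibility and Newton-smallness results recalled in Appendix~A then supply the missing factors $1-qw_{m_l}^{-1}w_j$ and the degree bound. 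The remaining work for (iii) is to compute the Newton polytope of the explicit prefactor under $\varphi_{\mathbf{M}}$. Its shift should move the type $A$ window into $[-r(r+1),(N-r)(2N-r+2)-1]$, and the strict upper bound $-1$ is inherited from the type $A$ strictness, which holds for $\mathbf{M}\neq\mathbf{I}$.

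I expect the main obstacle to be this last comparison: matching the normalization $\mathcal{E}^C$ against the type $A$ normalization exactly, so that no stray factor spoils either the divisibility or the endpoints of the degree interval. If the type $A$ route proves awkward, the fallback for (ii) is an induction on $N$. This would use the exchange relations derived from the Yang--Baxter and reflection equations, which permute $w_j,w_{j+1}$ and flip $w_1\mapsto w_1^{-1}$. These relations would propagate the divisibility starting from the diagonal case, where it holds by (i).
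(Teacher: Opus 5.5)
Your route is essentially the paper's: vanishing from Theorem~\ref{ffunctionproperties}~(i), the diagonal value by dividing the explicit diagonal evaluation by $\mathcal{E}^C_{N,n,r}(\mathbf{w}_{\mathbf{I}},\mathbf{w})$ and comparing with \eqref{typecfirstblock}--\eqref{typecsecondblock}, and (ii)--(iii) via the type $A$ relation, Theorem~\ref{RTVtheorem}~(i),(iii), and additivity of Newton degrees. The relation you need is exactly \eqref{relationwithAnormalized}, which involves $W_{N,r}(w_{m_1},\dots,w_{m_r}|\mathbf w|\mathbf I)$ in the original variables (no doubling) and matches $\mathcal{E}^C$ with no stray factors; it therefore gives all three blocks of \eqref{typeCfullDivisibility} at once, including the Laurent polynomiality that your direct argument for the $\mathbf M^{\mathrm c}$ block silently needs, the intervals $[-r(r+1),0]$, $[0,2(N-r)(N-r+1)]$ and $[0,r(N-r)-1]$ simply add, and the fallback induction is unnecessary.
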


The proof of Theorem~\ref{typeCmainproperties} is given in Section~\ref{sec:normalized-algebraic-properties}. At the end of the type $C$ part of Section~\ref{sec:normalized-specializations}, we explain explicitly how the specialization results reduce at $s=r$ to recover $\mathrm{(i)}$--$\mathrm{(iii)}$.

The following uniqueness statement concerns only the case $|\mathbf{M}|=r$.  It shows that these specializations are characterized by the properties in Theorem~\ref{typeCmainproperties}, together with the exchange congruence in condition $\mathrm{(iii)}$ below.

\begin{prop}
Fix $\mathbf{I}$, and suppose that a collection of Laurent polynomials
$(F_{\mathbf{M}})_{|\mathbf{M}|=r}$ satisfies the following conditions.
\begin{enumerate}
\item[(i)] If $\mathbf{M}\not\ge \mathbf{I}$, then $F_{\mathbf{M}}=0$, and
\begin{align*}
F_{\mathbf{I}}
=&
\prod_{1\le j\le k\le r}
(1-w_{i_j}^{-1}w_{i_k}^{-1})
\prod_{j\in \mathbf{I}^{\mathrm c}}
\prod_{l:i_l>j}
(1-w_{i_l}^{-1}w_j) \notag \\
&\times\prod_{\substack{j\le k\\j,k\in \mathbf{I}^{\mathrm c}}}
(1-qw_jw_k)
\prod_{j\in \mathbf{I}^{\mathrm c}}
\prod_{l:i_l<j}
(1-qw_{i_l}^{-1}w_j).
\end{align*}

\item[(ii)] The Laurent polynomial $F_{\mathbf{M}}$ is divisible by
\[
\prod_{1\le j\le k\le r}
(1-w_{m_j}^{-1}w_{m_k}^{-1})
\prod_{\substack{j\le k\\j,k\in \mathbf{M}^{\mathrm c}}}
(1-qw_jw_k)
\prod_{j\in \mathbf{M}^{\mathrm c}}
\prod_{l:m_l<j}
(1-qw_{m_l}^{-1}w_j).
\]

\item[(iii)] For $k\in \mathbf{M}$ and $j\notin \mathbf{M}$,
\[
F_{\mathbf{M}}-F_{(\mathbf{M}\setminus\{k\})\cup\{j\}}
\in(1-w_j/w_k).
\]

\item[(iv)] If $\mathbf{M}\ne \mathbf{I}$, then
\[
\varphi_{\mathbf{M}}\bigl(\mathcal N(F_{\mathbf{M}})\bigr)
\subset
[-r(r+1),(N-r)(2N-r+2)-1].
\]
\end{enumerate}
Then there is at most one such collection
$(F_{\mathbf{M}})_{|\mathbf{M}|=r}$.
\end{prop}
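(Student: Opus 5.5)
The plan is the standard Rimányi--Tarasov--Varchenko style argument. We take two collections $(F_{\mathbf M})$ and $(F'_{\mathbf M})$ satisfying (i)--(iv) and set $G_{\mathbf M}=F_{\mathbf M}-F'_{\mathbf M}$. Then $G_{\mathbf I}=0$ by (i), and $G_{\mathbf M}=0$ whenever $\mathbf M\not\ge\mathbf I$. Each $G_{\mathbf M}$ is still divisible by the product $D_{\mathbf M}$ of (ii). The collection $(G_{\mathbf M})$ still satisfies the congruences (iii), since these are linear. For $\mathbf M\ne\mathbf I$, $G_{\mathbf M}$ still satisfies the Newton bound (iv), because $\mathcal N(f-g)\subset\operatorname{Conv}(\mathcal N(f)\cup\mathcal N(g))$ and $\varphi_{\mathbf M}$ is linear. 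We then show $G_{\mathbf M}=0$ by induction upward on the componentwise order on $r$-subsets, starting from the minimal elements below or incomparable to $\mathbf I$, where $G$ already vanishes.

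For the inductive step, fix $\mathbf M>\mathbf I$ and assume $G_{\mathbf M'}=0$ for all $\mathbf M'<\mathbf M$. Consider pairs $k\in\mathbf M$, $j\notin\mathbf M$ with $j<k$. Then $\mathbf M'=(\mathbf M\setminus\{k\})\cup\{j\}$ is strictly smaller than $\mathbf M$, so (iii) gives $G_{\mathbf M}\in(1-w_j/w_k)$. These factors are exactly the type $A$ inversion factors $1-w_k^{-1}w_j$, $j\in\mathbf M^{\mathrm c}$, $k\in\mathbf M$, $j<k$, which appear in the diagonal value but not in $D_{\mathbf M}$.

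The irreducible factors $1-w_k^{-1}w_j$, $1-w_{m_j}^{-1}w_{m_k}^{-1}$, $1-qw_jw_k$ and $1-qw_{m_l}^{-1}w_j$ are pairwise non-associate in the Laurent ring over $\mathbb C(q)$, which is a UFD. Hence $G_{\mathbf M}$ is divisible by the full product
\[
P_{\mathbf M}=D_{\mathbf M}\prod_{j\in\mathbf M^{\mathrm c}}\prod_{l:m_l>j}(1-w_{m_l}^{-1}w_j).
\]
This is precisely the shape of the diagonal value $F_{\mathbf M}$ in (i) with $\mathbf I$ replaced by $\mathbf M$.

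Next we compute the degree interval $\varphi_{\mathbf M}(\mathcal N(P_{\mathbf M}))$. Each factor $1-x$ contributes the interval $[\min(0,\varphi_{\mathbf M}(x)),\max(0,\varphi_{\mathbf M}(x))]$, and Newton polytopes add under multiplication. The factors contribute as follows.
\begin{itemize}
\item $1-w_{m_j}^{-1}w_{m_k}^{-1}$ has weight $-2$, over $r(r+1)/2$ pairs, giving total $-r(r+1)$.
\item $1-qw_jw_k$ with $j\le k$ in $\mathbf M^{\mathrm c}$ has weight $4$, over $(N-r)(N-r+1)/2$ pairs.
\item $1-qw_{m_l}^{-1}w_j$ and $1-w_{m_l}^{-1}w_j$ each have weight $1$, and for each $j\in\mathbf M^{\mathrm c}$ the two families together run over all $l$, giving $r(N-r)$.
\end{itemize}
So the total interval is $[-r(r+1),\,2(N-r)(N-r+1)+r(N-r)]=[-r(r+1),(N-r)(2N-r+2)]$, which has length exactly one more than the interval allowed in (iv). Write $G_{\mathbf M}=P_{\mathbf M}H$. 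Then $\varphi_{\mathbf M}(\mathcal N(G_{\mathbf M}))=\varphi_{\mathbf M}(\mathcal N(P_{\mathbf M}))+\varphi_{\mathbf M}(\mathcal N(H))$. If $H\ne0$, this interval has length at least $(N-r)(2N-r+2)+r(r+1)$, which exceeds the length $(N-r)(2N-r+2)-1+r(r+1)$ permitted by (iv), a contradiction. Hence $H=0$ and $G_{\mathbf M}=0$, which completes the induction.

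The main obstacle is the divisibility step. We must check that the exchange congruences, applied only to smaller $\mathbf M'$, produce every missing factor $1-w_k^{-1}w_j$ with $j<k$. We must also check that all the factors are pairwise coprime, including the repeated case $j=k$ in the $1-w_{m_j}^{-2}$ and $1-qw_j^2$ factors; these are not irreducible, but they still share no factor with the others. Finally, we must confirm that the simple-transposition exchanges are enough to reach every $\mathbf M>\mathbf I$ by induction. The last point is immediate, since each $\mathbf M'$ used in the argument is strictly smaller in the componentwise order.
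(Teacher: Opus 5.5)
Your proposal is correct and follows the paper's proof essentially step by step. The paper also takes the difference of two collections and picks a componentwise-minimal $\mathbf M$ with nonzero difference, which is equivalent to your upward induction; it then extracts the factors $1-w_k^{-1}w_j$ ($j<k$) from the exchange congruences, combines them with (ii) by unique factorization, and compares the degree interval $[-r(r+1),(N-r)(2N-r+2)]$ of the full product with the bound in (iv). Your interval-length contradiction is the same as the paper's endpoint comparison $a\ge 0$, $b\le -1$.
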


\begin{proof}
The proof is essentially the same as in \cite{RTV}.
Let $(F_{\mathbf{M}})_{|\mathbf{M}|=r}$ and
$(G_{\mathbf{M}})_{|\mathbf{M}|=r}$ be two collections satisfying
conditions~(i)--(iv), and set
$
\Delta_{\mathbf{M}}=F_{\mathbf{M}}-G_{\mathbf{M}}.
$
Choose a minimal $\mathbf{M}$ in the componentwise order among the indices for
which $\Delta_{\mathbf{M}}\ne0$.
The diagonal values agree, so $\mathbf{M}\ne \mathbf{I}$.
If $j\notin \mathbf{M}$, $k\in \mathbf{M}$, and $j<k$, set
$
\mathbf{M}'=(\mathbf{M}\setminus\{k\})\cup\{j\}.
$
Then $\mathbf{M}'<\mathbf{M}$, and hence
$\Delta_{\mathbf{M}'}=0$ by the minimality of $\mathbf{M}$.
Applying condition~(iii) to the two collections and subtracting the resulting
congruences gives
$\displaystyle
\Delta_{\mathbf{M}}-\Delta_{\mathbf{M}'}
\in(1-w_j/w_k).
$
Therefore $\Delta_{\mathbf{M}}$ is divisible by $1-w_k^{-1}w_j$.
Multiplying over all such pairs shows that $\Delta_{\mathbf{M}}$ is divisible by
$\displaystyle
\prod_{j\in \mathbf{M}^{\mathrm c}}
\prod_{l:m_l>j}
(1-w_{m_l}^{-1}w_j).
$
The coefficient ring
$\mathbb{C}(q)[w_1^{\pm1},\ldots,w_N^{\pm1}]$ is a unique factorization domain,
and the factors $1-w_{m_l}^{-1}w_j$ obtained here are relatively prime to the
factors in condition~(ii). Therefore $\Delta_{\mathbf{M}}$ is divisible by
\begin{align*}
&\prod_{1\le j\le k\le r}
(1-w_{m_j}^{-1}w_{m_k}^{-1})
\prod_{j\in \mathbf{M}^{\mathrm c}}
\prod_{l:m_l>j}
(1-w_{m_l}^{-1}w_j)
\prod_{\substack{j\le k\\j,k\in \mathbf{M}^{\mathrm c}}}
(1-qw_jw_k)
\prod_{j\in \mathbf{M}^{\mathrm c}}
\prod_{l:m_l<j}
(1-qw_{m_l}^{-1}w_j).
\end{align*}

The degree interval of the first product is
$
[-r(r+1),0].
$
The degree interval of the second product is
$
\left[
0,\,
\left|\{(j,l): j\in\mathbf{M}^{\mathrm c},\ m_l>j\}\right|
\right].
$
The degree interval of the third product is
$
[0,2(N-r)(N-r+1)].
$
The degree interval of the fourth product is
$
\left[
0,\,
\left|\{(j,l): j\in\mathbf{M}^{\mathrm c},\ m_l<j\}\right|
\right].
$
The sum of the upper endpoints of the second and fourth intervals is $r(N-r)$.
Consequently, the product of all four factors has degree interval
$
[-r(r+1),(N-r)(2N-r+2)].
$
Thus $\Delta_{\mathbf{M}}$ is the displayed product times a Laurent polynomial $g$.
Suppose that $g\ne0$ and write
$
\varphi_{\mathbf{M}}\bigl(\mathcal N(g)\bigr)=[a,b].
$
Then $a\le b$. The additivity of the lower and upper Newton degrees under
multiplication gives
$
\varphi_{\mathbf{M}}\bigl(\mathcal N(\Delta_{\mathbf{M}})\bigr)
=
[-r(r+1)+a,(N-r)(2N-r+2)+b].
$
Since $\mathbf{M}\ne\mathbf{I}$, condition~(iv) applies to both candidates and
therefore also to their difference. Hence
$
\varphi_{\mathbf{M}}\bigl(\mathcal N(\Delta_{\mathbf{M}})\bigr)
\subset
[-r(r+1),(N-r)(2N-r+2)-1].
$
Comparing the two endpoints yields $a\ge0$ and $b\le-1$, contradicting
$a\le b$. Thus $g=0$, and hence $\Delta_{\mathbf{M}}=0$, contrary to the
choice of $\mathbf{M}$.
\end{proof}

\begin{prop}
Fix $\mathbf{I}$ and set
\[
F_{\mathbf{M}}
=
\widetilde{WC}_{N,n,r}
(\mathbf{w}_{\mathbf{M}}|\mathbf{w}|\mathbf{I})
\qquad (|\mathbf{M}|=r).
\]
This collection satisfies all conditions (i)--(iv) in the preceding uniqueness proposition.
Consequently, these specialization values are uniquely determined by these conditions.
\end{prop}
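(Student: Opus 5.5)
Conditions (i), (ii) and (iv) are exactly parts (i), (ii) and (iii) of Theorem~\ref{typeCmainproperties}; the only genuinely new item is the exchange congruence (iii). Note that the diagonal value required in (i) is the explicit product obtained by combining \eqref{normalizationWC} with \eqref{typecfirstblock} and \eqref{typecsecondblock}. Likewise, the divisor in (ii) is literally \eqref{typeCfullDivisibility}, and the bound in (iv) is the definition of type $C$ $\mathbf M$-smallness. So the plan is to cite Theorem~\ref{typeCmainproperties} for these three conditions and to spend the work on (iii). Uniqueness then follows from the preceding proposition.

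For (iii), I would argue directly from the symmetrization formula. $WC_{N,n,r}(\mathbf u|\mathbf w|\mathbf I)$ is obtained from \eqref{Motherfunction} at $\gamma=0,\beta=1$. The poles $(u_j-u_k)^{-1}$ cancel in the symmetrization, so it is a symmetric Laurent polynomial in $\mathbf u$ with coefficients in $\mathbb C(q)[\mathbf w^{\pm1}]$. Likewise the normalization factor $\mathcal E^C_{N,n,r}(\mathbf u,\mathbf w)$ in \eqref{typeCnormalizationfactor} is symmetric in $\mathbf u$. Hence $\widetilde{WC}_{N,n,r}(\mathbf u|\mathbf w|\mathbf I)$ is a symmetric rational function of $\mathbf u$. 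Let $\mathbf M'=(\mathbf M\setminus\{k\})\cup\{j\}$. By symmetry I may reorder the first $r$ variables so that the substitutions $\mathbf u=\mathbf w_{\mathbf M}$ and $\mathbf u=\mathbf w_{\mathbf M'}$ differ in exactly one slot: $u_a=w_k$ versus $u_a=w_j$, with all other $u_b$ equal (either $w_{m_b}$ for $m_b\in\mathbf M\setminus\{k\}$, or $1$). Then $F_{\mathbf M}-F_{\mathbf M'}=g(w_k)-g(w_j)$ for a one-variable function $g(x)=\widetilde{WC}_{N,n,r}(\dots,x,\dots)$ whose other arguments are fixed. Whenever $g$ is regular along $w_j=w_k$, i.e.\ the difference is a Laurent polynomial, it is divisible by $w_k-w_j$, hence by $1-w_j/w_k$.

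The main obstacle is regularity. The normalization factor has denominators $\prod_{j<k}(1-qu_ju_k)$, and its numerator contains factors like $(1-qu_au_b^{-1})$ and $(1-qu_aw_l)$. One must check that $g(x)$, as a rational function of $x$ and $\mathbf w$, has no pole along $x=w_j$ or $x=w_k$ other than poles that also survive in the difference. In particular, one must check that the specialization of $\mathcal E^C$ does not vanish identically at the other slot values. My plan is to use that $g$ is a Laurent polynomial in $x$ over the field of fractions of the ring generated by the remaining variables, whose denominators involve only factors of the form $1-q\cdot(\text{monomial})$. Those denominators are coprime to $w_k-w_j$ in $\mathbb C(q)[\mathbf w^{\pm1}]$. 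Then $g(w_k)-g(w_j)$ is divisible by $w_k-w_j$ in the localized ring. Since the left side is a Laurent polynomial by Theorem~\ref{typeCmainproperties} (ii), and the UFD argument allows clearing the coprime denominators, divisibility holds in $\mathbb C(q)[\mathbf w^{\pm1}]$. If the factors $(1-qu_a^{-1}u_b)$ in $\mathcal E^C$ cause trouble at coinciding specialized values, I would instead invoke the relation with type $A$ weight functions from Subsection~2.4, where the analogous exchange congruence is known from \cite{RTV}, and transport it.
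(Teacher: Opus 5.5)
Your proposal is correct and follows the paper's proof. Conditions (i), (ii) and (iv) are read off from Theorem~\ref{typeCmainproperties}, and (iii) comes from symmetry of $\widetilde{WC}_{N,n,r}$ in $\mathbf u$, which forces $F_{\mathbf M}$ and $F_{(\mathbf M\setminus\{k\})\cup\{j\}}$ to agree on $w_j=w_k$, combined with Laurent polynomiality. Your regularity check makes explicit what the paper leaves implicit: every factor of $\mathcal E^C_{N,n,r}(\mathbf w_{\mathbf M},\mathbf w)$ has the form $1-q\cdot(\text{monomial})$ and stays nonzero at $w_j=w_k$. So the fallback to the type $A$ exchange congruence is unnecessary. (Your $g$ is really a rational function of $x$ with such denominators rather than a Laurent polynomial in $x$, but the coprimality argument is unaffected.)
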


\begin{proof}
Conditions (i), (ii), and (iv) follow from Theorem~\ref{typeCmainproperties}.

Condition (iii) follows directly from symmetry in the $\mathbf{u}$-variables. Let $k\in\mathbf{M}$ and $j\notin\mathbf{M}$, and set
$
\mathbf{M}'=(\mathbf{M}\setminus\{k\})\cup\{j\}.
$
When $w_j=w_k$, the sequences $\mathbf{w}_{\mathbf{M}}$ and
$\mathbf{w}_{\mathbf{M}'}$ agree up to permutation. Therefore
$F_{\mathbf{M}}-F_{\mathbf{M}'}$ vanishes when $w_j=w_k$. Since both terms are Laurent polynomials,
$
F_{\mathbf{M}}-F_{\mathbf{M}'}
\in (1-w_j/w_k).
$
\end{proof}

\subsubsection{Normalized type $B$ weight function}

The proofs of the type $B$ statements are essentially the same as the corresponding type $C$ arguments, with the type $B$ specialization and normalization factors in place of the type $C$ ones. For brevity, we omit these proofs.

The positive root system of the type $B_N$ is
\[
\Delta^{+}
=
\{\, t_k \pm t_j \mid 1 \le j < k \le N \,\}
\;\cup\;
\{\, t_j \mid 1 \le j \le N \,\}.
\]

The simple roots are
\[
\alpha_0=t_1,
\qquad
\alpha_i=t_{i+1}-t_i
\quad
(1\leq i\leq N-1).
\]

The inversion set
$
\Inv(\lambda):= \{ \alpha \in \Delta^+ \mid s_\alpha \lambda < \lambda \},
$
is explicitly written as follows:
\begin{align}
\Inv(\lambda) 
&= \{\, t_k \mid k \in \mathbf{I} \,\} \cup \{\, t_j + t_k \mid 1 \le j < k \le N, \, j \in \mathbf{I}, \, k \in \mathbf{I} \,\} \nonumber \\
&\quad \cup \{\, t_k - t_j \mid 1 \le j < k \le N, \, j \notin \mathbf{I}, \, k \in \mathbf{I} \,\}. \notag
\end{align}
Therefore,
\begin{align}
\prod_{\alpha \in \Inv(\lambda)} (1-e^{-\alpha})
=& \prod_{j=1}^r (1 - w_{\lambda_j}^{-1}) \prod_{1 \le j < k \le r} (1 - w_{\lambda_j}^{-1} w_{\lambda_k}^{-1}) \prod_{j=1}^r \prod_{k \in \{1,\dots,\lambda_j-1\} \setminus \mathbf{I}} (1 - w_{\lambda_j}^{-1} w_k) \nonumber \\
=& \prod_{j=1}^r (1 - w_{i_j}^{-1}) \prod_{1 \le j < k \le r} (1-w_{i_j}^{-1} w_{i_k}^{-1})  \prod_{j \in \{1, 2, \dots, N\} \setminus \mathbf{I}} \prod_{l: i_l > j} (1-w_{i_l}^{-1} w_j).
\end{align}

On the other hand, the ascent set
$
T(\lambda) := \{ \alpha \in \Delta^+ \mid s_\alpha \lambda > \lambda \},
$
is
\begin{align}
T(\lambda) 
=& \{\, t_k \mid k \in \{1, 2, \dots, N\} \setminus \mathbf{I} \,\} \cup \{\, t_j + t_k \mid 1 \le j < k \le N, \, j \notin \mathbf{I}, \, k \notin \mathbf{I} \,\} \nonumber \\
& \cup \{\, t_k - t_j \mid 1 \le j < k \le N, \, j \in \mathbf{I}, \, k \notin \mathbf{I} \,\}, \notag
\end{align}
Similarly,
\begin{align}
\prod_{\alpha \in T(\lambda)} (1 - q e^{+\alpha}) 
&= \prod_{k \in \{1, 2, \dots, N\} \setminus \mathbf{I}} (1 - q w_k) \prod_{\substack{j < k \\ j, k \in \{1, 2, \dots, N\} \setminus \mathbf{I}}} (1 - q w_j w_k) \nonumber \\
&\quad \times \prod_{k \in \{1, 2, \dots, N\} \setminus \mathbf{I}} \prod_{\substack{l=1 \\ \lambda_l < k}}^r (1 - q w_k w_{\lambda_l}^{-1}) \nonumber \\
&= \prod_{j \in \{1, 2, \dots, N\} \setminus \mathbf{I}} (1 - q w_j) \prod_{\substack{j < k \\ j, k \in \{1, 2, \dots, N\} \setminus \mathbf{I}}} (1 - q w_j w_{k}) \text{\, \,} \nonumber \\
&\quad
 \times \prod_{j \in \{1, 2, \dots, N\} \setminus \mathbf{I}} \prod_{l: i_l < j} (1-q w_{i_l}^{-1} w_j).
\end{align}

\begin{Definition}
We define type $B$ $\mathbf{M}$-smallness only in the equal-cardinality case $s=r$.  Thus, for an $r$-element subset $\mathbf{M}=\{m_{1}<\cdots<m_{r}\}\subset\{1,\ldots,N\}$, we define the Laurent polynomial $f$ to be type $B$ \emph{$\mathbf{M}$-small} if
\begin{align}
\varphi_{\mathbf{M}}(\mathcal{N}(f)) \subset [-r^2, (N-r)(2N-r)-1].
\end{align}
\end{Definition}

\begin{Definition}

We introduce the normalization factor $\mathcal{E}^B_{N,n,r}(\mathbf{u}, \mathbf{w})$ and define the normalized weight function as follows:
\begin{equation}
\widetilde{WB}_{N,n,r}(\mathbf{u}|\mathbf{w}|\mathbf{I}) := WB_{N,n,r}(\mathbf{u}|\mathbf{w}|\mathbf{I}) \cdot \mathcal{E}^B_{N,n,r}(\mathbf{u}, \mathbf{w})^{-1}.
\label{typeBnormalizeddefinition}
\end{equation}
Here, the normalization factor $\mathcal{E}^B_{N,n,r}(\mathbf{u}, \mathbf{w})$, which is symmetric with respect to the variables $\mathbf{u}$ and independent of $\mathbf{I}$, is defined as follows:
\begin{align}
&\mathcal{E}^B_{N,n,r}(\mathbf{u}, \mathbf{w}) 
:= (1-q)^{-(n-r)}
\nonumber \\
&\times\frac{ \prod_{j,k=1}^n (1 - q u_j^{-1} u_k) \prod_{j=1}^n \prod_{k=1}^N (1 - q u_j w_k) \prod_{j=1}^n (1 - q u_j) }{ \prod_{1 \le j < k \le n} (1 - q u_j u_k) \prod_{j=1}^n (1 - q u_j^2) \prod_{1 \le j < k \le N} (1 - q w_j w_k) \prod_{k=1}^N (1 - q w_k) }.
\label{typeBnormalizationfactor}
\end{align}

\end{Definition}

\begin{thm} \label{typeBmainproperties}
Let $|\mathbf{M}|=|\mathbf{I}|=r$ and $\mathbf{M}=\{m_1<\cdots<m_r\}\subset\{1,\dots,N\}$. Then
$\widetilde{WB}_{N,n,r}
(\mathbf{w}_{\mathbf{M}}|\mathbf{w}|\mathbf{I})$ satisfies the following properties. 

$\mathrm{(i)}$
We have
\begin{align}
\widetilde{WB}_{N,n,r}(\mathbf{w}_{\mathbf{M}}|\mathbf{w}|\mathbf{I})=0,
\end{align}
unless $\mathbf{M} \ge \mathbf{I}$, and
\begin{equation}
\widetilde{WB}_{N,n,r}(\mathbf{w}_{\mathbf{I}}|\mathbf{w}|\mathbf{I}) = \prod_{\alpha \in \Inv(\lambda)} (1 - e^{-\alpha}) \prod_{\alpha \in T(\lambda)} (1 - q e^{+\alpha}).
\label{normalizationWB}
\end{equation}

$\mathrm{(ii)}$
$\widetilde{WB}_{N,n,r}
(\mathbf{w}_{\mathbf{M}}|\mathbf{w}|\mathbf{I})$
is divisible by
\begin{align}
&\prod_{j=1}^{r}(1-w_{m_j}^{-1})
\prod_{1\le j<k\le r}(1-w_{m_j}^{-1}w_{m_k}^{-1})
\prod_{j \in \{1, 2, \dots, N\} \setminus \mathbf{M}} (1 - q w_j) \nonumber\\
&\quad\times
\prod_{\substack{j < k \\ j, k \in \{1, 2, \dots, N\} \setminus \mathbf{M}}} (1 - q w_j w_{k})
 \prod_{j \in \{1, 2, \dots, N\} \setminus \mathbf{M}} \prod_{l: m_l < j} (1-q w_{m_l}^{-1} w_j).
\label{typeBfullDivisibility}
\end{align}

$\mathrm{(iii)}$
$\widetilde{WB}_{N,n,r}
(\mathbf{w}_{\mathbf{M}}|\mathbf{w}|\mathbf{I})$ is type $B$ $\mathbf{M}$-small if $\mathbf{M} \neq \mathbf{I}$.

\end{thm}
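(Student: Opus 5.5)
The plan mirrors the type $C$ argument for Theorem~\ref{typeCmainproperties}, with the type $B$ specialization \eqref{WBfunction} and the normalization factor \eqref{typeBnormalizationfactor} in place of the type $C$ ones.

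\emph{Part (i).} Vanishing unless $\mathbf{M}\ge\mathbf{I}$ follows at once from Theorem~\ref{ffunctionproperties}(i). Indeed, $WB$ is a specialization of $F$ times the factor $\prod_k(1+u_k^{-1})^{-1}$. This factor is nonzero at $\mathbf{u}=\mathbf{w}_{\mathbf{M}}$, being $2^{-(n-r)}\prod_j(1+w_{m_j}^{-1})^{-1}$ there. Dividing by $\mathcal{E}^B$, which is generically finite and nonzero at the specialization, preserves the vanishing.

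For the diagonal value, set $\gamma=-1$, $\beta=1$ in \eqref{diagonalFfunction}. Then $(\beta-\gamma)^{n-r}=2^{n-r}$, which cancels the $2^{-(n-r)}$ coming from the $u_b=1$ entries of the prefactor. The remaining prefactor $\prod_j(1+w_{i_j}^{-1})^{-1}$ combines with $(w_{i_j}^2-1)w_{i_j}^{-2}$ to give $(1-w_{i_j}^{-1})$. This produces exactly the short-root factor $\prod_j(1-w_{i_j}^{-1})$ of $\Inv(\lambda)$, which explains the type $B$ form.

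Next evaluate $\mathcal{E}^B$ at $\mathbf{u}=\mathbf{w}_{\mathbf{I}}$. The terms with $u=1$ give, among others, $\prod_k(1-qw_k)^{n-r}$, $(1-q)^{\ldots}$, and the factors $(1-qu_j)/(1-qu_j^2)$. These should cancel against $(1-q)^{(n-r)(n-r-1)/2}\prod_k(1-qw_k)^{n-r}\prod_a(w_{i_a}-q)^{n-r}$. One then matches the remaining $\mathbf{I}$-block factors term by term with \eqref{typecfirstblock}-type products. The ingredients are:
\begin{itemize}
\item $(w_{i_j}-w_k)$ for $k<i_j$ against $(1-w_{i_j}^{-1}w_k)$;
\item $(w_{i_j}-qw_k)$ for $k>i_j$ against $(1-qw_{i_j}^{-1}w_k)$;
\item the $\prod(1-qw_jw_k)$ and $\prod(1-qw_k)$ from the denominator of $\mathcal{E}^B$ against those of $T(\lambda)$ restricted to $\mathbf{I}^{\mathrm c}$;
\item the overcounted factors for $k\in\mathbf{I}$ cancelling with the $\prod(1-qu_ju_k)$, $\prod(1-qu_j^{-1}u_k)$ and $\prod_{j,k}(1-qu_jw_k)$ factors.
\end{itemize}
This is a direct bookkeeping computation.

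\emph{Part (ii).} Use the relation with type $A$ weight functions from Subsection~2.4, in its type $B$ form. By the branching formula, $\widetilde{WB}$ at $\mathbf{w}_{\mathbf{M}}$ is a combination of normalized type $A$ weight functions specialized at $\mathbf{M}$. Those inherit from Appendix~A the divisibility by $\prod_{l:m_l<j}(1-qw_{m_l}^{-1}w_j)$ and by the $(1-w^{-1}w^{-1})$-type factors.

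The remaining factors come from the boundary:
\begin{itemize}
\item $\prod_{j\notin\mathbf{M}}(1-qw_j)$ and $\prod(1-qw_jw_k)$ over $j,k\notin\mathbf{M}$ arise from the denominator of $\mathcal{E}^B$ (entries $1-qw_jw_k$, $1-qw_k$), and are not cancelled by the numerator, whose $u$-factors only involve $\mathbf{M}$ and $1$;
\item $\prod_j(1-w_{m_j}^{-1})$ arises from $(u^2-1)/(1+u^{-1})$ in each surviving summand, using the same restriction on $\sigma$ as in Theorem~\ref{ffunctionproperties}.
\end{itemize}
Finally, pairwise coprimality in the UFD $\mathbb{C}(q)[w^{\pm1}]$ upgrades the separate divisibilities to divisibility by the full product \eqref{typeBfullDivisibility}.

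\emph{Part (iii).} Compute the Newton degree interval of $\widetilde{WB}(\mathbf{w}_{\mathbf{M}})$ with respect to $\varphi_{\mathbf{M}}$. The approach is to compare with the divisor \eqref{typeBfullDivisibility}, whose interval is $[-r-r(r-1),\,(N-r)\cdot2+2(N-r)(N-r-1)+r(N-r)]$ after the same count as in the type $C$ uniqueness proof. This simplifies to $[-r^2,(N-r)(2N-r)]$. It then suffices to bound the lower degree of the quotient below by $0$ and its upper degree strictly below $0$ when $\mathbf{M}\ne\mathbf{I}$.

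I would do this via the branching formula: each type $A$ piece is Newton-small in the RTV sense (Appendix~A), giving the strict inequality on the upper end. The boundary factors contribute fixed degrees that cancel with the normalization.

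The main obstacle is this strictness in the upper bound. It relies on type $A$ smallness of the off-diagonal terms and on tracking exactly the $\varphi_{\mathbf{M}}$-degree shift produced by $\mathcal{E}^B$ and the $(1+u^{-1})^{-1}$ factor, since the short root contributes weight $1$ rather than the weight $2$ of type $C$.
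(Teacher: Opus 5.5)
Parts (i) and (ii) follow the paper's route. For (i), that is Theorem~\ref{ffunctionproperties}(i) plus direct bookkeeping of \eqref{diagonalFfunction} against $\mathcal E^B$ at $\mathbf u=\mathbf w_{\mathbf I}$. For (ii), it is the equal-cardinality relation \eqref{relationwithAnormalizedtypeB} together with Theorem~\ref{RTVtheorem}(i). In (ii), however, you should simply read the explicit factors off \eqref{relationwithAnormalizedtypeB}, where $\prod_j(1-w_{m_j}^{-1})\prod_{j<k}(1-w_{m_j}^{-1}w_{m_k}^{-1})\prod_{j\notin\mathbf M}(1-qw_j)\prod_{j<k,\ j,k\notin\mathbf M}(1-qw_jw_k)$ appears as a multiplicative prefactor. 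Then no coprimality argument is needed. Two of your alternative justifications are off:
\begin{itemize}
\item The factors $1-w_{m_j}^{-1}w_{m_k}^{-1}$ are not inherited from the type $A$ function. They come from the triangular part, as in Lemma~\ref{factorizationlemma}.
\item The claim that $1-qw_j$ ($j\notin\mathbf M$) from the denominator of $\mathcal E^B$ is not cancelled by its numerator is false for $n>r$. The entries $u=1$ in $\prod_{j,k}(1-qu_jw_k)$ produce $\prod_{k=1}^N(1-qw_k)^{n-r}$. The factor survives only because $WB_{N,n,r}(\mathbf w_{\mathbf M}|\mathbf w|\mathbf I)$ carries a compensating $\prod_{k=1}^N(1-qw_k)^{n-r}$ by \eqref{fixedpointrelationFandWA}.
\end{itemize}

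The genuine gap is in (iii): your stated reduction cannot work. Suppose $\widetilde{WB}_{N,n,r}(\mathbf w_{\mathbf M}|\mathbf w|\mathbf I)=D\,g$ with $\varphi_{\mathbf M}(\mathcal N(D))=[-r^2,(N-r)(2N-r)]$ and $\varphi_{\mathbf M}(\mathcal N(g))=[a,b]$. Then $\varphi_{\mathbf M}(\mathcal N(D\,g))=[-r^2+a,(N-r)(2N-r)+b]$, and demanding $a\ge0$ and $b\le-1$ forces $g=0$. That is the mechanism of the uniqueness proof, which shows a difference vanishes; it cannot establish smallness of a nonzero function.

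The interval you assign to \eqref{typeBfullDivisibility} is also wrong. That divisor contains only the vertical product $\prod_{j\notin\mathbf M}\prod_{l:m_l<j}(1-qw_{m_l}^{-1}w_j)$. Its upper degree is $\#\{(j,l):j\notin\mathbf M,\ m_l<j\}$, not $r(N-r)$. The count $r(N-r)$ in the uniqueness proof appears only after adjoining the horizontal factor produced by the exchange congruence.

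Your fallback sentence points the right way, and it is what the paper does. In \eqref{relationwithAnormalizedtypeB} the explicit factors have degree intervals $[-r,0]$, $[-r(r-1),0]$, $[0,2(N-r)]$ and $[0,2(N-r)(N-r-1)]$. By \eqref{identificationtypeA} and Theorem~\ref{RTVtheorem}(iii), the normalized type $A$ factor has interval contained in $[0,r(N-r)-1]$ when $\mathbf M\ne\mathbf I$. Minkowski additivity then gives $[-r^2,(N-r)(2N-r)-1]$. Nothing needs to cancel with the normalization, and no separate degree shift from $\mathcal E^B$ or $(1+u^{-1})^{-1}$ needs tracking, since both are already absorbed into \eqref{relationwithAnormalizedtypeB}.
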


The proof of Theorem~\ref{typeBmainproperties} is omitted, since it is essentially the same as the proof in the type $C$ case. At the end of the type $B$ part of Section~\ref{sec:normalized-specializations}, we explain explicitly how the specialization results reduce at $s=r$ to recover $\mathrm{(i)}$--$\mathrm{(iii)}$.

The corresponding uniqueness statement for type $B$ is as follows.

\begin{prop}
Fix $\mathbf{I}$, and suppose that a collection of Laurent polynomials
$(F_{\mathbf{M}})_{|\mathbf{M}|=r}$ satisfies the following conditions.
\begin{enumerate}
\item[(i)] If $\mathbf{M}\not\ge\mathbf{I}$, then $F_{\mathbf{M}}=0$, and
\begin{align*}
F_{\mathbf{I}}
={}&
\prod_{j=1}^{r}(1-w_{i_j}^{-1})
\prod_{1\le j<k\le r}(1-w_{i_j}^{-1}w_{i_k}^{-1})
\prod_{j\in\mathbf{I}^{\mathrm c}}
\prod_{l:i_l>j}(1-w_{i_l}^{-1}w_j)
\notag\\
&\times
\prod_{j\in\mathbf{I}^{\mathrm c}}(1-qw_j)
\prod_{\substack{j<k\\j,k\in\mathbf{I}^{\mathrm c}}}(1-qw_jw_k)
\prod_{j\in\mathbf{I}^{\mathrm c}}
\prod_{l:i_l<j}(1-qw_{i_l}^{-1}w_j).
\end{align*}

\item[(ii)] The Laurent polynomial $F_{\mathbf{M}}$ is divisible by
\begin{align*}
&\prod_{j=1}^{r}(1-w_{m_j}^{-1})
\prod_{1\le j<k\le r}(1-w_{m_j}^{-1}w_{m_k}^{-1})
\prod_{j\in\mathbf{M}^{\mathrm c}}(1-qw_j)
\notag\\
&\qquad\times
\prod_{\substack{j<k\\j,k\in\mathbf{M}^{\mathrm c}}}(1-qw_jw_k)
\prod_{j\in\mathbf{M}^{\mathrm c}}
\prod_{l:m_l<j}(1-qw_{m_l}^{-1}w_j).
\end{align*}

\item[(iii)] For $k\in\mathbf{M}$ and $j\notin\mathbf{M}$,
\[
F_{\mathbf{M}}-F_{(\mathbf{M}\setminus\{k\})\cup\{j\}}
\in(1-w_j/w_k).
\]

\item[(iv)] If $\mathbf{M}\ne\mathbf{I}$, then
\[
\varphi_{\mathbf{M}}\bigl(\mathcal N(F_{\mathbf{M}})\bigr)
\subset[-r^2,(N-r)(2N-r)-1].
\]
\end{enumerate}
Then there is at most one such collection
$(F_{\mathbf{M}})_{|\mathbf{M}|=r}$.
\end{prop}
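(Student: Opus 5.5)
The plan is to follow the proof of the type $C$ uniqueness proposition step by step. The only changes are the type $B$ divisibility factor and a recomputation of its Newton degree interval.

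Let $(F_{\mathbf M})$ and $(G_{\mathbf M})$ be two collections satisfying (i)--(iv), and set $\Delta_{\mathbf M}=F_{\mathbf M}-G_{\mathbf M}$. Suppose some $\Delta_{\mathbf M}\neq0$, and choose $\mathbf M$ minimal in the componentwise order among such indices. By (i), $\Delta_{\mathbf M}=0$ whenever $\mathbf M\not\ge\mathbf I$, and $\Delta_{\mathbf I}=0$ because the diagonal values agree. Hence $\mathbf M\ge\mathbf I$ and $\mathbf M\neq\mathbf I$.

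For $j\notin\mathbf M$, $k\in\mathbf M$ with $j<k$, put $\mathbf M'=(\mathbf M\setminus\{k\})\cup\{j\}$. Then $\mathbf M'<\mathbf M$, so $\Delta_{\mathbf M'}=0$ by minimality. Subtracting the two instances of (iii) gives $\Delta_{\mathbf M}\in(1-w_j/w_k)$. Taking the product over all such pairs, $\Delta_{\mathbf M}$ is divisible by
\[
\prod_{j\in\mathbf M^{\mathrm c}}\prod_{l:m_l>j}(1-w_{m_l}^{-1}w_j).
\]
The following factors are pairwise non-associate irreducibles in the UFD $\mathbb C(q)[w_1^{\pm1},\dots,w_N^{\pm1}]$:
\[
1-w_{m_l}^{-1}w_j,\quad 1-w_{m_j}^{-1},\quad 1-w_{m_j}^{-1}w_{m_k}^{-1},\quad 1-qw_j,\quad 1-qw_jw_k,\quad 1-qw_{m_l}^{-1}w_j .
\]
Combining this with (ii), $\Delta_{\mathbf M}$ is divisible by the product $P$ of the exchange factors above and the factor in (ii). Only this coprimality check needs a short verification, and it is routine.

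The main computation is the degree interval of $P$ under $\varphi_{\mathbf M}$.
\begin{itemize}
\item Each $1-w_{m_j}^{-1}$ contributes $[-1,0]$, and each $1-w_{m_j}^{-1}w_{m_k}^{-1}$ contributes $[-2,0]$. Together these give $[-r-r(r-1),0]=[-r^2,0]$.
\item Each $1-qw_j$ with $j\in\mathbf M^{\mathrm c}$ contributes $[0,2]$, for a total of $[0,2(N-r)]$.
\item Each $1-qw_jw_k$ with $j<k$ in $\mathbf M^{\mathrm c}$ contributes $[0,4]$, for a total of $[0,2(N-r)(N-r-1)]$.
\item Each factor of the form $1-w_{m_l}^{-1}w_j$ or $1-qw_{m_l}^{-1}w_j$ with $j\notin\mathbf M$ contributes $[0,1]$, since $\varphi_{\mathbf M}(-t_{m_l}+t_j)=-1+2=1$. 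The pairs $(j,l)$ with $m_l>j$ and those with $m_l<j$ together number $r(N-r)$.
\end{itemize}
Summing, $P$ has degree interval
\[
[-r^2,\;2(N-r)+2(N-r)(N-r-1)+r(N-r)]=[-r^2,(N-r)(2N-r)].
\]
This matches the smallness window in (iv), shifted by one at the top.

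Finally, write $\Delta_{\mathbf M}=P\,g$ with $g\neq0$ and $\varphi_{\mathbf M}(\mathcal N(g))=[a,b]$. Lower and upper Newton degrees are additive under multiplication, since they are the minimum and maximum of a linear functional over a Minkowski sum. Hence
\[
\varphi_{\mathbf M}(\mathcal N(\Delta_{\mathbf M}))=[-r^2+a,(N-r)(2N-r)+b].
\]
On the other hand, (iv) holds for both $F_{\mathbf M}$ and $G_{\mathbf M}$ since $\mathbf M\neq\mathbf I$. Therefore $\Delta_{\mathbf M}$ has Newton polytope inside the convex hull of their union, and its degree interval lies in $[-r^2,(N-r)(2N-r)-1]$. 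This forces $a\ge0$ and $b\le-1$, contradicting $a\le b$. So $g=0$, hence $\Delta_{\mathbf M}=0$, contradicting the choice of $\mathbf M$, and the collection is unique.

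I expect the endpoint bookkeeping for the type $B$ factors to be the main obstacle. In particular, the short root factors $1-w_{m_j}^{-1}$ and $1-qw_j$ replace the $j=k$ terms of type $C$, so one has to confirm that the resulting interval is exactly $[-r^2,(N-r)(2N-r)]$.
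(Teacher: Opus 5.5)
Your proposal is correct and is essentially the argument the paper intends: the paper omits the type $B$ proof as the same as the type $C$ uniqueness argument (minimal counterexample, exchange divisibility, coprimality with the factors in (ii), and the Newton-degree squeeze). Your count showing that the full divisor has degree interval $[-r^2,(N-r)(2N-r)]$ under $\varphi_{\mathbf M}$ is exactly the type $B$ bookkeeping that this adaptation requires.
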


\begin{prop}
Fix $\mathbf{I}$ and set
\[
F_{\mathbf{M}}
=
\widetilde{WB}_{N,n,r}(\mathbf{w}_{\mathbf{M}}|\mathbf{w}|\mathbf{I})
\qquad(|\mathbf{M}|=r).
\]
This collection satisfies all conditions (i)--(iv) in the preceding uniqueness proposition.
Consequently, these specialization values are uniquely determined by these conditions.
\end{prop}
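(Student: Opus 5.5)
The statement is the type $B$ analogue of the type $C$ proposition proved just above. I would follow that proof, replacing each type $C$ ingredient with its type $B$ counterpart.

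\textbf{Conditions (i), (ii) and (iv).} Set $F_{\mathbf{M}}=\widetilde{WB}_{N,n,r}(\mathbf{w}_{\mathbf{M}}|\mathbf{w}|\mathbf{I})$ for $|\mathbf{M}|=r$. Then:
\begin{itemize}
\item condition (i) is the triangular vanishing and diagonal evaluation in Theorem~\ref{typeBmainproperties}~(i), once the explicit products for $\prod_{\alpha\in\Inv(\lambda)}(1-e^{-\alpha})$ and $\prod_{\alpha\in T(\lambda)}(1-qe^{+\alpha})$ computed above are substituted;
\item condition (ii) is exactly the divisibility in Theorem~\ref{typeBmainproperties}~(ii), since \eqref{typeBfullDivisibility} coincides with the displayed product;
\item condition (iv) is type $B$ $\mathbf{M}$-smallness, Theorem~\ref{typeBmainproperties}~(iii).
\end{itemize}
One point needs checking: that $F_{\mathbf{M}}$ is a genuine Laurent polynomial, so that the divisibility and Newton polytope statements make sense. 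This is implicit in Theorem~\ref{typeBmainproperties}, which asserts divisibility of these specializations.

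\textbf{Condition (iii).} Take $k\in\mathbf{M}$ and $j\notin\mathbf{M}$, and put $\mathbf{M}'=(\mathbf{M}\setminus\{k\})\cup\{j\}$.
\begin{itemize}
\item On the hypersurface $w_j=w_k$, the tuples $\mathbf{w}_{\mathbf{M}}$ and $\mathbf{w}_{\mathbf{M}'}$ agree up to a permutation of the $\mathbf{u}$-variables.
\item $WB_{N,n,r}$ is symmetric in $\mathbf{u}$, being a specialization of $F_{N,n,r}$ times the symmetric factor $\prod_k(1+u_k^{-1})^{-1}$.
\item The normalization $\mathcal{E}^B_{N,n,r}$ is symmetric in $\mathbf{u}$, so $\widetilde{WB}_{N,n,r}$ is symmetric as well.
\end{itemize}
Hence $F_{\mathbf{M}}-F_{\mathbf{M}'}$ vanishes identically when $w_j=w_k$. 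Because $1-w_j/w_k$ is irreducible in the unique factorization domain $\mathbb{C}(q)[w_1^{\pm1},\dots,w_N^{\pm1}]$, the difference lies in the ideal $(1-w_j/w_k)$.

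\textbf{Uniqueness.} The final clause then follows from the preceding type $B$ uniqueness proposition. If that proposition is also to be justified, I would copy the type $C$ argument:
\begin{itemize}
\item take a minimal $\mathbf{M}$ with $\Delta_{\mathbf{M}}\ne0$, and use (iii) to get divisibility by $\prod_{j\in\mathbf{M}^{\rm c}}\prod_{l:m_l>j}(1-w_{m_l}^{-1}w_j)$;
\item combine this coprimely with the factors from (ii);
\item compute $\varphi_{\mathbf{M}}$-degree intervals of the factors: $[-r,0]$ for $\prod_j(1-w_{m_j}^{-1})$, $[-r(r-1),0]$ for the pair product, $[0,2(N-r)]$ for $\prod(1-qw_j)$, and $[0,2(N-r)(N-r-1)]$ for the product over pairs in $\mathbf{M}^{\rm c}$;
\item the two mixed products together give upper endpoints summing to $r(N-r)$.
\end{itemize}
The total interval is $[-r^2,(N-r)(2N-r)]$, which contradicts (iv) exactly as in type $C$.

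The main obstacle is this degree bookkeeping, specifically verifying that the sum is precisely $(N-r)(2N-r)$:
\[
2(N-r)+2(N-r)(N-r-1)+r(N-r)=(N-r)(2N-r).
\]
Beyond that, only the coprimality of the factors needs care.
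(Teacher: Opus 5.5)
Your proposal is correct and follows the paper's route. Conditions (i), (ii) and (iv) are read off from Theorem~\ref{typeBmainproperties}, and (iii) follows from the symmetry of $\widetilde{WB}_{N,n,r}$ in $\mathbf u$; the paper writes this argument out for type $C$ and states that the type $B$ case is the same. Your optional degree bookkeeping for the type $B$ uniqueness argument, which gives the interval $[-r^2,(N-r)(2N-r)]$, is also correct.
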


\subsection{Algebraic properties from integrability}

In this subsection, we derive various algebraic properties of the symmetric functions
$F_{N,n,r}(\mathbf{u}|\mathbf{w}|\mathbf{I})$ using their realization as partition functions of the integrable lattice model.

For a sequence of variables $\mathbf{w}=(w_1, \dots, w_N)$, let $s_i$ ($1 \le i \le N-1$) be the simple transposition that swaps $w_i$ and $w_{i+1}$:
\[
s_i \mathbf{w} = ( w_1, \dots, w_{i-1}, w_{i+1}, w_i, \dots, w_N ).
\]

\begin{prop} \label{diddiffbeforenorm}  Let $i \in \{1, \dots, N-1\}$. \\
   (i)  If $i+1 = i_k \in \mathbf{I}$ and $i \notin \mathbf{I}$, we have
    \begin{align}
    &F_{N,n,r}(\mathbf{u}|\mathbf{w}|\mathbf{I}^{(i)}) \nonumber \\
    =&\frac{1}{w_i-w_{i+1}}
    \Big(
    (w_i-q w_{i+1}) F_{N,n,r}(\mathbf{u}|s_i \mathbf{w}|\mathbf{I})
    -(1-q)w_{i+1} F_{N,n,r}(\mathbf{u}|\mathbf{w}|\mathbf{I})
    \Big), \label{recrelone} \\
    &F_{N,n,r}(\mathbf{u}|\mathbf{w}|\mathbf{I}) \nonumber \\
    =&\frac{1}{q(w_i-w_{i+1})}
    \Big(
    (w_i-q w_{i+1}) F_{N,n,r}(\mathbf{u}|s_i \mathbf{w}|\mathbf{I}^{(i)})
    -(1-q)w_{i} F_{N,n,r}(\mathbf{u}|\mathbf{w}|\mathbf{I}^{(i)})
    \Big), \label{recreltwo}
    \end{align}
    where 
$\mathbf{I}^{(i)} = \{ i_1, \dots, i_{k-1}, i, i_{k+1}, \dots, i_r \}.
$ \\
\\
(ii) If $i,i+1 \in \mathbf{I}$ or $i,i+1 \not\in \mathbf{I}$
, we have
\begin{align}
F_{N,n,r}(\mathbf{u}|\mathbf{w}|\mathbf{I})
=F_{N,n,r}(\mathbf{u}|s_i \mathbf{w}|\mathbf{I}). \label{recrelthree}
\end{align}

\begin{figure}[!htbp]
  \centering
  \includegraphics[width=0.7\linewidth]{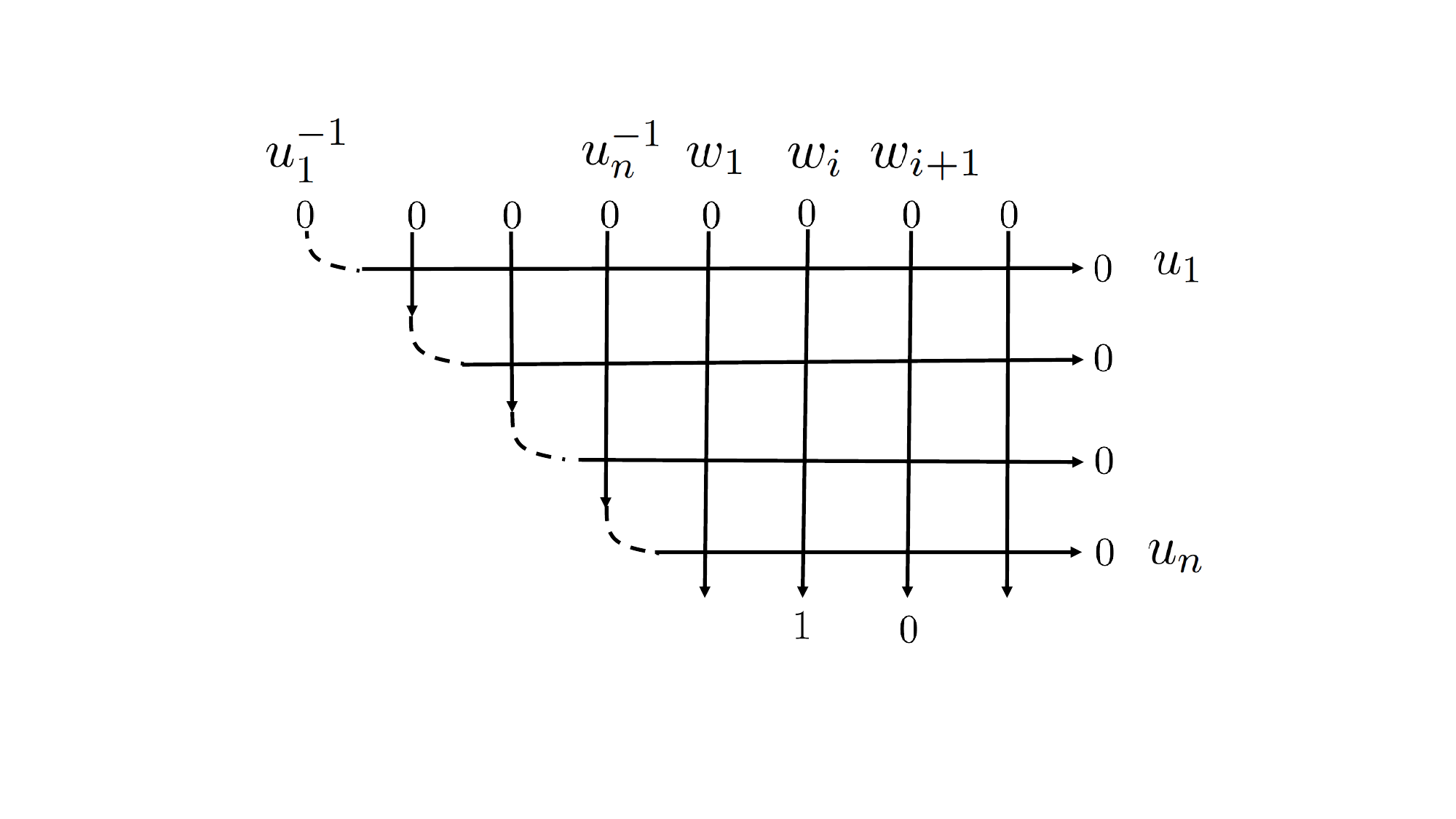}
\includegraphics[width=0.45\linewidth]{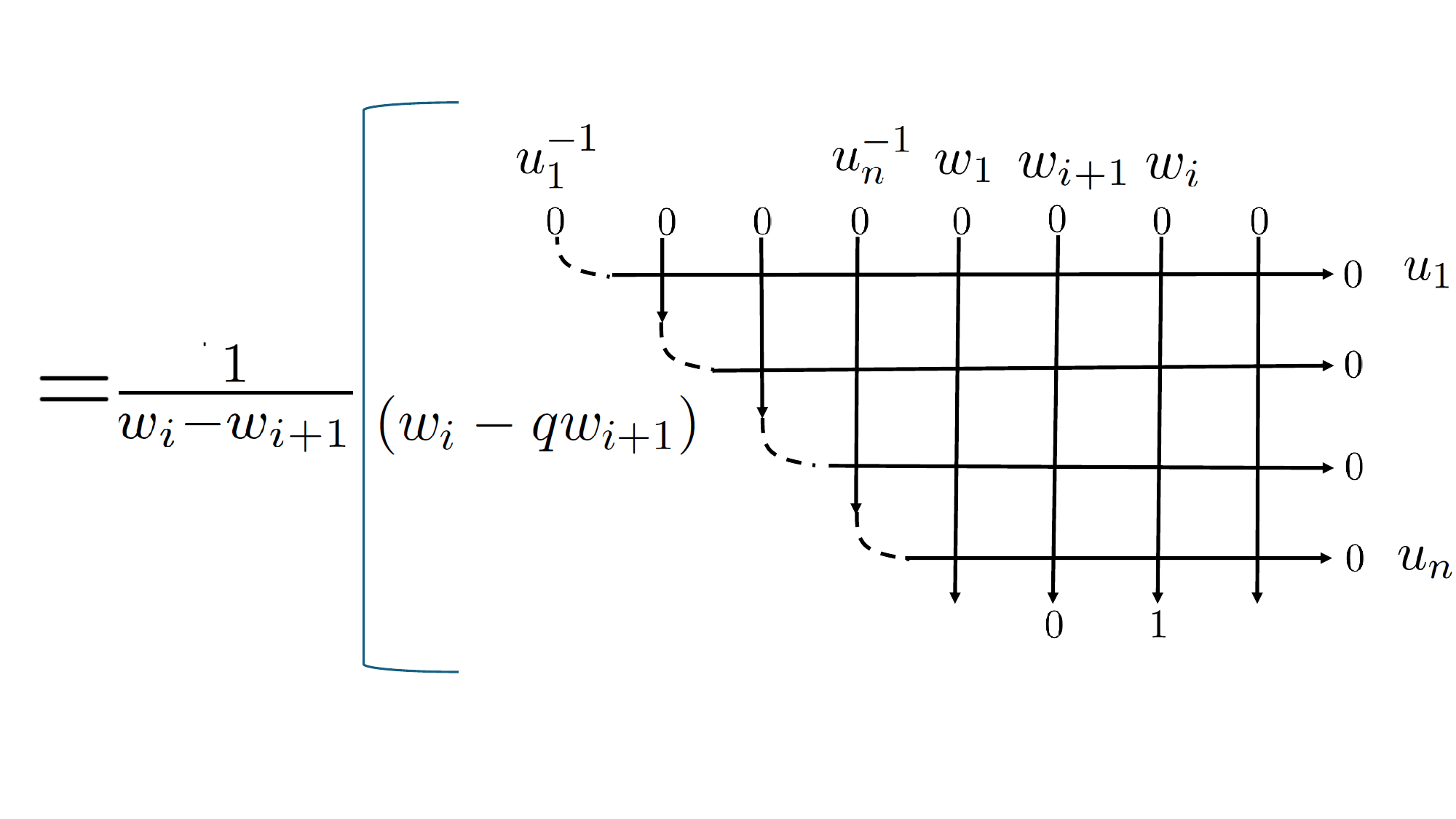}
\includegraphics[width=0.45\linewidth]{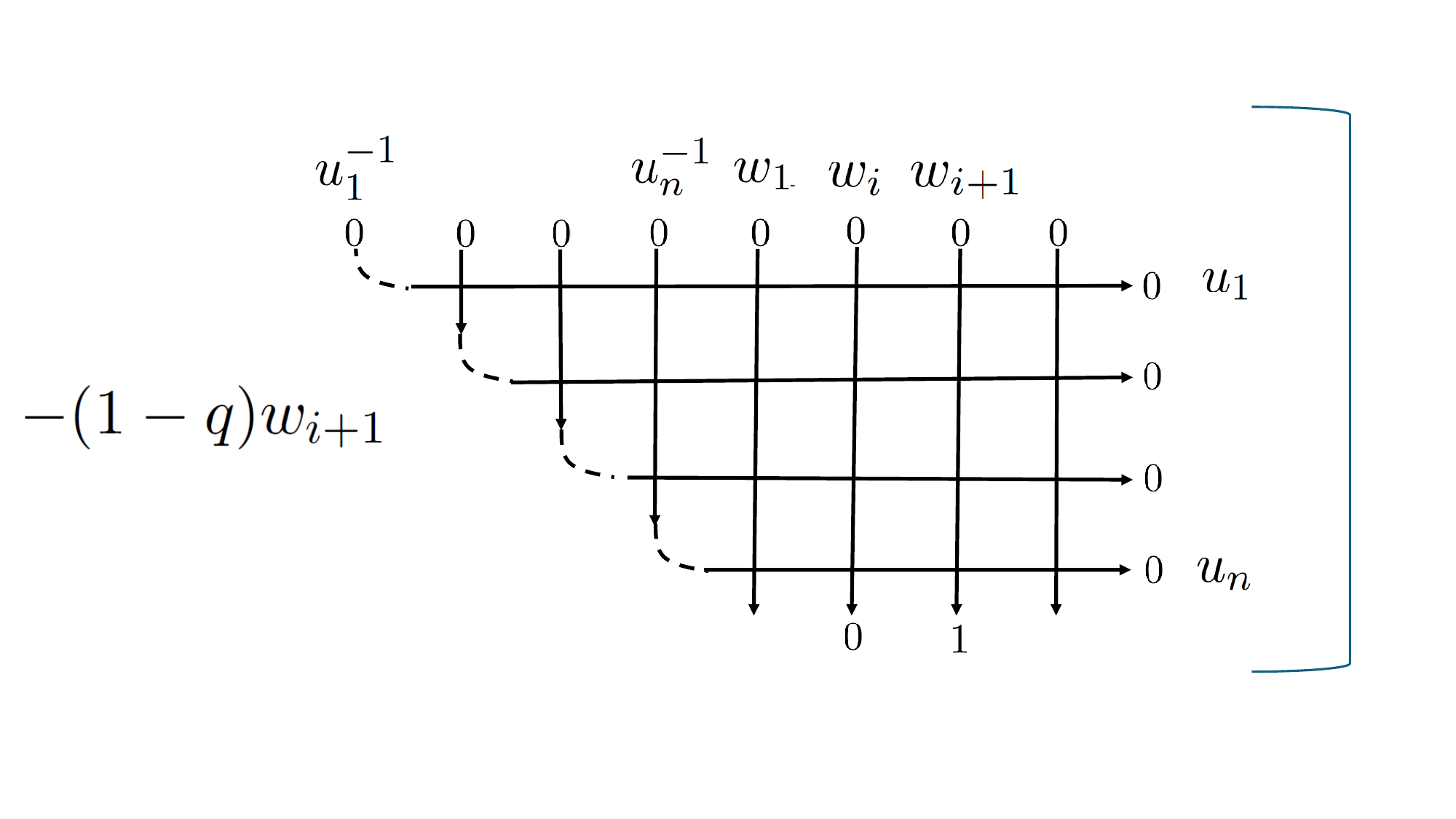}
  \caption{Recursive relation \eqref{recrelone}.}
  \label{fig:basicrecursion}
\end{figure}

\begin{figure}[!htbp]
  \centering
  \includegraphics[width=0.7\linewidth]{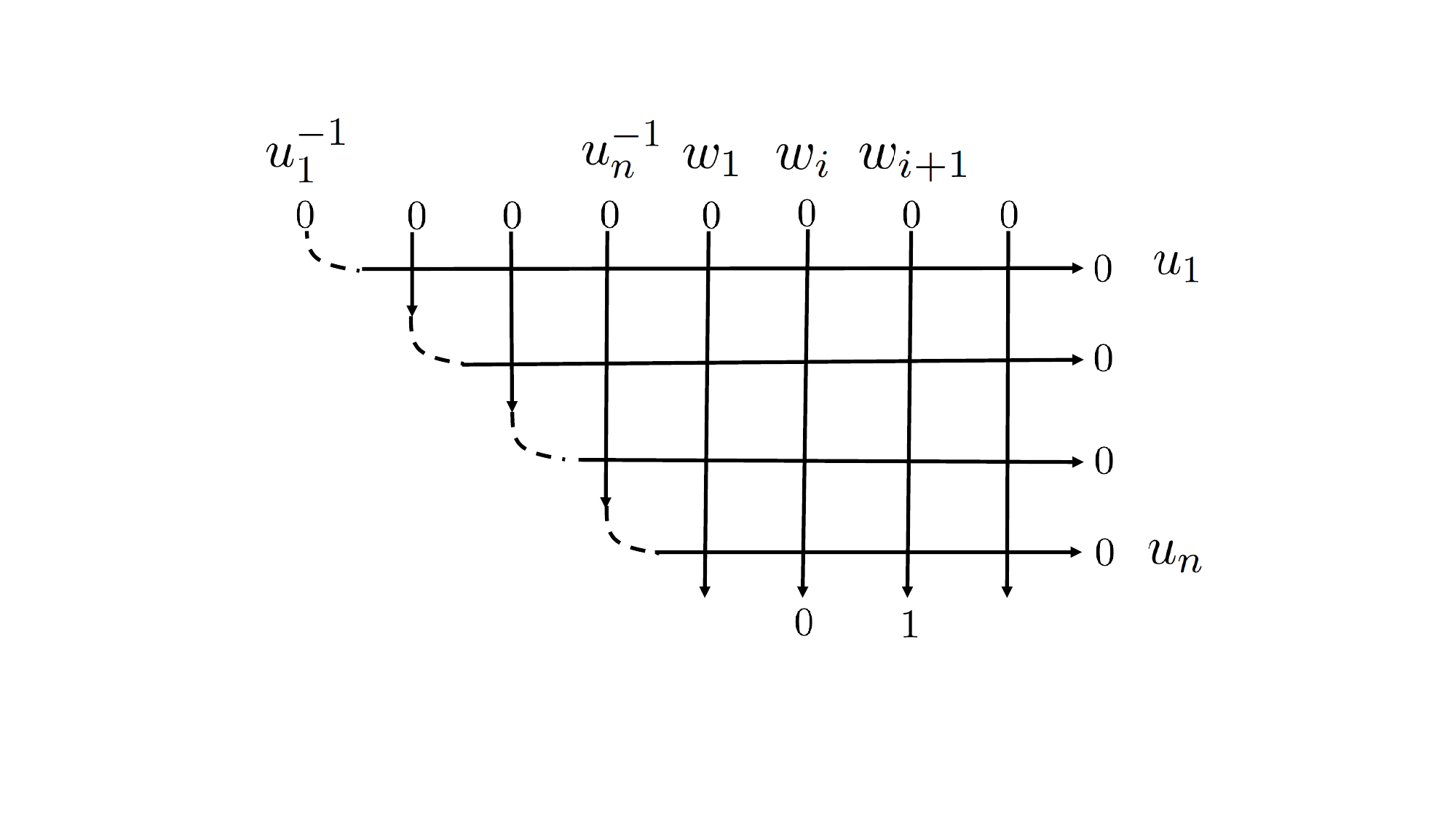}
\includegraphics[width=0.45\linewidth]{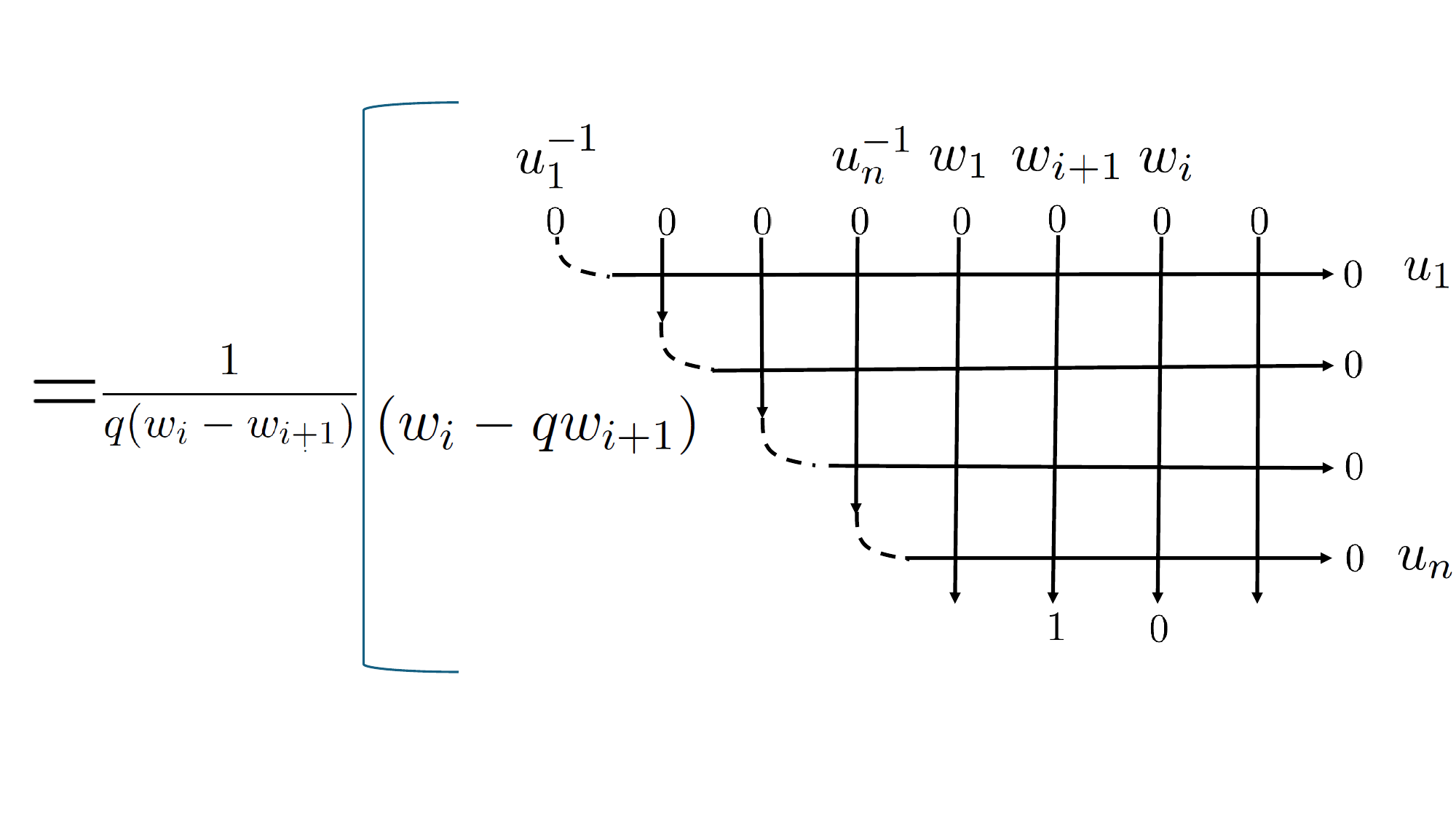}
\includegraphics[width=0.45\linewidth]{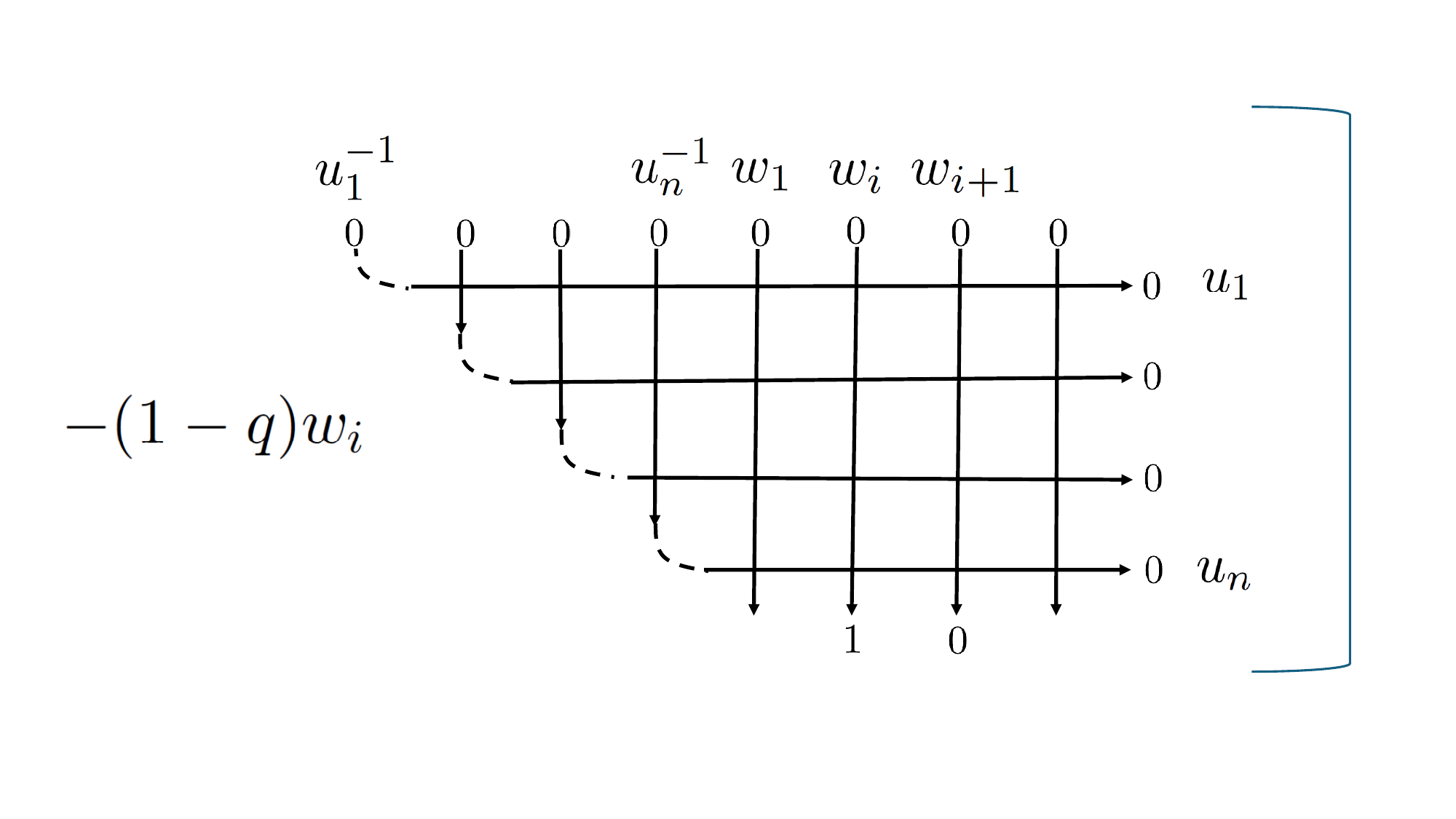}
  \caption{Recursive relation \eqref{recreltwo}.}
  \label{fig:basicrecursiontwo}
\end{figure}

\begin{figure}[!htbp]
  \centering
\includegraphics[width=0.45\linewidth]{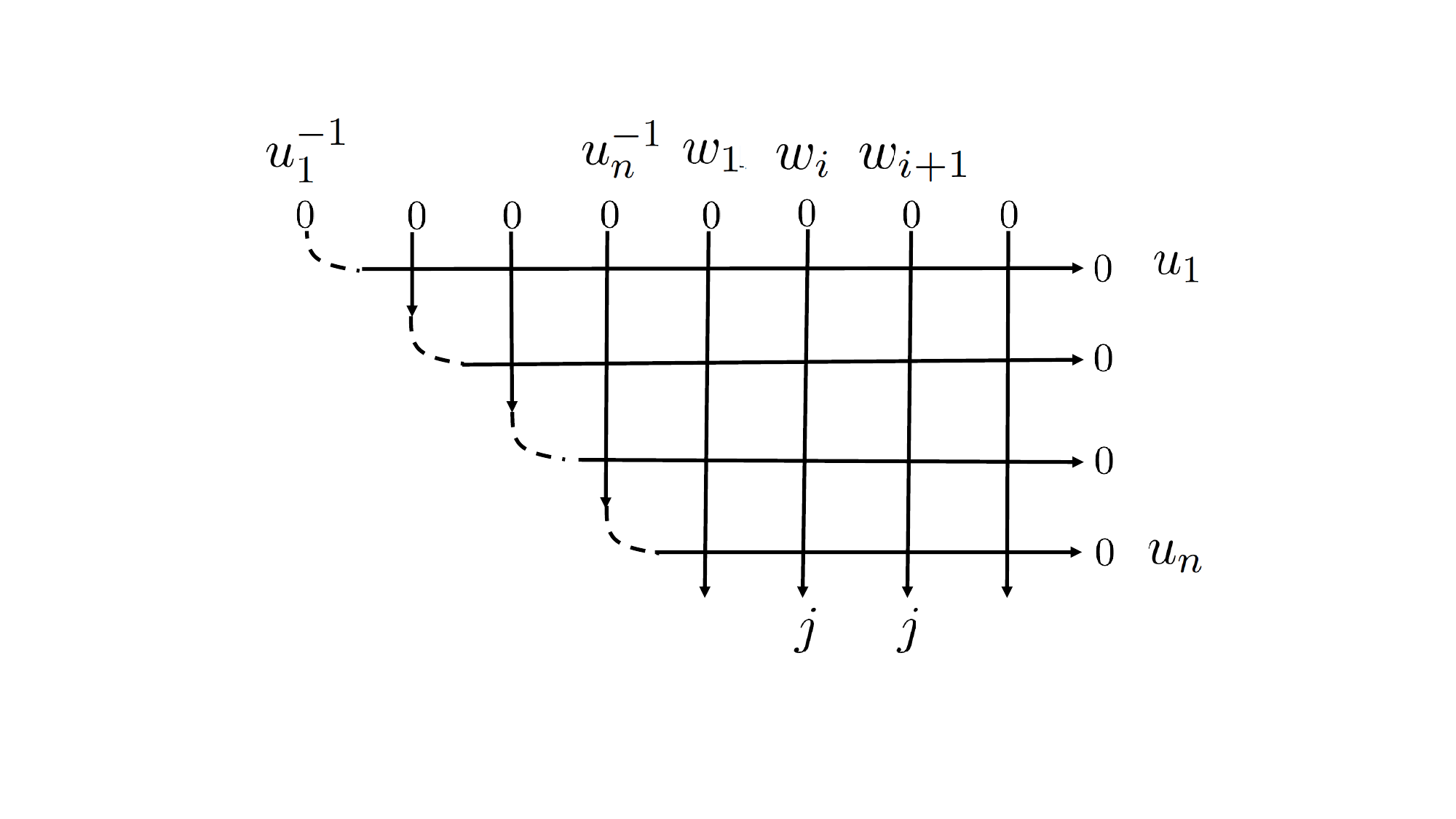}
\includegraphics[width=0.45\linewidth]{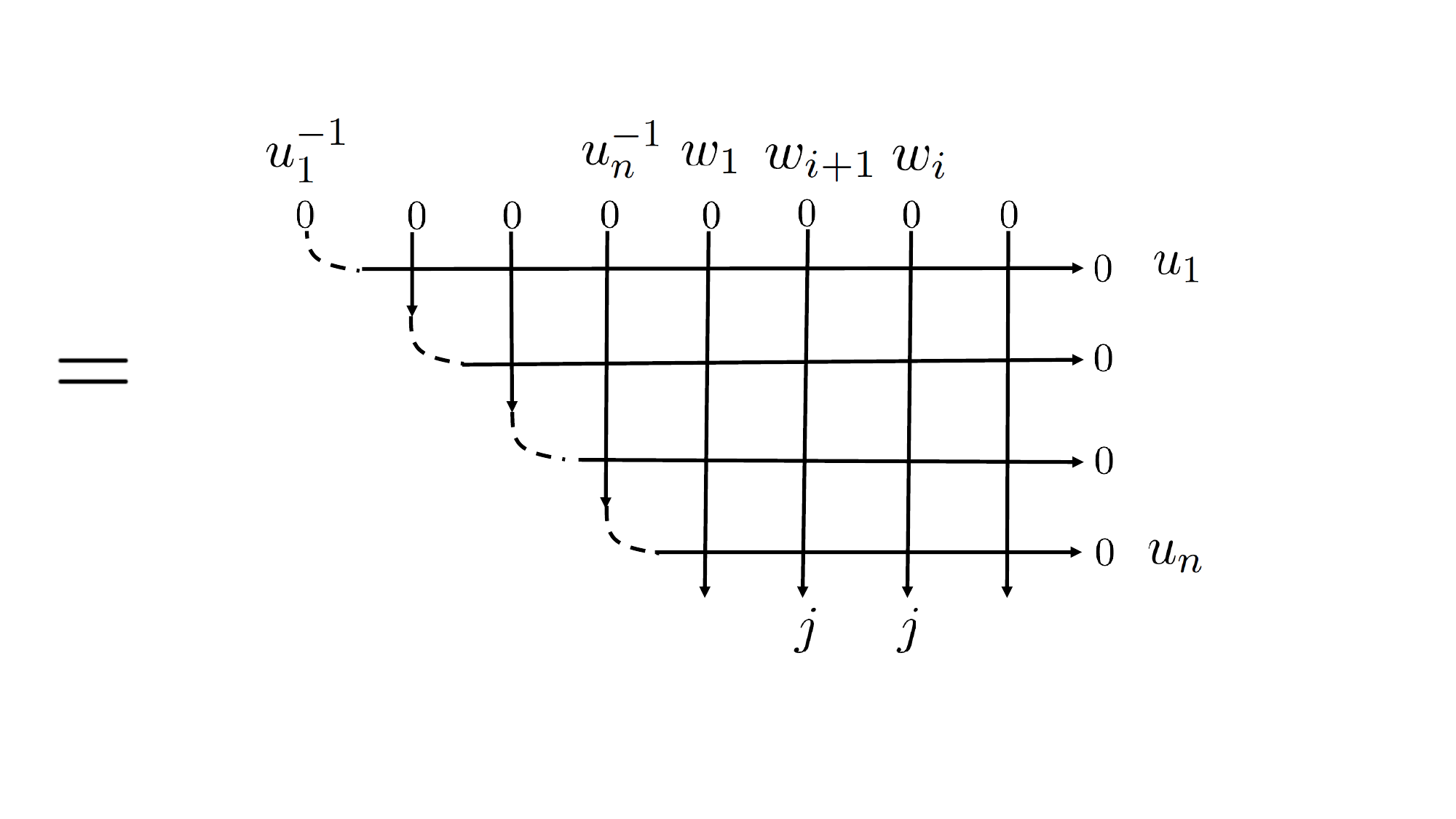}
  \caption{Commutativity \eqref{recrelthree}.}
  \label{fig:basicrecursionthree}
\end{figure}

\end{prop}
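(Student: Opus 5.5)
The plan is to prove both parts by a standard train argument in the lattice model. Represent $F_{N,n,r}(\mathbf u|\mathbf w|\mathbf I)$ as in Figure~\ref{fig:partitionfunction}: the $N$ vertical lines carry $w_1,\dots,w_N$ and the bottom boundary states are prescribed by $\mathbf I$. Consider the product $R(w_i,w_{i+1})$, or its inverse via \eqref{eq:unitarity}, acting on the two adjacent vertical lines $i,i+1$.

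\emph{Insert at the top.} At the top boundary both lines carry $0$. By \eqref{eq:R-components} and the ice rule \eqref{eq:ice-rule}, $R$ acts on $e_0\otimes e_0$ diagonally with weight $1-qw_{i+1}/w_i$ (in the appropriate orientation).

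\emph{Transport downward.} Using the Yang--Baxter equation \eqref{eq:YBE} with every horizontal line $u_j$, the inserted vertex moves through the lattice. The crossing lines $R(u_i,u_j^{-1})$ and the $K$-matrices involve only the $\mathbf u$-lines, and the $w$-lines never reach the reflection boundary. So only \eqref{eq:YBE} is needed, applied once for each horizontal crossing. Passing through both lines interchanges the order of $w_i$ and $w_{i+1}$ in the bulk.

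\emph{Read off at the bottom.} When the vertex arrives at the bottom, it acts on the bottom boundary states $(a,b)$ at positions $i,i+1$. The result is
\[
(1-qw_{i+1}/w_i)\,F(\mathbf u|s_i\mathbf w|\mathbf I)=\sum_{(c,d)}[R(w_i,w_{i+1})]^{(a,b)}_{(c,d)}\,F(\mathbf u|\mathbf w|\mathbf I_{(c,d)}),
\]
possibly with indices transposed depending on orientation. The key step is fixing the conventions so that the prefactors match \eqref{recrelone} exactly, and I expect this to be the main obstacle.

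\emph{Specialize by case.}
\begin{itemize}
\item If $(a,b)=(0,0)$ or $(1,1)$, i.e.\ $i,i+1\notin\mathbf I$ or $i,i+1\in\mathbf I$, the bottom weight is again $1-qw_{i+1}/w_i$. Dividing gives \eqref{recrelthree}.
\item If $i\notin\mathbf I$ and $i+1\in\mathbf I$, then $(a,b)=(0,1)$. The mixing weights $q(1-v/u)$ and $(1-q)v/u$ from \eqref{eq:R-components} relate $F(\mathbf u|s_i\mathbf w|\mathbf I)$ to $F(\mathbf u|\mathbf w|\mathbf I)$ and $F(\mathbf u|\mathbf w|\mathbf I^{(i)})$.
\end{itemize}
After multiplying through by $w_i$, the second case should give
\[
(w_i-qw_{i+1})F(\mathbf u|s_i\mathbf w|\mathbf I)=(w_i-w_{i+1})F(\mathbf u|\mathbf w|\mathbf I^{(i)})+(1-q)w_{i+1}F(\mathbf u|\mathbf w|\mathbf I),
\]
which rearranges to \eqref{recrelone}. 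To obtain \eqref{recreltwo}, apply \eqref{recrelone} with $\mathbf w$ replaced by $s_i\mathbf w$ and solve the resulting $2\times2$ linear system for $F(\mathbf u|\mathbf w|\mathbf I)$. Alternatively, repeat the train argument from the configuration $(1,0)$, which produces the $1/q$ factor from the entry $q(1-v/u)$.

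\emph{Check conventions.} If the sign or index conventions do not match at first, I would verify against the explicit formula \eqref{Motherfunction}. In the case $n=r=1$ the sum reduces to a single product, which makes \eqref{recrelone} a direct check on the factors involving $w_i$ and $w_{i+1}$. I would use this to fix the orientation, e.g.\ whether to use $R(w_i,w_{i+1})$ or $R(w_{i+1},w_i)$.
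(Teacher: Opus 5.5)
Your train argument is the same railroad/Yang--Baxter argument the paper cites. Your displayed relation for the mixed case is exactly \eqref{recrelone}, and deriving \eqref{recreltwo} by solving the $2\times 2$ system formed from \eqref{recrelone} at $\mathbf w$ and at $s_i\mathbf w$ does work: the determinant is $q(w_i-w_{i+1})^2$, which is where the $1/q$ comes from.

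One slip needs fixing. The weights that produce \eqref{recrelone} are $1-v/u$ and $(1-q)v/u$, the two entries $[R]_{ij}^{kl}$ with $(k,l)=(1,0)$. This is because, read in the order of the $w_i$- and $w_{i+1}$-lines, the outer state after the swap is $(1,0)$, not the positional $(0,1)$. The pair you named, $q(1-v/u)$ and $(1-q)v/u$, is not a consistent choice and gives a false identity. The entry $q(1-v/u)$ belongs to the other column, $(k,l)=(0,1)$, together with $1-q$, and that column is what gives \eqref{recreltwo} in your alternative train argument.
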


\begin{proof}
The derivation of relations of this type by the railroad argument
using the Yang--Baxter equation \eqref{eq:YBE} is well known
(cf.~\cite{GGMW}).
Rearranging the resulting relations gives
\eqref{recrelone}--\eqref{recrelthree}, whose graphical forms
are shown in Figures~\ref{fig:basicrecursion},
\ref{fig:basicrecursiontwo}, and
\ref{fig:basicrecursionthree}, respectively.
\end{proof}

\eqref{recrelone}, \eqref{recreltwo}, and \eqref{recrelthree},
with \(F_{N,n,r}(\mathbf{u}|\mathbf{w}|\mathbf{I})\) replaced by
\(WC_{N,n,r}(\mathbf{u}|\mathbf{w}|\mathbf{I})\) and
\(WB_{N,n,r}(\mathbf{u}|\mathbf{w}|\mathbf{I})\),
also hold.

\begin{prop}
(i) If $1 \in \mathbf{I}$, we have
\begin{align}
    &F_{N,n,r-1}(\mathbf{u}|\mathbf{w}|\mathbf{I}^{(0)}) \nonumber \\
    =&\frac{1}{1-w_1^2}
    \Bigg(
    (1-q)^{-1} \prod_{j=2}^{N} (1-q w_1w_j)^{-1} \prod_{j=1}^n (1-q w_1^{-1} u_j^{-1})^{-1}  \nonumber \\
    &\times
    F_{N,n+1,r}(\mathbf{u},w_1^{-1}|w_1^{-1},w_2,\dotsm,w_N|\mathbf{I})
    - (\beta w_1^2-\gamma w_1) F_{N,n,r}(\mathbf{u}|\mathbf{w}|\mathbf{I}) \Bigg),
    \label{pizerorecursionone}
\end{align}
where $\mathbf{I}^{(0)}=\{ i_2,\dotsm,i_r \}$.
\\
(ii) If $1 \not\in \mathbf{I}$, we have
\begin{align}
 &F_{N,n+1,r}(\mathbf{u},w_1^{-1}|w_1^{-1},w_2,\dotsm,w_N|\mathbf{I})
 \nonumber \\
 =& (1-q) (\beta-\gamma w_1) \prod_{j=2}^{N} (1-q w_1w_j) \prod_{j=1}^n (1-q w_1^{-1} u_j^{-1})
 F_{N,n,r}(\mathbf{u}|\mathbf{w}|\mathbf{I}).
 \label{pizerorecursiontwo}
\end{align}

\begin{figure}[!htbp]
  \centering
  \includegraphics[width=0.7\linewidth]{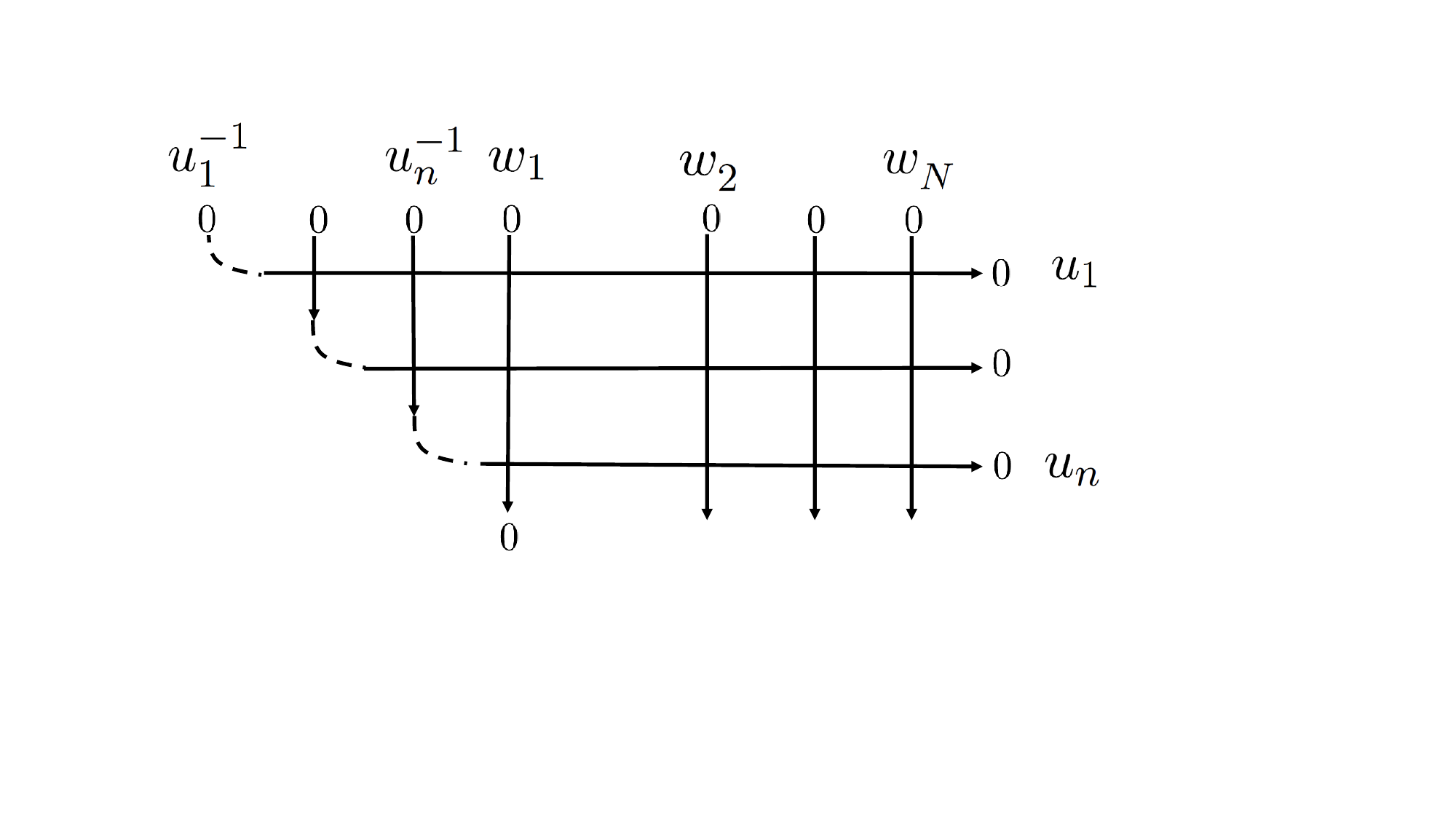}
\includegraphics[width=0.45\linewidth]{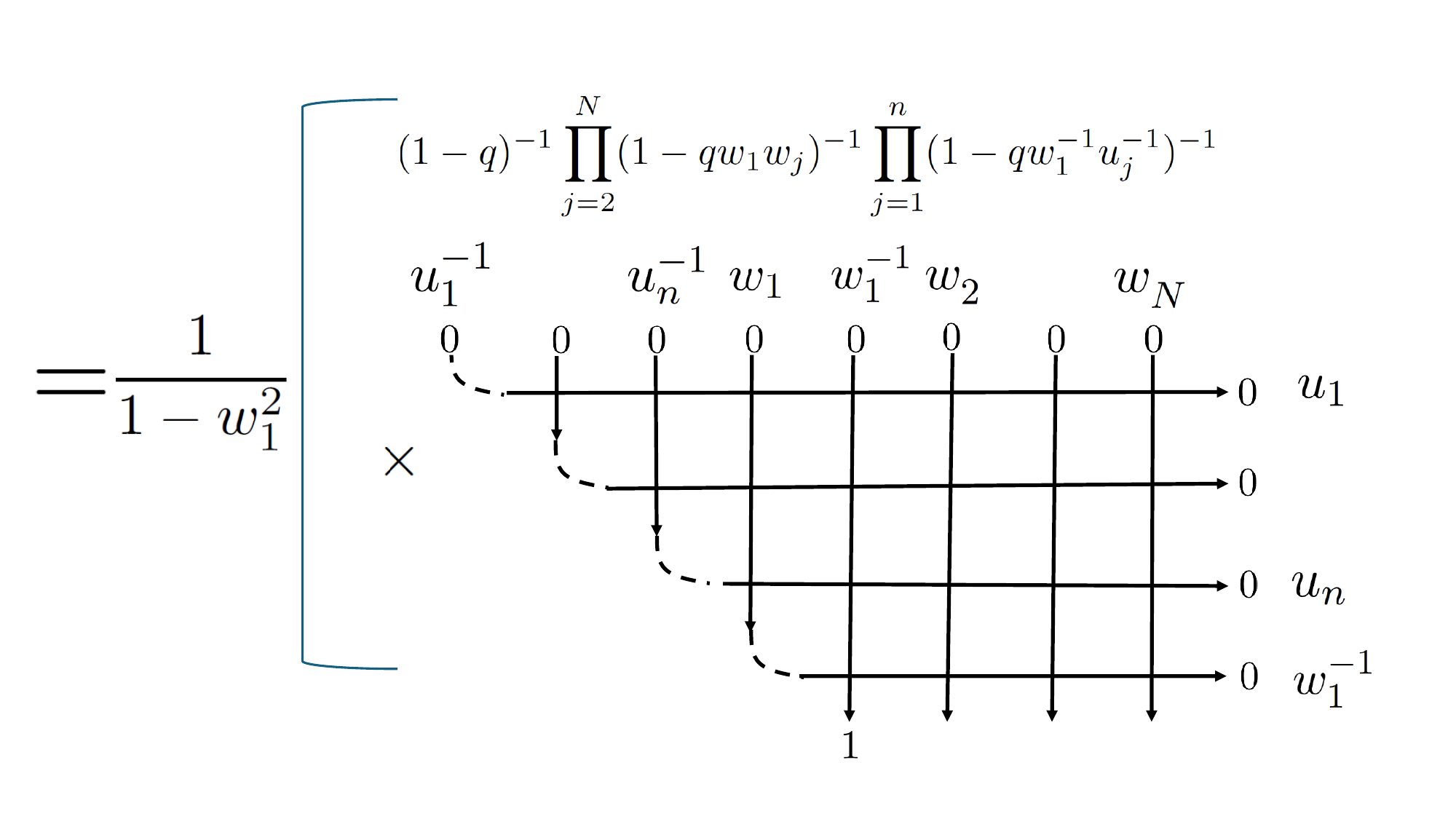}
\includegraphics[width=0.45\linewidth]{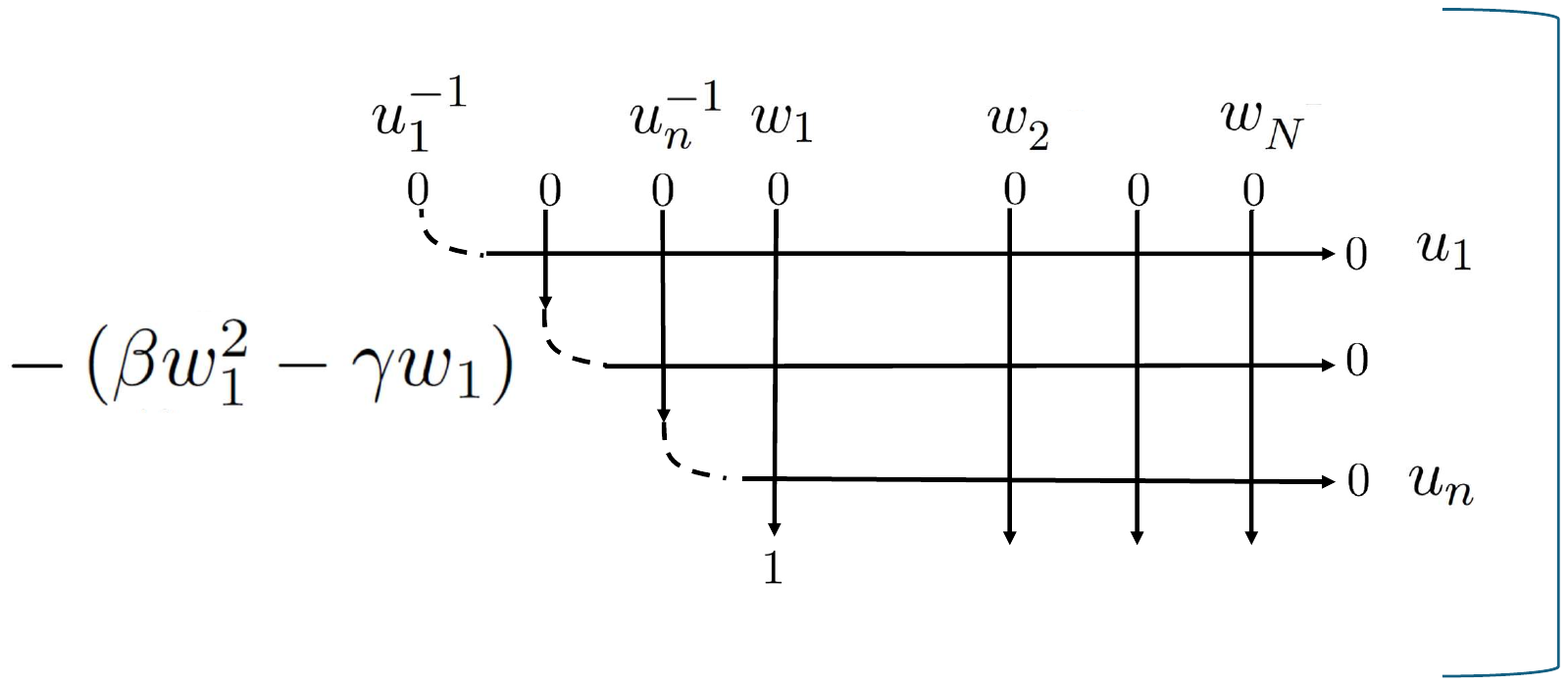}
  \caption{Recursive relation \eqref{pizerorecursionone}.}
  \label{fig:pizerorecursionone}
\end{figure}

\begin{figure}[!htbp]
  \centering
\includegraphics[width=0.45\linewidth]{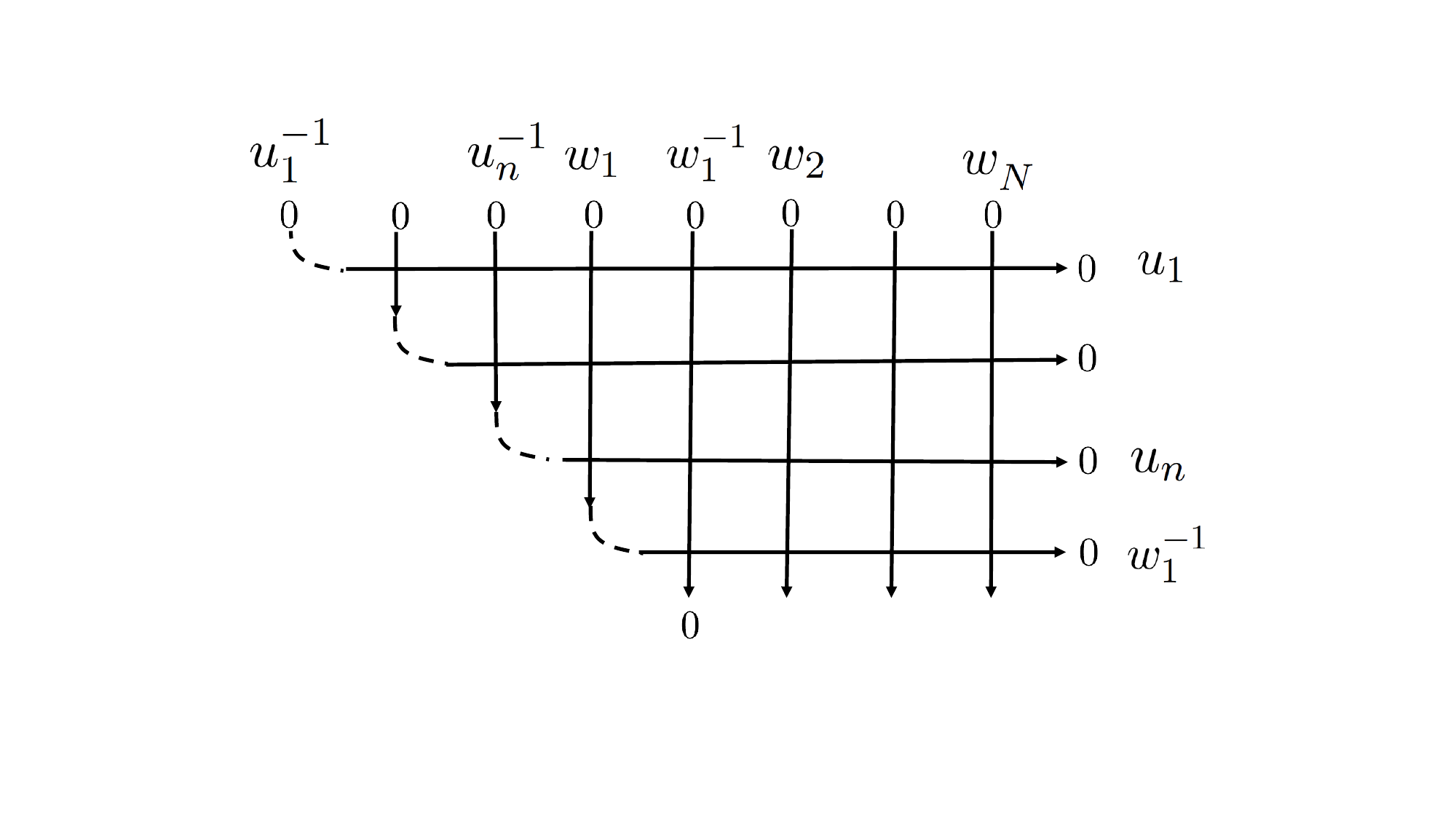}
\includegraphics[width=0.45\linewidth]{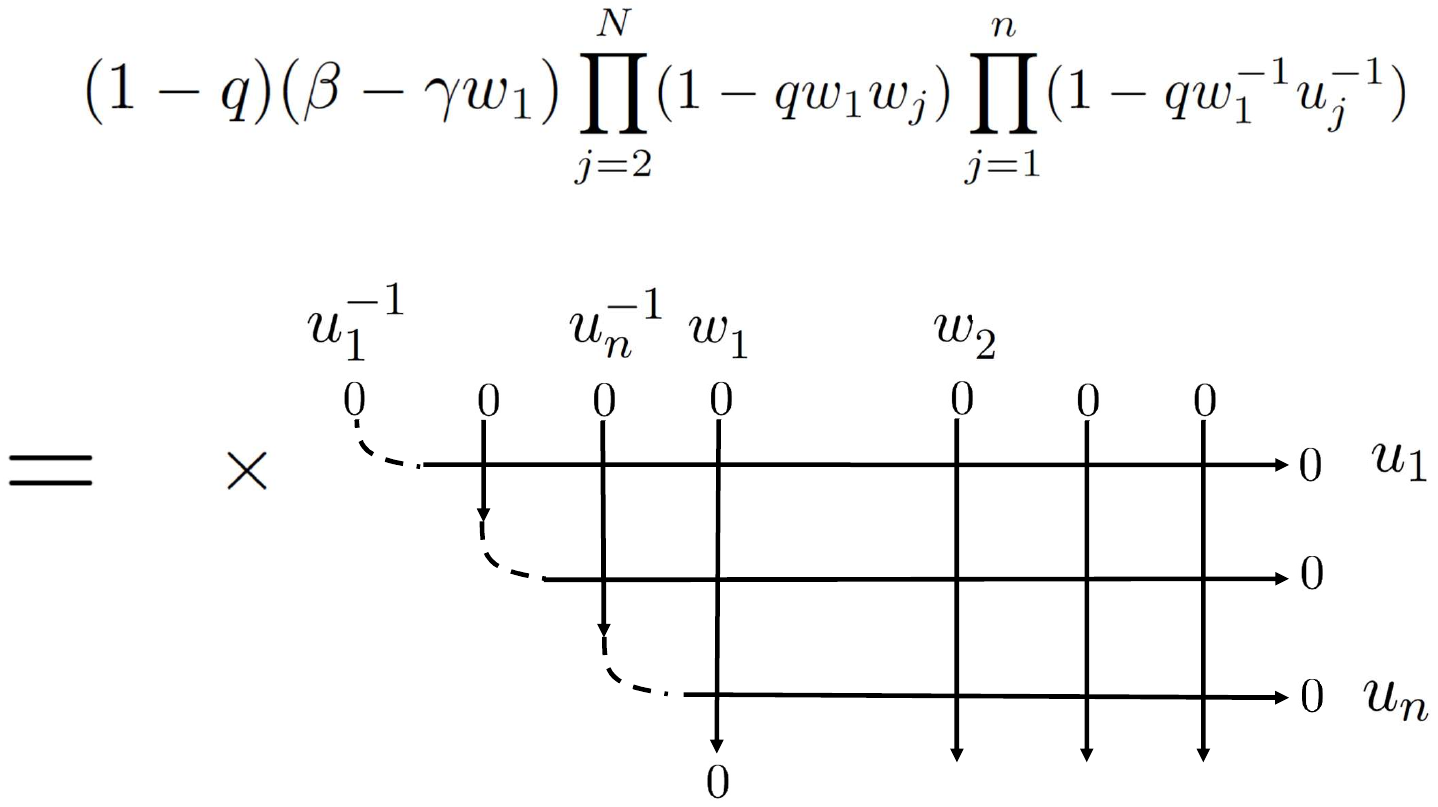}
  \caption{Reduction \eqref{pizerorecursiontwo}.}
  \label{fig:pizerorecursiontwo}
\end{figure}

\end{prop}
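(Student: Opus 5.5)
Both identities come from a single graphical manipulation at the left (first-column) boundary of the triangular lattice. I will run it as a ``fusion/absorption'' argument with the $R$-matrix reduction \eqref{eq:Ruu}, and cross-check it against the symmetrization formula of Theorem~\ref{explicitFformula}.

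\emph{Lattice step.} Consider $F_{N,n+1,r}(\mathbf{u},w_1^{-1}\,|\,w_1^{-1},w_2,\dots,w_N\,|\,\mathbf{I})$. By the $\mathbf{u}$-symmetry, the new spectral line carrying $u_{n+1}=w_1^{-1}$ may be placed next to the column carrying $w_1^{-1}$. Two facts act at the crossing of that line with the first column. The vertex there is $R(w_1^{-1},w_1^{-1})=(1-q)P$, which gives the factor $(1-q)$ and swaps the two lines. Its crossings with the remaining columns then carry weights $R(w_1^{-1},w_j)$.

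The ice rule \eqref{eq:ice-rule} together with the all-$0$ top and right boundaries forces the line to carry state $0$ through the crossings $R(w_1^{-1},w_j)$, $j\ge 2$. Each such crossing contributes the $(0,0)$ weight $1-qw_jw_1$. Similarly, the crossings $R(u_j,(w_1^{-1})^{-1})=R(u_j,w_1)$ with the other spectral lines contribute $1-qw_1^{-1}u_j^{-1}$, after using $R(u,v)=R(v^{-1},u^{-1})$. What remains is the $K$-matrix $K(w_1^{-1})$ at the bend, fed into the first column.

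\emph{Case split by the bottom label at column 1.} If $1\notin\mathbf{I}$, the bottom entry is $0$. The $K$-vertex is then forced to be $[K(w_1^{-1})]_0^{0}=\beta-\gamma w_1$, and the rest of the lattice is exactly the configuration for $F_{N,n,r}(\mathbf{u}|\mathbf{w}|\mathbf{I})$. This yields \eqref{pizerorecursiontwo}.

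If $1\in\mathbf{I}$, two $K$-entries contribute. The diagonal entry $[K(w_1^{-1})]_1^{1}=\beta w_1^2-\gamma w_1$ gives $F_{N,n,r}(\mathbf{u}|\mathbf{w}|\mathbf{I})$ with the same prefactors. The off-diagonal entry $[K(w_1^{-1})]_0^{1}=1-w_1^{2}$ converts the bottom $1$ into a $0$ and produces $F_{N,n,r-1}(\mathbf{u}|\mathbf{w}|\mathbf{I}^{(0)})$. Solving the resulting linear relation for $F_{N,n,r-1}$ gives \eqref{pizerorecursionone}.

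\emph{Cross-check via the formula.} As an independent check, I will substitute $u_{n+1}=w_1^{-1}$ and $w_1\mapsto w_1^{-1}$ into \eqref{Motherfunction}. The factor $\prod_k(u-qw_k)$ at $k=1$ becomes $w_1^{-1}(1-q)$. The factor $\prod_{k<i_j}(u_{\sigma(j)}-w_k)$ kills every term in which $u_{n+1}$ sits at a position $j$ with $i_j>1$. I will then track the surviving $\sigma$ according to whether $u_{n+1}$ occupies an ``$r+1,\dots,n+1$'' slot or, when $i_1=1$, the first slot. The combinatorial factor $1/(n+1-r)!$ against $1/(n-r)!$ must also be accounted for.

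\emph{Main obstacle.} I expect the hardest part to be the precise bookkeeping of the state propagation near the boundary. Concretely, I must check that after the permutation $P$ the configuration genuinely reduces to the smaller lattice, with no extra boundary terms. I must also match the prefactors $u_k^{-(N+n)}$ to $u_k^{-(N+n+1)}$, and confirm that the spurious $w_1^{\pm1}$ powers cancel. I would first verify the cases $n=0$ and $n=1$ with $N=1$ directly from \eqref{Motherfunction} to fix normalizations, then carry out the general lattice argument.
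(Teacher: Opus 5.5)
Your outline has the same shape as the paper's argument: the $(1-q)P$ vertex at the corner, freezing one line, then splitting by the entries of $K(w_1^{-1})$. However, the freezing step is wrong as stated, and it removes the wrong vertices.

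The crossings of the $u_{n+1}$-line with the other spectral lines are $R(u_j,u_{n+1}^{-1})=R(u_j,w_1)$. Their diagonal weight is $1-qw_1u_j^{-1}$, not $1-qw_1^{-1}u_j^{-1}$. More importantly, these vertices must survive. The reflected segment of the $u_{n+1}$-line, with parameter $u_{n+1}^{-1}=w_1$, is exactly what becomes the first column (parameter $w_1$) of $F_{N,n,r}(\mathbf u|\mathbf w|\mathbf I)$ and $F_{N,n,r-1}(\mathbf u|\mathbf w|\mathbf I^{(0)})$, with $K(w_1^{-1})$ attached at its lower end. The factor $\prod_{j=1}^n(1-qw_1^{-1}u_j^{-1})$ comes instead from the vertices $R(u_j,w_1^{-1})$ of the \emph{original} first column. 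If you freeze the $R(u_j,w_1)$ vertices and keep the original column, the residual lattice has first-column parameter $w_1^{-1}$, so it is not $F_{N,n,r}(\mathbf u|\mathbf w|\mathbf I)$.

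Second, neither family is frozen by the ice rule alone. If the $u_{n+1}$-row sits next to the all-$0$ top boundary, the crossings with columns $2,\dots,N$ do freeze. But the output of $K(w_1^{-1})$ must still pass the triangle crossings and the $n$ vertices of the first column before reaching the bottom boundary, so the bottom label does not force the $K$-entry. If instead you move the $u_{n+1}$-row (by $\mathbf u$-symmetry) next to the bottom boundary, $P$ does send the $K$-output straight to bottom position $1$, as your case split requires. But then, after the swap, the original $w_1^{-1}$-column becomes a line running down through the rows $u_j$ and then along that bottom row through columns $2,\dots,N$, where no boundary fixes its states.

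The missing idea is the paper's train argument. The vertices $R(u_j,w_1^{-1})$, $R(u_j,w_k)$, $R(w_1^{-1},w_k)$ satisfy \eqref{eq:YBE} with $(u,v,w)=(u_j,w_1^{-1},w_k)$, so this line can be pushed to the upper-right corner. There the all-$0$ top and right boundaries freeze it and produce exactly $(1-q)\prod_{k\ge2}(1-qw_1w_k)\prod_{j=1}^n(1-qw_1^{-1}u_j^{-1})$. Only after this does the split by $K$-entries give (i) and (ii).

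Your algebraic cross-check via \eqref{Motherfunction} would actually prove (ii) directly. Only permutations placing $u_{n+1}$ in a slot $>r$ survive, the factors recombine term by term, and $(n+1-r)/(n+1-r)!=1/(n-r)!$. For (i), however, the slot-$1$ terms must be recombined with the others through nontrivial cancellations, so it does not replace the missing step.
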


\begin{proof}
Graphical descriptions of
\eqref{pizerorecursionone} and \eqref{pizerorecursiontwo}
are given in Figures~\ref{fig:pizerorecursionone} and \ref{fig:pizerorecursiontwo}, respectively.
The two identities are proved in the same way, so we prove \eqref{pizerorecursionone}.

The graphical deformation process is depicted in Figure~\ref{fig:pizerorecursionthree}. 
We begin with \\
$F_{N,n+1,r}(\mathbf{u}, w_1^{-1}|w_1^{-1}, w_2, \dots, w_N|\mathbf{I})$. 
Here, we observe an $R$-matrix that essentially reduces to the permutation operator \eqref{eq:Ruu} $R(w_1^{-1}, w_1^{-1}) = (1-q)P$. 
By applying the Yang--Baxter equation \eqref{eq:YBE} repeatedly, we move the line to which the spectral parameter $w_1^{-1}$ is assigned toward the upper-right region. 
Due to the ice rule \eqref{eq:ice-rule}, the configuration involving this line is uniquely determined; thus, we can remove the line and instead multiply by the overall factor 
$\displaystyle
\prod_{j=2}^{N} (1-q w_1 w_j) \prod_{j=1}^n (1-q w_1^{-1} u_j^{-1}),
$
which consists of the $R$-matrix elements \eqref{eq:R-components}. Finally, we decompose the partition function into two terms based on the $K$-matrix elements \eqref{eq:Ktri-components}, leading to the final equation
\begin{align}
&(1-q)
\prod_{j=2}^{N} (1-q w_1 w_j) \prod_{j=1}^n (1-q w_1^{-1} u_j^{-1})
\nonumber \\
&\times
 \Big(
 (\beta w_1^2-\gamma w_1) F_{N,n,r}(\mathbf{u}|\mathbf{w}|\mathbf{I}) 
 +
 (1-w_1^2)
 F_{N,n,r-1}(\mathbf{u}|\mathbf{w}|\mathbf{I}^{(0)}) 
     \Big). \notag
\end{align}
The graphical identity in Figure~\ref{fig:pizerorecursionthree} is equivalent to Figure~\ref{fig:pizerorecursionone}, and algebraically gives \eqref{pizerorecursionone}.

\begin{figure}[!htbp]
  \centering
  \includegraphics[width=0.7\linewidth]{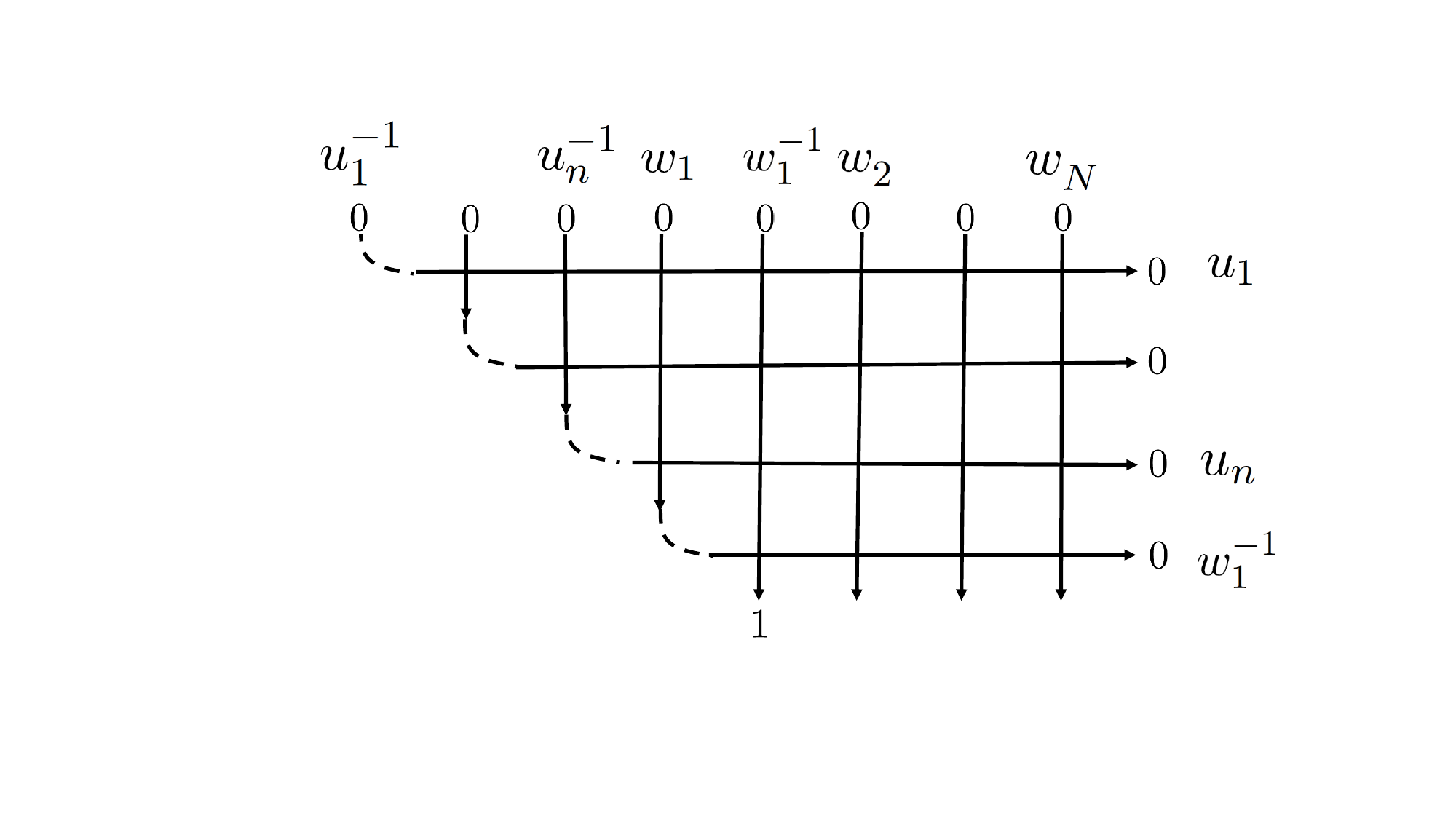}
\includegraphics[width=0.45\linewidth]{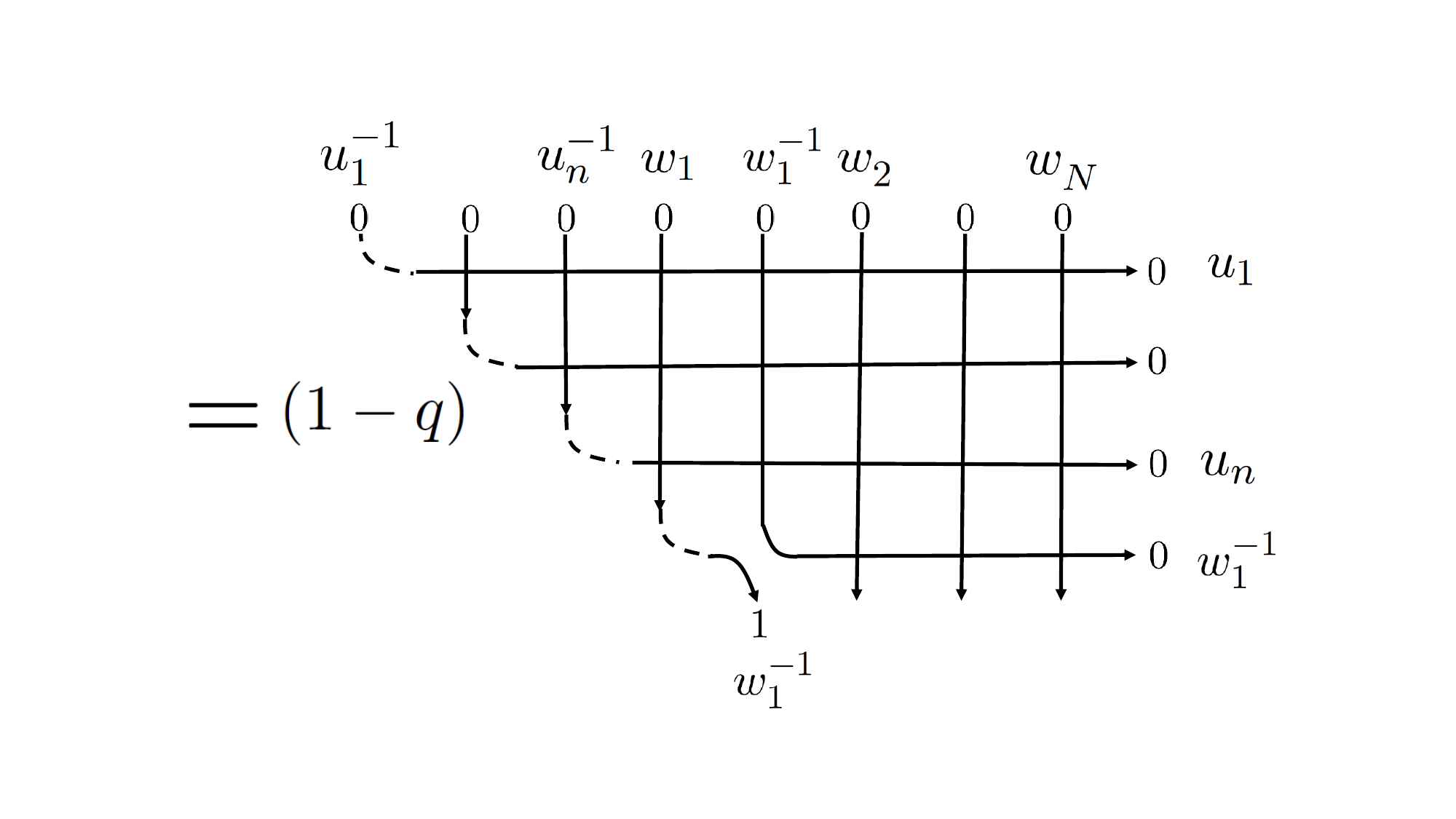}
\includegraphics[width=0.45\linewidth]{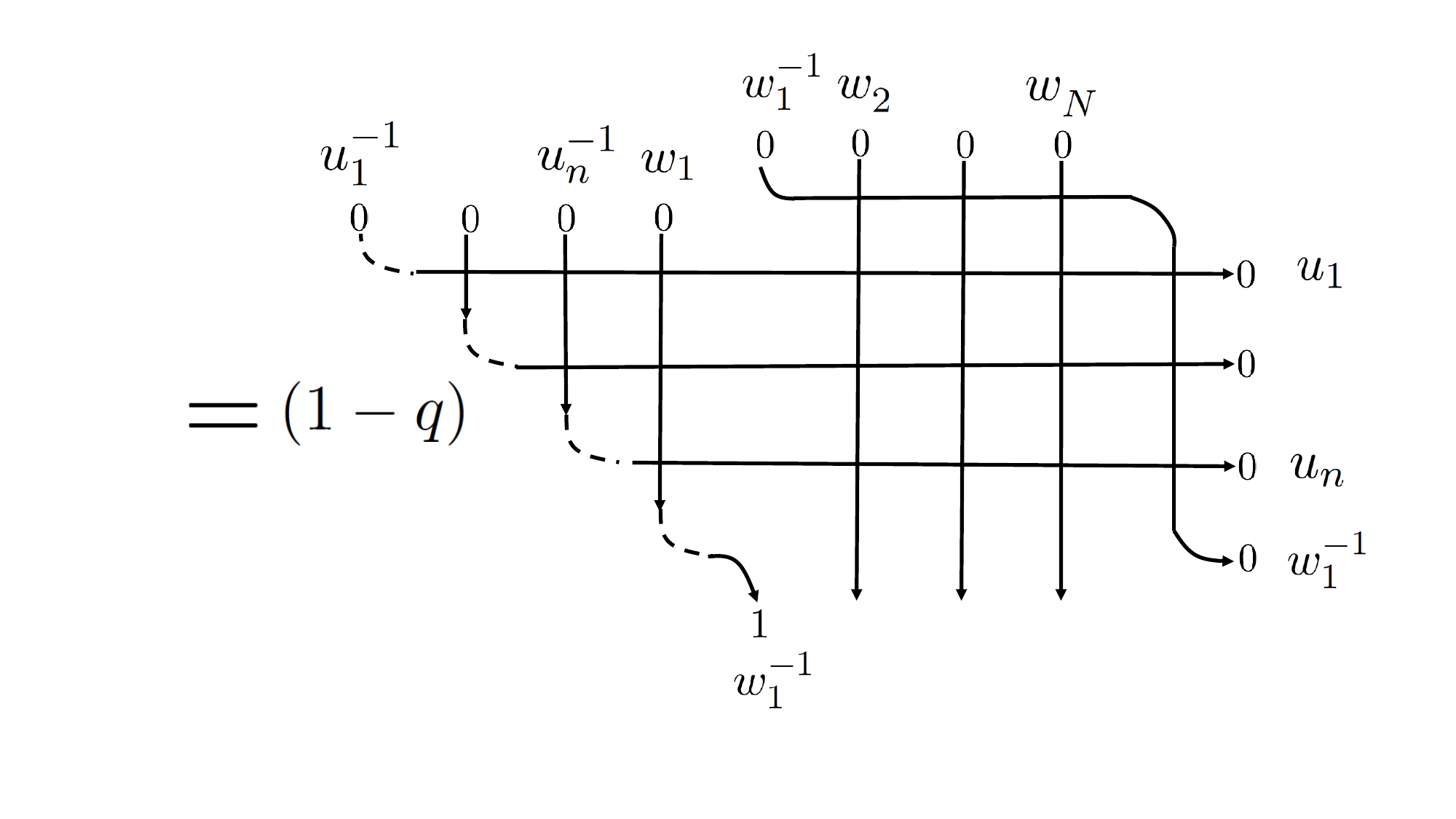}
 \includegraphics[width=0.45\linewidth]{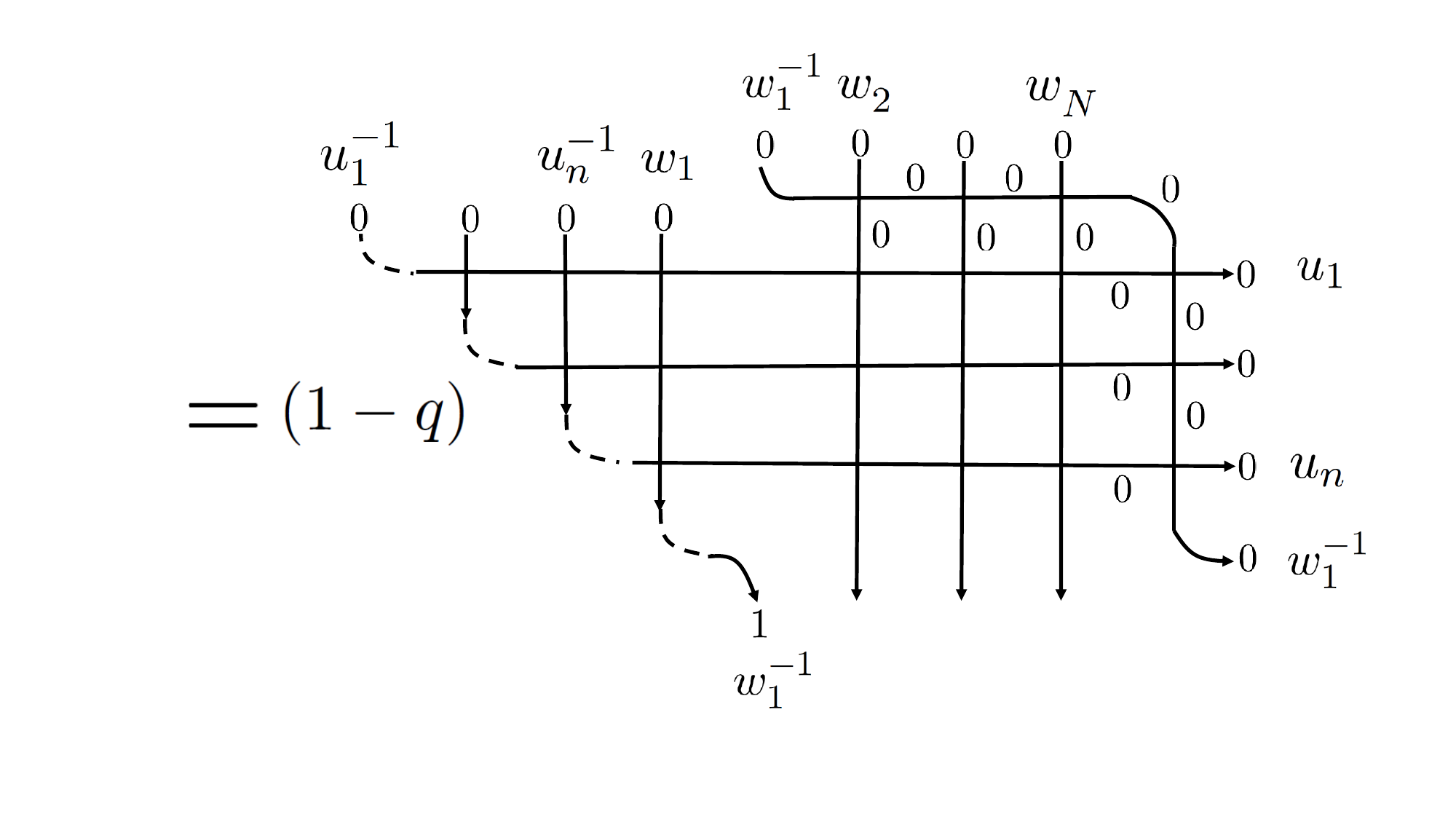}
\includegraphics[width=0.45\linewidth]{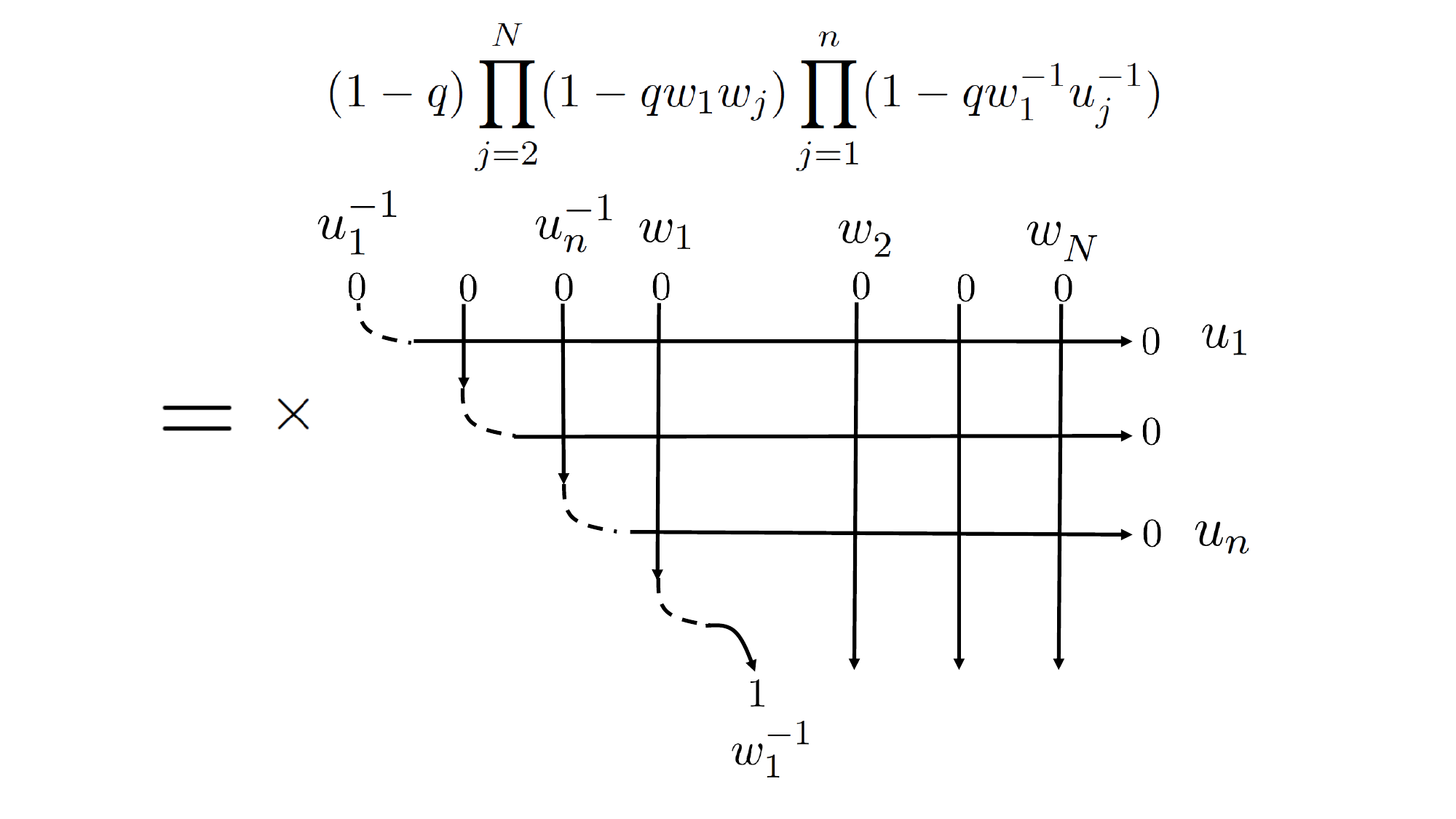}
\includegraphics[width=0.45\linewidth]{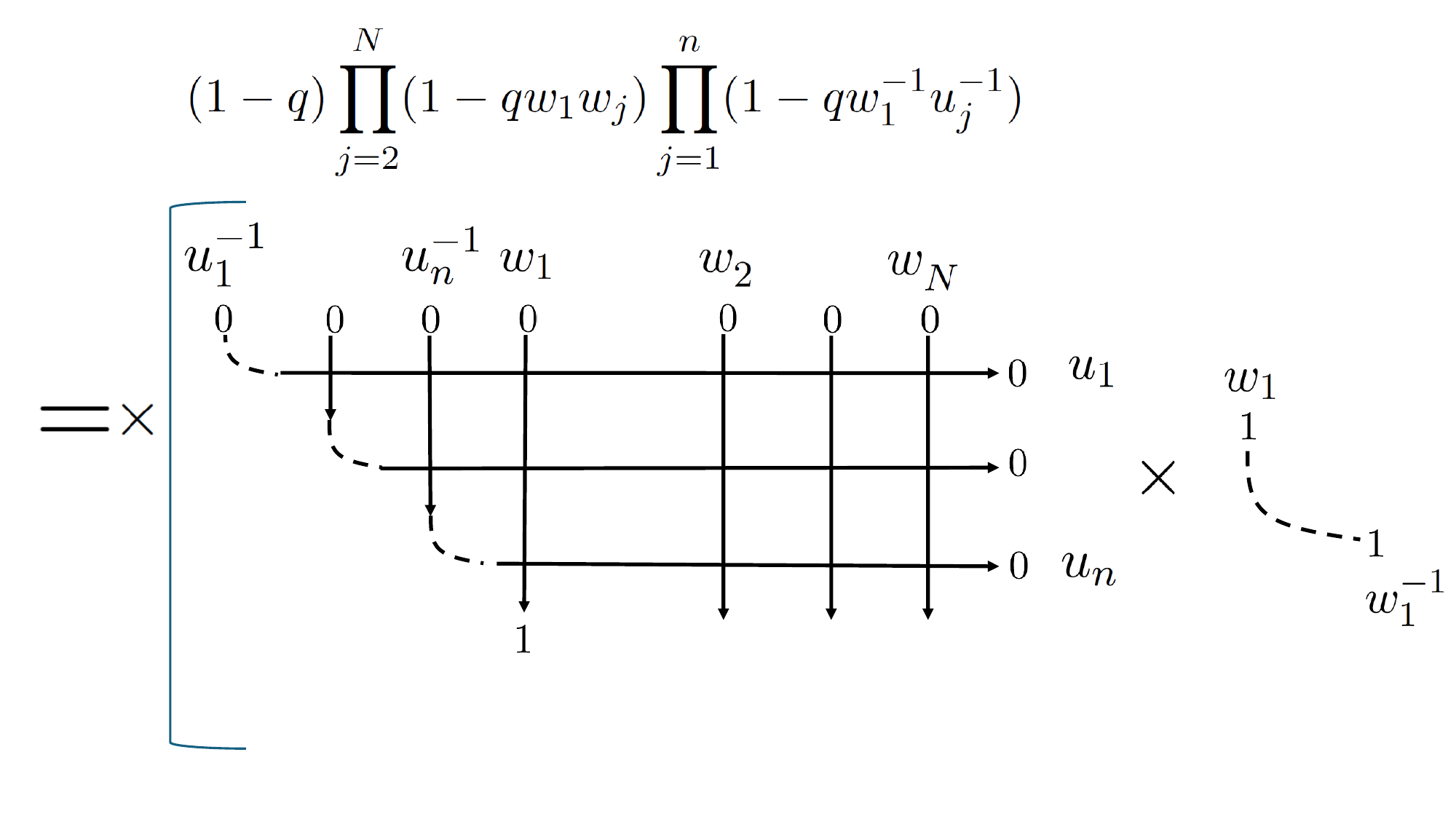}
 \includegraphics[width=0.45\linewidth]{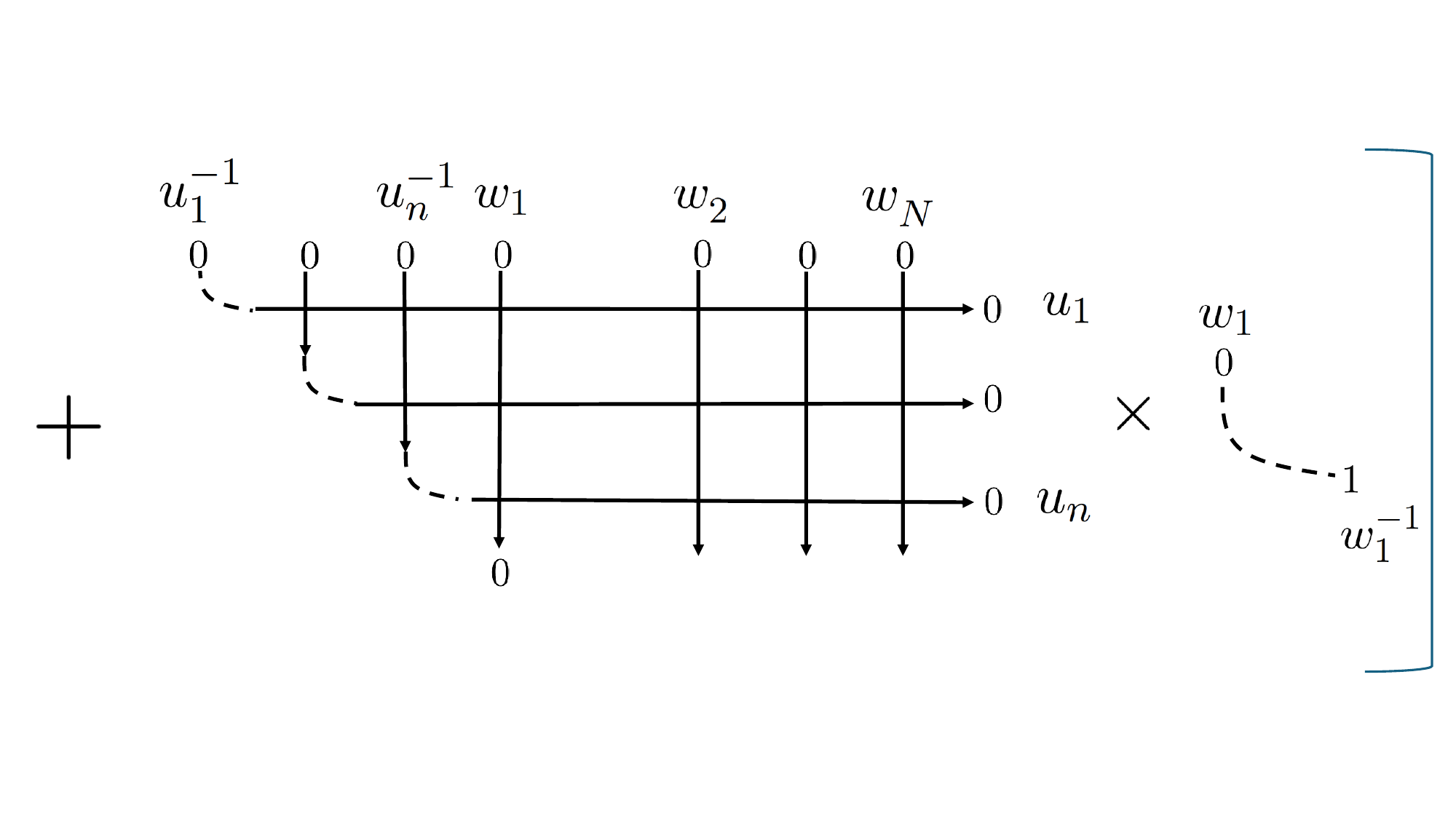}
  \caption{The graphical deformation process to get \eqref{pizerorecursionone}.}
  \label{fig:pizerorecursionthree}
\end{figure}
\end{proof}

The relations \eqref{pizerorecursionone} and \eqref{pizerorecursiontwo} yield the following identities for the $WC$- and $WB$-functions: 
\begin{corollary}

(i) If $1 \in \mathbf{I}$, we have
\begin{align}
    &WC_{N,n,r-1}(\mathbf{u}|\mathbf{w}|\mathbf{I}^{(0)}) \nonumber \\
    =&\frac{1}{1-w_1^2}
    \Bigg(
    (1-q)^{-1} \prod_{j=2}^{N} (1-q w_1w_j)^{-1} \prod_{j=1}^n (1-q w_1^{-1} u_j^{-1})^{-1}  \nonumber \\
    &\times
WC_{N,n+1,r}(\mathbf{u},w_1^{-1}|w_1^{-1},w_2,\dotsm,w_N|\mathbf{I})
    - w_1^2 WC_{N,n,r}(\mathbf{u}|\mathbf{w}|\mathbf{I}) \Bigg),
    \label{pizerorecursiononeWC} \\
    &WB_{N,n,r-1}(\mathbf{u}|\mathbf{w}|\mathbf{I}^{(0)}) \nonumber \\
    =&\frac{1}{1-w_1}
    \Bigg(
    (1-q)^{-1} \prod_{j=2}^{N} (1-q w_1w_j)^{-1} \prod_{j=1}^n (1-q w_1^{-1} u_j^{-1})^{-1}  \nonumber \\
    &\times
WB_{N,n+1,r}(\mathbf{u},w_1^{-1}|w_1^{-1},w_2,\dotsm,w_N|\mathbf{I})
    - w_1 WB_{N,n,r}(\mathbf{u}|\mathbf{w}|\mathbf{I}) \Bigg),
    \label{pizerorecursiononeWB}
\end{align}
where $\mathbf{I}^{(0)}=\{ i_2,\dotsm,i_r \}$.
\\
(ii) If $1 \not\in \mathbf{I}$, we have
\begin{align}
 &WC_{N,n+1,r}(\mathbf{u},w_1^{-1}|w_1^{-1},w_2,\dotsm,w_N|\mathbf{I})
 \nonumber \\
 =& (1-q)  \prod_{j=2}^{N} (1-q w_1w_j) \prod_{j=1}^n (1-q w_1^{-1} u_j^{-1})
 WC_{N,n,r}(\mathbf{u}|\mathbf{w}|\mathbf{I}),
 \label{pizerorecursiontwoWC} \\
 &WB_{N,n+1,r}(\mathbf{u},w_1^{-1}|w_1^{-1},w_2,\dotsm,w_N|\mathbf{I})
 \nonumber \\
 =& (1-q)  \prod_{j=2}^{N} (1-q w_1w_j) \prod_{j=1}^n (1-q w_1^{-1} u_j^{-1})
 WB_{N,n,r}(\mathbf{u}|\mathbf{w}|\mathbf{I}).
 \label{pizerorecursiontwoWB}
\end{align}

\end{corollary}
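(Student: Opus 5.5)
The corollary is obtained by specializing the two identities of the preceding proposition, \eqref{pizerorecursionone} and \eqref{pizerorecursiontwo}, to the two choices of boundary parameters, using the definitions \eqref{WCfunction} and \eqref{WBfunction}. No further lattice-model argument is needed; only the scalar factors have to be tracked.

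\emph{Type $C$.} Set $\gamma=0$, $\beta=1$. By \eqref{WCfunction}, each $F$ becomes the corresponding $WC$ with identical arguments. In \eqref{pizerorecursionone} the coefficient $\beta w_1^2-\gamma w_1$ becomes $w_1^2$, which gives \eqref{pizerorecursiononeWC}. In \eqref{pizerorecursiontwo} the factor $\beta-\gamma w_1$ becomes $1$, which gives \eqref{pizerorecursiontwoWC}.

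\emph{Type $B$.} Set $\gamma=-1$, $\beta=1$. By \eqref{WBfunction},
\[
F_{N,n,r}(\mathbf u|\mathbf w|\mathbf I)=\prod_{k=1}^n(1+u_k^{-1})\,WB_{N,n,r}(\mathbf u|\mathbf w|\mathbf I),
\]
and the same holds for $F_{N,n,r-1}(\mathbf u|\mathbf w|\mathbf I^{(0)})$. The $(n+1)$-variable function has the extra variable $u_{n+1}=w_1^{-1}$, so it carries the factor $\prod_{k=1}^n(1+u_k^{-1})\cdot(1+w_1)$.

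Substitute these into \eqref{pizerorecursionone} and divide by the common factor $\prod_{k=1}^n(1+u_k^{-1})$.
\begin{itemize}
\item The left side becomes $WB_{N,n,r-1}$.
\item The first term on the right acquires the factor $(1+w_1)/(1-w_1^2)=1/(1-w_1)$.
\item In the second term, $\beta w_1^2-\gamma w_1=w_1^2+w_1=w_1(1+w_1)$. Together with the prefactor $1/(1-w_1^2)$ this gives $w_1/(1-w_1)$.
\end{itemize}
Pulling out the common $1/(1-w_1)$ yields exactly \eqref{pizerorecursiononeWB}.

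Similarly, in \eqref{pizerorecursiontwo} we have $\beta-\gamma w_1=1+w_1$. This cancels against the factor $(1+w_1)$ attached to the left side, leaving \eqref{pizerorecursiontwoWB}.

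The only point needing care is the bookkeeping of the boundary scalar $(1+u^{-1})^{-1}$ for the added spectral variable $u_{n+1}=w_1^{-1}$. It is exactly this factor that converts $1-w_1^2$ into $1-w_1$ and removes $\beta-\gamma w_1$. Everything else is direct substitution.
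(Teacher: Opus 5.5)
Your proposal is correct and follows the paper's route. The paper obtains the corollary by specializing \eqref{pizerorecursionone} and \eqref{pizerorecursiontwo} at $(\gamma,\beta)=(0,1)$ and $(-1,1)$. Your bookkeeping of the extra type $B$ scalar $(1+u_{n+1}^{-1})^{-1}=(1+w_1)^{-1}$ is accurate: it turns $1-w_1^2$ into $1-w_1$, turns $w_1^2+w_1$ into $w_1$, and cancels $\beta-\gamma w_1=1+w_1$. This just spells out what the paper leaves implicit.
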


\begin{lem} \label{deltalem} We have
\begin{align}
    F_{N,n,r}(\mathbf{u}|\mathbf{w}|\mathbf{I})|_{u_1=\cdots=u_n=1}
    =(\beta-\gamma)^n (1-q)^{\frac{n(n-1)}{2}} \prod_{j=1}^N (1-q w_j)^n \delta_{\mathbf{I},\emptyset}. \label{specializetoone}
    \end{align}
    Here $\delta_{\mathbf{I},\emptyset}$ means 1 if $\mathbf{I}$ is an empty set $(r=0)$, and 0 otherwise.

\begin{figure}[!htbp]
  \centering
  \includegraphics[width=0.7\linewidth]{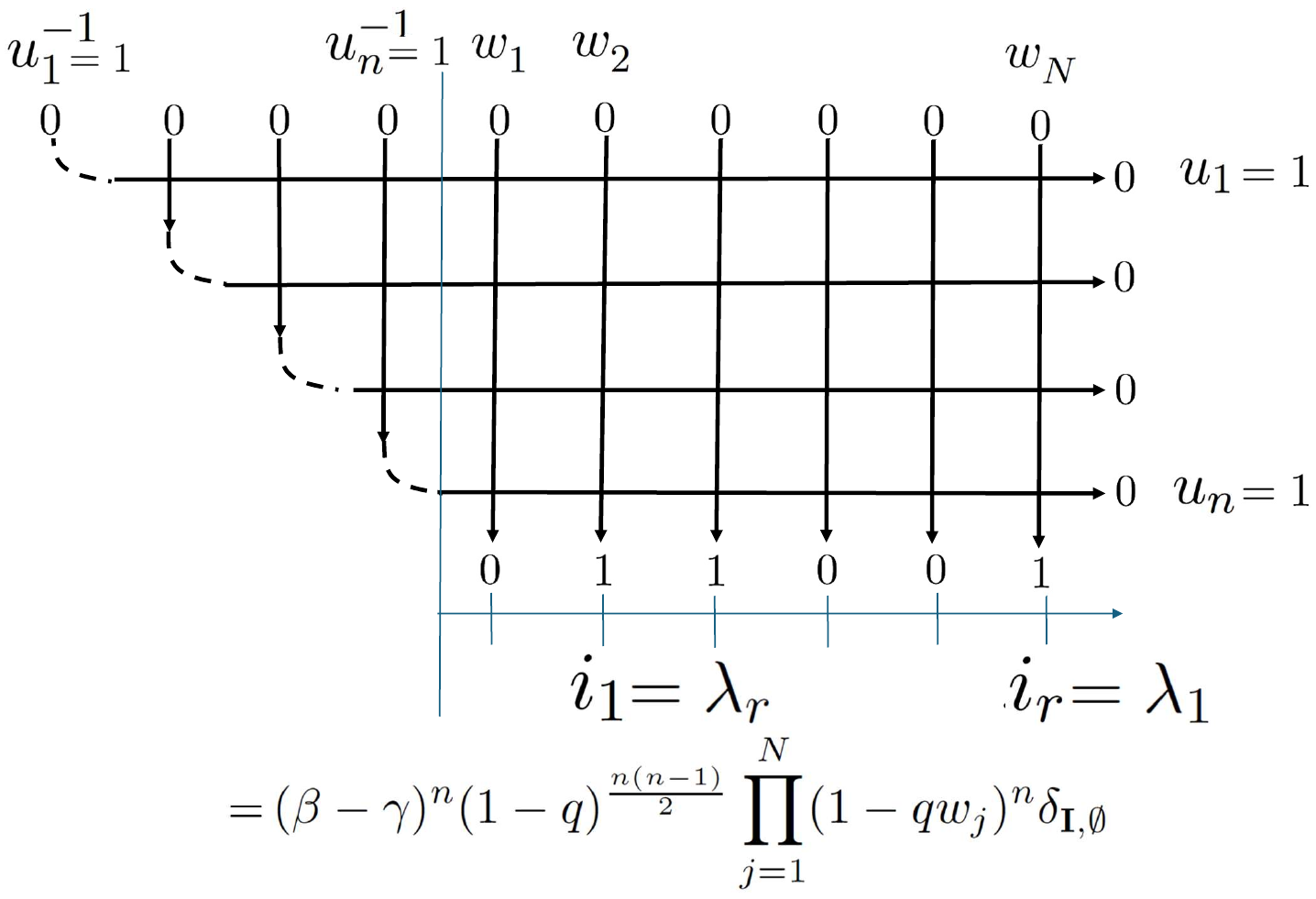}
  \caption{Specializing all $u$-variables to 1 \eqref{specializetoone}.}
  \label{fig:spone}
\end{figure}

\end{lem}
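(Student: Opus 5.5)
Two routes are available here: the lattice-model picture suggested by Figure~\ref{fig:spone}, and a direct use of the explicit formula \eqref{Motherfunction}. I would take the second as the primary route and keep the lattice picture as a cross-check. The first step is to set $u_1=\cdots=u_n=1$ in \eqref{Motherfunction}. The quotients $(qu_j-u_k)/(u_j-u_k)$ have $0/0$ form at this point, so a little care is needed. Each summand, however, carries the factor $\prod_{j=1}^r\sigma\bigl(u_j^2-1\bigr)$, and the full sum is a Laurent polynomial.

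If $r\ge1$, I would argue that the specialization vanishes. The most efficient way is to specialize one variable at a time. Set $u_n=1$ first while $u_1,\dots,u_{n-1}$ remain generic; the summands are then regular. Only those $\sigma$ with $\sigma^{-1}(n)>r$ survive, because otherwise the factor $u_n^2-1$ kills the term. Each surviving summand is regular at the remaining points, since all poles are at $u_j=u_k$. One can then pass to the diagonal by continuity.

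A cleaner argument, and the one I would actually write, runs as follows. The total expression is a polynomial in the $u$'s (times $\prod u_k^{-(N+n)}$), symmetric in $\mathbf u$, and it is divisible by $\prod_{j}(u_j^2-1)$ restricted to... Strictly, this divisibility is not clear termwise. So I would instead use the lattice model. Specialize $u_1=\cdots=u_n=1$ there. Then $K(1)$ has off-diagonal entry $1-u^{-2}=0$, so every $K$-matrix is diagonal. By the ice rule \eqref{eq:ice-rule}, the number of $1$'s entering the boundary must equal the number leaving. With top and right boundaries all $0$ and no creation at the $K$-matrices, conservation forces the bottom boundary to have no $1$'s. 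Hence the value is $0$ unless $r=0$.

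For $r=0$, the same argument shows that the configuration is unique: all edges are $0$. Reading off the weights then gives the value. Each $R(1,w_k)$ vertex of type $(0,0,0,0)$ has weight $1-qw_k$, which produces $\prod_k(1-qw_k)^n$. Each $R(1,1)$ vertex between $u_i$ and $u_j^{-1}$ has weight $1-q$, and there are $\binom n2$ of them, giving $(1-q)^{n(n-1)/2}$. Each $K(1)$ contributes $[K(1)]_0^0=\beta-\gamma$, giving $(\beta-\gamma)^n$.

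As a consistency check, the same value follows from \eqref{Motherfunction}. At $r=0$ there are no singular quotients, and every summand equals $\prod_{j<k}(1-q)\prod_{j,k}(1-qw_k)(\beta-\gamma)^n$. The $n!$ identical terms cancel the $1/n!$, which matches.

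The main obstacle is making the conservation argument precise in the triangular geometry. One must track how lines reflect at the $K$-matrices and confirm that the $1$'s on the bottom boundary have nowhere to originate once $K$ is diagonal. This is a flow/path argument: each $1$ traces a path through the lattice, and the paths can only end on a boundary or at an off-diagonal $K$-entry. I would present it via the path interpretation of the six-vertex configurations.
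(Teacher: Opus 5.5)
Your final argument is correct and is essentially the paper's: the paper also uses $K(1)=(\beta-\gamma)\mathrm{Id}$ together with the ice rule \eqref{eq:ice-rule}. With these, it freezes the configuration row by row from the all-$0$ top boundary, which forces the bottom row to be all $0$ and leaves the unique all-$0$ configuration whose weight is exactly the product you read off. Your global count of $1$'s is a repackaging of that freezing step, the check via \eqref{Motherfunction} at $r=0$ is a valid extra confirmation, and the abandoned explicit-formula attempts for $r\ge1$ are not needed.
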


\begin{proof}

Consider a diagram where the configuration of $0$'s and $1$'s at the bottom boundary is not fixed
(Figure~\ref{fig:delta}, top left panel).
Since the $K$-matrix \eqref{eq:Ktri-components} reduces essentially to the identity matrix $K(1)=(\beta-\gamma) \mathrm{Id}$ and the $R$-matrix satisfies the ice rule \eqref{eq:ice-rule}, the configuration is uniquely determined row by row starting from the top, as illustrated in Figure~\ref{fig:delta}. This implies that the partition function is nonzero if the bottom row consists entirely of $0$'s (i.e., $\mathbf{I}=\emptyset$), and vanishes otherwise.
If $\mathbf{I} = \emptyset$, a direct calculation shows that
the product of the elements of the $K$-matrices and $R$-matrices
for the unique configuration is
$(\beta-\gamma)^n (1-q)^{\frac{n(n-1)}{2}} \prod_{j=1}^N (1-q w_j)^n$,
leading to \eqref{specializetoone}
(Figure~\ref{fig:spone}).

\begin{figure}[!htbp]
  \centering
 \includegraphics[width=0.45\linewidth]{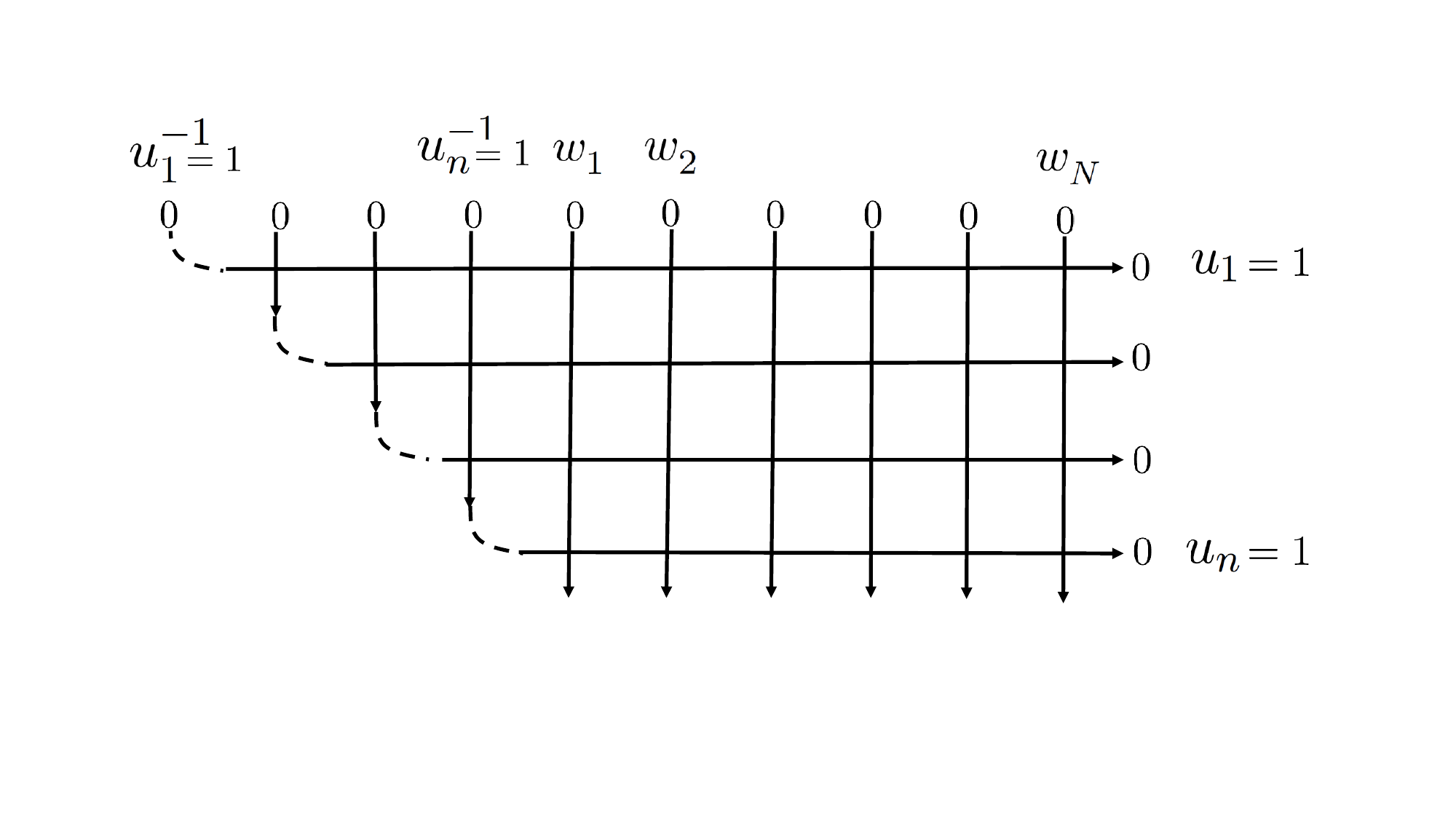}
\includegraphics[width=0.45\linewidth]{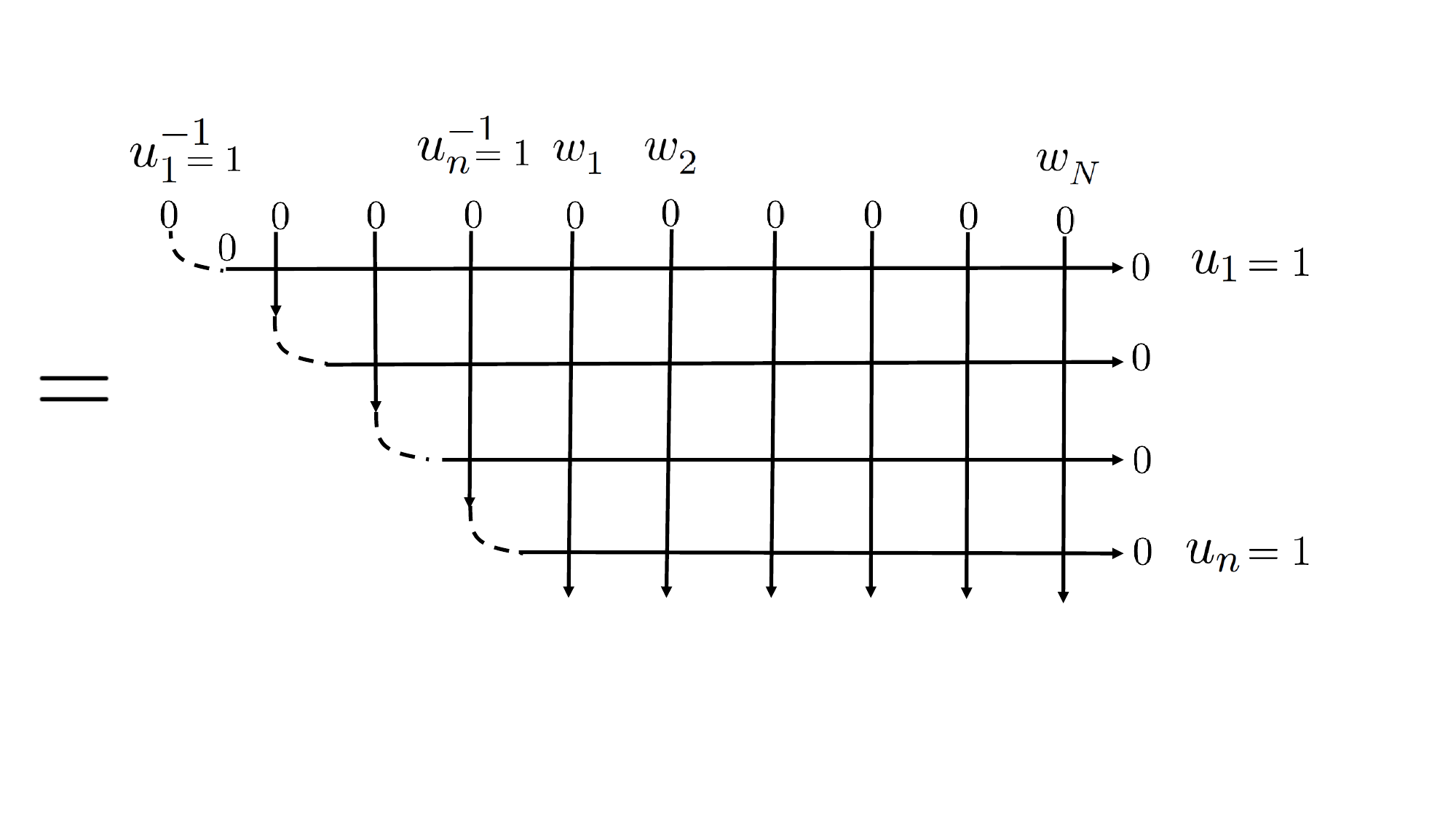}
\includegraphics[width=0.45\linewidth]{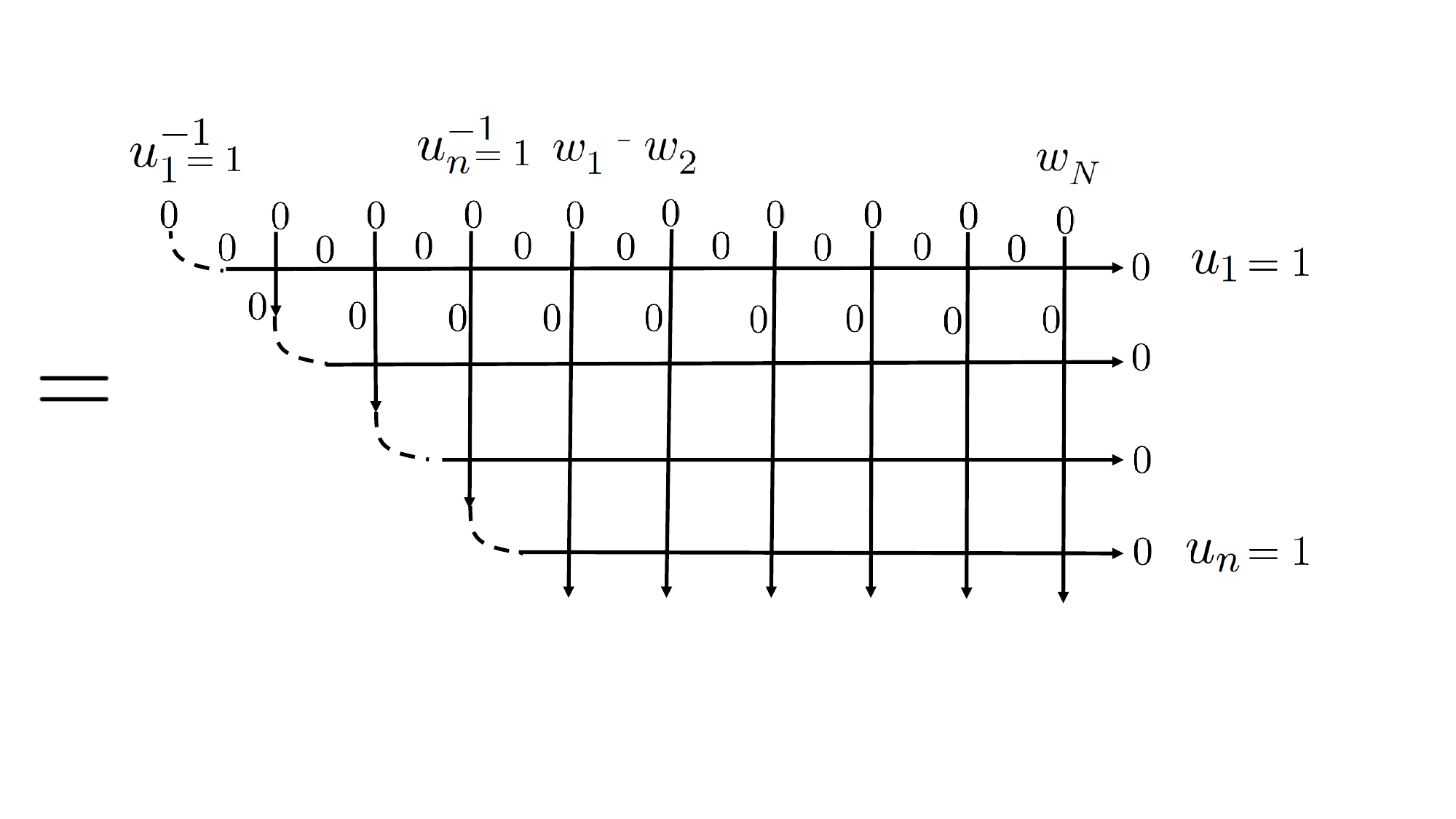}
 \includegraphics[width=0.45\linewidth]{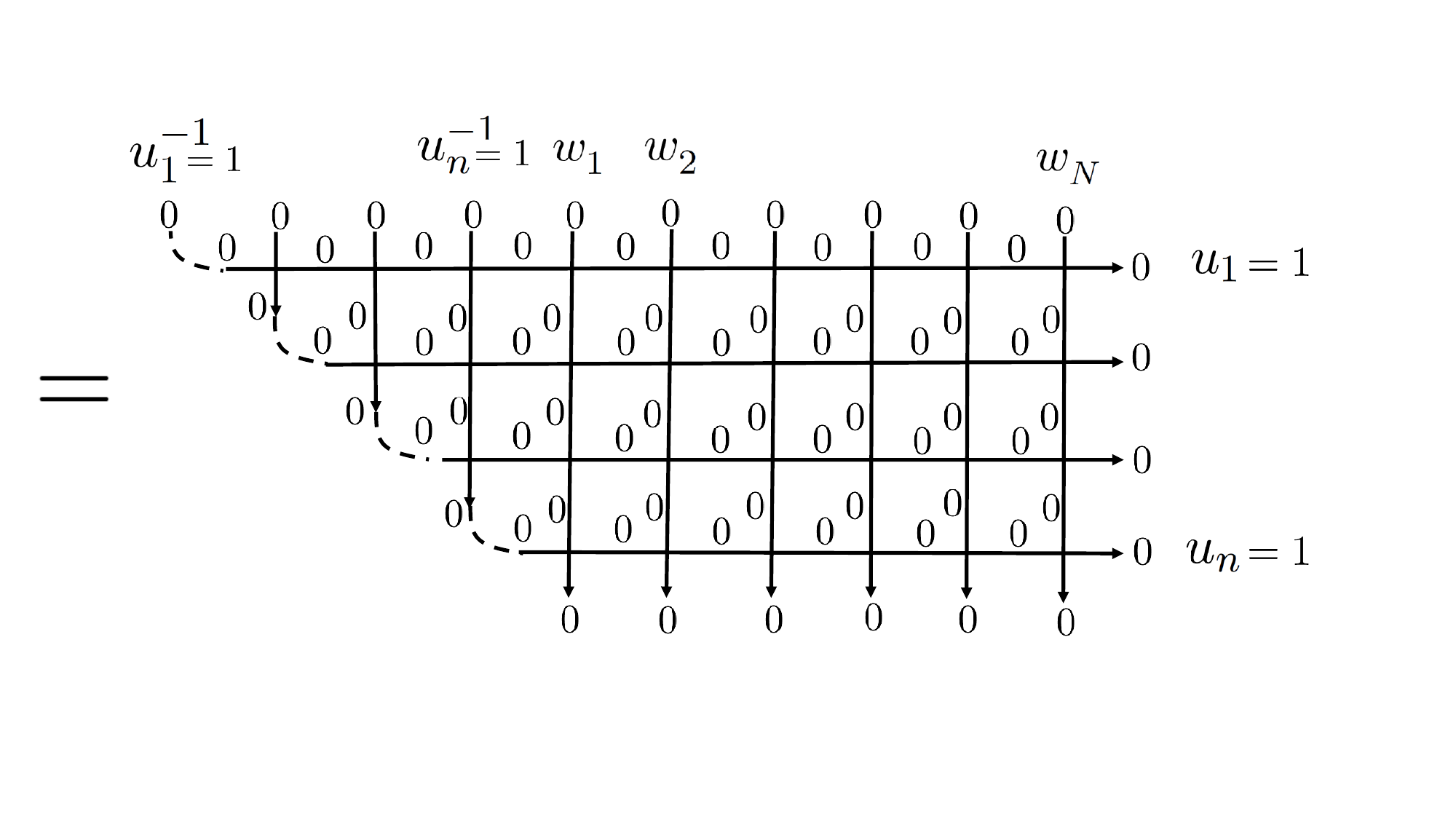}
  \caption{The freezing process starting from the top row.}
  \label{fig:delta}
\end{figure}

\end{proof}

Lemma~\ref{deltalem} gives the following corollary.
\begin{corollary} We have
    \begin{align}
    &WC_{N,n,r}(\mathbf{u}|\mathbf{w}|\mathbf{I})|_{u_1=\cdots=u_n=1}
    =
     WB_{N,n,r}(\mathbf{u}|\mathbf{w}|\mathbf{I})|_{u_1=\cdots=u_n=1}
     \nonumber \\
    =& (1-q)^{\frac{n(n-1)}{2}} \prod_{j=1}^N (1-q w_j)^n \delta_{\mathbf{I},\emptyset}. \label{whenalltoone}
    \end{align}
\end{corollary}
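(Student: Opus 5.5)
The plan is to derive the corollary directly from Lemma~\ref{deltalem} by substituting the two boundary specializations that define $WC_{N,n,r}$ and $WB_{N,n,r}$. No further lattice-model argument is needed, because each specialization only fixes the parameters $\gamma,\beta$, possibly together with a scalar prefactor in $\mathbf{u}$.

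For type $C$, by \eqref{WCfunction} we have $WC_{N,n,r}=F_{N,n,r}|_{\gamma=0,\beta=1}$. The specialization in $(\gamma,\beta)$ commutes with setting $u_1=\cdots=u_n=1$. This holds because, by Theorem~\ref{explicitFformula}, $F_{N,n,r}$ is a Laurent polynomial in $\mathbf{u}$ whose coefficients are polynomial in $\gamma,\beta$: the apparent denominators $u_j-u_k$ cancel after symmetrization, which also follows from the lattice-model realization. Hence \eqref{specializetoone} gives the value $(\beta-\gamma)^n=1$ times $(1-q)^{n(n-1)/2}\prod_{j=1}^N(1-qw_j)^n\delta_{\mathbf{I},\emptyset}$.

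For type $B$, by \eqref{WBfunction} we have $WB_{N,n,r}=\prod_{k=1}^n(1+u_k^{-1})^{-1}\,F_{N,n,r}|_{\gamma=-1,\beta=1}$. The prefactor is regular at $u_k=1$, where it equals $2^{-n}$. Lemma~\ref{deltalem} gives the factor $(\beta-\gamma)^n=2^n$, and the two powers of $2$ cancel. The remaining factor is again $(1-q)^{n(n-1)/2}\prod_{j}(1-qw_j)^n\delta_{\mathbf{I},\emptyset}$, so the two specialized values agree, which is \eqref{whenalltoone}.

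The only point needing care is the legitimacy of evaluating at $u_k=1$ after specializing, that is, the regularity just discussed. Alternatively, one can rerun the freezing argument of Lemma~\ref{deltalem} directly with $K^C(1)=\mathrm{Id}$ and $K^B(1)=\mathrm{Id}$, which follow from \eqref{KC-components} and \eqref{KB-components}. This yields the same unique configuration, now with $K$-weight $1$, and avoids the regularity issue entirely.
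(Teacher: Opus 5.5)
Your proposal is correct and is essentially the paper's argument: the paper obtains the corollary directly from Lemma~\ref{deltalem} by specializing $(\gamma,\beta)$. This gives $(\beta-\gamma)^n=1$ in type $C$, while in type $B$ the factor $2^n$ cancels the value $2^{-n}$ of the prefactor $\prod_k(1+u_k^{-1})^{-1}$ at $u_k=1$. Your remarks on regularity and the alternative freezing argument with $K^C(1)=K^B(1)=\mathrm{Id}$ are correct but not needed.
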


\begin{lem} \label{onevariablesp}
Assume $r\le n-1$. We have
\begin{align}
&F_{N,n,r}(\mathbf{u}|\mathbf{w}|\mathbf{I})|_{u_n=1}
=(\beta-\gamma) \prod_{k=1}^{n-1} (1-qu_k^{-1}) \prod_{j=1}^{N} (1-q w_j)
F_{N,n-1,r}(u_1,\dots,u_{n-1}|\mathbf{w}|\mathbf{I}).
\label{Fonevariablesp}
\end{align}
\end{lem}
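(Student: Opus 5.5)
I would prove \eqref{Fonevariablesp} directly from the symmetrization formula \eqref{Motherfunction} of Theorem~\ref{explicitFformula}. The lattice-model freezing argument of Lemma~\ref{deltalem}, which uses $K(1)=(\beta-\gamma)\mathrm{Id}$, is a possible alternative. However, the explicit formula makes the bookkeeping of scalar factors transparent.

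\emph{Step 1 (vanishing of summands).} Set $u_n=1$ in \eqref{Motherfunction}. In the summand indexed by $\sigma$, the factor $\prod_{j=1}^r(u_{\sigma(j)}^2-1)$ vanishes whenever $\sigma^{-1}(n)\le r$. Since $r\le n-1$, the surviving permutations are those with $\sigma(b)=n$ for some $b\in\{r+1,\dots,n\}$. The apparent poles $(u_{\sigma(b)}-u_{\sigma(k)})^{-1}$ with $k\le r$ are harmless: each comes paired with $(u_{\sigma(b)}u_{\sigma(k)}-1)$, and I would evaluate the product $\frac{qu_b-u_k}{u_b-u_k}(u_bu_k-1)$ before specializing, exactly as in the proof of Theorem~\ref{ffunctionproperties}.

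\emph{Step 2 (extracting the $u_n$-factors).} Fix such a $\sigma$ and collect every factor containing $u_{\sigma(b)}=1$.
\begin{itemize}
\item The factor $\prod_{k=1}^N(u_{\sigma(b)}-qw_k)$ gives $\prod_{k=1}^N(1-qw_k)$.
\item The factor $\beta u_{\sigma(b)}-\gamma$ gives $\beta-\gamma$.
\item For each $k\le r$, the cross factor gives $\frac{q-u_{\sigma(k)}}{1-u_{\sigma(k)}}(u_{\sigma(k)}-1)=u_{\sigma(k)}-q$.
\item For each $j\in\{r+1,\dots,n\}\setminus\{b\}$, the symmetric factor $u_{\sigma(j)}u_{\sigma(b)}-q$ gives $u_{\sigma(j)}-q$.
\end{itemize}
Thus every $u_k$ with $k\le n-1$ receives exactly one factor $u_k-q$. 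Now compare the prefactor $\prod_{k=1}^n u_k^{-(N+n)}$ at $u_n=1$ with the prefactor $\prod_{k=1}^{n-1}u_k^{-(N+n-1)}$ of $F_{N,n-1,r}$. The surplus is $\prod_{k=1}^{n-1}u_k^{-1}$, which combines with these factors to give $\prod_{k=1}^{n-1}(1-qu_k^{-1})$.

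\emph{Step 3 (reindexing).} The remaining factors of the summand are exactly the summand of \eqref{Motherfunction} for $F_{N,n-1,r}(u_1,\dots,u_{n-1}|\mathbf w|\mathbf I)$. It is indexed by the permutation $\sigma'\in S_{n-1}$ obtained by deleting the slot $b$. This uses the facts that the products over $r+1\le j<k$ are symmetric in the tail variables and that the cross products do not depend on the order of the tail. The map $\sigma\mapsto(b,\sigma')$ is a bijection onto $\{r+1,\dots,n\}\times S_{n-1}$, so each $\sigma'$ appears $n-r$ times. Since $\frac{n-r}{(n-r)!}=\frac{1}{(n-1-r)!}$, the normalization matches, and \eqref{Fonevariablesp} follows.

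The only delicate point is Step 2. There I must check that no factor of $u_{\sigma(b)}$ is lost or double counted, in particular that the exponent shift $N+n\to N+n-1$ exactly supplies the $u_k^{-1}$ needed to produce $(1-qu_k^{-1})$. The tail variables, which enter through $(u_ju_b-q)$, must be treated on the same footing as the head variables, which enter through the cross factor.
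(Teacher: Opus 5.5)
Your proof is correct, but it takes a different route from the paper.

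\emph{The paper's proof.} The argument there is graphical. At $u_n=1$ the boundary matrix $K(1)=(\beta-\gamma)\mathrm{Id}$ is diagonal. Together with the ice rule, this freezes the top row, which carries $u_n$. Deleting that row leaves $F_{N,n-1,r}$ multiplied by the frozen local weights: $\beta-\gamma$ from $K(1)$, $1-qw_j$ from $R(1,w_j)$, and $1-qu_k^{-1}$ from $R(1,u_k^{-1})$.

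\emph{Your proof.} You work entirely inside the symmetrization formula of Theorem~\ref{explicitFformula}. I checked each step, and they hold:
\begin{itemize}
\item The factor $u_{\sigma(j)}^2-1$ kills every $\sigma$ that places $n$ in a head slot $j\le r$.
\item The factors attached to the tail slot $b=\sigma^{-1}(n)$ collapse to $(\beta-\gamma)\prod_j(1-qw_j)\prod_{k\le n-1}(u_k-q)$. Each remaining variable receives exactly one factor $u_k-q$, from either a cross factor or a tail pair factor.
\item The drop of the monomial exponent from $N+n$ to $N+n-1$ supplies the $u_k^{-1}$ that turns $u_k-q$ into $1-qu_k^{-1}$.
\item The map $\sigma\mapsto(b,\sigma')$ is a bijection onto $\{r+1,\dots,n\}\times S_{n-1}$, and $(n-r)/(n-r)!=1/(n-r-1)!$ fixes the normalization.
\end{itemize}
Iterating the result down to $n=0$ reproduces the factor $(1-q)^{n(n-1)/2}$ of Lemma~\ref{deltalem}, which is a good consistency check.

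\emph{Trade-off.} The paper's argument is self-contained at the level of the partition function and explains each scalar factor as a Boltzmann weight. However, it rests on a figure. Your argument is fully algebraic and makes the combinatorial bookkeeping checkable. Its cost is the dependence on the external symmetrization formula. There is no circularity: you borrow only the elementary identity $\frac{q-u}{1-u}(u-1)=u-q$ from the proof of Theorem~\ref{ffunctionproperties}, not the theorem itself, which in the paper uses this lemma.

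\emph{Minor point.} Your remark about apparent poles is unnecessary. With $u_n=1$ and $u_1,\dots,u_{n-1}$ generic, no denominator $u_{\sigma(j)}-u_{\sigma(k)}$ vanishes, so each summand is regular at $u_n=1$ and can be specialized term by term. The identity above is just a simplification.
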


\begin{proof}
The proof proceeds in the same way as that of Lemma~\ref{deltalem}, using the ice rule \eqref{eq:ice-rule}. Consider the first panel of Figure~\ref{fig:forstability}. When $u_n=1$, the configuration of the first row is uniquely determined, as shown in the second panel. Hence we may remove the first row at the cost of multiplying by
$\displaystyle (\beta-\gamma)\prod_{k=1}^{n-1}(1-qu_k^{-1}) \prod_{j=1}^{N}(1-qw_j)$. After removing the first row, the remaining partition function is
$F_{N,n-1,r}(u_1,\dots,u_{n-1}|\mathbf{w}|\mathbf{I})$. Therefore, we obtain \eqref{Fonevariablesp}.

\begin{figure}[!htbp]
  \centering
  \includegraphics[width=0.7\linewidth]{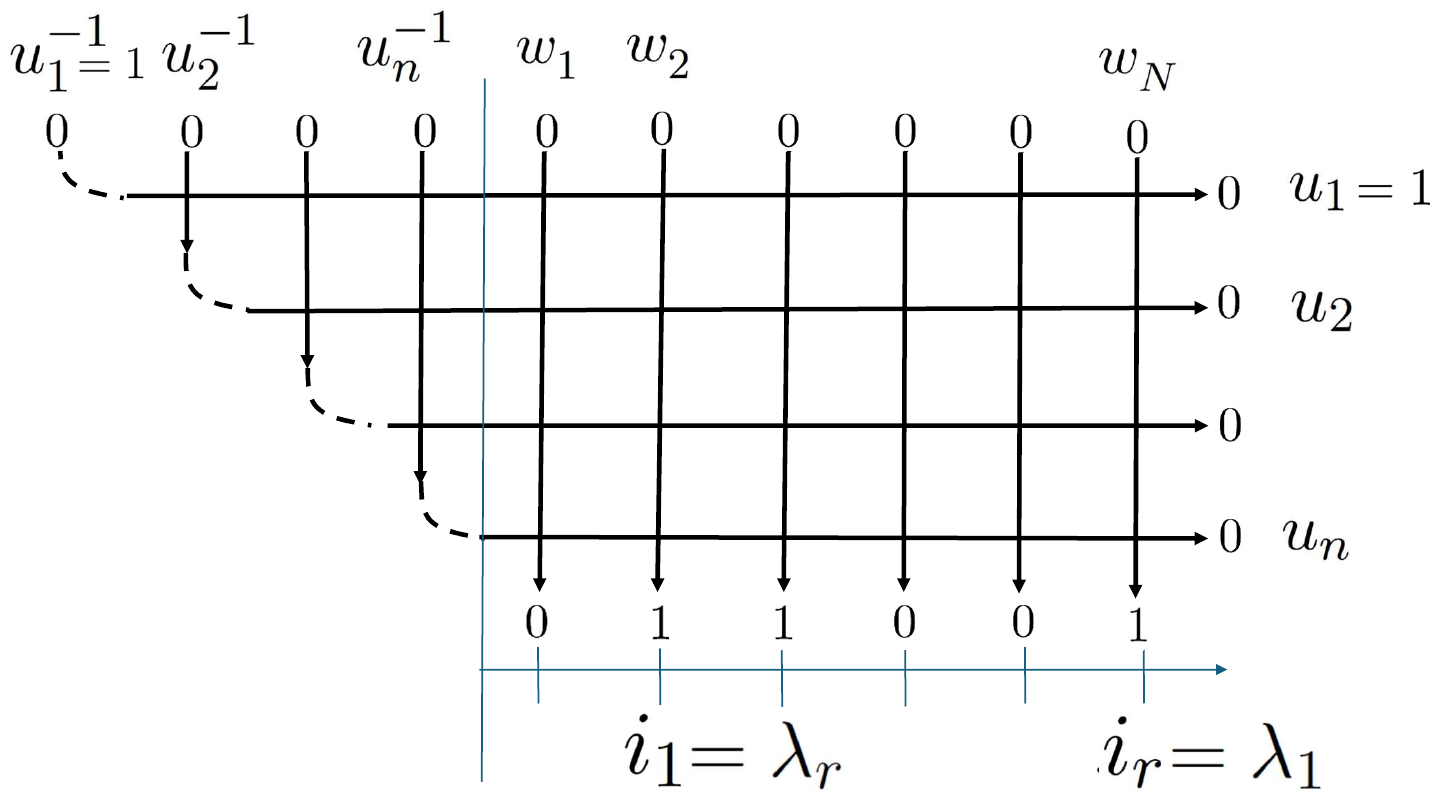}
 \includegraphics[width=0.45\linewidth]{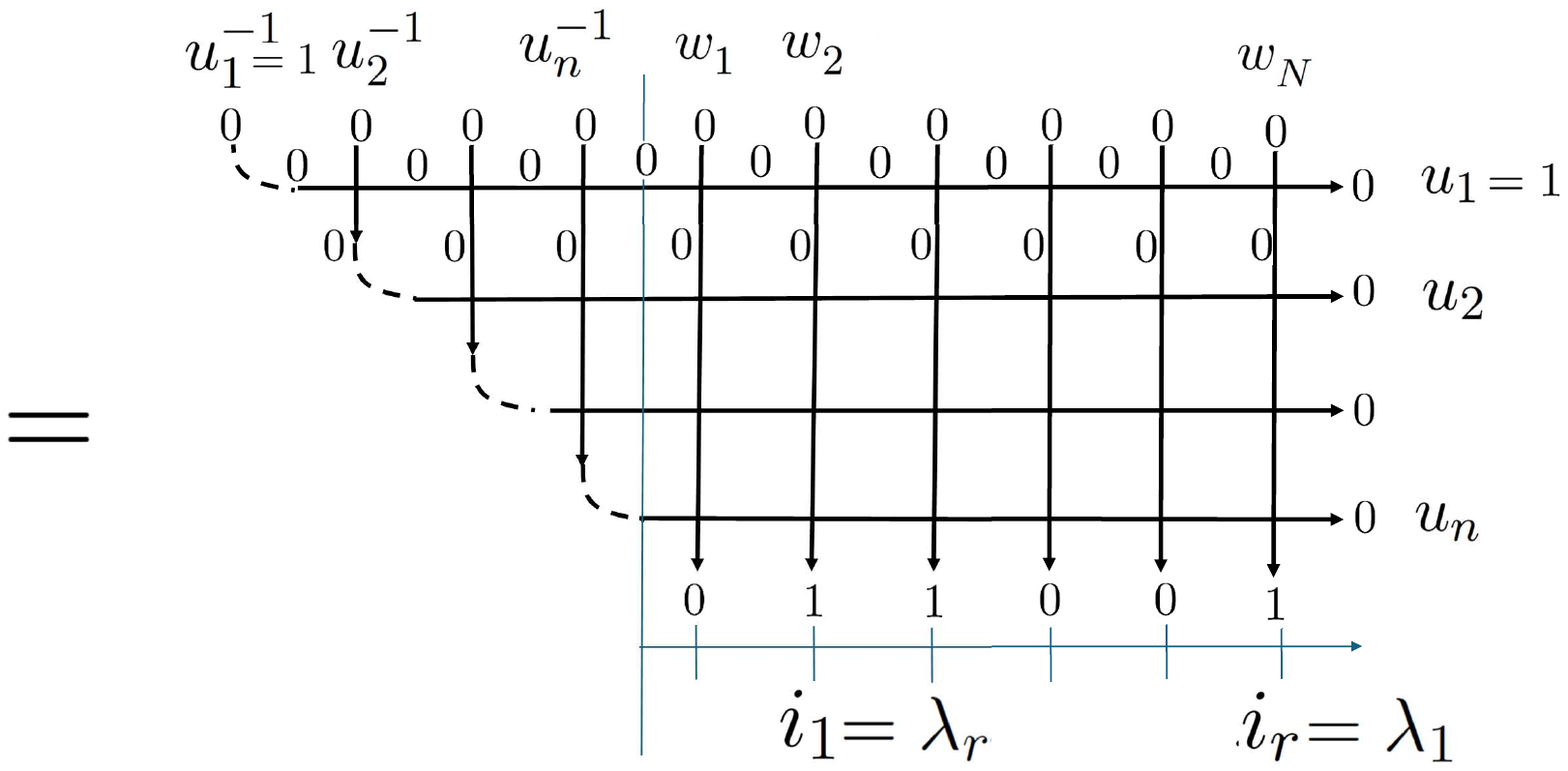}
\includegraphics[width=0.45\linewidth]{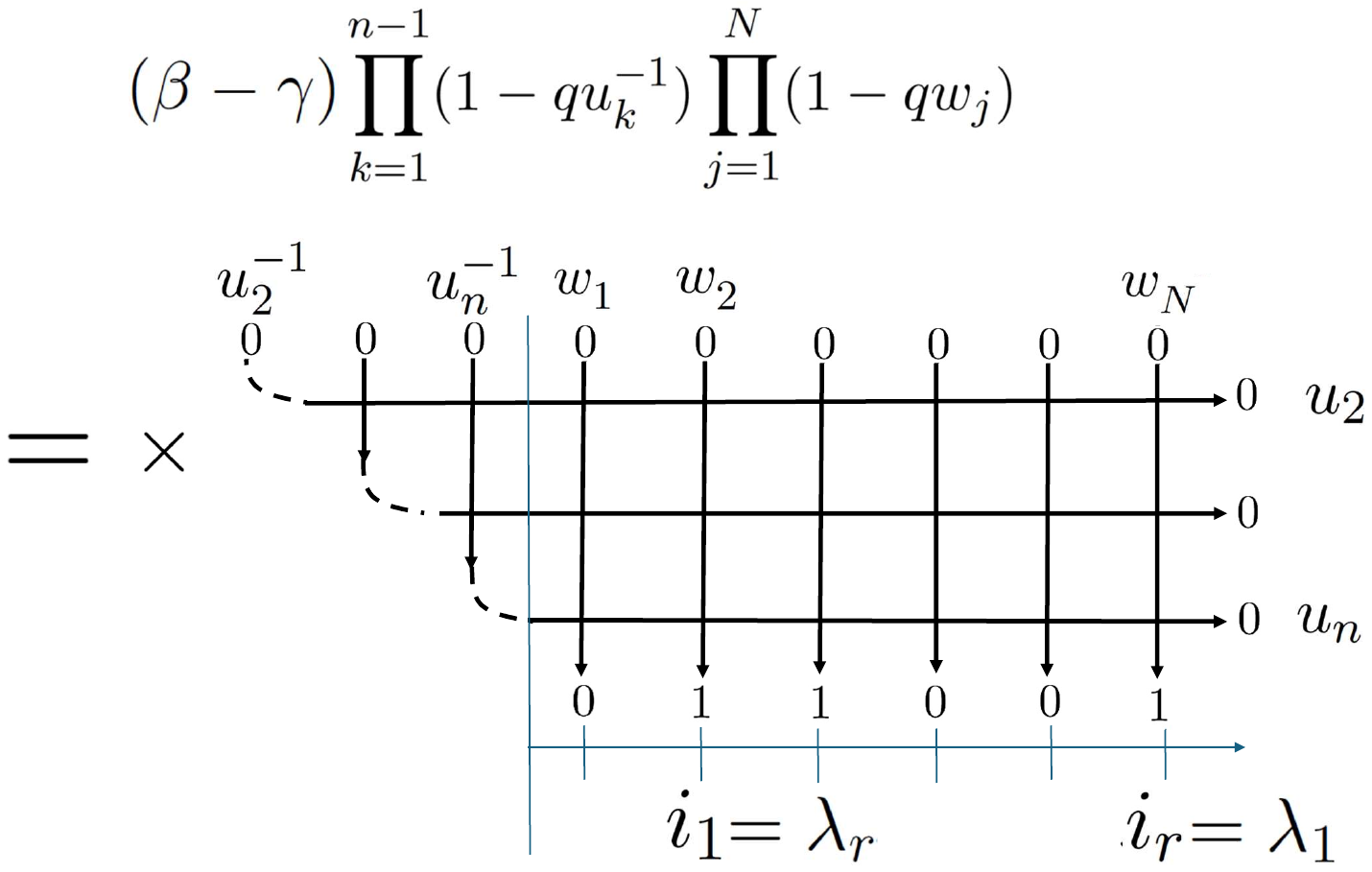}
\caption{A graphical derivation of \eqref{Fonevariablesp}.}
  \label{fig:forstability}
\end{figure}

\end{proof}

For $r=n$,  $F_{N,n,r}(\mathbf{u}|\mathbf{w}|\mathbf{I})$,
$WC_{N,n,r}(\mathbf{u}|\mathbf{w}|\mathbf{I})$, $WB_{N,n,r}(\mathbf{u}|\mathbf{w}|\mathbf{I})$ reduce to the
$K$-theoretic weight functions for type $A$ Grassmannian.

\begin{Definition}
We introduce type $A$ trigonometric weight functions:
\begin{align}
    W_{N,n}(\mathbf{u}|\mathbf{w}|\mathbf{I})
    :=&\prod_{k=1}^n u_k^{-N}
    \sum_{\sigma \in S_n} \sigma
    \Bigg[
    \prod_{j=1}^n \Bigg\{ (1-q)u_j \prod_{k=i_j+1}^N (u_j-qw_k) \prod_{k=1}^{i_j-1} (u_j-w_k) \Bigg\} \nonumber \\
    &\times
    \prod_{1 \le j < k \le n} \frac{qu_j-u_k}{u_j-u_k}
\Bigg].
\end{align}

\end{Definition}

The same functions admit the lattice-model realization shown in Figure~\ref{fig:basepartition}.
We use the same notation
$W_{N,n}(\mathbf{u}|\mathbf{w}|\mathbf{I})$
for these partition functions, which correspond to the standard off-shell Bethe wavefunctions.

\begin{figure}[!htbp]
  \centering
 \includegraphics[width=0.7\linewidth]{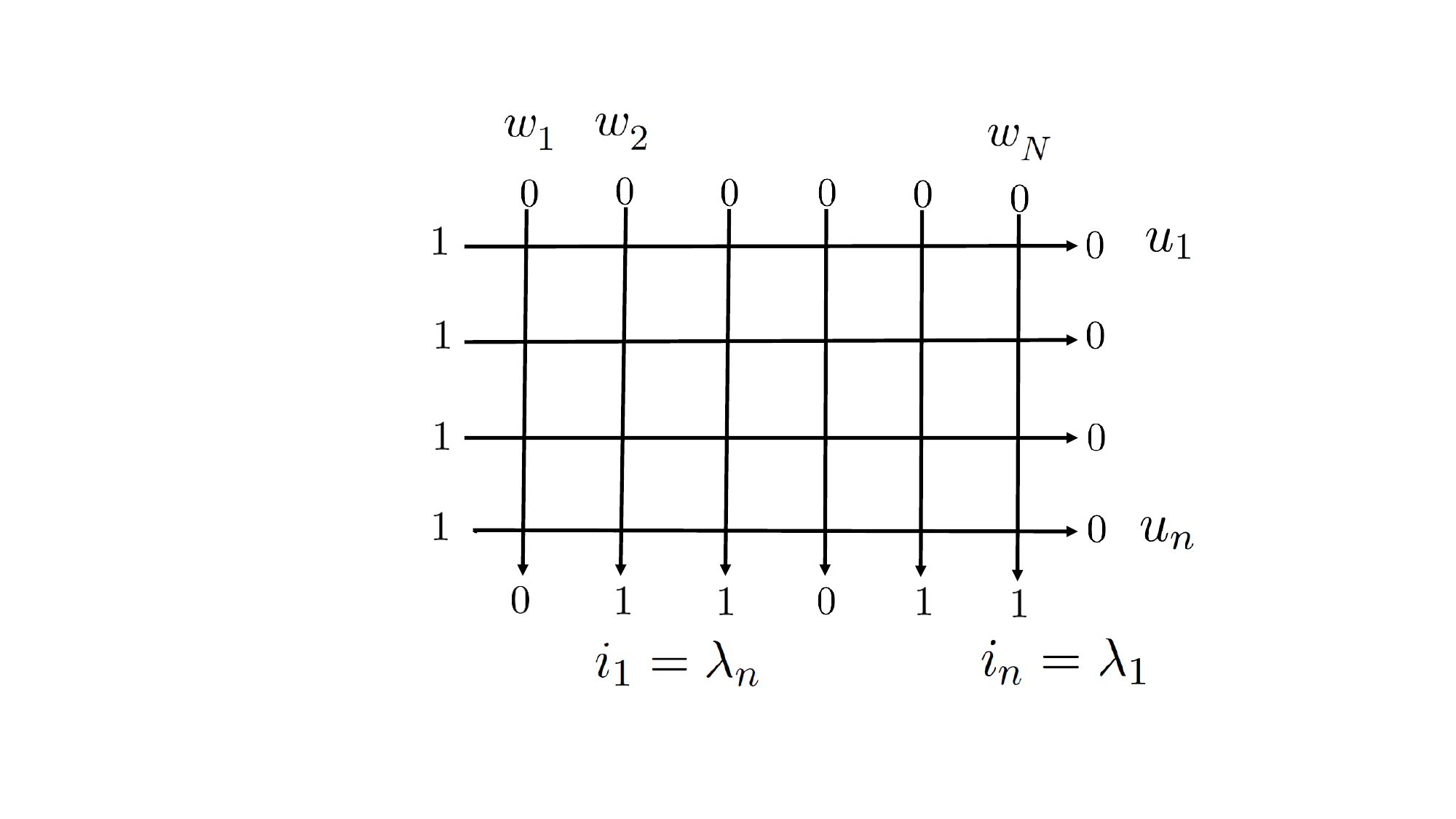}
  \caption{The partition function realization of
  $W_{N,n}(\mathbf{u}|\mathbf{w}|\mathbf{I})$ corresponding to the Bethe wavefunctions.}
  \label{fig:basepartition}
\end{figure}

The identification of this partition function with the symmetrization formula above
is well known and can be proved in several ways.
In standard textbooks on quantum integrable systems,
where the quantum \(R\)-matrix is used instead of the stochastic \(R\)-matrix,
it is discussed in \cite{Slavnov}, for example.
For higher-rank versions formulated in terms of nested Bethe vectors, see, for example, \cite{TVNested}.
Higher-spin versions appear, for example, in \cite{FelderVarchenko,BP}.
At the $q=0$ degeneration of the stochastic $R$-matrix, closely related five-vertex partition functions give Grothendieck polynomials; see, for example, \cite{MS,GK,WZ}.

Using this lattice-model realization of $W_{N,n}$,
we can graphically show the following factorization.

\begin{lem} \label{factorizationlemma}
    If $r=n$, we have
    \begin{align}
        F_{N,n,n}(\mathbf{u}|\mathbf{w}|\mathbf{I})
        =\prod_{j=1}^n (1-u_j^{-2}) \prod_{1 \le i < j \le n} (1-u_i^{-1} u_j^{-1})  W_{N,n}(\mathbf{u}|\mathbf{w}|\mathbf{I}).
        \label{factorizationlemmaidentity}
    \end{align}
\end{lem}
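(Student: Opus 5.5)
The plan is to prove \eqref{factorizationlemmaidentity} directly from the symmetrization formula \eqref{Motherfunction}. The graphical argument suggested in the text would serve as an alternative cross-check. Set $r=n$ in Theorem~\ref{explicitFformula}. Then $(n-r)!=1$, and all products indexed by $j\ge r+1$ are empty. In particular the boundary factors $\beta u_j-\gamma$ and the factors $u_ju_k-q$ disappear, so $F_{N,n,n}$ is independent of $\beta,\gamma$, as it should be. We are left with
\begin{align*}
F_{N,n,n}(\mathbf u|\mathbf w|\mathbf I)
=\prod_{k=1}^n u_k^{-(N+n)}\sum_{\sigma\in S_n}\sigma\Bigg[&\prod_{j=1}^n\Big\{(1-q)(u_j^2-1)\prod_{k=i_j+1}^N(u_j-qw_k)\prod_{k=1}^{i_j-1}(u_j-w_k)\Big\}\\
&\times\prod_{1\le j<k\le n}\frac{qu_j-u_k}{u_j-u_k}(u_ju_k-1)\Bigg].
\end{align*}

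The key observation is that the factors $\prod_{j=1}^n(u_j^2-1)$ and $\prod_{j<k}(u_ju_k-1)$ are symmetric in $\mathbf u$. Hence they are invariant under every $\sigma$ and can be pulled outside the symmetrization. This is the only point where $r=n$ is essential: for $r<n$ the cross factors $(u_ju_k-1)$ with $j\le r<k$ are not part of a symmetric product.

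After factoring, I will rewrite the extracted products as
\[
\prod_{j}(u_j^2-1)=\prod_j u_j^{2}\prod_j(1-u_j^{-2}),
\qquad
\prod_{j<k}(u_ju_k-1)=\prod_j u_j^{\,n-1}\prod_{j<k}(1-u_j^{-1}u_k^{-1}),
\]
using that each $u_j$ appears in exactly $n-1$ of the pairs. The total monomial pulled out is $\prod_j u_j^{n+1}$. Combined with the prefactor $\prod_k u_k^{-(N+n)}$, this gives $\prod_k u_k^{-N}\cdot\prod_k u_k$. The leftover $\prod_k u_k$ is again symmetric, so it may be pushed back inside $\sigma[\cdots]$ as the factor $u_j$ in $(1-q)u_j$ of the type $A$ definition. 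What remains is exactly $W_{N,n}(\mathbf u|\mathbf w|\mathbf I)$, which proves \eqref{factorizationlemmaidentity}.

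The only potential obstacle is bookkeeping of the monomial powers $u^{-(N+n)}$ versus $u^{-N}$ and the extra $u_j$; this is a one-line check. If a purely lattice-theoretic proof is preferred instead, I would freeze the triangular region. With $r=n$, all $n$ lines must carry a $1$ out through the $K$-matrices, so each $K$ contributes its $(0,1)$ entry $1-u_j^{-2}$. The ice rule then fixes the triangle of $R(u_i,u_j^{-1})$ vertices, which contributes $\prod_{i<j}(1-u_i^{-1}u_j^{-1})$ after normalization. The remaining square lattice is the Bethe wavefunction of Figure~\ref{fig:basepartition}. In that route, verifying the frozen vertex weights is the step needing care.
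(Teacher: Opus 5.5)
Your proof is correct, and it takes a different route from the paper's. The bookkeeping checks out. At $r=n$ the products $\prod_j(u_j^2-1)$ and $\prod_{j<k}(u_ju_k-1)$ are symmetric. Pulling them out of the symmetrization in \eqref{Motherfunction} produces $\prod_j u_j^{n+1}\prod_j(1-u_j^{-2})\prod_{j<k}(1-u_j^{-1}u_k^{-1})$. Then $\prod_k u_k^{-(N+n)}\prod_k u_k^{n+1}=\prod_k u_k^{-N}\prod_k u_k$ gives exactly the prefactor and the factor $u_j$ in the definition of $W_{N,n}$.

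The paper instead proves the lemma on the lattice, essentially as in your secondary sketch. For $r=n$ the ice rule forces every edge between the triangular and rectangular parts to be $1$. This pins each boundary weight to $[K(u_j)]_0^{\,1}=1-u_j^{-2}$ and freezes the triangle, whose vertices contribute exactly $\prod_{i<j}(1-u_i^{-1}u_j^{-1})$, so no extra normalization is involved. The remaining rectangle is the Bethe wavefunction.

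What each route buys:
\begin{itemize}
\item Your algebraic route is a two-line verification. Since $W_{N,n}$ is \emph{defined} by its symmetrization formula, it proves the statement without invoking the ``well known'' identification of the rectangular partition function with that formula. The price is that it rests entirely on the cited Theorem~\ref{explicitFformula}, which is where the real work lies.
\item The graphical argument is independent of the symmetrization formula and explains structurally why $\beta,\gamma$ drop out when $r=n$. It also supplies the freezing mechanism that the paper reuses directly in the proof of \eqref{fixedpointrelationFandWA} and in the graphical $s<r$ vanishing argument.
\end{itemize}
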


\begin{proof}

Starting from the top-left panel of Figure~\ref{fig:factorization}, we observe that for $r=n$, the ice rule \eqref{eq:ice-rule} restricts all states on the edges between the triangular and rectangular parts to be $1$. This restriction further fixes the boundary $K$-matrix elements to $[K(u_j)]_{0}^1$ in \eqref{eq:Ktri-components}, uniquely determining the configuration of the triangular part (see the middle-right panel). Removing this triangular part introduces the factor 
$\displaystyle
    \prod_{j=1}^n (1-u_j^{-2}) \prod_{1 \le i < j \le n} (1-u_i^{-1} u_j^{-1}).
$
The remaining rectangular part is the partition function $W_{N,n}(\mathbf{u}|\mathbf{w}|\mathbf{I})$. Hence we arrive at the factorization identity \eqref{factorizationlemmaidentity}.

\begin{figure}[!htbp]
  \centering
   \includegraphics[width=0.45\linewidth]{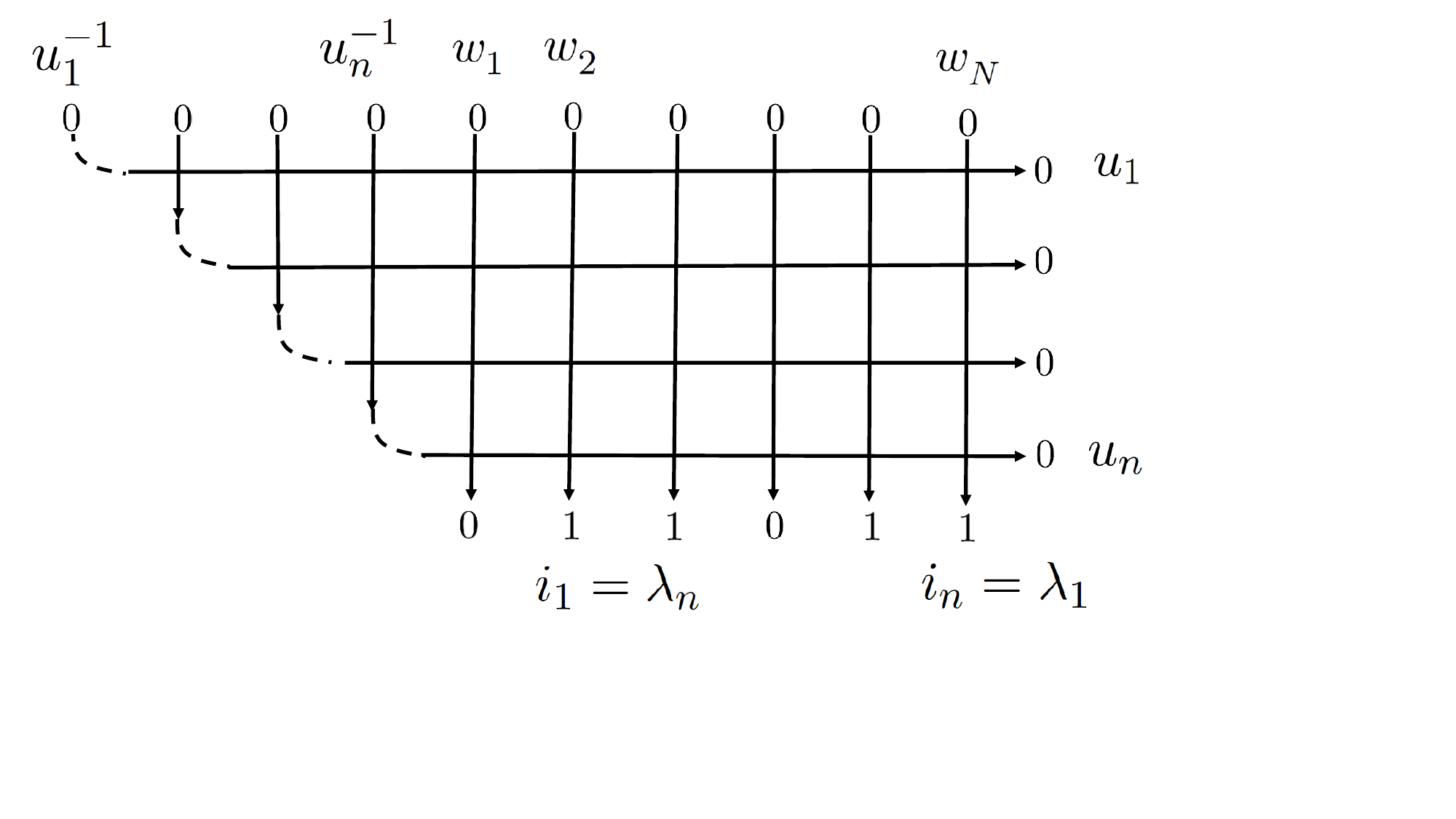}
 \includegraphics[width=0.45\linewidth]{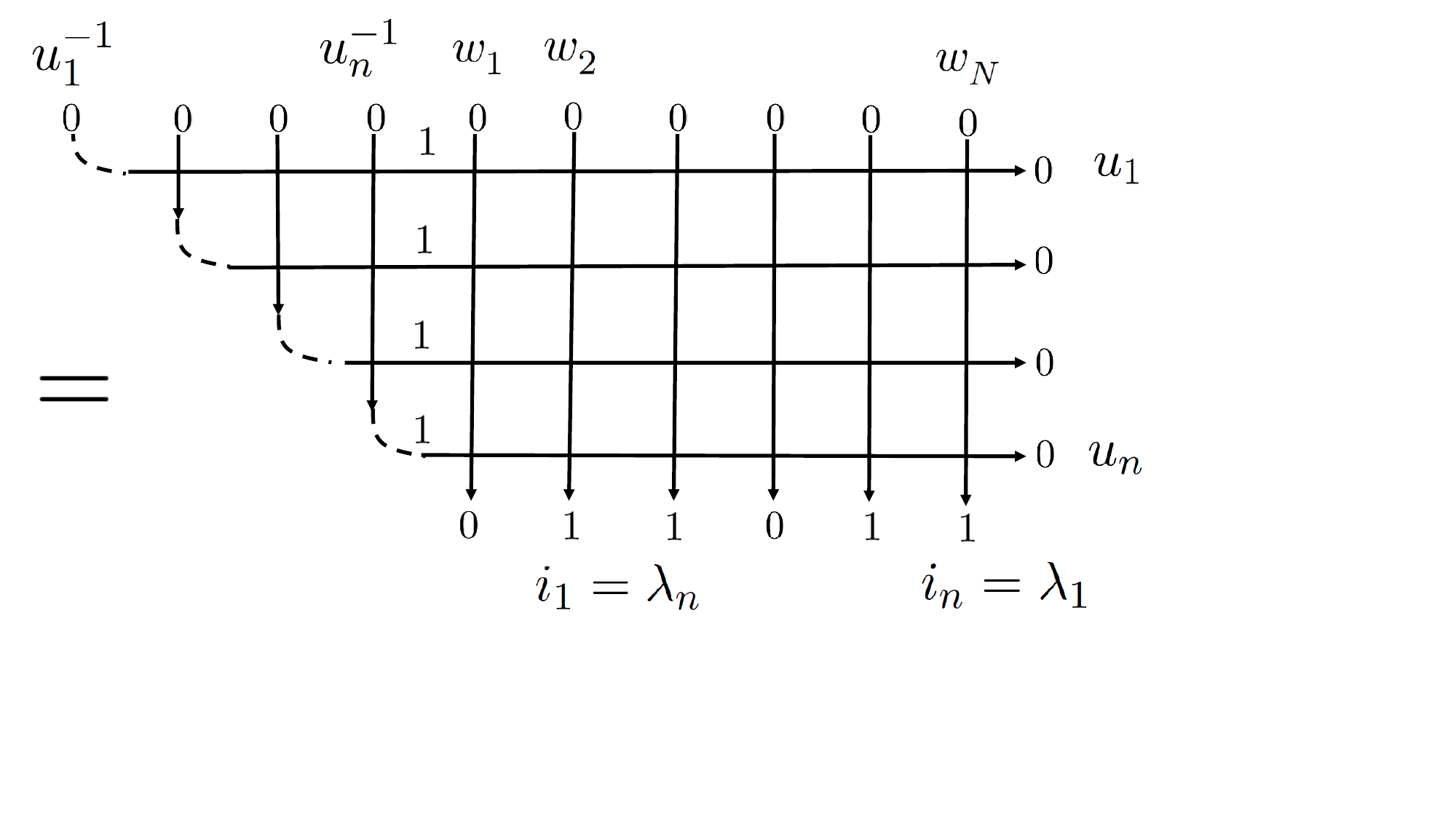}
\includegraphics[width=0.45\linewidth]{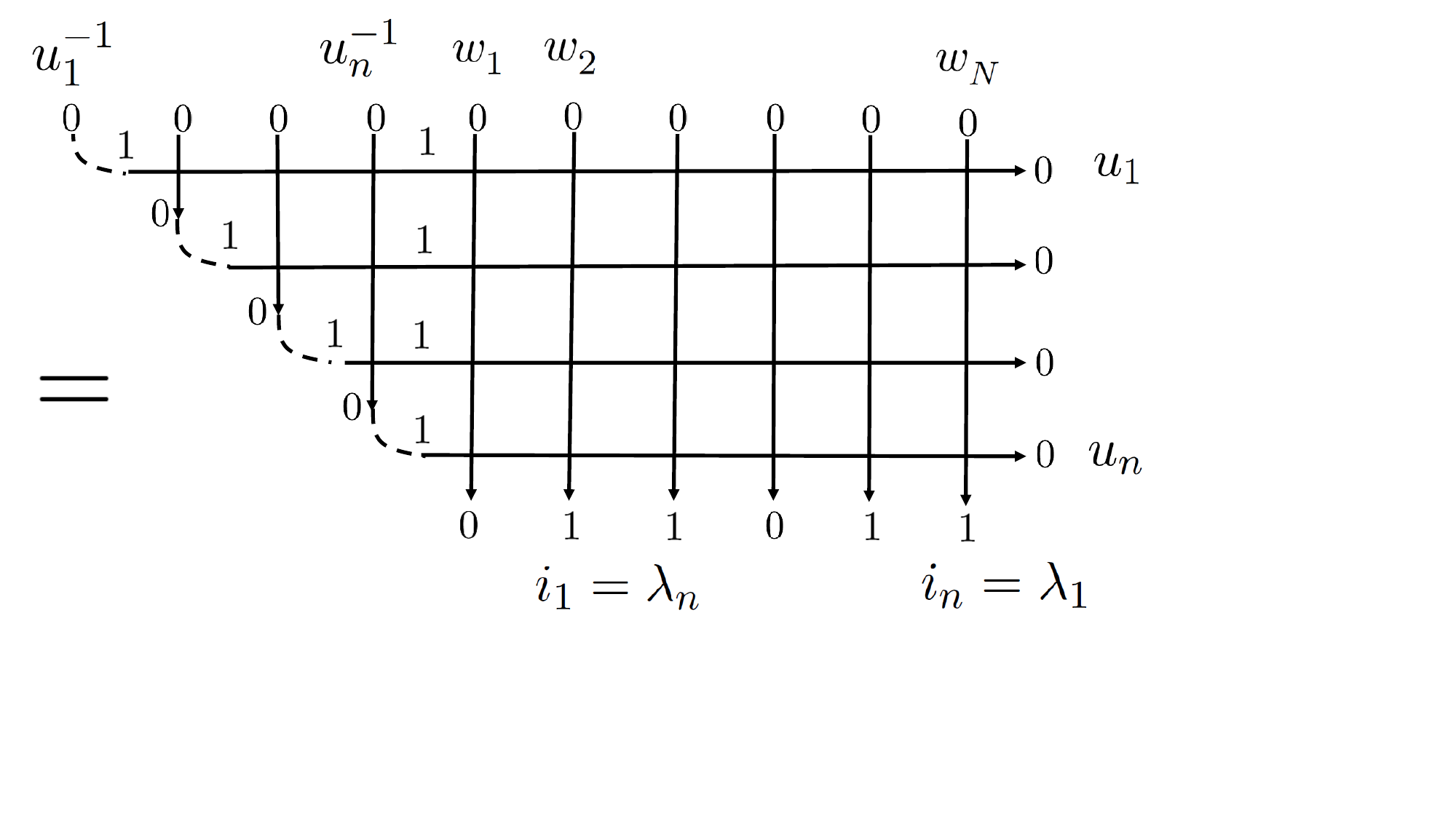}
\includegraphics[width=0.45\linewidth]{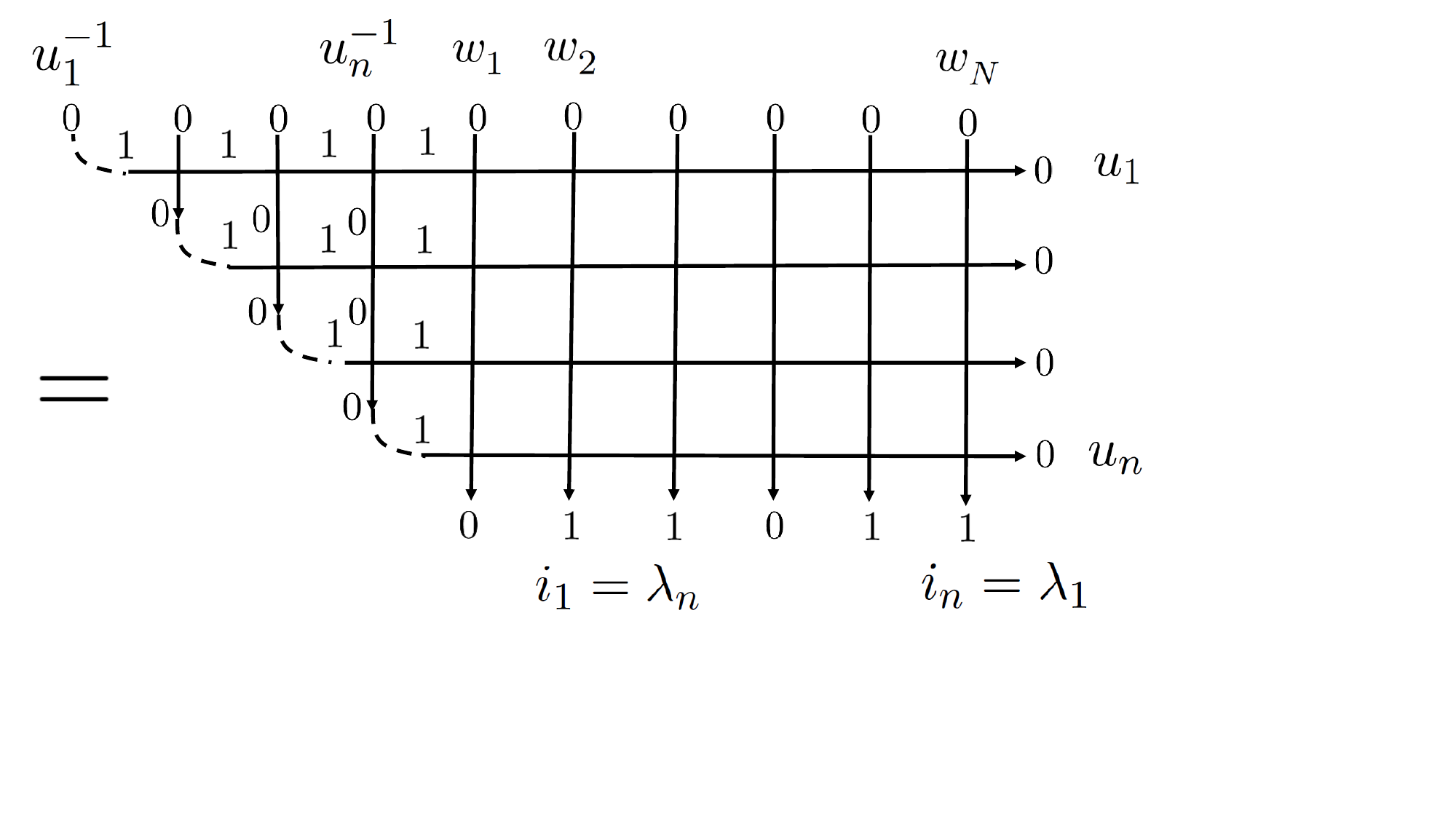}
 \includegraphics[width=0.7\linewidth]{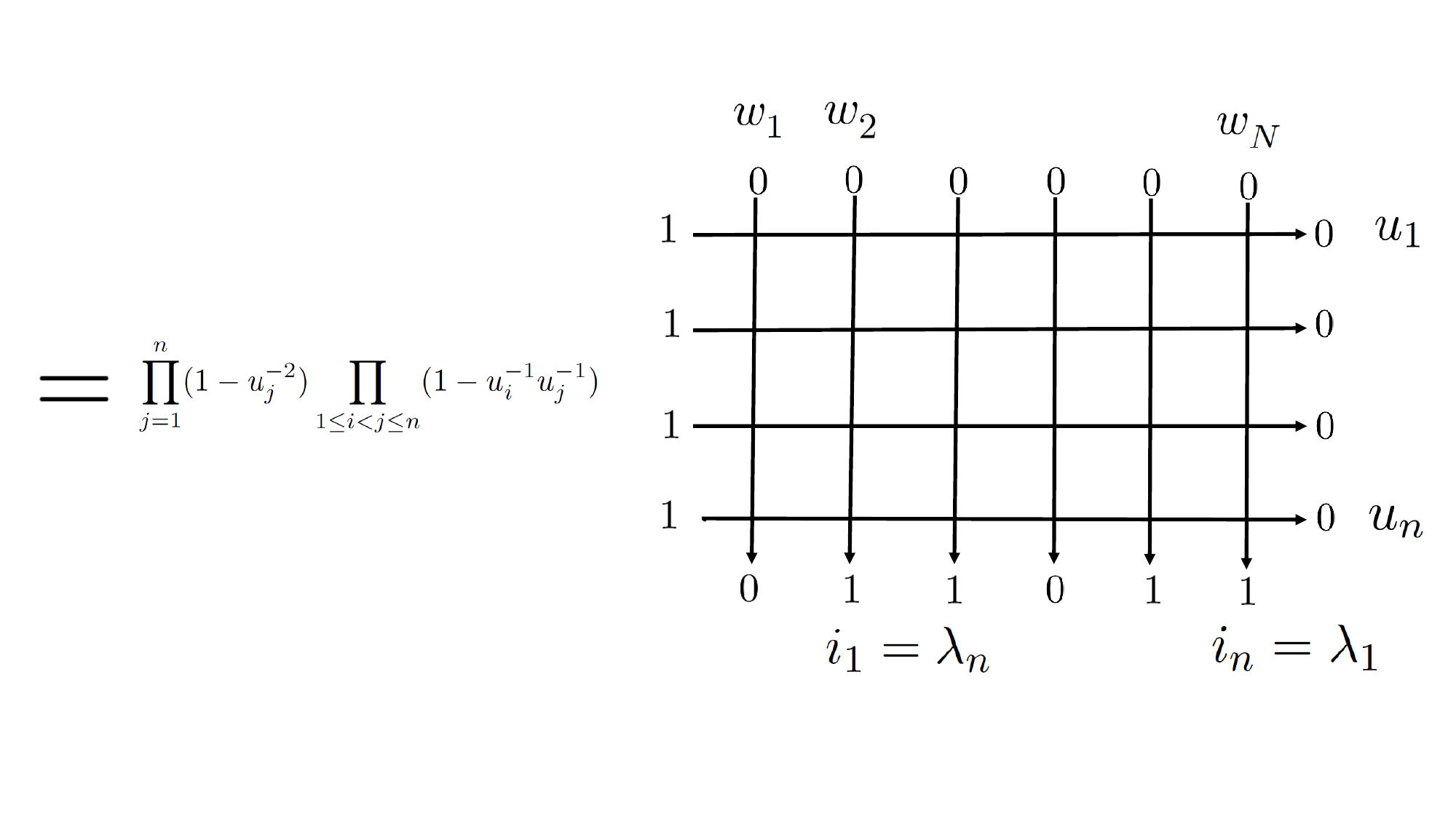}
  \caption{The factorization \eqref{factorizationlemmaidentity} when $r=n$.}
  \label{fig:factorization}
\end{figure}

\end{proof}

  \eqref{factorizationlemmaidentity} leads to the following,
  which is an extension of \cite[Proposition 2.3]{IkedaNaruse}.
\begin{corollary}
       If $r=n$, we have
    \begin{align}
        WC_{N,n,n}(\mathbf{u}|\mathbf{w}|\mathbf{I})
        &=\prod_{j=1}^n (1-u_j^{-2}) \prod_{1 \le i < j \le n} (1-u_i^{-1} u_j^{-1})  W_{N,n}(\mathbf{u}|\mathbf{w}|\mathbf{I}), \label{typeCnrFactorization}\\
           WB_{N,n,n}(\mathbf{u}|\mathbf{w}|\mathbf{I})
        &=\prod_{j=1}^n (1-u_j^{-1}) \prod_{1 \le i < j \le n} (1-u_i^{-1} u_j^{-1})  W_{N,n}(\mathbf{u}|\mathbf{w}|\mathbf{I}). \label{typeBnrFactorization}
    \end{align}

\end{corollary}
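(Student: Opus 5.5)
This follows by specializing Lemma~\ref{factorizationlemma}, or equivalently the explicit formula \eqref{Motherfunction}, to the two boundary choices \eqref{WCfunction} and \eqref{WBfunction}. The factorization \eqref{factorizationlemmaidentity} holds for arbitrary $\beta,\gamma$. It has no dependence on these parameters at all, because for $r=n$ the triangular part is frozen to the off-diagonal elements $[K(u_j)]_0^{\,1}=1-u_j^{-2}$, which do not involve $\beta,\gamma$. So I can substitute $\gamma=0,\beta=1$ directly to obtain \eqref{typeCnrFactorization}, since $WC_{N,n,n}=F_{N,n,n}|_{\gamma=0,\beta=1}$.

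For type $B$, I will use \eqref{WBfunction}:
\[
WB_{N,n,n}=\prod_{k=1}^n(1+u_k^{-1})^{-1}F_{N,n,n}|_{\gamma=-1,\beta=1}.
\]
Applying \eqref{factorizationlemmaidentity} and the identity $(1-u_j^{-2})(1+u_j^{-1})^{-1}=1-u_j^{-1}$ for each $j$ gives \eqref{typeBnrFactorization}.

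As a cross-check, or as an alternative proof independent of the graphical argument, I would compare with \eqref{Motherfunction} at $r=n$. There the factor $1/(n-r)!$ equals $1$, and all products indexed by $j\ge r+1$ are empty, so the $\beta,\gamma$-dependence disappears. Each summand contains
\[
\prod_j(1-q)(u_j^2-1)\prod_{j<k}(u_ju_k-1),
\]
which is symmetric in $\mathbf{u}$ and can therefore be pulled out of the symmetrization. Combined with the prefactor $\prod_k u_k^{-(N+n)}$, the powers of $u$ match. Indeed, $(u_j^2-1)=u_j^2(1-u_j^{-2})$ and $\prod_{j<k}(u_ju_k-1)=\prod_k u_k^{n-1}\prod_{j<k}(1-u_j^{-1}u_k^{-1})$, so the total extracted power is $u_k^{n+1}$. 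Together with $u_k^{-(N+n)}$ this leaves $u_k^{-N}\cdot u_k$, which is exactly the $u_j$ inside the type $A$ bracket times the prefactor $u_k^{-N}$ of $W_{N,n}$.

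There is no real obstacle. The only step needing care is this power-of-$u$ bookkeeping in the direct check, and it is unnecessary if Lemma~\ref{factorizationlemma} is simply invoked.
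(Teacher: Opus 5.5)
Your proposal is correct and follows the paper's route: the paper obtains this corollary by specializing Lemma~\ref{factorizationlemma}, whose right-hand side does not depend on $\beta,\gamma$, at $\gamma=0,\beta=1$ for type $C$, and at $\gamma=-1,\beta=1$ together with $(1-u_j^{-2})(1+u_j^{-1})^{-1}=1-u_j^{-1}$ for type $B$. Your direct check from \eqref{Motherfunction} is also sound, with $(1-q)^n$ and $\prod_j u_j$ absorbed into $W_{N,n}$, and it has the small bonus of not relying on the graphical argument.
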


The following corresponds to the supersymmetry property.

\begin{prop}

Assume $r\le n-2$. If $u_i u_j=1$, we have
\begin{align}
     F_{N,n,r}(\mathbf{u}|\mathbf{w}|\mathbf{I})|_{u_i u_j=1} 
     &=(1-q)(\beta-\gamma u_i )  (\beta-\gamma u_{j}) 
     \prod_{k=1}^N \{ (1-qu_i w_k)(1-qu_j w_k)  \} \nonumber  \\
&  \times   \prod_{\substack{k=1 \\ k \neq i,j}}^n \{\
     (1-qu_i u_k^{-1})(1-q u_j u_k^{-1})
     \}
     F_{N,n-2,r}(\mathbf{u} \backslash \{ u_i, u_j \}|\mathbf{w}|\mathbf{I}). \label{reductionrelation}
\end{align}

\end{prop}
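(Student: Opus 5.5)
By the symmetry of $F_{N,n,r}(\mathbf{u}|\mathbf{w}|\mathbf{I})$ in $\mathbf{u}$, I would reduce to $(i,j)=(n-1,n)$, writing $u_{n-1}=x$ and $u_n=x^{-1}$. I would prove \eqref{reductionrelation} algebraically from the symmetrization formula \eqref{Motherfunction}, treating it as an identity of rational functions in generic $u_1,\dots,u_{n-2},x$. There is an alternative graphical route, because $R(u_{n-1},u_n^{-1})=R(x,x)=(1-q)P$ by \eqref{eq:Ruu} and the two lines could then be railroaded out using the Yang--Baxter and reflection equations. I would nevertheless use the explicit formula, since it makes the prefactors transparent.

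\emph{Step 1 (vanishing of most summands).} For $\sigma\in S_n$, let $a=\sigma^{-1}(n-1)$ and $b=\sigma^{-1}(n)$ be the slots occupied by $x$ and $x^{-1}$. I will check that the summand vanishes at $x\cdot x^{-1}=1$ unless both $a,b\ge r+1$. If both $a,b\le r$, the factor $(u_ju_k-1)$ in $\prod_{1\le j<k\le r}$ vanishes. If exactly one of them is $\le r$, the factor $(u_ju_k-1)$ in $\prod_{j=r+1}^n\prod_{k=1}^r$ vanishes. None of the denominators $u_j-u_k$ vanish generically, so these summands are honestly zero.

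\emph{Step 2 (factoring the surviving summands).} Suppose $a,b\ge r+1$. The factors of the summand that involve $x$ and $x^{-1}$ are the following.
\begin{itemize}
\item $\prod_k (x-qw_k)(x^{-1}-qw_k)=\prod_k(1-qxw_k)(1-qx^{-1}w_k)$.
\item For each of the $r$ variables $u_k$ in the first block, the product of the two cross factors, $\frac{qx-u_k}{x-u_k}(xu_k-1)\cdot\frac{qx^{-1}-u_k}{x^{-1}-u_k}(x^{-1}u_k-1)$. The two rational pieces cancel to $1$, leaving $u_k^2(1-qxu_k^{-1})(1-qx^{-1}u_k^{-1})$.
\item For each other variable $u_k$ with $k\ge r+1$, the factor $(xu_k-q)(x^{-1}u_k-q)=u_k^2(1-qxu_k^{-1})(1-qx^{-1}u_k^{-1})$.
\item The pair factor $x\cdot x^{-1}-q=1-q$.
\item The boundary factor $(\beta x-\gamma)(\beta x^{-1}-\gamma)$.
\end{itemize}
For the prefactor, the part $x^{-(N+n)}x^{N+n}=1$ drops out. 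For every other variable, the change from $u_k^{-(N+n)}$ to $u_k^{-(N+n-2)}$ is exactly compensated by the $u_k^2$ just produced. The boundary factor equals $(\beta-\gamma x^{-1})(\beta-\gamma x)$, which matches the factor $(\beta-\gamma u_i)(\beta-\gamma u_j)$ in \eqref{reductionrelation} since $u_i=x$ and $u_j=x^{-1}$. After these cancellations, the remaining factors form precisely the summand of $F_{N,n-2,r}(u_1,\dots,u_{n-2}|\mathbf{w}|\mathbf{I})$ for the induced arrangement of $u_1,\dots,u_{n-2}$.

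\emph{Step 3 (counting).} The summand is symmetric in the variables placed in slots $r+1,\dots,n$, so the surviving $\sigma$ decompose as follows. There are $(n-r)(n-r-1)$ choices of the ordered slots $(a,b)$, and each choice leaves a permutation in $S_{n-2}$ of the remaining variables. This gives a factor $\frac{(n-r)(n-r-1)}{(n-r)!}=\frac{1}{(n-r-2)!}$, which is exactly the normalization of $F_{N,n-2,r}$ (here $r\le n-2$ is used). Collecting the $x$-dependent factors from Step 2 gives \eqref{reductionrelation}.

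The main obstacle is Step 2, namely verifying the cancellation of the rational cross factors against the $(u_ju_k-1)$ terms and the exact bookkeeping of the powers of $u_k$ in the prefactor. I would check this carefully on the two kinds of cross terms (first block versus second block) separately.
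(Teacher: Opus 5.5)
Your proof is correct, and it takes a genuinely different route from the paper's.

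\textbf{The paper's route.} The paper argues directly on the lattice. By symmetry it takes $(i,j)=(1,2)$ and observes that the corner vertex is $R(u_1,u_2^{-1})=R(u_1,u_1)=(1-q)P$ by \eqref{eq:Ruu}. It then uses the ice rule \eqref{eq:ice-rule} to freeze the first row. Next it invokes an auxiliary local relation (Figure~\ref{fig:auxsuper}) that forces both boundary $K$-weights in the first two rows to be $[K]_0^{\,0}$, which freezes the second row as well. The prefactor is read off as the product of the frozen Boltzmann weights, in the same way as in Lemma~\ref{onevariablesp}. So the paper uses a freezing argument, not the railroad argument you mention as an alternative.

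\textbf{Your route.} You work summand by summand in \eqref{Motherfunction}. The factors $u_ju_k-1$ kill every summand in which $x$ or $x^{-1}$ occupies one of the first $r$ slots. The identity $\frac{(xu_k-1)(x^{-1}u_k-1)}{(x-u_k)(x^{-1}-u_k)}=1$ collapses the cross terms. Finally, $(n-r)(n-r-1)/(n-r)!=1/(n-r-2)!$ reproduces the normalization of $F_{N,n-2,r}$. I checked the following points and they hold:
\begin{itemize}
\item the exponent bookkeeping, where the $u_k^{2}$ produced by each cross or pair factor turns $u_k^{-(N+n)}$ into $u_k^{-(N+n-2)}$;
\item that the orientation of the asymmetric factors $\frac{qu_j-u_k}{u_j-u_k}$ survives compression of the slots;
\item that the $x$-dependent factors do not depend on the chosen slots $(a,b)$.
\end{itemize}

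\textbf{Trade-offs.} Your argument is completely explicit and checkable line by line. It does not rely on a relation that the paper only draws. It also does not need the symmetry reduction, since the same computation works for any pair $(i,j)$ directly. However, it depends on the symmetrization formula of Theorem~\ref{explicitFformula}, which is imported from \cite{Motegi}. The paper's argument uses only local properties of the $R$- and $K$-matrices, so it is independent of that formula. It also explains each factor in \eqref{reductionrelation} as a frozen vertex or boundary weight, which fits the paper's program of deriving these identities from integrability.
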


\begin{proof}

Since $F_{N,n,r}(\mathbf{u}|\mathbf{w}|\mathbf{I})$ is symmetric with respect to $\mathbf{u}$, it suffices to verify \eqref{reductionrelation} for $i=1$ and $j=2$, as illustrated in Figure~\ref{fig:super}. First, we note that the $R$-matrix at the upper-left corner reduces essentially to the permutation operator \eqref{eq:Ruu}. Together with the ice rule \eqref{eq:ice-rule}, this allows the configuration of the first row (excluding the corner site) to be uniquely fixed, as shown in the middle-left panel.

To proceed, we apply the auxiliary relation depicted in Figure~\ref{fig:auxsuper}, which implies that the boundary $K$-matrix elements in the first two rows must both be $[K]_{0}^0$. Consequently, the configuration of the second row is also uniquely determined. Removing these two rows yields the smaller weight function $F_{N,n-2,r}(\mathbf{u} \setminus \{ u_1, u_2 \}|\mathbf{w}|\mathbf{I})$. After multiplying by the contribution from the removed rows:
\begin{align}
    &(1-q)(\beta-\gamma u_1)(\beta-\gamma u_2) \prod_{k=1}^N \{ (1-qu_1 w_k)(1-qu_2 w_k) \}  \prod_{k=3}^n \{ (1-qu_1 u_k^{-1})(1-q u_2 u_k^{-1}) \}, \notag
\end{align}
we arrive at the reduction relation \eqref{reductionrelation}.

\begin{figure}[!htbp]
  \centering
   \includegraphics[width=0.45\linewidth]{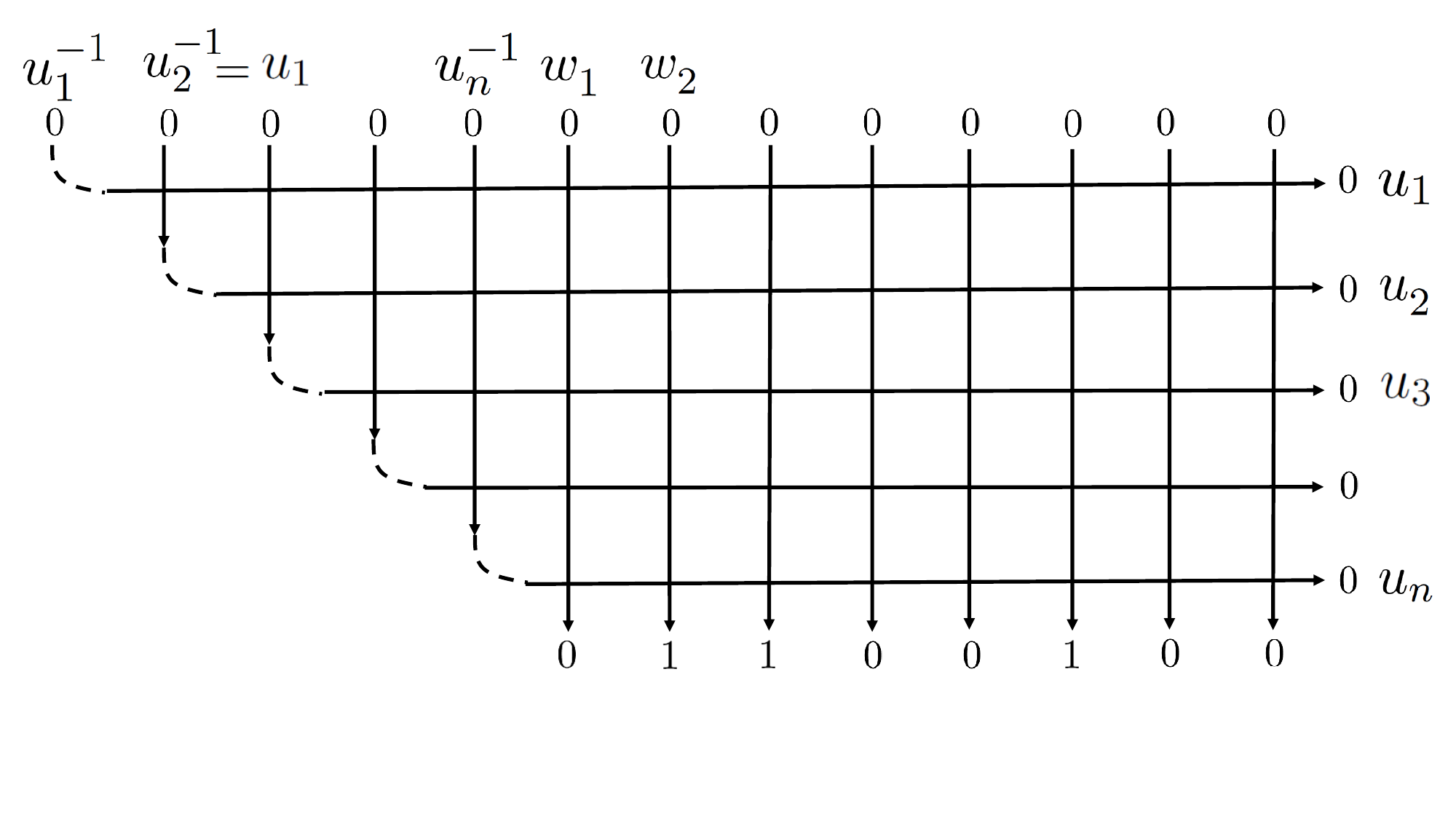}
 \includegraphics[width=0.45\linewidth]{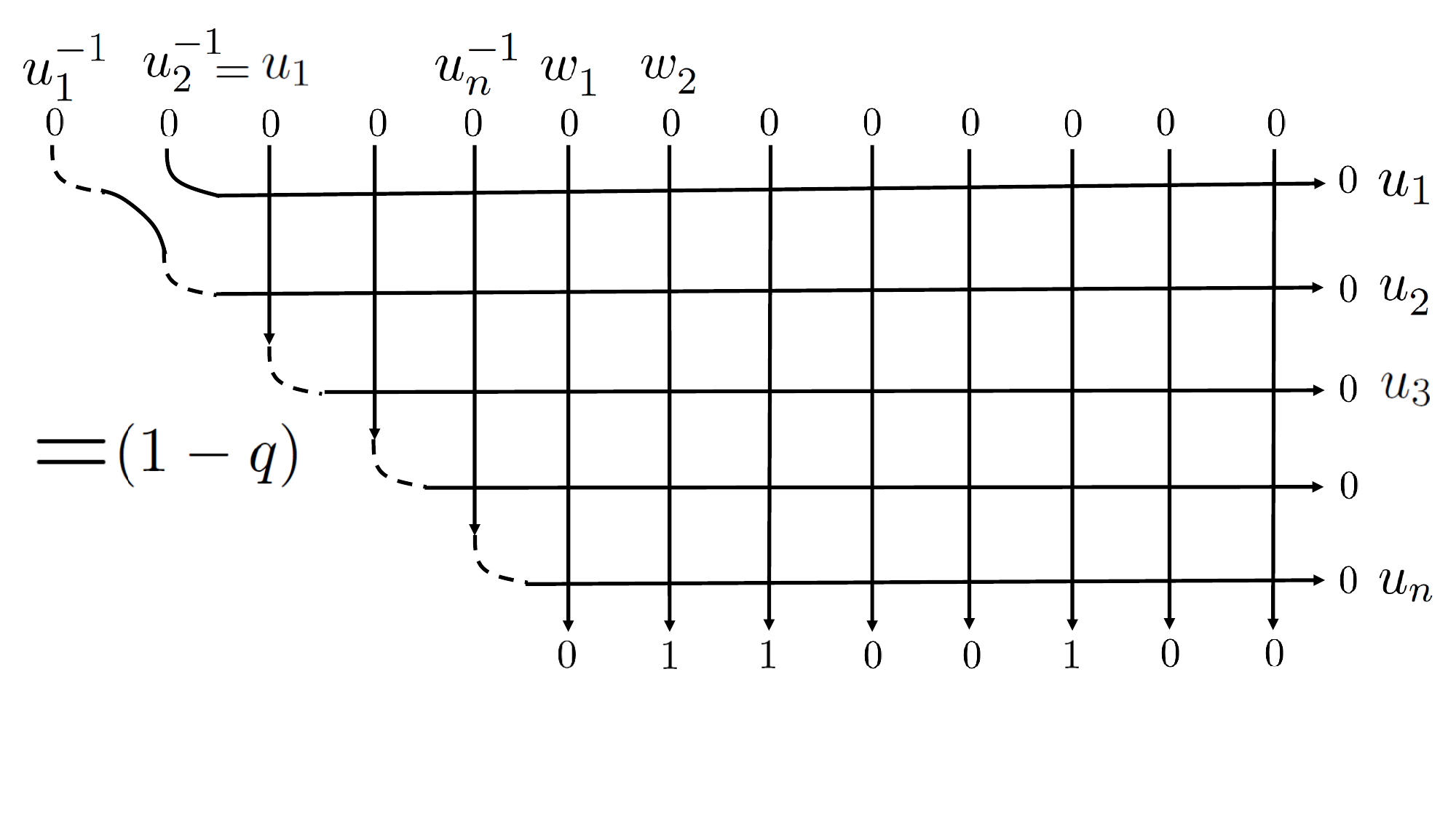}
\includegraphics[width=0.45\linewidth]{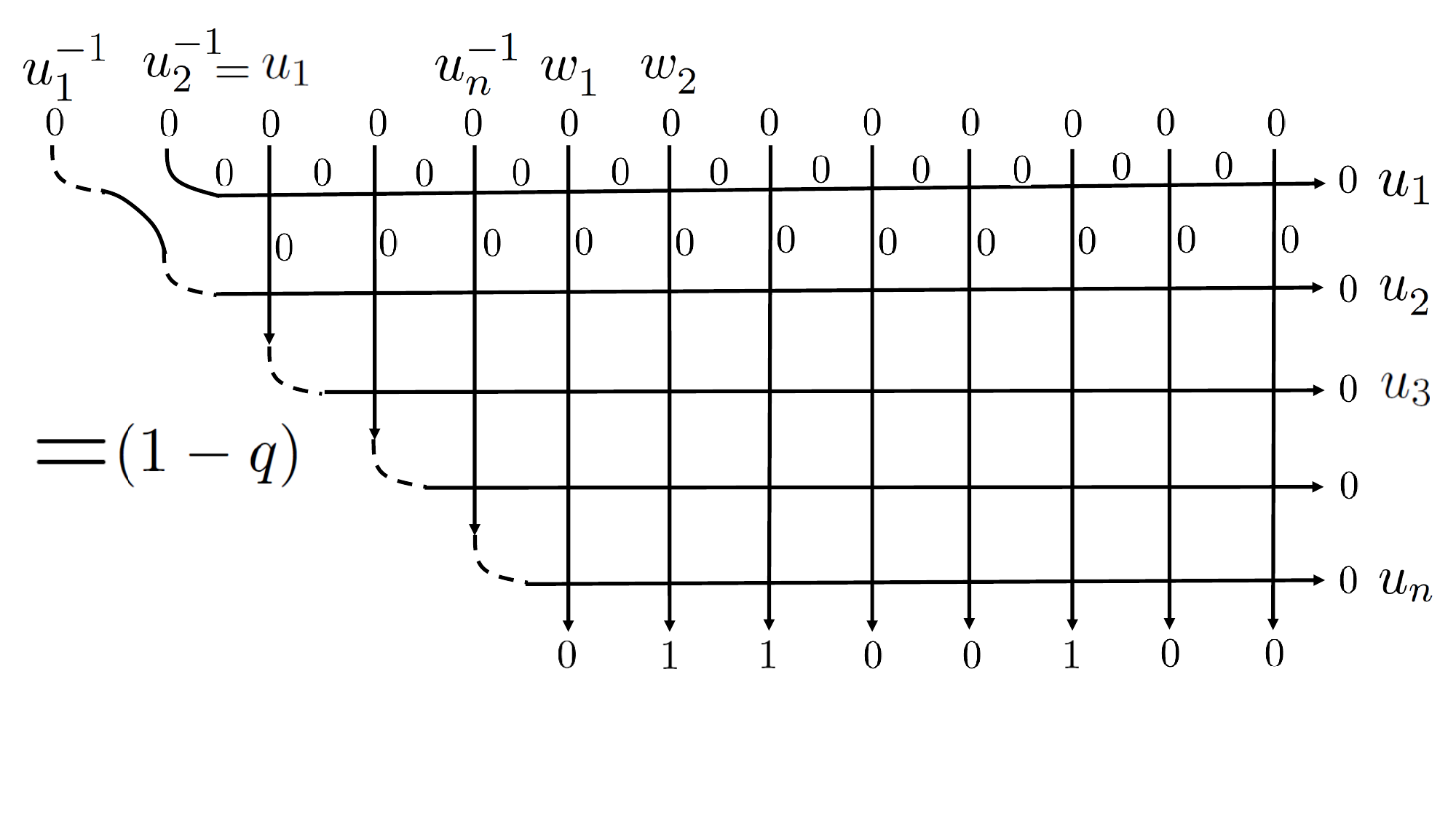}
\includegraphics[width=0.45\linewidth]{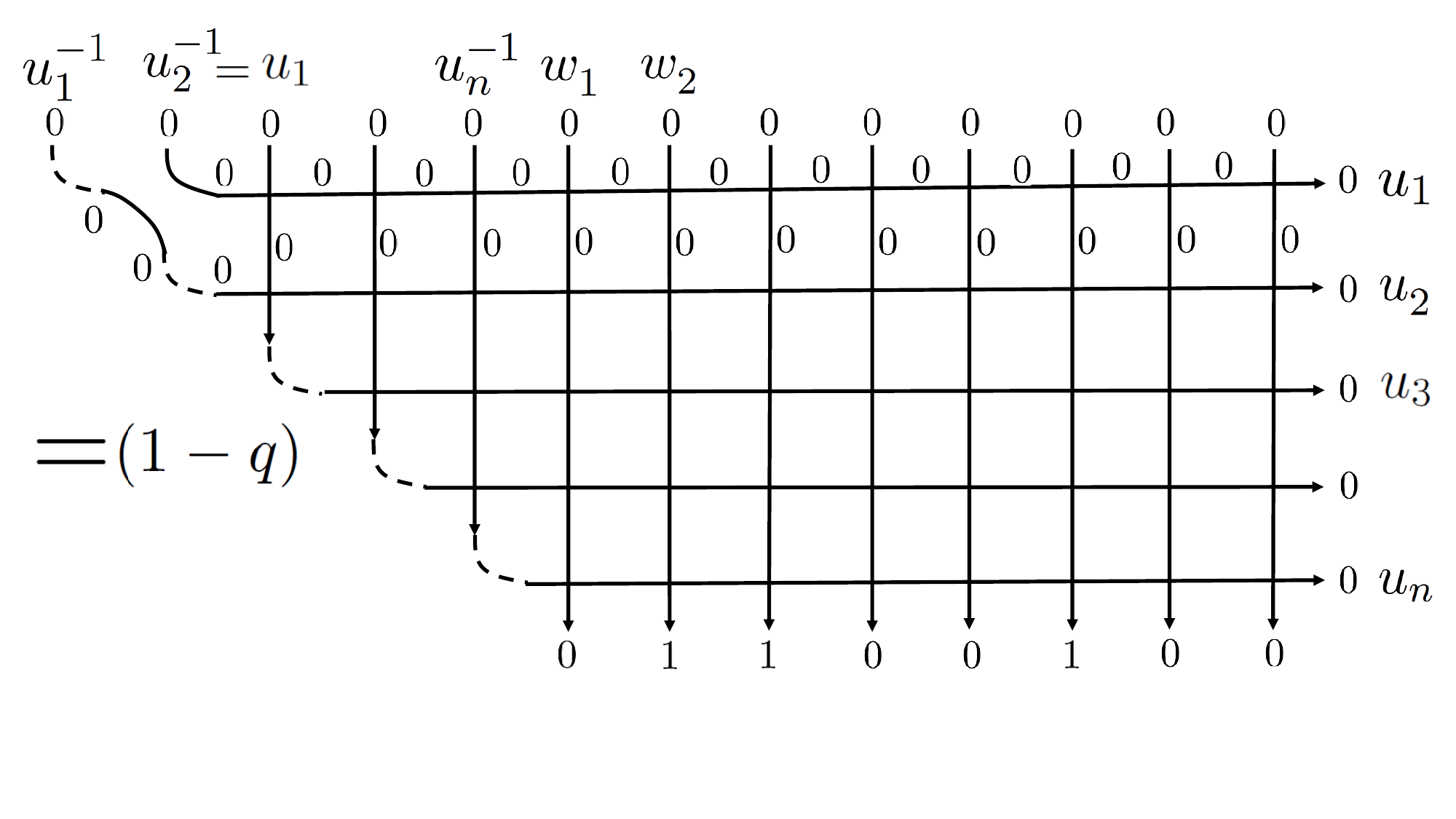}
 \includegraphics[width=0.45\linewidth]{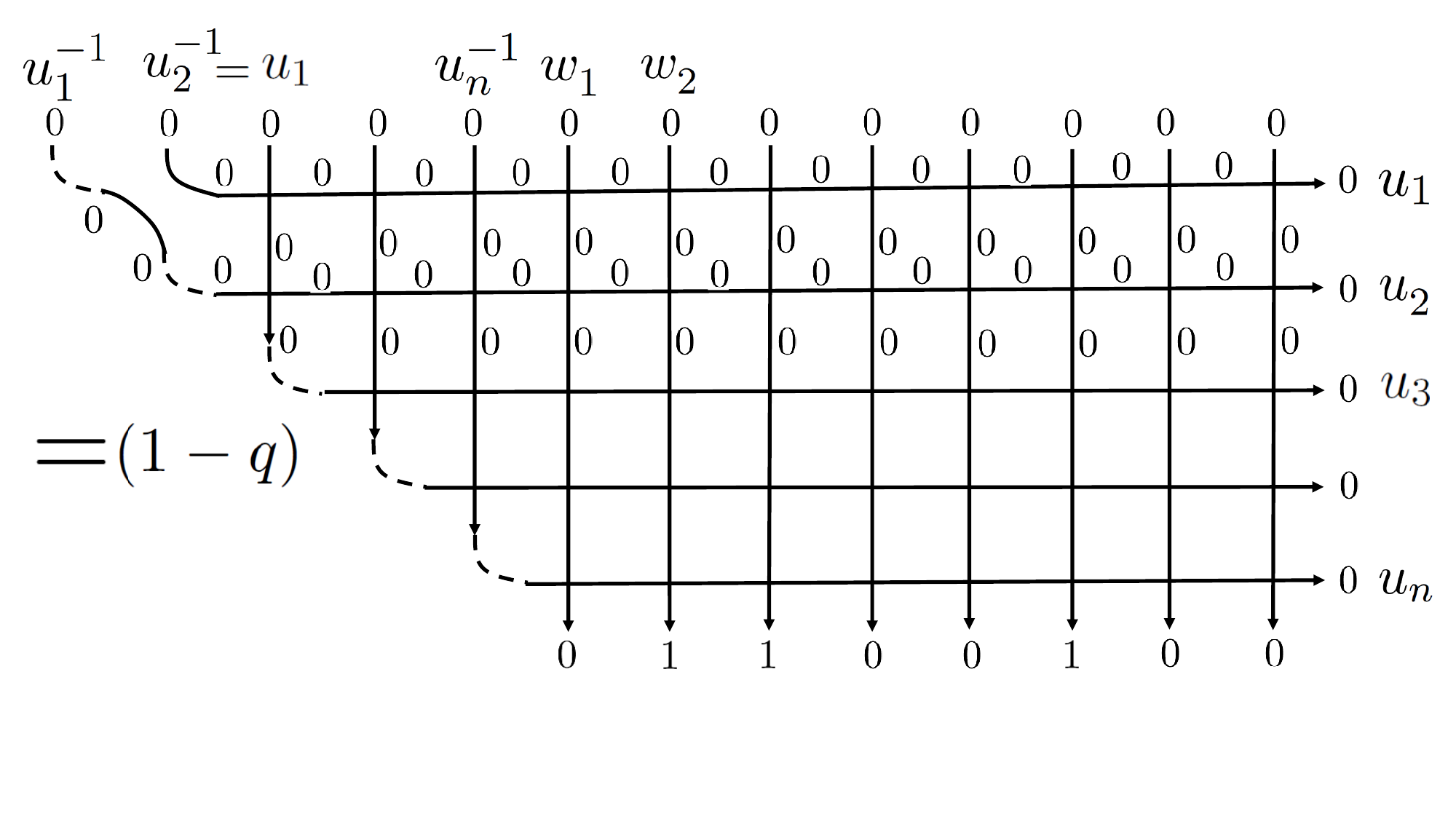}
 \includegraphics[width=0.45\linewidth]{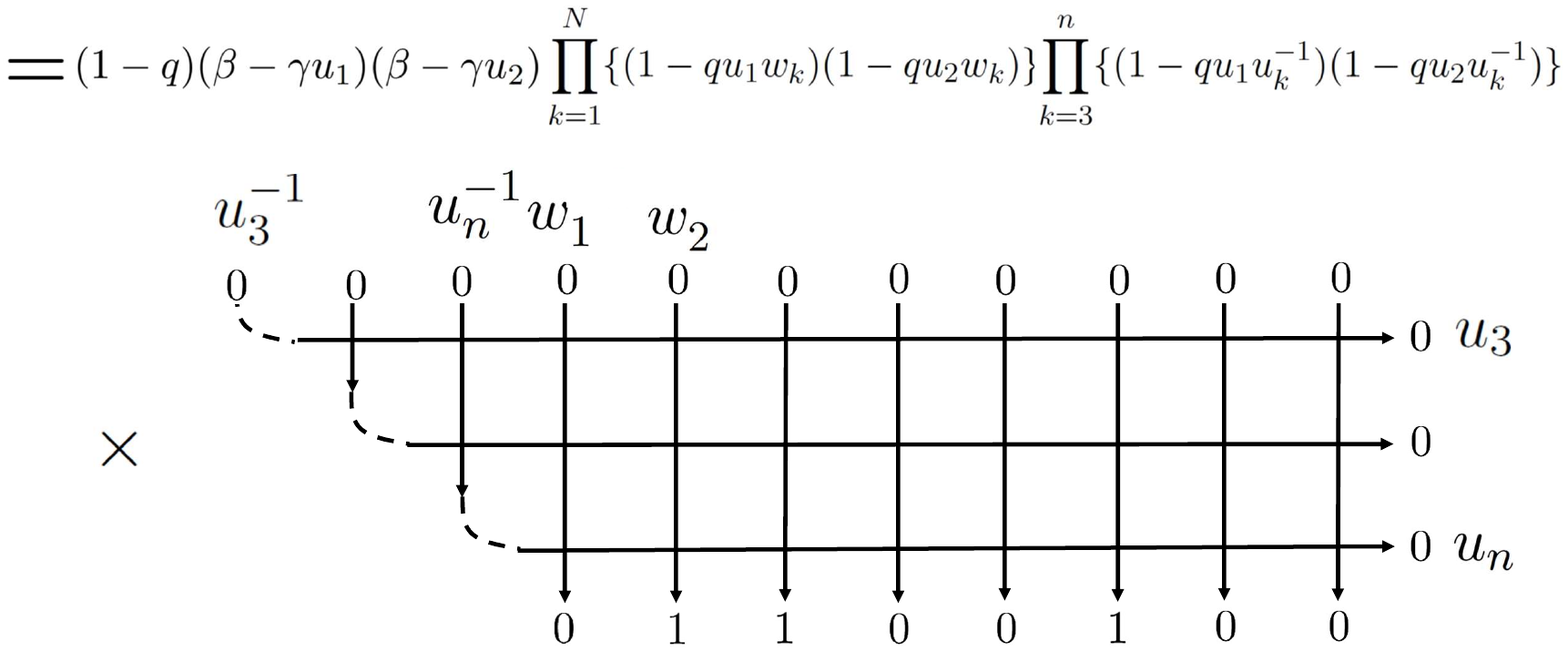}
  \caption{The reduction \eqref{reductionrelation}
  when $i=1$, $j=2$.}
  \label{fig:super}
\end{figure}

\begin{figure}[!htbp]
  \centering
   \includegraphics[width=0.7\linewidth]{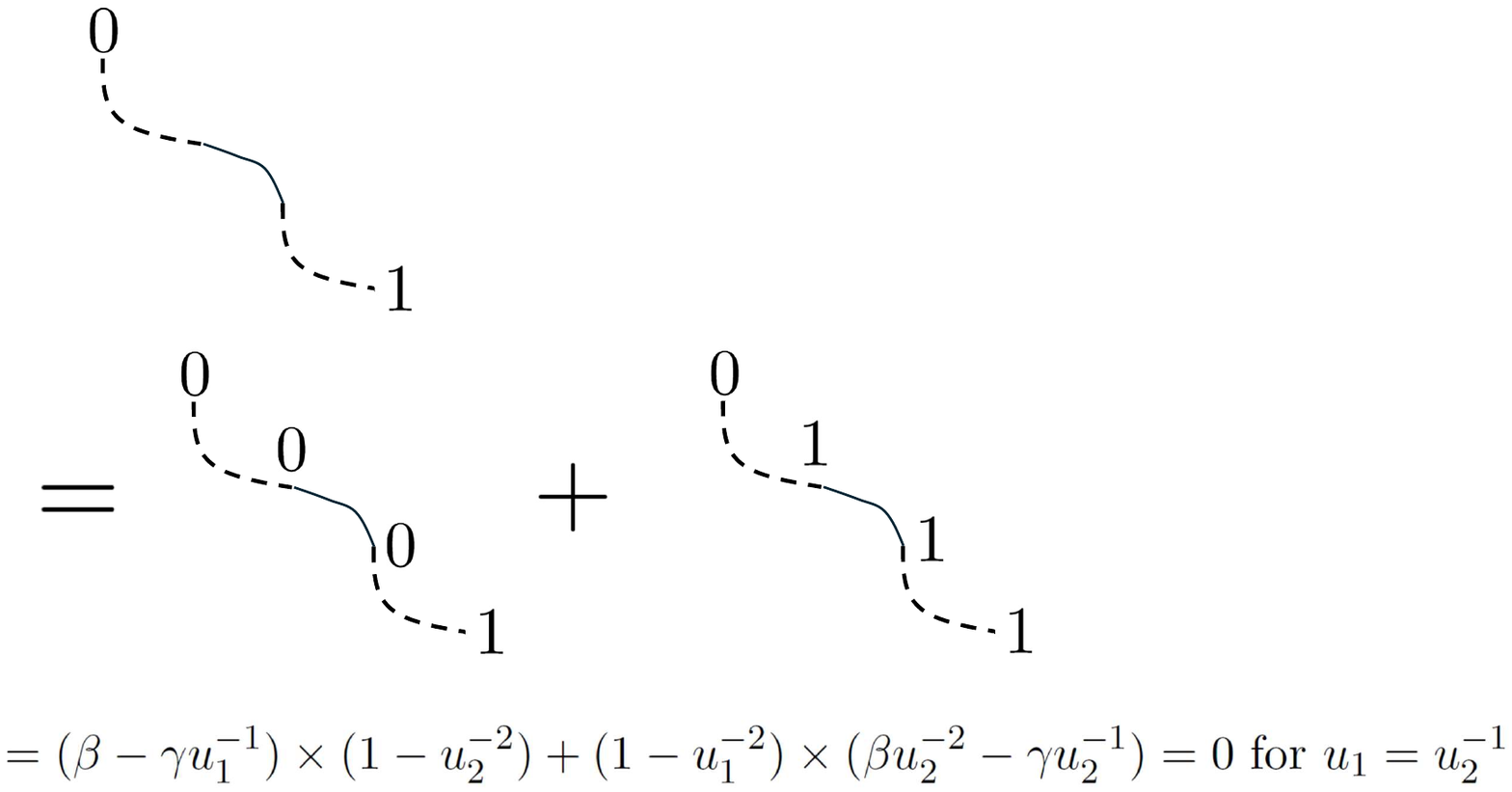}
  \caption{An auxiliary relation used in Figure~\ref{fig:super}.}
  \label{fig:auxsuper}
\end{figure}

\end{proof}

From
\eqref{reductionrelation},
 we have the following.

\begin{corollary} \label{susycor}
    
Assume $r\le n-2$. If $u_i u_j=1$, we have
\begin{align}
     WC_{N,n,r}(\mathbf{u}|\mathbf{w}|\mathbf{I})|_{u_i u_j=1} 
     =&(1-q) 
     \prod_{k=1}^N \{ (1-qu_i w_k)(1-qu_j w_k)  \} \nonumber \\
&   \times  \prod_{\substack{k=1 \\ k \neq i,j}}^n \{\
     (1-qu_i u_k^{-1})(1-q u_j u_k^{-1})
     \} \nonumber
     WC_{N,n-2,r}(\mathbf{u} \backslash \{ u_i, u_j \}|\mathbf{w}|\mathbf{I}), 
     \\
     WB_{N,n,r}(\mathbf{u}|\mathbf{w}|\mathbf{I})|_{u_i u_j=1} 
     =&(1-q) 
     \prod_{k=1}^N \{ (1-qu_i w_k)(1-qu_j w_k)  \} \nonumber  \\
& \times    \prod_{\substack{k=1 \\ k \neq i,j}}^n \{\
     (1-qu_i u_k^{-1})(1-q u_j u_k^{-1})
     \} \nonumber
     WB_{N,n-2,r}(\mathbf{u} \backslash \{ u_i, u_j \}|\mathbf{w}|\mathbf{I}).
\end{align}

\end{corollary}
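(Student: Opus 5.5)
The plan is to obtain both identities as direct specializations of the supersymmetry reduction \eqref{reductionrelation}, using the definitions \eqref{WCfunction} and \eqref{WBfunction}. Neither identity needs any new lattice-model argument.

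For type $C$, set $\gamma=0$, $\beta=1$ in \eqref{reductionrelation}. The boundary factor $(\beta-\gamma u_i)(\beta-\gamma u_j)$ becomes $1$. Both $F_{N,n,r}$ and $F_{N,n-2,r}$ become $WC_{N,n,r}$ and $WC_{N,n-2,r}$ by \eqref{WCfunction}. No other parameter appears in the remaining factors, so the type $C$ identity follows immediately.

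For type $B$, set $\gamma=-1$, $\beta=1$. The boundary factor becomes $(1+u_i)(1+u_j)$. By \eqref{WBfunction} we have
\[
F_{N,n,r}|_{\gamma=-1,\beta=1}=\prod_{k=1}^n(1+u_k^{-1})\,WB_{N,n,r},
\]
and similarly for $n-2$ with the product over $k\neq i,j$. Dividing the specialized relation by $\prod_{k=1}^n(1+u_k^{-1})$ leaves the extra scalar
\[
\frac{(1+u_i)(1+u_j)}{(1+u_i^{-1})(1+u_j^{-1})}=u_iu_j.
\]
This scalar equals $1$ on the locus $u_iu_j=1$. Hence the type $B$ identity has exactly the same prefactor as the type $C$ one.

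The only point needing care is the order of operations. The specialization $u_iu_j=1$ and the division by the nonvanishing factor $(1+u_i^{-1})(1+u_j^{-1})$ commute, as rational functions on the locus $u_iu_j=1$. Also, the factor $\prod_{k\neq i,j}(1+u_k^{-1})$ cancels on both sides.
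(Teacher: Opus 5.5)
Your proposal is correct and follows the paper's route: the corollary is simply \eqref{reductionrelation} specialized at $(\gamma,\beta)=(0,1)$ and $(-1,1)$. In type $B$, the boundary factor $(1+u_i)(1+u_j)$ combines with the scalar normalization $(1+u_i^{-1})^{-1}(1+u_j^{-1})^{-1}$ to give $u_iu_j=1$. You spell out this one cancellation, which the paper leaves implicit.
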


\subsection{Relation with type $A$ weight functions}

For the relation with the type $A$ weight functions and its graphical proof below,
we assume
\[
|\mathbf{M}|=|\mathbf{I}|=r,
\qquad
\mathbf{M}=(m_1<\cdots<m_r).
\]
Thus the proposition below and the graphical reduction
to the rectangular type $A$ partition function concern only the equal-cardinality
specializations $\mathbf{u}=\mathbf{w}_{\mathbf{M}}$. They give another proof of the
$|\mathbf{M}|=|\mathbf{I}|$ part of Theorem~\ref{ffunctionproperties},
$\mathrm{(i)}$, and give a second form of the diagonal specialization
$F_{N,n,r}(\mathbf{w}_{\mathbf{I}}|\mathbf{w}|\mathbf{I})$ which will be used
in Section~\ref{sec:normalized-algebraic-properties}. After establishing this
relation, we give a separate graphical explanation of the vanishing when
$s<r$. This argument does not use the type $A$ relation.

\begin{prop}
We have
\begin{align}
F_{N,n,r}(\mathbf{w}_{\mathbf{M}}|\mathbf{w}|\mathbf{I}) 
=&
 (1-q)^{\frac{(n-r)(n-r-1)}{2}} (\beta-\gamma)^{n-r} 
 \prod_{1 \le j \le k \le r} (1-w_{m_j}^{-1} w_{m_k}^{-1})
\nonumber \\
&\times \prod_{j=1}^N (1-qw_j)^{n-r} \prod_{k=1}^r (1 - q w_{m_k}^{-1})^{n-r}  W_{N,r}(w_{m_1},\dots,w_{m_r}|\mathbf{w}|\mathbf{I}).
\label{fixedpointrelationFandWA}
\end{align}

\end{prop}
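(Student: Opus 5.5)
The plan is to reduce first to the case $n=r$ with Lemma~\ref{onevariablesp}, and then apply the factorization of Lemma~\ref{factorizationlemma}.

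\textbf{Step 1 (removing the variables equal to $1$).} Under the substitution $\mathbf{u}=\mathbf{w}_{\mathbf{M}}$ with $s=r$, we have $u_{r+1}=\cdots=u_n=1$. Since $F_{N,n,r}$ is symmetric in $\mathbf{u}$, I would apply Lemma~\ref{onevariablesp} successively to $u_n,u_{n-1},\dots,u_{r+1}$. At the stage with $p$ variables ($r<p\le n$), the hypothesis $r\le p-1$ holds, and the lemma produces the factor
\[
(\beta-\gamma)\prod_{k=1}^{p-1}(1-qu_k^{-1})\prod_{j=1}^N(1-qw_j).
\]
Among $u_1,\dots,u_{p-1}$, the last $p-1-r$ are still equal to $1$, and each contributes $1-q$. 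The first $r$ equal $w_{m_1},\dots,w_{m_r}$ and contribute $\prod_{k=1}^r(1-qw_{m_k}^{-1})$.

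Taking the product over $p=r+1,\dots,n$ gives $(\beta-\gamma)^{n-r}\prod_j(1-qw_j)^{n-r}\prod_k(1-qw_{m_k}^{-1})^{n-r}$. The powers of $1-q$ contribute $(1-q)^{\sum_{p}(p-1-r)}=(1-q)^{(n-r)(n-r-1)/2}$. This yields
\[
F_{N,n,r}(\mathbf{w}_{\mathbf{M}}|\mathbf{w}|\mathbf{I})=(1-q)^{\frac{(n-r)(n-r-1)}{2}}(\beta-\gamma)^{n-r}\prod_{j=1}^N(1-qw_j)^{n-r}\prod_{k=1}^r(1-qw_{m_k}^{-1})^{n-r}F_{N,r,r}(w_{m_1},\dots,w_{m_r}|\mathbf{w}|\mathbf{I}).
\]
One subtlety needs checking: Lemma~\ref{onevariablesp} is an identity of rational functions in the remaining variables. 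Specializing $u_k=1$ for the other variables afterwards is harmless, because both sides are Laurent polynomials in $\mathbf{u}$.

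\textbf{Step 2 (factorization).} Lemma~\ref{factorizationlemma} with $n=r$ gives
\[
F_{N,r,r}(\mathbf{u}|\mathbf{w}|\mathbf{I})=\prod_{j}(1-u_j^{-2})\prod_{i<j}(1-u_i^{-1}u_j^{-1})\,W_{N,r}(\mathbf{u}|\mathbf{w}|\mathbf{I}).
\]
Substituting $u_j=w_{m_j}$, the two products combine into $\prod_{1\le j\le k\le r}(1-w_{m_j}^{-1}w_{m_k}^{-1})$. Here the diagonal terms $j=k$ give $1-w_{m_j}^{-2}$. Combining this with Step 1 yields \eqref{fixedpointrelationFandWA}.

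\textbf{Main obstacle.} There is essentially no obstacle; the only care needed is the bookkeeping of the $(1-q)$ powers and of which variables are still present at each step. As a consistency check, I would set $\mathbf{M}=\mathbf{I}$ and compare with \eqref{diagonalFfunction}, using the diagonal value of $W_{N,r}$. Alternatively, the whole statement could be verified directly from \eqref{Motherfunction}, in the same way as the diagonal computation in Theorem~\ref{ffunctionproperties}. After restricting to $\sigma$ with $\sigma(\{1,\dots,r\})=\{1,\dots,r\}$, the sum over $S_r$ together with the factor $(u_j-u_k)^{-1}(qu_j-u_k)$ reproduces $W_{N,r}$, and the cross factors evaluate to $w_{m_a}-q$, which after the overall $u^{-(N+n)}$ gives $1-qw_{m_a}^{-1}$. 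I would use that computation only as a cross-check.
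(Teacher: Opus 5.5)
Your proposal is correct and follows essentially the paper's route. The paper freezes the top $n-r$ rows graphically using $[K(1)]_0^{\,1}=0$ and the ice rule, which is exactly what iterating Lemma~\ref{onevariablesp} does one row at a time. It then reuses the argument of Lemma~\ref{factorizationlemma} to strip off the triangular part, leaving $W_{N,r}$; you package the same two steps through the already-established lemmas, with the $(1-q)^{(n-r)(n-r-1)/2}$ bookkeeping made explicit.
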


\begin{proof}

We investigate the partition function $F_{N,n,r}(\mathbf{w}_{\mathbf{M}}|\mathbf{w}|\mathbf{I})$ in its lattice-model realization. 
By leveraging its symmetry with respect to the $u$-variables, we can rearrange their assignment to the lines as shown in the top-left panel of Figure~\ref{fig:fromtri}. 
Upon setting $u_{r+1} = \dots = u_n = 1$, we observe that the configuration in the top $(n-r)$ rows is uniquely determined by the condition $[K(1)]_{0}^1 = 0$ from \eqref{eq:Ktri-components} and the ice rule \eqref{eq:ice-rule} of the $R$-matrix. 
Furthermore, the remaining part corresponds to a configuration where we can apply the same argument used to derive Lemma~\ref{factorizationlemma}. 
By collecting the corresponding $R$-matrix and $K$-matrix elements, we obtain the overall factor:
\begin{align}
& (1-q)^{\frac{(n-r)(n-r-1)}{2}} (\beta-\gamma)^{n-r} \prod_{1 \le j \le k \le r} (1-w_{m_j}^{-1} w_{m_k}^{-1})  \prod_{j=1}^N (1-qw_j)^{n-r} \prod_{k=1}^r (1 - q w_{m_k}^{-1})^{n-r}. \label{overallfactorfordiagonal}
\end{align}
The remaining part of the partition function then reduces to a rectangular shape, as depicted in the bottom panel of Figure~\ref{fig:fromtri}, which is nothing but the type $A$ partition function
$
W_{N,r}(w_{m_1},\dots,w_{m_r}|\mathbf{w}|\mathbf{I})$.
Hence we obtain
\eqref{fixedpointrelationFandWA}.

\end{proof}

\begin{figure}[!htbp]
  \centering
   \includegraphics[width=0.45\linewidth]{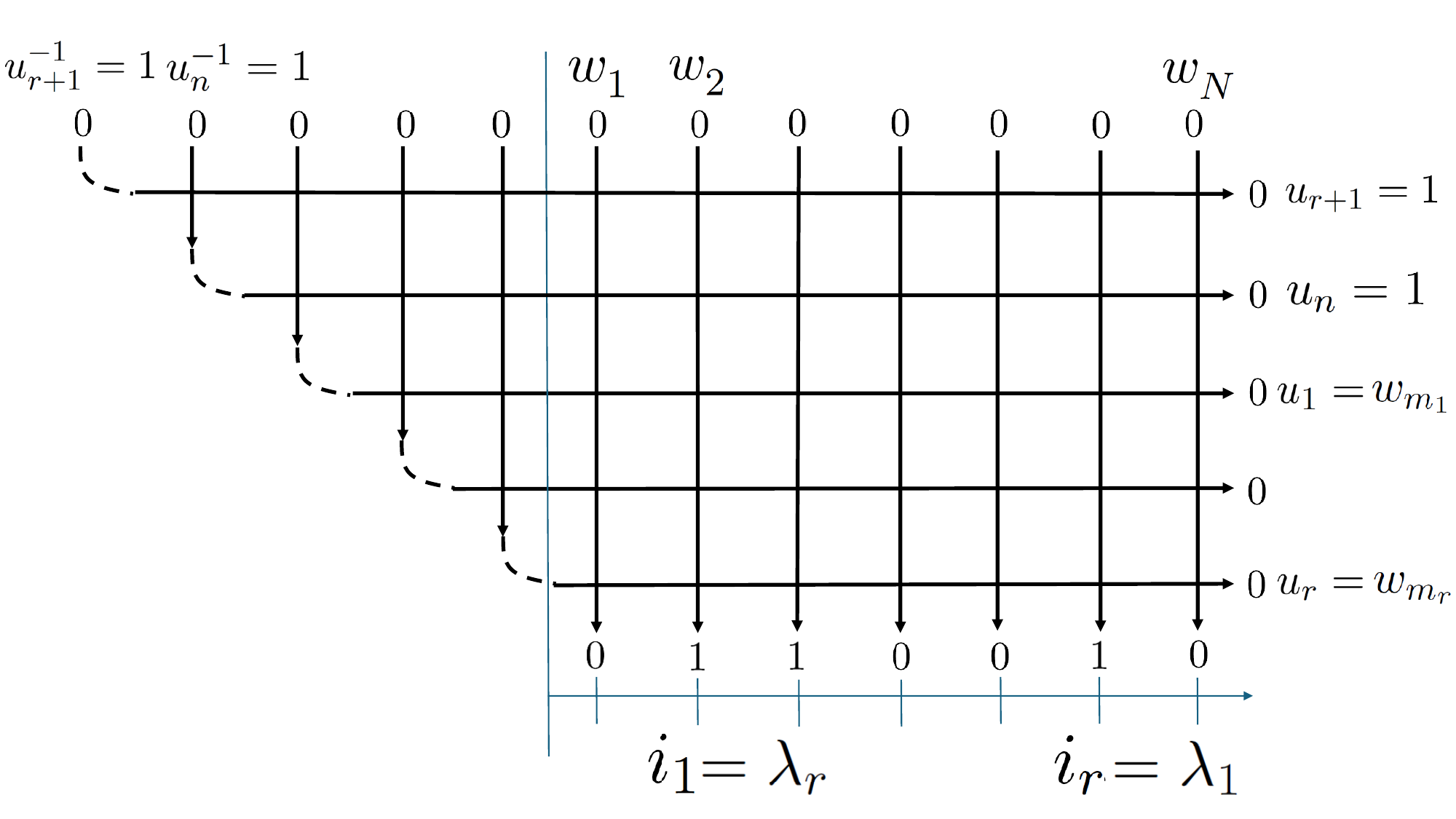}
 \includegraphics[width=0.45\linewidth]{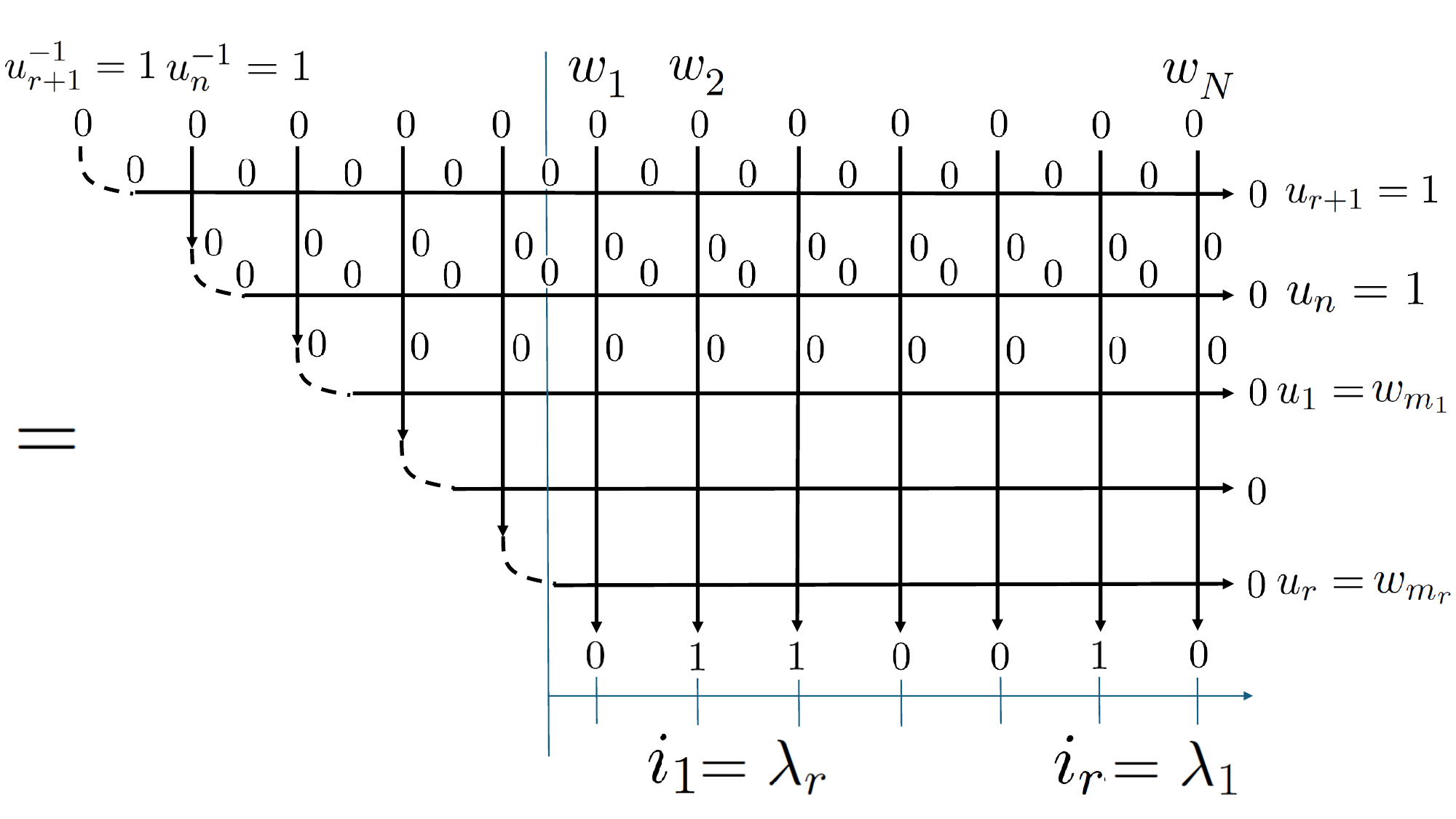}
\includegraphics[width=0.45\linewidth]{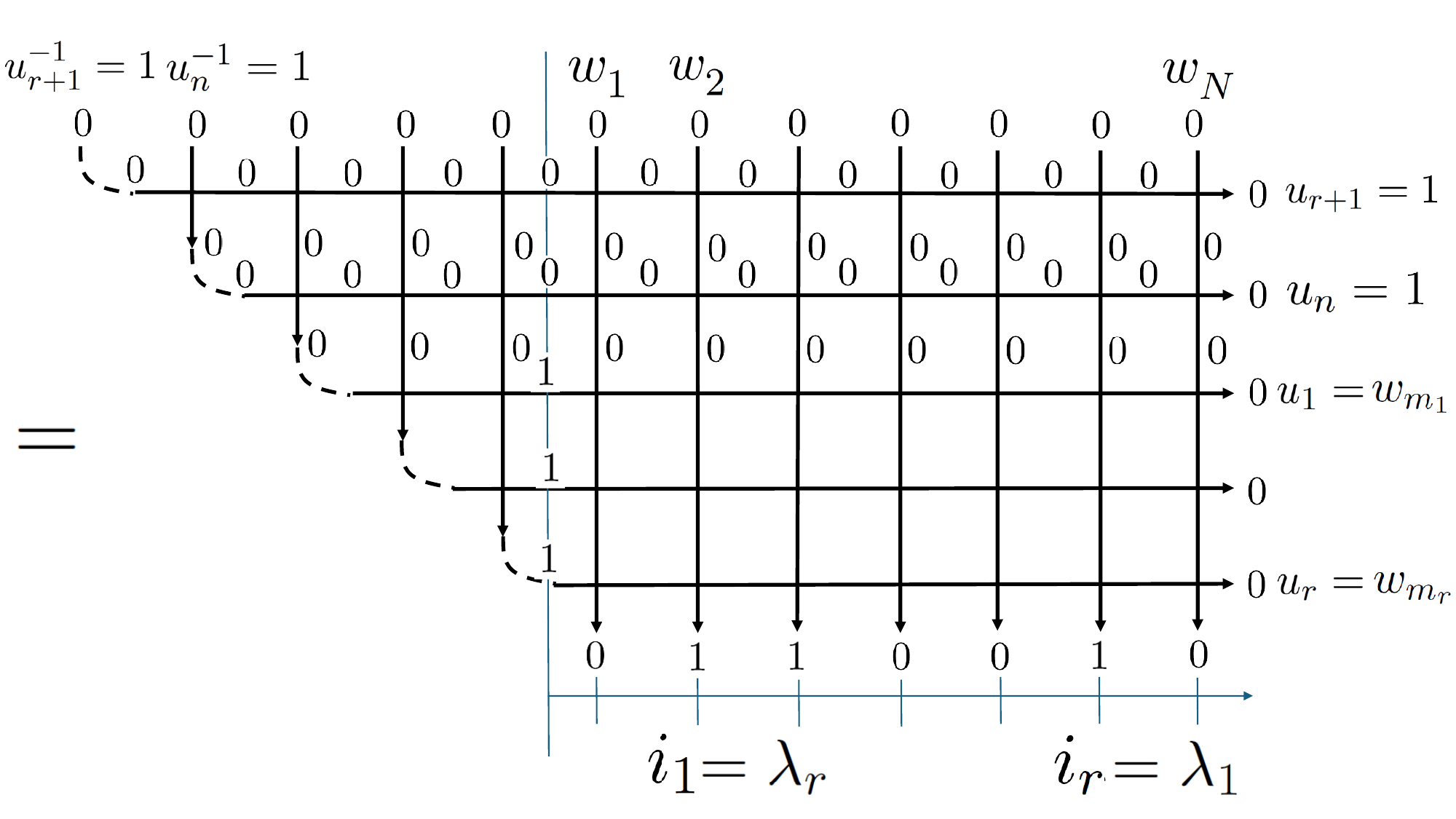}
\includegraphics[width=0.45\linewidth]{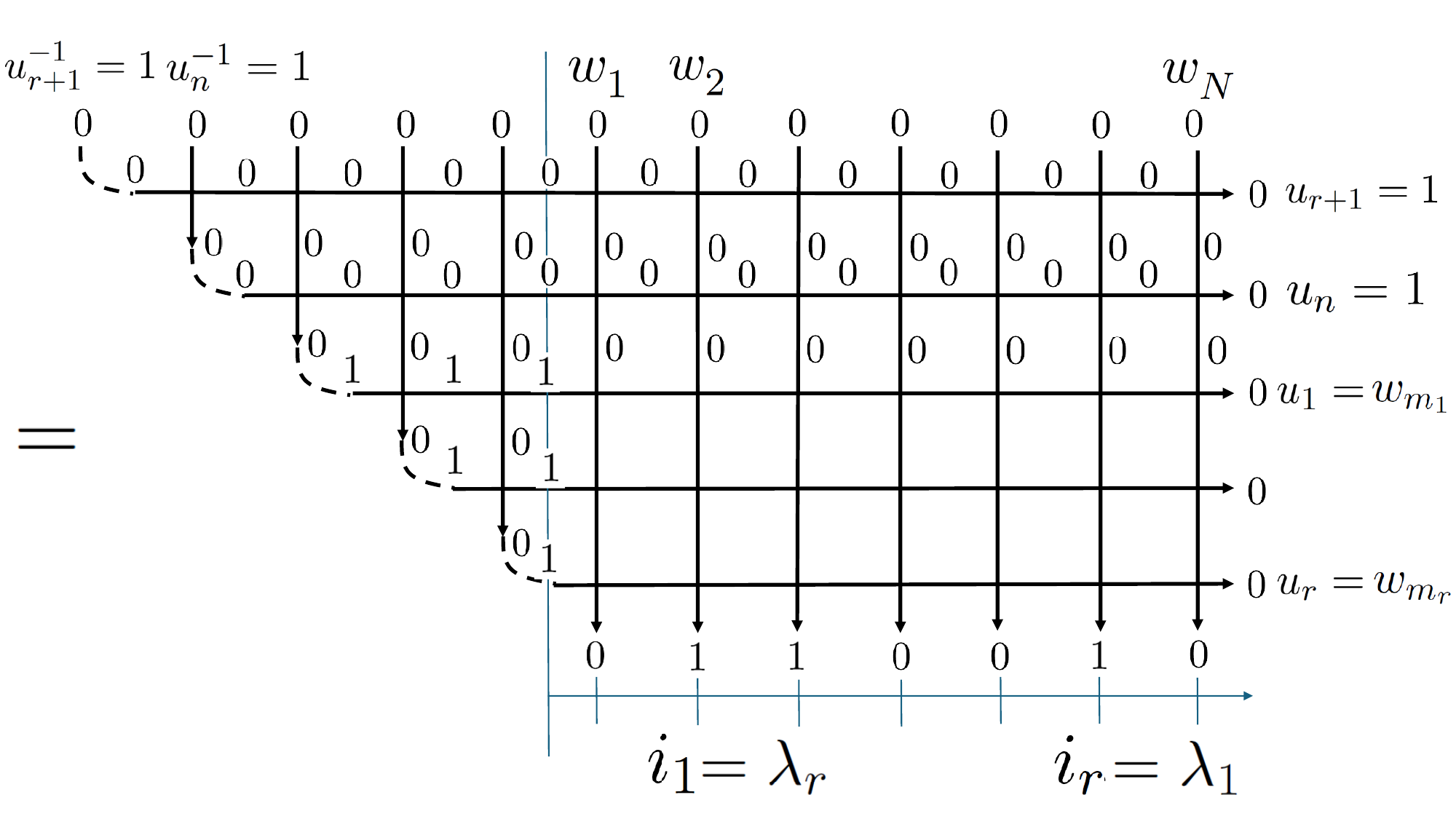}
 \includegraphics[width=0.45\linewidth]{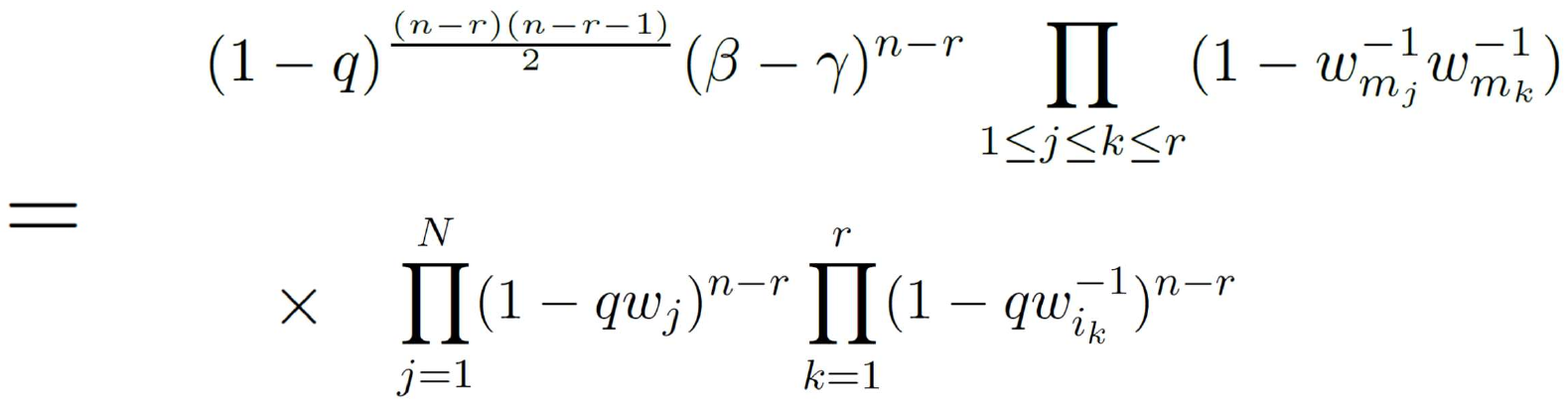}
\includegraphics[width=0.45\linewidth]{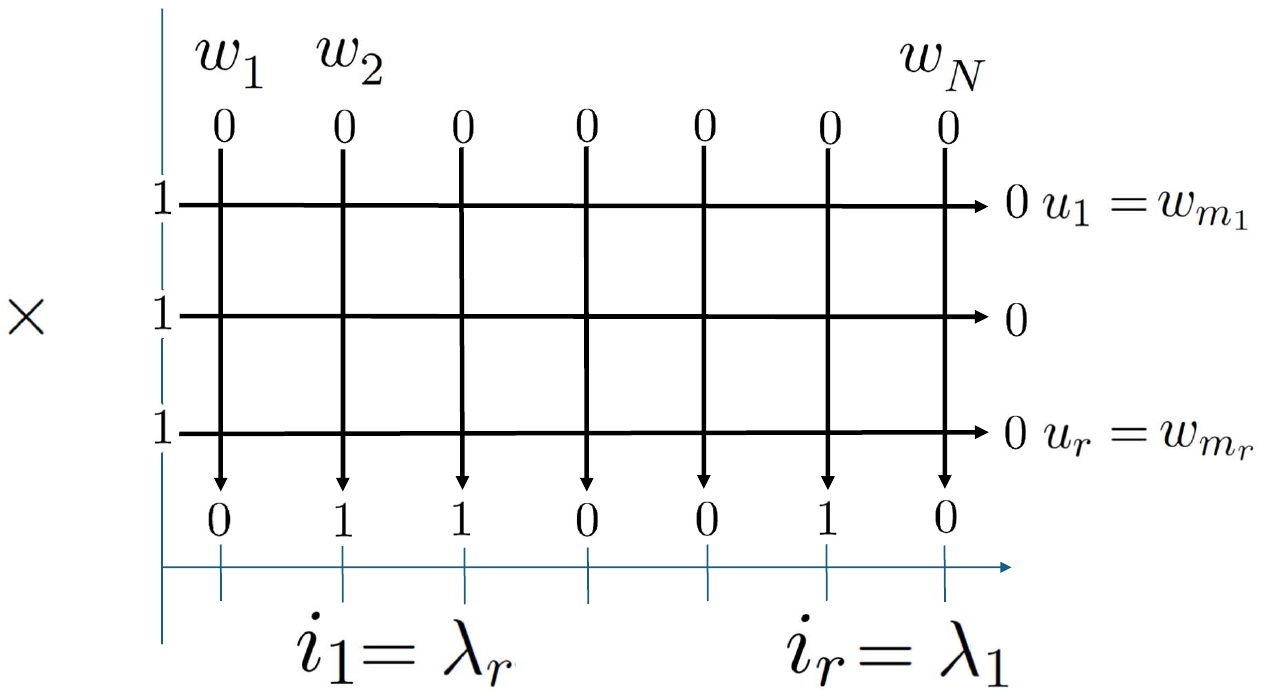}
  \caption{The partition function $F_{N,n,r}(\mathbf{w}_{\mathbf{M}}|\mathbf{w}|\mathbf{I})$
  reduces to the rectangular partition function
  multiplied by the overall factor
   \eqref{overallfactorfordiagonal}.}
  \label{fig:fromtri}
\end{figure}

\begin{figure}[htbp]
  \centering
  \includegraphics[width=0.7\linewidth]{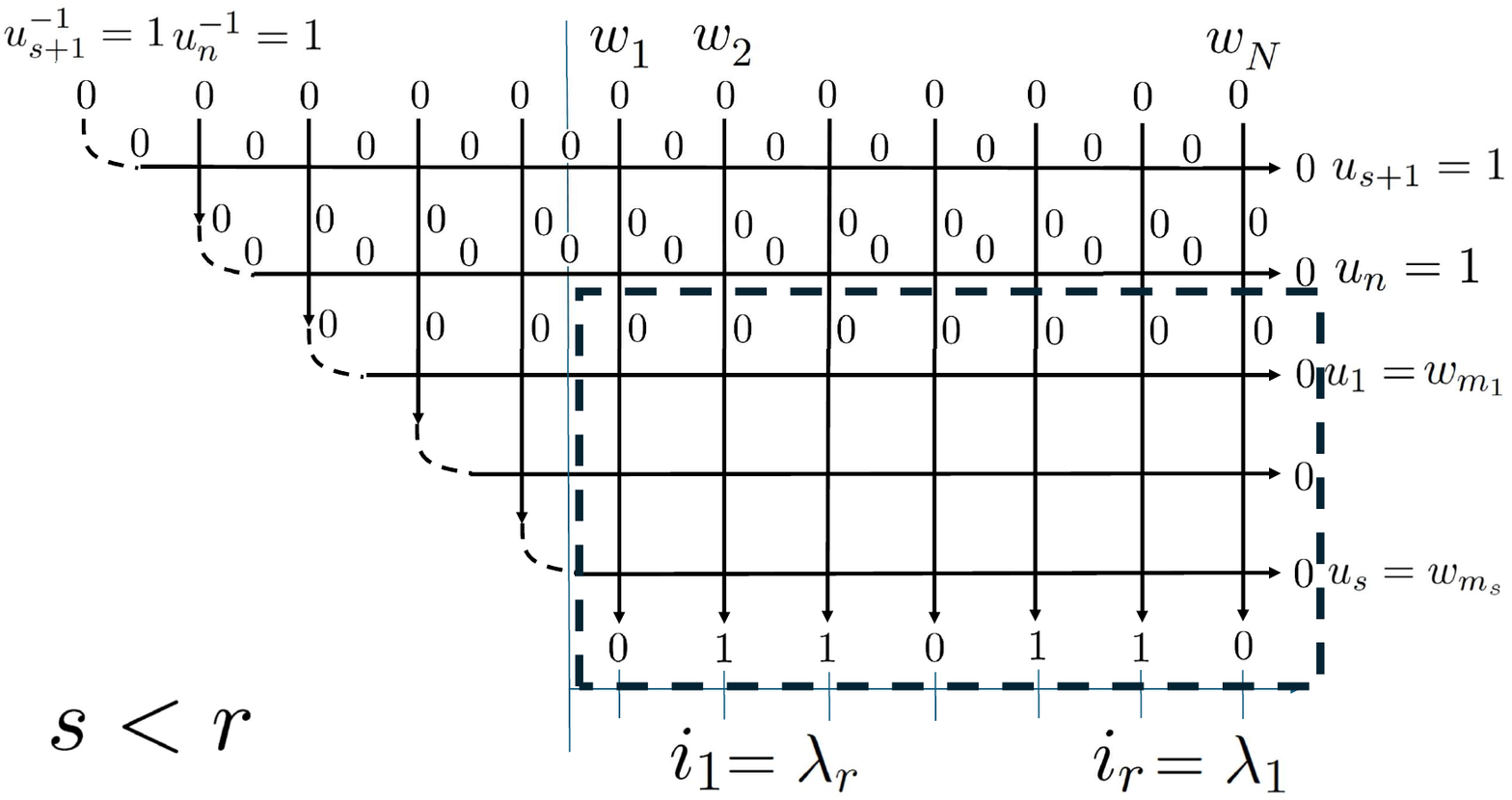} 
  \caption{Graphical explanation of the vanishing when $s<r$.
After freezing the upper $n-s$ rows, consider the remaining rectangular
part enclosed by the thick dashed lines. Its top and right boundary states are all $0$,
whereas its bottom boundary contains exactly $r$ edges in state $1$.
By the ice rule \eqref{eq:ice-rule}, the left boundary of this rectangle must therefore
contain exactly $r$ edges in state $1$. Since this boundary has only
$s<r$ edges, this is impossible, and the partition function vanishes.
  }
  \label{fig:lower-cardinality-vanishing}
\end{figure}

When $s<r$, the vanishing can also be seen graphically. First, by the
same argument as when $s=r$, the states in the upper $n-s$ rows
are uniquely determined, and we obtain the partial configuration shown
in Figure~\ref{fig:lower-cardinality-vanishing}. Notice that these
frozen rows are retained in the figure.

Now consider the rectangular part which is not frozen, enclosed by the
thick dashed lines in Figure~\ref{fig:lower-cardinality-vanishing}. The states
on its top and right boundaries are all $0$, while its bottom boundary
contains exactly $r$ edges in state $1$ and all the remaining edges in
state $0$. Therefore, by the ice rule \eqref{eq:ice-rule} for the $R$-matrix, the left
boundary of this rectangle must contain exactly $r$ edges in state $1$
for the partition function to be nonzero. On the other hand, this left
boundary consists of only $s$ edges. Since $s<r$, it is impossible for
exactly $r$ of them to be in state $1$. Hence there is no admissible
configuration, and the partition function vanishes.

\begin{proof}[Graphical proof of the equal-cardinality part of Theorem~\ref{ffunctionproperties}, $\mathrm{(i)}$]

Under the standing assumption $|\mathbf{M}|=|\mathbf{I}|=r$, we show that the rectangular partition functions
$
W_{N,r}(w_{m_1},\dots,w_{m_r}|\mathbf{w}|\mathbf{I})$
are given by
\begin{align}
\prod_{j=1}^r \prod_{k=1}^r (1-q w_{i_k}^{-1} w_{i_j}) 
\prod_{k=1}^{r+1} \left[ \prod_{i_{k-1} < j < i_k} \left\{ \prod_{l=k}^r (1-w_{i_l}^{-1} w_j) \prod_{l=1}^{k-1} (1-q w_{i_l}^{-1} w_j) \right\} \right], \label{rectdiagonalfactor}
\end{align}
where $i_0:=0$, $i_{r+1}:=N+1$
for the case $\mathbf{M}=\mathbf{I}$, and that they vanish unless the condition $\mathbf{M} \ge \mathbf{I}$ is satisfied.

When $\mathbf{M}=\mathbf{I}$, the configuration is uniquely determined starting from the leftmost column by the ice rule \eqref{eq:ice-rule}.
For each $i_j$-th column ($j=1, \dots, r$), the configuration is depicted in the top-left panel of Figure~\ref{fig:fromtritwo}. By \eqref{eq:R-components}, these columns give the weights $\prod_{k=1}^r (1-q w_{i_k}^{-1} w_{i_j})$, resulting in the factor
\begin{equation*}
\prod_{j=1}^r \prod_{k=1}^r (1-q w_{i_k}^{-1} w_{i_j}).
\end{equation*}
For a column index $j$ such that $i_{k-1} < j < i_k$, the configuration corresponds to the top-right panel of Figure~\ref{fig:fromtritwo}. The weight contribution from such a column is $\prod_{l=k}^r (1-w_{i_l}^{-1} w_j) \prod_{l=1}^{k-1} (1-q w_{i_l}^{-1} w_j)$, which leads to the factor
\begin{align}
\prod_{k=1}^{r+1} \left[ \prod_{i_{k-1} < j < i_k} \left\{ \prod_{l=k}^r (1-w_{i_l}^{-1} w_j) \prod_{l=1}^{k-1} (1-q w_{i_l}^{-1} w_j) \right\} \right]. \notag
\end{align}
The case $k=r+1$ in the preceding product accounts for the
columns $i_r+1,\ldots,N$: the product over $l=k,\ldots,r$
is empty, and their contribution is
$\prod_{j=i_r+1}^{N}\prod_{l=1}^{r}
(1-qw_{i_l}^{-1}w_j)$.
Together with the contributions from the columns
$i_1,\ldots,i_r$, this gives \eqref{rectdiagonalfactor}.

Conversely, if the condition $\mathbf{M} \ge \mathbf{I}$ is not satisfied, let $j$ be the minimal index such that $m_j < i_j$. By analyzing the configuration from the left, we find that the $m_j$-th column must take the form shown in the bottom panel of Figure~\ref{fig:fromtritwo}. In this case, there appears an inadmissible configuration (where, by the ice rule \eqref{eq:ice-rule}, the $R$-matrix element must be zero), implying that the rectangular partition functions vanish identically.

From \eqref{fixedpointrelationFandWA} and
 \eqref{rectdiagonalfactor},
 we have
\begin{align}
&F_{N,n,r}(\mathbf{w}_{\mathbf{I}}|\mathbf{w}|\mathbf{I})
=
 (1-q)^{\frac{(n-r)(n-r-1)}{2}} (\beta-\gamma)^{n-r} 
\nonumber \\
 &\times 
 \prod_{1 \le j \le k \le r} (1-w_{i_j}^{-1} w_{i_k}^{-1})
 \prod_{j=1}^N (1-qw_j)^{n-r} \prod_{k=1}^r (1 - q w_{i_k}^{-1})^{n-r} \nonumber \\
 &\times
\prod_{j=1}^r \prod_{k=1}^r (1-q w_{i_k}^{-1} w_{i_j}) \prod_{k=1}^{r+1} \left[ \prod_{i_{k-1} < j < i_k} \left\{ \prod_{l=k}^r (1-w_{i_l}^{-1} w_j) \prod_{l=1}^{k-1} (1-q w_{i_l}^{-1} w_j) \right\} \right].
\label{diagonalFfunctionanotherform}
\end{align}
A direct calculation shows that this expression is equivalent to \eqref{diagonalFfunction}.

\begin{figure}[!htbp]
  \centering
  \includegraphics[width=0.45\linewidth]{fromtrisix.pdf}
 \includegraphics[width=0.45\linewidth]{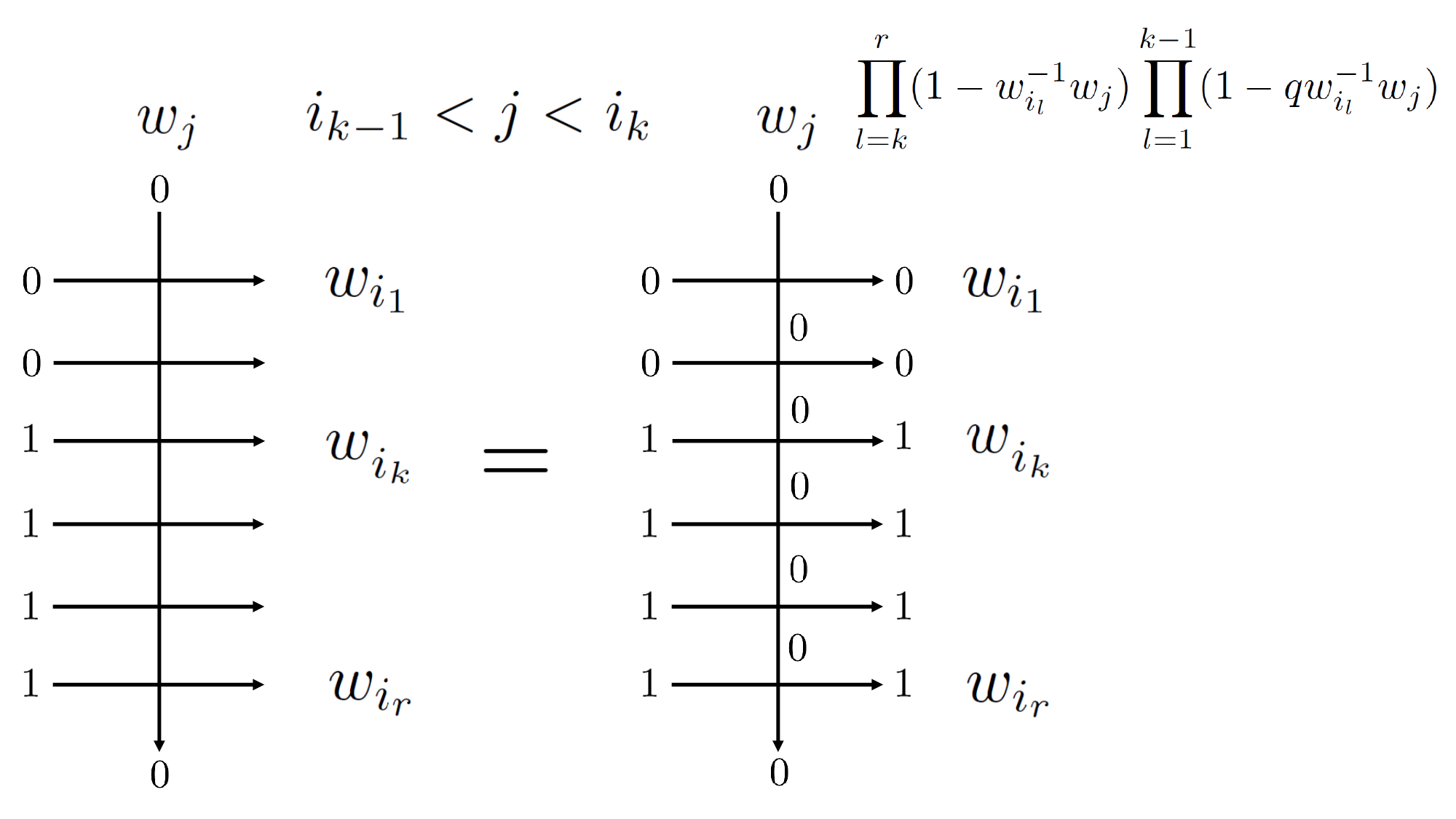}
\includegraphics[width=0.45\linewidth]{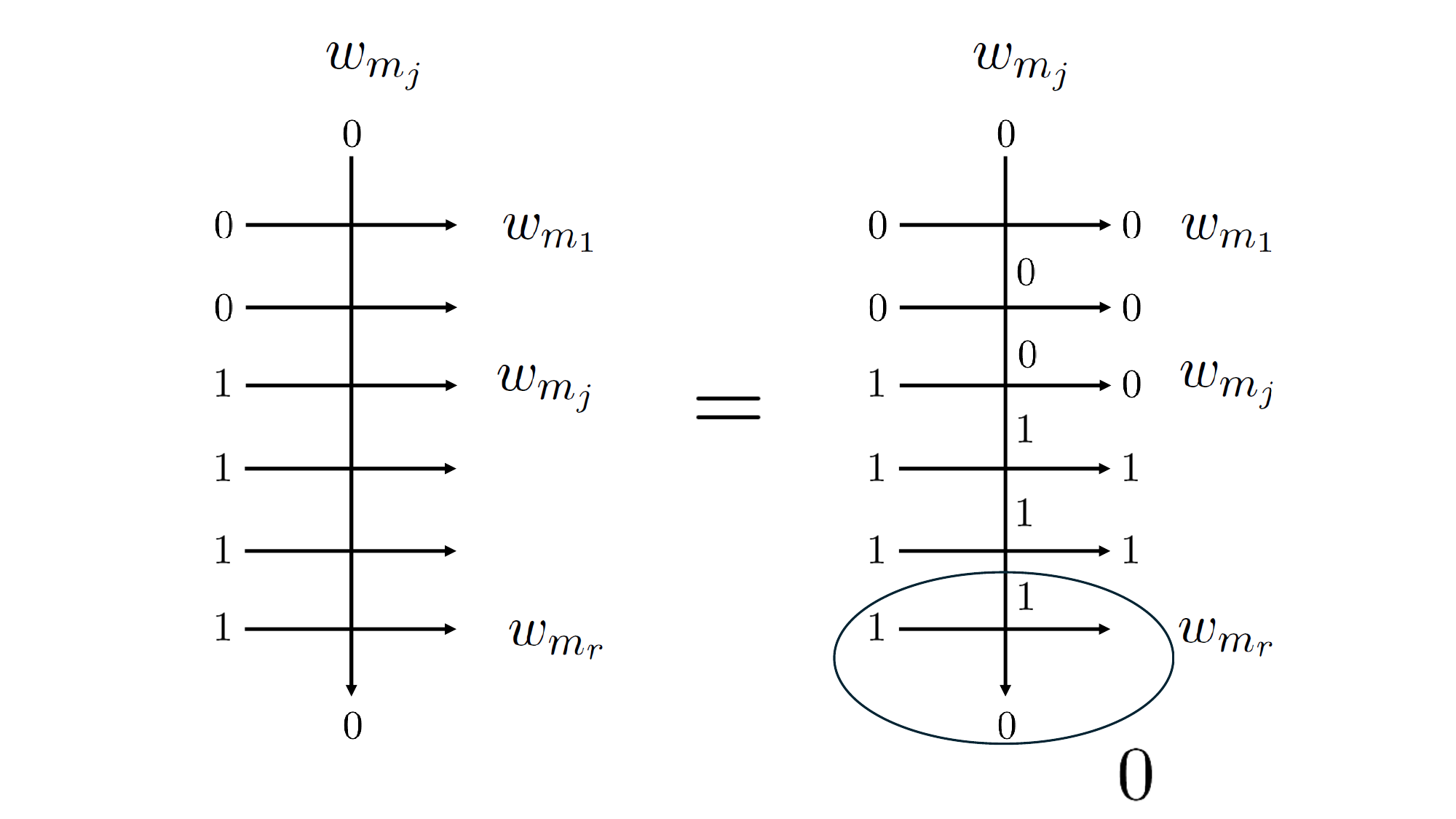}
\caption{Top: Column weights for the case $\mathbf{M}=\mathbf{I}$. Bottom: Illustration of the vanishing mechanism when $\mathbf{M} \ge \mathbf{I}$ is not satisfied.}
  \label{fig:fromtritwo}
\end{figure}

\end{proof}

At the end of this section,
let us rewrite $F_{N,n,r}(\mathbf{w}_{\mathbf{I}}|\mathbf{w}|\mathbf{I})$
into a form that will be used in
Section~\ref{sec:normalized-algebraic-properties}.
The final factor in equation \eqref{diagonalFfunctionanotherform}
\[
\prod_{k=1}^{r+1} \left[ \prod_{i_{k-1} < j < i_k} \left\{ \prod_{l=k}^r (1-w_{i_l}^{-1} w_j) \prod_{l=1}^{k-1} (1-q w_{i_l}^{-1} w_j) \right\} \right]
\]
can be rewritten in the following form using the complement set:
\begin{equation*}
\prod_{j \in \{1, 2, \dots, N\} \setminus \mathbf{I}} \left( \prod_{l: i_l > j} (1-w_{i_l}^{-1} w_j) \prod_{l: i_l < j} (1-q w_{i_l}^{-1} w_j) \right).
\end{equation*}

For this, we used
\begin{equation*}
\prod_{k=1}^{r+1} \prod_{i_{k-1} < j < i_k} = \prod_{j \in \{1, 2, \dots, N\} \setminus \mathbf{I}},
\end{equation*}
and the following:

    Since $l \ge k \iff i_l \ge i_k > j \iff i_l > j$,
    \begin{equation*}
    \prod_{l=k}^r (1-w_{i_l}^{-1} w_j) = \prod_{l: i_l > j} (1-w_{i_l}^{-1} w_j).
\end{equation*}
    
    Since $l \le k-1 \iff i_l \le i_{k-1} < j \iff i_l < j$,
    \begin{equation*}
    \prod_{l=1}^{k-1} (1-q w_{i_l}^{-1} w_j) = \prod_{l: i_l < j} (1-q w_{i_l}^{-1} w_j).
\end{equation*}

Therefore, $F_{N,n,r}(\mathbf{w}_{\mathbf{I}}|\mathbf{w}|\mathbf{I})$ can be rewritten in the following form:
\begin{align}
&F_{N,n,r}(\mathbf{w}_{\mathbf{I}}|\mathbf{w}|\mathbf{I}) \nonumber \\
=& (1-q)^{\frac{(n-r)(n-r-1)}{2}} (\beta-\gamma)^{n-r} \nonumber \\
&\times
 \prod_{1 \le j \le k \le r} (1-w_{i_j}^{-1} w_{i_k}^{-1}) 
  \prod_{j=1}^N (1-qw_j)^{n-r} \prod_{k=1}^r (1 - q w_{i_k}^{-1})^{n-r} 
\nonumber \\
&\times
\prod_{j=1}^r  \prod_{k=1}^r (1-q w_{i_k}^{-1} w_{i_j}) 
  \prod_{j \in \{1, 2, \dots, N\} \setminus \mathbf{I}} \left( \prod_{l: i_l > j} (1-w_{i_l}^{-1} w_j) \prod_{l: i_l < j} (1-q w_{i_l}^{-1} w_j) \right).
\label{diagonalFfunction_rewritten}
\end{align}

\section{Algebraic properties of the normalized weight functions}
\label{sec:normalized-algebraic-properties}

The normalized type $C$ and type $B$ weight functions and their main vanishing and evaluation
properties were introduced in Subsection~2.2. We prove the type $C$ vanishing and evaluation
properties using the lattice-model identities from Subsection~2.3 and the relation
with the type $A$ weight functions from Subsection~2.4. The corresponding type $B$
arguments are essentially the same and are omitted. We then state the normalized
recursive relations, supersymmetry properties, and
stability. The latter will be used systematically in Section~\ref{sec:normalized-specializations}.

\subsection{Type $C$}

\subsubsection{Proof of Theorem~\ref{typeCmainproperties}}
\begin{proof}
$
\widetilde{WC}_{N,n,r}(\mathbf{w}_{\mathbf{M}}|\mathbf{w}|\mathbf{I})=0$
unless $\mathbf{M} \ge \mathbf{I}$
follows from
Theorem~\ref{ffunctionproperties} $\mathrm{(i)}$.
Let us show
\eqref{normalizationWC}.
By definition \eqref{typeCnormalizeddefinition}, $\widetilde{WC}_{N,n,r}(\mathbf{w}_{\mathbf{I}}|\mathbf{w}|\mathbf{I}) = \frac{WC_{N,n,r}(\mathbf{w}_{\mathbf{I}}|\mathbf{w}|\mathbf{I})}{\mathcal{E}^C_{N,n,r}(\mathbf{w}_{\mathbf{I}}, \mathbf{w})}$. For brevity, we denote the complement of the index set by $\mathbf{I}^c := \{1, 2, \dots, N\} \setminus \mathbf{I}$. Executing the specialization substitution $\mathbf{u} = \mathbf{w}_{\mathbf{I}}$ in \eqref{typeCnormalizationfactor} and rearranging the equation, we obtain the following explicit form of the total normalization factor:
\begin{align}
\mathcal{E}_{N,n,r}^C(\mathbf{w}_{\mathbf{I}}, \mathbf{w}) =& (1-q)^{\frac{(n-r)(n-r-1)}{2}}
\prod_{j,k=1}^r (1 - q w_{i_j}^{-1} w_{i_k}) \prod_{j=1}^r (1 - q w_{i_j}^{-1})^{n-r}
\nonumber \\
&\quad \times \frac{  \prod_{k=1}^N (1 - q w_k)^{n-r} \prod_{j=1}^r \prod_{k=1}^N (1 - q w_{i_j} w_k) }{ \prod_{1 \le j < k \le r} (1 - q w_{i_j} w_{i_k}) \prod_{1 \le j \le k \le N} (1 - q w_j w_k) }.
\label{explicit_E_unified}
\end{align}

Rearranging using the expression \eqref{diagonalFfunction_rewritten} for $F_{N,n,r}(\mathbf{w}_{\mathbf{I}}|\mathbf{w}|\mathbf{I})$ and equation \eqref{explicit_E_unified} and setting $\gamma=0$, $\beta=1$, we obtain the form:
\begin{align}
\widetilde{WC}_{N,n,r}(\mathbf{w}_{\mathbf{I}}|\mathbf{w}|\mathbf{I})
&= \prod_{1 \le j \le k \le r} (1-w_{i_j}^{-1} w_{i_k}^{-1})  \prod_{j \in \mathbf{I}^c} \left( \prod_{l: i_l > j} (1-w_{i_l}^{-1} w_j) \prod_{l: i_l < j} (1-q w_{i_l}^{-1} w_j) \right) \nonumber \\
&\quad \times \frac{ \prod_{1 \le j < k \le r} (1 - q w_{i_j} w_{i_k}) \prod_{1 \le j \le k \le N} (1 - q w_j w_k) }{ \prod_{j=1}^r \prod_{k=1}^N (1 - q w_{i_j} w_k) }.
 \notag
\end{align}

Furthermore, simplifying using
\begin{align}
&\prod_{1 \le j \le k \le N} (1 - q w_j w_k) \nonumber \\
=& \prod_{1 \le j \le k \le r} (1 - q w_{i_j} w_{i_k})  \prod_{\substack{j \le k \\ j, 
k \in \mathbf{I}^c}} (1 - q w_j w_{k})  \prod_{j=1}^r \prod_{k \in \mathbf{I}^c} (1 - q w_{i_j} w_k),
 \notag
\end{align}
and
\begin{equation*}
\prod_{j=1}^r \prod_{k=1}^N (1 - q w_{i_j} w_k) = \prod_{j,k=1}^r (1 - q w_{i_j} w_{i_k})  \prod_{j=1}^r \prod_{k \in \mathbf{I}^c} (1 - q w_{i_j} w_k),
\end{equation*}
$\widetilde{WC}_{N,n,r}(\mathbf{w}_{\mathbf{I}}|\mathbf{w}|\mathbf{I})$ reduces to the following expression:
\begin{align}
\widetilde{WC}_{N,n,r}(\mathbf{w}_{\mathbf{I}}|\mathbf{w}|\mathbf{I})
&=  \prod_{1 \le j \le k \le r} (1-w_{i_j}^{-1} w_{i_k}^{-1})  \prod_{j \in \mathbf{I}^c} \prod_{l: i_l > j} (1-w_{i_l}^{-1} w_j)  \nonumber \\
&\quad \times  \prod_{\substack{j \le k \\ j, k \in \mathbf{I}^c}} (1 - q w_j w_{k})  \prod_{j \in \mathbf{I}^c} \prod_{l: i_l < j} (1-q w_{i_l}^{-1} w_j) .
\label{final_explicit_blocks}
\end{align}
The expressions \eqref{typecfirstblock} and \eqref{typecsecondblock} for $\prod_{\alpha \in \Inv(\lambda)} (1 - e^{-\alpha})$ and $\prod_{\alpha \in T(\lambda)} (1 - q e^{+\alpha})$ of type $C$ show that the right-hand side of \eqref{final_explicit_blocks} is $\prod_{\alpha \in \Inv(\lambda)} (1 - e^{-\alpha}) \prod_{\alpha \in T(\lambda)} (1 - q e^{+\alpha})$.

To prove $\mathrm{(ii)}$ and $\mathrm{(iii)}$,
we use the relation
\begin{align}
\widetilde{WC}_{N,n,r}
(\mathbf{w}_{\mathbf{M}}|\mathbf{w}|\mathbf{I})
={}&
\prod_{1\le j\le k\le r}
\left(1-w_{m_j}^{-1}w_{m_k}^{-1}\right)
\prod_{\substack{j\le k\\
j,k\in\{1,\dots,N\}\setminus\mathbf M}}
(1-qw_jw_k)
\nonumber\\
&\times
\frac{
W_{N,r}(w_{m_1},\dots,w_{m_r}|\mathbf w|\mathbf I)
}{
\displaystyle
\prod_{j,k=1}^r
(1-qw_{m_j}^{-1}w_{m_k})
}, \label{relationwithAnormalized}
\end{align}
which follows by combining \eqref{fixedpointrelationFandWA} with the normalized definition \eqref{typeCnormalizeddefinition} and the normalization factor \eqref{typeCnormalizationfactor}.
We also note the following identification with
type $A$ weight functions by Rim\'anyi--Tarasov--Varchenko:
\begin{align}
\frac{
W_{N,r}(w_{m_1},\dots,w_{m_r}|\mathbf w|\mathbf I)
}{
\displaystyle
\prod_{j,k=1}^r
(1-qw_{m_j}^{-1}w_{m_k})
}=\widetilde W_{\sigma_0,\mathcal I}^{\mathrm{RTV}}
(\mathbf{w}_{\mathbf{M}}, \mathbf{w},q).
\label{identificationtypeA}
\end{align}

To prove $\mathrm{(ii)}$, we use \eqref{relationwithAnormalized}.
First note that from Theorem~\ref{RTVtheorem} $\mathrm{(i)}$,
\eqref{identificationtypeA} is divisible by
\begin{align}
e_{\sigma_0,\mathcal{M},-}^{\mathrm{vert}}
&=\prod_{\substack{j \in \mathbf{M}^c, k \in \mathbf{M} \\ \sigma_0^{-1}(j)<\sigma_0^{-1}(k)}}
(1-q w_k^{-1} w_j)=\prod_{\substack{j \in \mathbf{M}^c, k \in \mathbf{M} \\ k <j }}
(1-q w_k^{-1} w_j) \nonumber \\
&=\prod_{j \in \{1, 2, \dots, N\} \setminus \mathbf{M}} \prod_{l: m_l < j} (1-q w_{m_l}^{-1} w_j). \notag
\end{align}

It is also explicit from \eqref{relationwithAnormalized} that
$\widetilde{WC}_{N,n,r}
(\mathbf{w}_{\mathbf{M}}|\mathbf{w}|\mathbf{I})$
contains the two factors
\[
\prod_{1\le j\le k\le r}
(1-w_{m_j}^{-1}w_{m_k}^{-1})
\quad\text{and}\quad
\prod_{\substack{j \le k \\ j, k \in \{1, 2, \dots, N\} \setminus \mathbf{M}}}
(1-q w_j w_{k}).
\]
Since the remaining normalized type $A$ factor is divisible by the vertical product displayed above, the full product in \eqref{typeCfullDivisibility} divides
$\widetilde{WC}_{N,n,r}(\mathbf{w}_{\mathbf{M}}|\mathbf{w}|\mathbf{I})$.

To prove $\mathrm{(iii)}$,
first note
that from Theorem~\ref{RTVtheorem} $\mathrm{(iii)}$
and the identification
\eqref{identificationtypeA},
we have
\begin{align}
&\varphi_{\mathbf{M}}
\Bigg(
\mathcal{N}
\Bigg(
\displaystyle
\prod_{j,k=1}^r
(1-qw_{m_j}^{-1}w_{m_k})^{-1}
W_{N,r}(w_{m_1},\dots,w_{m_r}|\mathbf w|\mathbf I)
\Bigg)
\Bigg) \nonumber \\
=&
\varphi_{\mathbf{M}} \Big(
\mathcal{N} \Big(
\widetilde W_{\sigma_0,\mathcal I}^{\mathrm{RTV}}
(\mathbf{w}_{\mathbf{M}},\mathbf{w},q)
\Big)
 \Big) \subset [0,r(N-r)-1], \label{newtfromA}
\end{align}
if $\mathbf{I} \neq \mathbf{M}$.

Using $\varphi_{\mathbf{M}} (\mathcal{N}( 1-w_{m_j}^{-1}w_{m_k}^{-1}))=[-2,0]$,
we have
\begin{align}
\varphi_{\mathbf{M}} \Bigg( \mathcal{N}
\Bigg(
\prod_{1\le j\le k\le r}
\left(1-w_{m_j}^{-1}w_{m_k}^{-1}\right)
\Bigg)
\Bigg)=[-r(r+1),0]. \label{newtfactorone}
\end{align}
A direct degree count gives
\begin{align}
\varphi_{\mathbf{M}} \Bigg( \mathcal{N}
\Bigg(
\prod_{\substack{j\le k\\
j,k\in\{1,\dots,N\}\setminus\mathbf M}}
(1-qw_jw_k)
\Bigg)
\Bigg)=[0,2(N-r)(N-r+1)]. \label{newtfactortwo}
\end{align}

From \eqref{relationwithAnormalized}, \eqref{newtfromA}, \eqref{newtfactorone},
\eqref{newtfactortwo} and the Minkowski-sum identity
$\mathcal N(fg)=\mathcal N(f)+\mathcal N(g)$, the claim follows.
\end{proof}

Note
\begin{align}
\varphi_{\mathbf{M}}(\mathcal{N}(
\widetilde{WC}_{N,n,r}
(\mathbf{w}_{\mathbf{M}}|\mathbf{w}|\mathbf{I})
)) = [-r(r+1), (N-r)(2N-r+2)], \notag
\end{align}
when $\mathbf{M}=\mathbf{I}$
which follows from $\mathrm{(i)}$,
or by using 
\begin{align}
&\varphi_{\mathbf{M}}
\Bigg(
\mathcal{N}
\Bigg(
\displaystyle
\prod_{j,k=1}^r
(1-qw_{m_j}^{-1}w_{m_k})^{-1}
W_{N,r}(w_{m_1},\dots,w_{m_r}|\mathbf w|\mathbf I)
\Bigg)
\Bigg) \nonumber \\
=&
\varphi_{\mathbf{M}} \Big(
\mathcal{N} \Big(
\widetilde W_{\sigma_0,\mathcal I}^{\mathrm{RTV}}
(\mathbf{w}_{\mathbf{M}},\mathbf{w},q)
\Big)
 \Big) =[0,r(N-r)], \notag 
\end{align}
when $\mathbf{M} = \mathbf{I}$.

\subsubsection{Recursive relations and stability}

\begin{prop}
Let $i \in \{1, \dots, N-1\}$.
For the normalized weight function $\widetilde{WC}_{N,n,r}(\mathbf{u}|\mathbf{w}|\mathbf{I})$, the following relations hold.

(i) If $i+1 = i_k \in \mathbf{I}$ and $i \notin \mathbf{I}$, we have
\begin{align}
&\widetilde{WC}_{N,n,r}(\mathbf{u}|\mathbf{w}|\mathbf{I}^{(i)}) \nonumber \\
=& \frac{1}{w_i-w_{i+1}} \Big( (w_i-q w_{i+1}) \widetilde{WC}_{N,n,r}(\mathbf{u}|s_i \mathbf{w}|\mathbf{I}) - (1-q)w_{i+1} \widetilde{WC}_{N,n,r}(\mathbf{u}|\mathbf{w}|\mathbf{I}) \Big), \label{recrelone_tilde} \\
&\widetilde{WC}_{N,n,r}(\mathbf{u}|\mathbf{w}|\mathbf{I}) \nonumber \\
=& \frac{1}{q(w_i-w_{i+1})} \Big( (w_i-q w_{i+1}) \widetilde{WC}_{N,n,r}(\mathbf{u}|s_i \mathbf{w}|\mathbf{I}^{(i)}) 
- (1-q)w_{i} \widetilde{WC}_{N,n,r}(\mathbf{u}|\mathbf{w}|\mathbf{I}^{(i)}) \Big). \label{recreltwo_tilde}
\end{align}
Here $\mathbf{I}^{(i)} = \{ i_1, \dots, i_{k-1}, i, i_{k+1}, \dots, i_r \}$.

(ii) If $i,i+1 \in \mathbf{I}$ or $i,i+1 \not\in \mathbf{I}$, we have
\begin{align}
\widetilde{WC}_{N,n,r}(\mathbf{u}|\mathbf{w}|\mathbf{I}) = \widetilde{WC}_{N,n,r}(\mathbf{u}|s_i \mathbf{w}|\mathbf{I}).
\label{recrelthree_tilde}
\end{align}
\end{prop}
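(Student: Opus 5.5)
The plan is to deduce these relations directly from Proposition~\ref{diddiffbeforenorm} for the unnormalized functions, which holds for $WC_{N,n,r}$ (as remarked after that proposition). By definition \eqref{typeCnormalizeddefinition}, $\widetilde{WC}_{N,n,r}(\mathbf{u}|\mathbf{w}|\mathbf{I}) = WC_{N,n,r}(\mathbf{u}|\mathbf{w}|\mathbf{I})\,\mathcal{E}^C_{N,n,r}(\mathbf{u},\mathbf{w})^{-1}$. The relations are linear in the weight functions, with coefficients depending only on $w_i,w_{i+1},q$. So it suffices to show that the normalization factor is the same for every term appearing in each relation.

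Two observations about $\mathcal{E}^C_{N,n,r}$ do this. First, it is independent of $\mathbf{I}$, so replacing $\mathbf{I}$ by $\mathbf{I}^{(i)}$ leaves it unchanged. Second, it is symmetric under $s_i$ acting on $\mathbf{w}$:
\begin{itemize}
\item the factor $\prod_{j=1}^n\prod_{k=1}^N(1-qu_jw_k)$ is symmetric in all of $\mathbf{w}$;
\item the factor $\prod_{1\le j\le k\le N}(1-qw_jw_k)$ is symmetric in all of $\mathbf{w}$, being a product over unordered pairs with repetition;
\item the remaining factors involve only $\mathbf{u}$ and $q$.
\end{itemize}
Hence $\mathcal{E}^C_{N,n,r}(\mathbf{u},s_i\mathbf{w})=\mathcal{E}^C_{N,n,r}(\mathbf{u},\mathbf{w})$.

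Now take \eqref{recrelone}--\eqref{recrelthree} with $F$ replaced by $WC$, and divide both sides by $\mathcal{E}^C_{N,n,r}(\mathbf{u},\mathbf{w})$. On the right-hand sides, each term $WC_{N,n,r}(\mathbf{u}|s_i\mathbf{w}|\cdot)$ divided by $\mathcal{E}^C_{N,n,r}(\mathbf{u},\mathbf{w})$ equals $WC_{N,n,r}(\mathbf{u}|s_i\mathbf{w}|\cdot)/\mathcal{E}^C_{N,n,r}(\mathbf{u},s_i\mathbf{w})$, which is $\widetilde{WC}_{N,n,r}(\mathbf{u}|s_i\mathbf{w}|\cdot)$. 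The unswapped terms become the corresponding $\widetilde{WC}$ at once, and this yields \eqref{recrelone_tilde}--\eqref{recrelthree_tilde} verbatim.

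There is no real obstacle. The only point needing care is the $s_i$-invariance of $\mathcal{E}^C$, in particular of the product over $j\le k$, which is immediate.
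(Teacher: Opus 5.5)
Your proposal is correct and follows essentially the same route as the paper: it checks that $\mathcal{E}^C_{N,n,r}$ is invariant under $s_i$ (since $\mathbf{w}$ enters only through the symmetric products $\prod_{j,k}(1-qu_jw_k)$ and $\prod_{j\le k}(1-qw_jw_k)$) and independent of $\mathbf{I}$. Substituting $WC=\widetilde{WC}\cdot\mathcal{E}^C$ into the $\gamma=0$, $\beta=1$ case of Proposition~\ref{diddiffbeforenorm} then cancels this factor from both sides.
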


\begin{proof}
By \eqref{typeCnormalizeddefinition}, $WC_{N,n,r}(\mathbf{u}|\mathbf{w}|\mathbf{I}) = \widetilde{WC}_{N,n,r}(\mathbf{u}|\mathbf{w}|\mathbf{I}) \cdot \mathcal{E}_{N,n,r}^C(\mathbf{u}, \mathbf{w})$.
We check the action of the simple transposition $s_i$ (swapping $w_i$ and $w_{i+1}$) on the normalization factor $\mathcal{E}^C_{N,n,r}(\mathbf{u}, \mathbf{w})$.
By \eqref{typeCnormalizationfactor}, the variable $\mathbf{w}$ in $\mathcal{E}^C_{N,n,r}(\mathbf{u}, \mathbf{w})$ appears only in the products $\prod_{j=1}^n \prod_{k=1}^N (1 - q u_j w_k)$ and $\prod_{1 \le j \le k \le N} (1 - q w_j w_k)$.
Since these are both symmetric products over all $k \in \{1, \dots, N\}$, they are invariant under the action of any simple transposition $s_i$. Thus,
$
\mathcal{E}^C_{N,n,r}(\mathbf{u}, s_i \mathbf{w}) = \mathcal{E}^C_{N,n,r}(\mathbf{u}, \mathbf{w}).
$
Therefore, substituting $WC_{N,n,r} = \widetilde{WC}_{N,n,r} \cdot \mathcal{E}^C_{N,n,r}$ into the recursive relations of the
$\gamma=0$, $\beta=1$ case of
Proposition~\ref{diddiffbeforenorm}, the invariant factor $\mathcal{E}^C_{N,n,r}(\mathbf{u}, \mathbf{w})$ completely cancels from both sides of all equations, so $\widetilde{WC}_{N,n,r}$ satisfies the same recursive relations as the original $WC_{N,n,r}$.
\end{proof}

\begin{prop}
The normalized weight function $\widetilde{WC}_{N,n,r}$ satisfies the following boundary recursive relations.

(i) If $1 \in \mathbf{I}$, we have
\begin{align}
\widetilde{WC}_{N,n,r-1}(\mathbf{u}|\mathbf{w}|\mathbf{I}^{(0)})
=& \frac{1}{1-w_1^2} \Bigg( (1 - q w_1^2) \widetilde{WC}_{N,n+1,r}(\mathbf{u}, w_1^{-1}|w_1^{-1}, w_2, \dotsm, w_N|\mathbf{I})
\nonumber \\
 &- (1-q) w_1^2 \widetilde{WC}_{N,n,r}(\mathbf{u}|\mathbf{w}|\mathbf{I}) \Bigg), \label{pizerorecursionone_tilde}
\end{align}
Here $\mathbf{I}^{(0)}=\{ i_2,\dotsm,i_r \}$.

(ii) If $1 \not\in \mathbf{I}$, we have
\begin{align}
\widetilde{WC}_{N,n+1,r}(\mathbf{u}, w_1^{-1}|w_1^{-1}, w_2, \dotsm, w_N|\mathbf{I})
= \frac{1-q}{1 - q w_1^2} \widetilde{WC}_{N,n,r}(\mathbf{u}|\mathbf{w}|\mathbf{I}). \label{pizerorecursiontwo_tilde}
\end{align}
\end{prop}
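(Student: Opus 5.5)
The plan is to derive both identities directly from the corresponding unnormalized relations \eqref{pizerorecursiononeWC} and \eqref{pizerorecursiontwoWC} of the preceding corollary. We substitute $WC=\widetilde{WC}\cdot\mathcal{E}^C$ from \eqref{typeCnormalizeddefinition} and compute how the normalization factor \eqref{typeCnormalizationfactor} transforms. Three normalization factors enter:
\begin{itemize}
\item $\mathcal{E}^C_{N,n,r}(\mathbf{u},\mathbf{w})$;
\item $\mathcal{E}^C_{N,n,r-1}(\mathbf{u},\mathbf{w})$;
\item $\mathcal{E}' := \mathcal{E}^C_{N,n+1,r}(\mathbf{u},w_1^{-1},\tilde{\mathbf{w}})$, where $\tilde{\mathbf{w}}=(w_1^{-1},w_2,\dots,w_N)$.
\end{itemize}
The first two differ only in the prefactor $(1-q)^{-(n-r)}$, so $\mathcal{E}^C_{N,n,r-1}=(1-q)^{-1}\mathcal{E}^C_{N,n,r}$.

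\textbf{Key step: the ratio $\mathcal{E}'/\mathcal{E}^C_{N,n,r}$.} Write $v=w_1^{-1}$ for the new spectral variable and go through \eqref{typeCnormalizationfactor} factor by factor.
\begin{itemize}
\item The prefactor contributes $(1-q)^{-1}$.
\item The double product $\prod_{j,k}(1-qu_j^{-1}u_k)$ acquires $(1-q)\prod_{k}(1-qw_1u_k)\prod_j(1-qu_j^{-1}w_1^{-1})$.
\item The product $\prod_{j}\prod_k(1-qu_jw_k)$ changes in the rows $u_j$ by $\prod_j(1-qu_jw_1^{-1})/(1-qu_jw_1)$. It also gains the new row $(1-qw_1^{-2})\prod_{k\ge2}(1-qw_1^{-1}w_k)$.
\item The denominator $\prod_{j<k}(1-qu_ju_k)$ gains $\prod_j(1-qu_jw_1^{-1})$.
\item The denominator $\prod_{j\le k}(1-qw_jw_k)$ changes by
\[
\frac{(1-qw_1^{-2})\prod_{k\ge2}(1-qw_1^{-1}w_k)}{(1-qw_1^{2})\prod_{k\ge2}(1-qw_1w_k)}.
\]
\end{itemize}
After cancellation, the factors $\prod_k(1-qw_1u_k)$, $\prod_j(1-qu_jw_1^{\pm1})$, $(1-qw_1^{-2})$ and $\prod_{k\ge2}(1-qw_1^{-1}w_k)$ all drop out. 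I expect this to leave
\[
\mathcal{E}'=\mathcal{E}^C_{N,n,r}(\mathbf{u},\mathbf{w})\,(1-qw_1^2)\prod_{j=2}^N(1-qw_1w_j)\prod_{j=1}^n(1-qw_1^{-1}u_j^{-1}).
\]
This bookkeeping is the only real obstacle. It must be done carefully, in particular the replacement $w_1\mapsto w_1^{-1}$ inside the $\mathbf{w}$-products and the diagonal term $j=k$ in $\prod_{1\le j\le k\le N}$.

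\textbf{Part (ii).} Write $P=\prod_{j\ge2}(1-qw_1w_j)\prod_{j}(1-qw_1^{-1}u_j^{-1})$. Inserting the ratio into \eqref{pizerorecursiontwoWC} gives
\[
\widetilde{WC}'\,\mathcal{E}^C_{N,n,r}\,(1-qw_1^2)P=(1-q)P\,\widetilde{WC}_{N,n,r}\,\mathcal{E}^C_{N,n,r}.
\]
Dividing both sides by $\mathcal{E}^C_{N,n,r}\,(1-qw_1^2)P$ yields \eqref{pizerorecursiontwo_tilde}.

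\textbf{Part (i).} Use $\widetilde{WC}_{N,n,r-1}=(1-q)\,WC_{N,n,r-1}/\mathcal{E}^C_{N,n,r}$ together with \eqref{pizerorecursiononeWC}. The first term becomes $(1-q)\cdot(1-q)^{-1}P^{-1}\widetilde{WC}'\,\mathcal{E}'/\mathcal{E}^C_{N,n,r}$, which equals $(1-qw_1^2)\widetilde{WC}'$. The second term becomes $(1-q)w_1^2\widetilde{WC}_{N,n,r}$. Together these give exactly \eqref{pizerorecursionone_tilde}.
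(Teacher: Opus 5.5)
Your proposal is correct and is essentially the paper's proof: substitute $WC=\widetilde{WC}\cdot\mathcal{E}^C$ into the $\gamma=0$, $\beta=1$ boundary recursions, then use the two ratios $\mathcal{E}^C_{N,n,r}/\mathcal{E}^C_{N,n,r-1}=1-q$ and $\mathcal{E}^C_{N,n+1,r}(\mathbf{u},w_1^{-1},\tilde{\mathbf{w}})/\mathcal{E}^C_{N,n,r}(\mathbf{u},\mathbf{w})=(1-qw_1^2)\prod_{j=1}^n(1-qw_1^{-1}u_j^{-1})\prod_{k=2}^N(1-qw_1w_k)$. The paper simply asserts the second ratio as \eqref{E_ratio_shift}; your factor-by-factor verification of it is correct, including the diagonal term $j=k=1$, and the final algebra in both parts checks out.
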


\begin{proof}
For brevity, we write $\mathbf{u}' = (\mathbf{u}, w_1^{-1})$ and \\ $\mathbf{w}' = (w_1^{-1}, w_2, \dotsm, w_N)$ in the proof.
From \eqref{typeCnormalizationfactor}, the ratio under the parameter shift $(\mathbf{u}', \mathbf{w}')$ is
\begin{align}
\frac{\mathcal{E}_{N,n+1,r}^C(\mathbf{u}', \mathbf{w}')}{\mathcal{E}_{N,n,r}^C(\mathbf{u}, \mathbf{w})}
&= (1-q w_1^2)  \prod_{j=1}^n (1 - q w_1^{-1} u_j^{-1}) \prod_{k=2}^N (1 - q w_1 w_k).
\label{E_ratio_shift}
\end{align}
Also, regarding the case where the index $r$ changes to $r-1$, since only the exponent of the coefficient part changes, we immediately obtain the following:
\begin{align}
\frac{\mathcal{E}_{N,n,r}^C(\mathbf{u}, \mathbf{w})}{\mathcal{E}^C_{N,n,r-1}(\mathbf{u}, \mathbf{w})}  = 1-q.
\label{E_ratio_r}
\end{align}

To prove (i), we substitute $WC_{N,n,r} = \widetilde{WC}_{N,n,r} \cdot \mathcal{E}_{N,n,r}^C$ into the original relation $\gamma=0$, $\beta=1$ of \eqref{pizerorecursionone} and divide both sides by $\mathcal{E}_{N,n,r-1}^C(\mathbf{u}, \mathbf{w})$:
\begin{align*}
&\widetilde{WC}_{N,n,r-1}(\mathbf{u}|\mathbf{w}|\mathbf{I}^{(0)}) \nonumber \\
&= \frac{1}{1-w_1^2} \Bigg( (1-q)^{-1} \prod_{j=1}^n (1-q w_1^{-1} u_j^{-1})^{-1} \prod_{k=2}^{N} (1-q w_1 w_k)^{-1} \frac{\mathcal{E}_{N,n+1,r}^C(\mathbf{u}', \mathbf{w}')}{\mathcal{E}_{N,n,r-1}^C(\mathbf{u}, \mathbf{w})} 
\nonumber \\
&\times \widetilde{WC}_{N,n+1,r}(\mathbf{u}'|\mathbf{w}'|\mathbf{I})  -  w_1^2 \frac{\mathcal{E}_{N,n,r}^C(\mathbf{u}, \mathbf{w})}{\mathcal{E}_{N,n,r-1}^C(\mathbf{u}, \mathbf{w})} \widetilde{WC}_{N,n,r}(\mathbf{u}|\mathbf{w}|\mathbf{I}) \Bigg).
\end{align*}
Decomposing the ratio of the normalization factors 
in the first term as \\
$\frac{\mathcal{E}^C_{N,n+1,r}(\mathbf{u}', \mathbf{w}')}{\mathcal{E}^C_{N,n,r-1}(\mathbf{u}, \mathbf{w})} = \frac{\mathcal{E}_{N,n+1,r}^C(\mathbf{u}', \mathbf{w}')}{\mathcal{E}^C_{N,n,r}(\mathbf{u}, \mathbf{w})} \cdot \frac{\mathcal{E}^C_{N,n,r}(\mathbf{u}, \mathbf{w})}{\mathcal{E}^C_{N,n,r-1}(\mathbf{u}, \mathbf{w})}$ and applying equations \eqref{E_ratio_shift} and \eqref{E_ratio_r}, equation \eqref{pizerorecursionone_tilde} is derived.

To prove (ii), we substitute $WC_{N,n,r} = \widetilde{WC}_{N,n,r} \cdot \mathcal{E}_{N,n,r}^C$ into the original relation $\gamma=0$, $\beta=1$ of
\eqref{pizerorecursiontwo} and divide both sides by $\mathcal{E}_{N,n+1,r}^C(\mathbf{u}', \mathbf{w}')$. This yields the following:
\begin{align*}
&\widetilde{WC}_{N,n+1,r}(\mathbf{u}'|\mathbf{w}'|\mathbf{I}) \nonumber \\
=& (1-q)  \prod_{j=1}^n (1-q w_1^{-1} u_j^{-1}) \prod_{k=2}^{N} (1-q w_1 w_k) \cdot \frac{\mathcal{E}^C_{N,n,r}(\mathbf{u}, \mathbf{w})}{\mathcal{E}_{N,n+1,r}^C(\mathbf{u}', \mathbf{w}')} \widetilde{WC}_{N,n,r}(\mathbf{u}|\mathbf{w}|\mathbf{I}).
\end{align*}
Substituting the inverse of the ratio \eqref{E_ratio_shift} into this equation derives equation \eqref{pizerorecursiontwo_tilde}.

\end{proof}

\begin{prop}
Assume $r\le n-2$. We have
\begin{align}
 \widetilde{WC}_{N,n,r}(\mathbf{u}|\mathbf{w}|\mathbf{I})\Big|_{u_i u_j = 1}& = \frac{(1-q)^2}{(1-qu_i^2)(1-qu_j^2)} \widetilde{WC}_{N,n-2,r}(\mathbf{u} \setminus \{u_i, u_j\}|\mathbf{w}|\mathbf{I}). \label{normalizedsusytwo} 
\end{align}
\end{prop}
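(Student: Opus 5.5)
The plan is to follow the pattern of the preceding normalized recursions. We write $WC_{N,n,r}=\widetilde{WC}_{N,n,r}\cdot\mathcal{E}^C_{N,n,r}$ and insert this into Corollary~\ref{susycor}. Then we divide by $\mathcal{E}^C_{N,n,r}(\mathbf{u},\mathbf{w})|_{u_iu_j=1}$. The statement reduces to the identity
\begin{align*}
\frac{\mathcal{E}^C_{N,n-2,r}(\mathbf{u}\setminus\{u_i,u_j\},\mathbf{w})}{\mathcal{E}^C_{N,n,r}(\mathbf{u},\mathbf{w})\big|_{u_iu_j=1}}
\,(1-q)\prod_{k=1}^N(1-qu_iw_k)(1-qu_jw_k)\prod_{k\neq i,j}(1-qu_iu_k^{-1})(1-qu_ju_k^{-1})
=\frac{(1-q)^2}{(1-qu_i^2)(1-qu_j^2)}.
\end{align*}
Since $\mathcal{E}^C$ is symmetric in $\mathbf{u}$, and $WC$ is as well, we may take $i=1$, $j=2$ and write $u_2=u_1^{-1}$.

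Next we compute the ratio of normalization factors factor by factor from \eqref{typeCnormalizationfactor}.

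\textbf{The $(1-q)$ prefactor.} The factor $(1-q)^{-(n-r)}$ contributes $(1-q)^{-2}$ to $\mathcal{E}^C_{N,n,r}/\mathcal{E}^C_{N,n-2,r}$.

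\textbf{The $w$-products.} The denominator product $\prod_{j\le k}(1-qw_jw_k)$ cancels. The numerator product over $\mathbf{w}$ contributes exactly $\prod_k(1-qu_1w_k)(1-qu_2w_k)$, which cancels against the corresponding factor in the corollary.

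\textbf{The product $\prod_{j,k=1}^n(1-qu_j^{-1}u_k)$.} Its new factors are as follows.
\begin{itemize}
\item The diagonal terms $(1-q)^2$.
\item The cross terms $(1-qu_1^{-1}u_2)(1-qu_2^{-1}u_1)=(1-qu_1^{-2})(1-qu_1^{2})$.
\item The mixed terms with $k\ge3$: $\prod_{k\ge3}(1-qu_1^{-1}u_k)(1-qu_1u_k)(1-qu_k^{-1}u_1)(1-qu_k^{-1}u_1^{-1})$, using $u_2^{-1}=u_1$.
\end{itemize}

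\textbf{The denominator product $\prod_{j<k}(1-qu_ju_k)$.} It contributes $(1-qu_1u_2)=(1-q)$ together with $\prod_{k\ge3}(1-qu_1u_k)(1-qu_1^{-1}u_k)$. These cancel the two mixed factors $(1-qu_1^{-1}u_k)(1-qu_1u_k)$.

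What remains of the mixed terms is $\prod_{k\ge3}(1-qu_1u_k^{-1})(1-qu_2u_k^{-1})$. This cancels precisely against the product over $k\neq i,j$ in Corollary~\ref{susycor}.

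\textbf{Collecting the rest.} We have
\[
\frac{\mathcal{E}^C_{N,n,r}|_{u_1u_2=1}}{\mathcal{E}^C_{N,n-2,r}}\Big/(\text{corollary factors})
=\frac{(1-q)^{-2}(1-q)^2(1-qu_1^2)(1-qu_1^{-2})}{(1-q)\cdot(1-q)}.
\]
Taking the reciprocal gives $(1-q)^2/\bigl((1-qu_1^2)(1-qu_2^2)\bigr)$, as claimed.

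The main obstacle is purely bookkeeping. One must make sure the $(1-q)$ powers come out exactly: one power from the corollary, $(1-q)^{-2}$ from the prefactor, $(1-q)^2$ from the diagonal terms, and $(1-q)$ from $1-qu_1u_2$. One must also check that the mixed $k\ge3$ factors cancel completely rather than leave a residue. I would verify this by redoing the computation with $n=2$, $r=0$ as a sanity check.
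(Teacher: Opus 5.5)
Your proposal is correct and follows essentially the same route as the paper: compute the ratio $\mathcal{E}^C_{N,n,r}|_{u_iu_j=1}/\mathcal{E}^C_{N,n-2,r}$ by direct substitution into \eqref{typeCnormalizationfactor}, which gives $(1-q)^{-1}(1-qu_i^2)(1-qu_j^2)\prod_{k}(1-qu_iw_k)(1-qu_jw_k)\prod_{k\ne i,j}(1-qu_k^{-1}u_i)(1-qu_k^{-1}u_j)$, and combine it with the $\gamma=0$, $\beta=1$ case of \eqref{reductionrelation}. Your factor-by-factor bookkeeping, including the powers of $(1-q)$ and the cancellation of the mixed $k\ge 3$ terms, reproduces exactly this ratio.
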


\begin{proof}
Direct substitution in \eqref{typeCnormalizationfactor} gives
\begin{align}
    &\frac{\mathcal{E}_{N,n,r}^C(\mathbf{u},\mathbf{w})|_{u_i u_j = 1}}{\mathcal{E}_{N,n-2,r}^C(\mathbf{u} \setminus \{u_i, u_j\}, \mathbf{w})} 
    = \frac{1}{1-q}(1-qu_i^2)(1-qu_j^2) 
   \nonumber \\ 
&\times    \prod_{k=1}^N (1-qu_iw_k)(1-qu_jw_k)  \prod_{\substack{k=1 \\ k \neq i,j}}^n (1-qu_k^{-1}u_i)(1-qu_k^{-1}u_j). \notag
\end{align}
Combining with $\gamma=0$, $\beta=1$ of
\eqref{reductionrelation},
we obtain \eqref{normalizedsusytwo}.
\end{proof}

\begin{prop}
Assume $r\le n-1$. We have
\begin{align}
\widetilde{WC}_{N,n,r}(\mathbf{u}|\mathbf{w}|\mathbf{I})\Big|_{u_n = 1}& =\widetilde{WC}_{N,n-1,r}(\mathbf{u} \setminus \{ u_n \}|\mathbf{w}|\mathbf{I}).
\label{stabilityWC}
\end{align}
\end{prop}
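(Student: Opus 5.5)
The plan follows the pattern of the preceding normalized recursions: combine Lemma~\ref{onevariablesp} at $\gamma=0,\beta=1$ with a direct computation of the ratio of normalization factors. Specializing \eqref{Fonevariablesp} to $\gamma=0$, $\beta=1$ gives
\begin{align*}
WC_{N,n,r}(\mathbf{u}|\mathbf{w}|\mathbf{I})\big|_{u_n=1}
=\prod_{k=1}^{n-1}(1-qu_k^{-1})\prod_{j=1}^{N}(1-qw_j)\,
WC_{N,n-1,r}(u_1,\dots,u_{n-1}|\mathbf{w}|\mathbf{I}).
\end{align*}
Writing $WC_{N,n,r}=\widetilde{WC}_{N,n,r}\cdot\mathcal{E}^C_{N,n,r}$ on both sides, it suffices to show that
\begin{align*}
\frac{\mathcal{E}^C_{N,n,r}(\mathbf{u},\mathbf{w})|_{u_n=1}}{\mathcal{E}^C_{N,n-1,r}(\mathbf{u}\setminus\{u_n\},\mathbf{w})}
=\prod_{k=1}^{n-1}(1-qu_k^{-1})\prod_{j=1}^{N}(1-qw_j).
\end{align*}

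I will compute this ratio factor by factor from \eqref{typeCnormalizationfactor}. The prefactor $(1-q)^{-(n-r)}$ divided by $(1-q)^{-(n-1-r)}$ contributes $(1-q)^{-1}$. In $\prod_{j,k=1}^n(1-qu_j^{-1}u_k)$, the new terms with $j=n$ or $k=n$ at $u_n=1$ are $(1-q)$ from $j=k=n$ and $\prod_{k=1}^{n-1}(1-qu_k)(1-qu_k^{-1})$. The product $\prod_{j=1}^n\prod_{k=1}^N(1-qu_jw_k)$ contributes the new factor $\prod_{k=1}^N(1-qw_k)$. In the denominator, $\prod_{j<k\le n}(1-qu_ju_k)$ contributes $\prod_{j=1}^{n-1}(1-qu_j)$, and the $\mathbf{w}$-only denominator cancels.

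Collecting these, the $(1-q)$ factors cancel and $\prod(1-qu_k)$ cancels against the denominator. What remains is exactly $\prod_{k=1}^{n-1}(1-qu_k^{-1})\prod_{j=1}^N(1-qw_j)$. Dividing then yields \eqref{stabilityWC}.

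There is no serious obstacle here. The only care needed is bookkeeping of the diagonal term $j=k=n$ and of the $(1-q)^{-(n-r)}$ exponent, since the hypothesis $r\le n-1$ is what allows Lemma~\ref{onevariablesp} to apply.
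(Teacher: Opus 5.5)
Your proposal is correct and matches the paper's proof. Both specialize Lemma~\ref{onevariablesp} to $\gamma=0$, $\beta=1$ and check by direct substitution into \eqref{typeCnormalizationfactor} that $\mathcal{E}^C_{N,n,r}(\mathbf{u},\mathbf{w})|_{u_n=1}=\prod_{k=1}^{N}(1-qw_k)\prod_{k=1}^{n-1}(1-qu_k^{-1})\,\mathcal{E}^C_{N,n-1,r}(\mathbf{u}\setminus\{u_n\},\mathbf{w})$, so the scalar factors cancel. Your factor-by-factor bookkeeping is accurate, including the diagonal $(1-q)$ that offsets the change in the $(1-q)^{-(n-r)}$ exponent and the $\prod(1-qu_k)$ that cancels against the $\prod_{j<k}(1-qu_ju_k)$ denominator.
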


\begin{proof}

Lemma~\ref{onevariablesp} gives \eqref{Fonevariablesp}.
Direct substitution in \eqref{typeCnormalizationfactor} gives
\begin{align}
\left.
\mathcal{E}_{N,n,r}^{C}(\mathbf{u},\mathbf{w})
\right|_{u_n=1}
&=
\prod_{k=1}^{N}(1-qw_k)
\prod_{k=1}^{n-1}(1-qu_k^{-1})
\,\mathcal{E}^{C}_{N,n-1,r}(\mathbf{u}\setminus\{u_n\},\mathbf{w}). \notag
\end{align}
Combining with $\gamma=0, \beta=1$ of
\eqref{Fonevariablesp}, we obtain \eqref{stabilityWC}.

\end{proof}

\subsection{Type $B$}

\subsubsection{Equal-cardinality relation}

For later use, we record the relation
\begin{align}
&\widetilde{WB}_{N,n,r}
(\mathbf{w}_{\mathbf{M}}|\mathbf{w}|\mathbf{I})
={}
\prod_{j=1}^r (1-w_{m_j}^{-1})
\prod_{1\le j< k\le r}
\left(1-w_{m_j}^{-1}w_{m_k}^{-1}\right)
\nonumber \\
&\times
\prod_{j\in\{1,\dots,N\}\setminus\mathbf M}
(1-q w_j)
\prod_{\substack{j< k\\
j,k\in\{1,\dots,N\}\setminus\mathbf M}}
(1-qw_jw_k)
\frac{
W_{N,r}(w_{m_1},\dots,w_{m_r}|\mathbf w|\mathbf I)
}{
\displaystyle
\prod_{j,k=1}^r
(1-qw_{m_j}^{-1}w_{m_k})
}. \label{relationwithAnormalizedtypeB}
\end{align}
which follows directly from \eqref{fixedpointrelationFandWA}, \eqref{WBfunction},
\eqref{typeBnormalizeddefinition}, and \eqref{typeBnormalizationfactor}.

Note
\begin{align}
\varphi_{\mathbf{M}}(\mathcal{N}(
\widetilde{WB}_{N,n,r}
(\mathbf{w}_{\mathbf{M}}|\mathbf{w}|\mathbf{I})
)) = [-r^2, (N-r)(2N-r)], \notag
\end{align}
when $\mathbf{M}=\mathbf{I}$ which follows from $(\mathrm{i})$.

\subsubsection{Recursive relations and stability}

\begin{prop} \label{prop:B_si}
Let $i \in \{1, \dotsm, N-1\}$. For the normalized weight function $\widetilde{WB}_{N,n,r}(\mathbf{u}|\mathbf{w}|\mathbf{I})$, the following relations hold.

(i) If $i+1 = i_k \in \mathbf{I}$ and $i \notin \mathbf{I}$, we have
\begin{align}
&\widetilde{WB}_{N,n,r}(\mathbf{u}|\mathbf{w}|\mathbf{I}^{(i)}) \nonumber \\
=& \frac{1}{w_i-w_{i+1}} \Big( (w_i-q w_{i+1}) \widetilde{WB}_{N,n,r}(\mathbf{u}|s_i \mathbf{w}|\mathbf{I}) - (1-q)w_{i+1} \widetilde{WB}_{N,n,r}(\mathbf{u}|\mathbf{w}|\mathbf{I}) \Big), \\
&\widetilde{WB}_{N,n,r}(\mathbf{u}|\mathbf{w}|\mathbf{I}) \nonumber \\
=& \frac{1}{q(w_i-w_{i+1})} \Big( (w_i-q w_{i+1}) \widetilde{WB}_{N,n,r}(\mathbf{u}|s_i \mathbf{w}|\mathbf{I}^{(i)}) - (1-q)w_{i} \widetilde{WB}_{N,n,r}(\mathbf{u}|\mathbf{w}|\mathbf{I}^{(i)}) \Big),
\end{align}
here $\mathbf{I}^{(i)} = \{ i_1, \dotsm, i_{k-1}, i, i_{k+1}, \dotsm, i_r \}$.

(ii) If $i,i+1 \in \mathbf{I}$ or $i,i+1 \not\in \mathbf{I}$, we have
\begin{align}
\widetilde{WB}_{N,n,r}(\mathbf{u}|\mathbf{w}|\mathbf{I}) = \widetilde{WB}_{N,n,r}(\mathbf{u}|s_i \mathbf{w}|\mathbf{I}).
\end{align}
\end{prop}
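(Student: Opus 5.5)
The plan follows the type $C$ argument for \eqref{recrelone_tilde}--\eqref{recrelthree_tilde}. By \eqref{typeBnormalizeddefinition} we have
\[
WB_{N,n,r}(\mathbf{u}|\mathbf{w}|\mathbf{I}) = \widetilde{WB}_{N,n,r}(\mathbf{u}|\mathbf{w}|\mathbf{I})\,\mathcal{E}^B_{N,n,r}(\mathbf{u},\mathbf{w}).
\]
The remark after Proposition~\ref{diddiffbeforenorm} says that \eqref{recrelone}, \eqref{recreltwo} and \eqref{recrelthree} remain valid with $F_{N,n,r}$ replaced by $WB_{N,n,r}$. This holds because $WB_{N,n,r}$ is the specialization $\gamma=-1$, $\beta=1$ of $F_{N,n,r}$, multiplied by the factor $\prod_{k=1}^n(1+u_k^{-1})^{-1}$. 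That factor depends only on $\mathbf{u}$, so it commutes with swapping $w_i,w_{i+1}$ and with changing $\mathbf{I}$ to $\mathbf{I}^{(i)}$. Hence it factors out of both sides of each linear relation.

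The key step is to show that $\mathcal{E}^B_{N,n,r}(\mathbf{u},s_i\mathbf{w}) = \mathcal{E}^B_{N,n,r}(\mathbf{u},\mathbf{w})$. In \eqref{typeBnormalizationfactor}, the variables $\mathbf{w}$ enter only through four products:
\[
\prod_{j=1}^n\prod_{k=1}^N(1-qu_jw_k),\qquad \prod_{1\le j<k\le N}(1-qw_jw_k),\qquad \prod_{k=1}^N(1-qw_k),
\]
together with the scalar $(1-q)^{-(n-r)}$, which does not involve $\mathbf{w}$ at all. Each product is symmetric in $w_1,\dots,w_N$, so it is fixed by $s_i$. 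The factor $\mathcal{E}^B_{N,n,r}$ also does not depend on $\mathbf{I}$; only $r=|\mathbf{I}|$ enters, and $|\mathbf{I}^{(i)}|=|\mathbf{I}|$. Therefore every $WB$ term appearing in the relations carries the same factor $\mathcal{E}^B_{N,n,r}(\mathbf{u},\mathbf{w})$.

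With this in hand, substitute $WB = \widetilde{WB}\cdot\mathcal{E}^B$ into each of the three $WB$ relations and divide by the common nonzero factor $\mathcal{E}^B_{N,n,r}(\mathbf{u},\mathbf{w})$. This gives the claimed relations in (i) and (ii) verbatim.

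There is no real obstacle. The only point to check is that the scalar prefactor in \eqref{triandb}, the exponent $n-r$, and the $\mathbf{u}$-only products in $\mathcal{E}^B$ are all unaffected by both operations ($s_i$ and $\mathbf{I}\mapsto\mathbf{I}^{(i)}$), which is immediate.
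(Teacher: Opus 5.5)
Your proposal is correct and follows the paper's own argument, namely the type $C$ proof with $\mathcal{E}^B$ in place of $\mathcal{E}^C$: the $\mathbf{u}$-only prefactor $\prod_k(1+u_k^{-1})^{-1}$ passes through the linear relations of Proposition~\ref{diddiffbeforenorm}, and $\mathcal{E}^B_{N,n,r}$ cancels because it is $s_i$-invariant and depends on $\mathbf{I}$ only through $r=|\mathbf{I}|=|\mathbf{I}^{(i)}|$. One small slip in wording: $\mathbf{w}$ enters $\mathcal{E}^B$ through three symmetric products, not four, since the fourth item you list is the $\mathbf{w}$-free scalar $(1-q)^{-(n-r)}$.
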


\begin{prop} \label{prop:B_shift}
The normalized weight function $\widetilde{WB}_{N,n,r}$ satisfies the following boundary recursive relations.

(i) If $1 \in \mathbf{I}$, we have
\begin{align}
\widetilde{WB}_{N,n,r-1}(\mathbf{u}|\mathbf{w}|\mathbf{I}^{(0)}) =& \frac{1}{1-w_1} \Bigg( (1 - q w_1) \widetilde{WB}_{N,n+1,r}(\mathbf{u}, w_1^{-1}|w_1^{-1}, w_2, \dotsm, w_N|\mathbf{I}) \nonumber \\
&- (1-q) w_1 \widetilde{WB}_{N,n,r}(\mathbf{u}|\mathbf{w}|\mathbf{I}) \Bigg),
\end{align}
here $\mathbf{I}^{(0)}=\{ i_2,\dotsm,i_r \}$.

(ii) If $1 \not\in \mathbf{I}$, we have
\begin{align}
\widetilde{WB}_{N,n+1,r}(\mathbf{u}, w_1^{-1}|w_1^{-1}, w_2, \dotsm, w_N|\mathbf{I}) = \frac{1-q}{1-qw_1} \widetilde{WB}_{N,n,r}(\mathbf{u}|\mathbf{w}|\mathbf{I}).
\end{align}
\end{prop}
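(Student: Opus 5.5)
The plan is to mirror the type $C$ proof of the boundary recursive relations: substitute $WB_{N,n,r}=\widetilde{WB}_{N,n,r}\cdot\mathcal{E}^B_{N,n,r}$ into \eqref{pizerorecursiononeWB} and \eqref{pizerorecursiontwoWB}. Dividing by the appropriate normalization factor then reduces everything to computing two ratios of the type $B$ normalization factors \eqref{typeBnormalizationfactor}. Write $\mathbf{u}'=(\mathbf{u},w_1^{-1})$ and $\mathbf{w}'=(w_1^{-1},w_2,\dots,w_N)$.

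The first ratio is the cardinality shift. Only the prefactor $(1-q)^{-(n-r)}$ depends on $r$, so
\[
\frac{\mathcal{E}^B_{N,n,r}(\mathbf{u},\mathbf{w})}{\mathcal{E}^B_{N,n,r-1}(\mathbf{u},\mathbf{w})}=1-q .
\]

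The second ratio, $\mathcal{E}^B_{N,n+1,r}(\mathbf{u}',\mathbf{w}')/\mathcal{E}^B_{N,n,r}(\mathbf{u},\mathbf{w})$, is the main computation. I expect the result to be
\[
(1-qw_1)\prod_{j=1}^n(1-qw_1^{-1}u_j^{-1})\prod_{k=2}^N(1-qw_1w_k).
\]
To verify it, I track each block of \eqref{typeBnormalizationfactor} after adjoining $u_{n+1}=w_1^{-1}$ and replacing $w_1$ by $w_1^{-1}$.

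\begin{itemize}
\item The prefactor contributes $(1-q)^{-1}$.
\item The block $\prod_{j,k}(1-qu_j^{-1}u_k)$ gains the diagonal term $(1-q)$ and the cross terms $\prod_j(1-qw_1u_j)(1-qw_1^{-1}u_j^{-1})$.
\item The block $\prod_{j,k}(1-qu_jw_k)$ changes in two ways. The old $j\le n$, $k=1$ factors $\prod_j(1-qu_jw_1)$ become $\prod_j(1-qu_jw_1^{-1})$. The new row $u_{n+1}=w_1^{-1}$ gives $(1-qw_1^{-2})\prod_{k\ge2}(1-qw_1^{-1}w_k)$.
\item The numerator factor $\prod_j(1-qu_j)$ gains $(1-qw_1^{-1})$.
\item In the denominator, $\prod_{j<k}(1-qu_ju_k)$ gains $\prod_j(1-qu_jw_1^{-1})$, and $\prod_j(1-qu_j^2)$ gains $(1-qw_1^{-2})$.
\item Also in the denominator, $\prod_{j<k\le N}(1-qw_jw_k)$ has its $j=1$ factors $\prod_{k\ge2}(1-qw_1w_k)$ replaced by $\prod_{k\ge2}(1-qw_1^{-1}w_k)$. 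Likewise $\prod_k(1-qw_k)$ has its factor $(1-qw_1)$ replaced by $(1-qw_1^{-1})$.
\end{itemize}

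After cancellation, the factors $(1-q)$, $(1-qw_1^{-2})$, $(1-qw_1^{-1})$, $\prod_j(1-qu_jw_1^{-1})$ and $\prod_{k\ge2}(1-qw_1^{-1}w_k)$ all drop out. What remains is
\[
\prod_j(1-qw_1^{-1}u_j^{-1})\prod_{k\ge2}(1-qw_1w_k)(1-qw_1),
\]
which is the claimed ratio. This bookkeeping is the only real obstacle. The point to check carefully is the extra boundary terms $\prod_j(1-qu_j)$, $\prod_j(1-qu_j^2)$ and $\prod_k(1-qw_k)$: they replace type $C$'s single diagonal $j=k$ term in $\prod_{j\le k}(1-qw_jw_k)$. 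They are what produce $(1-qw_1)$ here instead of $(1-qw_1^2)$.

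With both ratios in hand, part (i) follows by dividing \eqref{pizerorecursiononeWB} by $\mathcal{E}^B_{N,n,r-1}(\mathbf{u},\mathbf{w})$. The ratio for the first term factors as the shift ratio times $(1-q)$. Its product factors cancel against $(1-q)^{-1}\prod_{k\ge2}(1-qw_1w_k)^{-1}\prod_j(1-qw_1^{-1}u_j^{-1})^{-1}$, leaving the coefficient $1-qw_1$. The second term acquires the coefficient $(1-q)w_1$. For part (ii), I divide \eqref{pizerorecursiontwoWB} by $\mathcal{E}^B_{N,n+1,r}(\mathbf{u}',\mathbf{w}')$ and insert the inverse of the shift ratio, which leaves exactly $(1-q)/(1-qw_1)$.
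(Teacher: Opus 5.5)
Your proposal is correct and follows the paper's route: the paper proves the type $C$ analogue by substituting the normalization into the unnormalized boundary relations and computing the shift ratio and the $r\to r-1$ ratio, and it states that the type $B$ proof is the same. Your type $B$ ratio $(1-qw_1)\prod_{j=1}^n(1-qw_1^{-1}u_j^{-1})\prod_{k\ge 2}(1-qw_1w_k)$ and the cardinality ratio $1-q$ both check out; in your cancellation summary you should also list the cross terms $\prod_j(1-qw_1u_j)$, which cancel the removed factors $\prod_j(1-qu_jw_1)$, though this does not change the result.
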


\begin{prop}
Assume $r\le n-2$. We have
\begin{align}
 \widetilde{WB}_{N,n,r}(\mathbf{u}|\mathbf{w}|\mathbf{I})\Big|_{u_i u_j = 1}& = \frac{(1-q)^2}{(1-qu_i)(1-qu_j)} \widetilde{WB}_{N,n-2,r}(\mathbf{u} \setminus \{u_i, u_j\}|\mathbf{w}|\mathbf{I}). \label{normalizedsusythree}
\end{align}
\end{prop}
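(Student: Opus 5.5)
The plan is to follow the type $C$ argument for \eqref{normalizedsusytwo}: combine the unnormalized reduction of Corollary~\ref{susycor} for $WB$ with a direct computation of the ratio of normalization factors
\[
\frac{\mathcal{E}^B_{N,n,r}(\mathbf{u},\mathbf{w})|_{u_iu_j=1}}{\mathcal{E}^B_{N,n-2,r}(\mathbf{u}\setminus\{u_i,u_j\},\mathbf{w})}.
\]
Since $\mathcal{E}^B_{N,n,r}$ is symmetric in $\mathbf{u}$, I would take $i=1$, $j=2$ and set $u_2=u_1^{-1}$.

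The first step is to track each factor of \eqref{typeBnormalizationfactor}; the $\mathbf{w}$-only denominators cancel in the ratio.

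\begin{itemize}
\item The prefactor $(1-q)^{-(n-r)}$ contributes $(1-q)^{-2}$.
\item In $\prod_{j,k=1}^n(1-qu_j^{-1}u_k)$, the new factors are the diagonal terms $(1-q)^2$, the terms $(1-qu_1^{-1}u_2)(1-qu_2^{-1}u_1)=(1-qu_1^{-2})(1-qu_1^2)$, and the mixed terms $\prod_{k\ge3}(1-qu_1^{\pm1}u_k)(1-qu_k^{-1}u_1^{\pm1})$.
\item In $\prod_{j<k}(1-qu_ju_k)$, the new factors are $(1-qu_1u_2)=1-q$ and $\prod_{k\ge3}(1-qu_1u_k)(1-qu_1^{-1}u_k)$.
\item The factor $\prod_j\prod_k(1-qu_jw_k)$ contributes $\prod_k(1-qu_1w_k)(1-qu_2w_k)$.
\item The factor $\prod_j(1-qu_j)$ contributes $(1-qu_1)(1-qu_2)$.
\item The factor $\prod_j(1-qu_j^2)$ contributes $(1-qu_1^2)(1-qu_2^2)$.
\end{itemize}

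In the mixed terms, $(1-qu_1u_k)(1-qu_2u_k)$ cancels against the corresponding $\prod_{j<k}(1-qu_ju_k)$ factors. This leaves $\prod_{k\ge3}(1-qu_k^{-1}u_1)(1-qu_k^{-1}u_2)$, which matches the unnormalized prefactor. The term $(1-qu_1^{-2})(1-qu_1^2)$ cancels exactly against $(1-qu_1^2)(1-qu_2^2)$.

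Collecting the factors, the ratio should equal
\[
\frac{(1-q)^2}{(1-q)^2(1-q)}\,(1-qu_1)(1-qu_2)\prod_{k=1}^N(1-qu_1w_k)(1-qu_2w_k)\prod_{k\ge3}(1-qu_k^{-1}u_1)(1-qu_k^{-1}u_2).
\]
This simplifies to $\frac{1}{1-q}(1-qu_1)(1-qu_2)\times(\cdots)$, the type $B$ analogue of the type $C$ ratio with $(1-qu^2)$ replaced by $(1-qu)$.

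Finally, I would divide the $WB$ identity of Corollary~\ref{susycor} by $\mathcal{E}^B_{N,n,r}|_{u_1u_2=1}$. All $\mathbf{w}$-dependent and $u_k$-dependent products cancel, leaving
\[
\frac{1-q}{(1-q)^{-1}(1-qu_1)(1-qu_2)}=\frac{(1-q)^2}{(1-qu_1)(1-qu_2)},
\]
as claimed.

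The only delicate point is the bookkeeping in the $\mathbf{u}$-products. One must check that $(1-qu_1u_2)=1-q$ and $(1-qu_1^{-2})=(1-qu_2^2)$, and that the mixed $u_k$ terms split as described. As a sanity check I would verify the case $n=2$, $r=0$ against Lemma~\ref{deltalem}-type evaluations.
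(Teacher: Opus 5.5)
Your proposal is correct and follows essentially the same route as the paper, which omits the type $B$ proof as parallel to the type $C$ argument for \eqref{normalizedsusytwo}. That route computes the ratio $\mathcal{E}^B_{N,n,r}|_{u_iu_j=1}/\mathcal{E}^B_{N,n-2,r}=\frac{1}{1-q}(1-qu_i)(1-qu_j)\prod_{k=1}^N(1-qu_iw_k)(1-qu_jw_k)\prod_{k\ne i,j}(1-qu_k^{-1}u_i)(1-qu_k^{-1}u_j)$, exactly as in your bookkeeping, and divides the $WB$ identity of Corollary~\ref{susycor} by it.
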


\begin{prop}
Assume $r\le n-1$. We have
\begin{align}
\widetilde{WB}_{N,n,r}(\mathbf{u}|\mathbf{w}|\mathbf{I})\Big|_{u_n = 1}& =\widetilde{WB}_{N,n-1,r}(\mathbf{u} \setminus \{ u_n \}|\mathbf{w}|\mathbf{I}).
\label{stabilityWB}
\end{align}
\end{prop}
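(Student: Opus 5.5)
The argument copies the type $C$ stability proof: combine Lemma~\ref{onevariablesp} with a direct evaluation of the type $B$ normalization factor at $u_n=1$. Start from the definition \eqref{WBfunction}. At $u_n=1$ the scalar prefactor gives
\[
(1+u_n^{-1})^{-1}\big|_{u_n=1}=\tfrac12 .
\]
The boundary constant in \eqref{Fonevariablesp} at $\gamma=-1$, $\beta=1$ is $\beta-\gamma=2$, so the two cancel. Since $r\le n-1$, Lemma~\ref{onevariablesp} applies and yields
\[
WB_{N,n,r}(\mathbf u|\mathbf w|\mathbf I)\big|_{u_n=1}
=\prod_{k=1}^{n-1}(1-qu_k^{-1})\prod_{j=1}^N(1-qw_j)\,
WB_{N,n-1,r}(\mathbf u\setminus\{u_n\}|\mathbf w|\mathbf I).
\]
This matches the type $C$ relation obtained from \eqref{Fonevariablesp} with $\gamma=0$, $\beta=1$.

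Next, substitute $u_n=1$ into \eqref{typeBnormalizationfactor} and sort the factors into those involving $u_n$ and the rest.
\begin{itemize}
\item The prefactor $(1-q)^{-(n-r)}$ gives one extra $(1-q)^{-1}$ relative to $\mathcal E^B_{N,n-1,r}$.
\item In $\prod_{j,k=1}^n(1-qu_j^{-1}u_k)$ the new terms are the diagonal term $1-q$ and the pairs $(1-qu_k)(1-qu_k^{-1})$ for $k\le n-1$.
\item $\prod_{k}(1-qu_nw_k)$ becomes $\prod_{k=1}^N(1-qw_k)$.
\item In the numerator, $(1-qu_n)$ becomes $1-q$.
\item In the denominator, $\prod_{j<n}(1-qu_ju_n)$ becomes $\prod_{k\le n-1}(1-qu_k)$.
\item Also in the denominator, $(1-qu_n^2)$ becomes $1-q$.
\end{itemize}
The $\mathbf w$-only denominator factors are unchanged.

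Multiplying these out, the two numerator factors of $1-q$ cancel against the prefactor $(1-q)^{-1}$ and the denominator $1-q$. The factors $(1-qu_k)$ cancel between numerator and denominator. What survives is
\[
\mathcal E^B_{N,n,r}(\mathbf u,\mathbf w)\big|_{u_n=1}
=\prod_{k=1}^{N}(1-qw_k)\prod_{k=1}^{n-1}(1-qu_k^{-1})\,
\mathcal E^B_{N,n-1,r}(\mathbf u\setminus\{u_n\},\mathbf w).
\]

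The prefactor here is exactly the one in the specialized $WB$ relation. Dividing the two identities, via \eqref{typeBnormalizeddefinition}, gives \eqref{stabilityWB}. The only delicate step is the bookkeeping of the powers of $(1-q)$ and of the boundary terms $(1-qu_k)$ against $(1-qu_ku_n)$, which are new compared with type $C$. I expect these to cancel as described. One should also check that no factor of $\mathcal E^B$ vanishes at $u_n=1$ generically, which is clear for $q$ generic.
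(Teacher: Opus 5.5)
Your proposal is correct and follows the paper's route: the paper proves the type $C$ version by combining Lemma~\ref{onevariablesp} with a direct evaluation of the normalization factor at $u_n=1$, and it omits the type $B$ case as identical. Your type $B$-specific bookkeeping is right: the $\tfrac12$ from $(1+u_n^{-1})^{-1}$ cancels $\beta-\gamma=2$, and the extra factors $(1-qu_n)$, $(1-qu_n^2)$ and $(1-qu_ju_n)$ cancel as you describe, leaving exactly $\prod_{k=1}^{N}(1-qw_k)\prod_{k=1}^{n-1}(1-qu_k^{-1})$.
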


\section{Further properties of the normalized weight functions}
\label{sec:normalized-specializations}

We further derive properties of the normalized weight functions,
including branching formulas and orthogonality relations.
Recall that $N$ and $n$ are the numbers of $w$-variables and
$u$-variables, respectively, and that $r=|\mathbf I|$ is the
cardinality of the label $\mathbf I$.
For $\mathbf M=\{m_1<\cdots<m_s\}\subset\{1,\ldots,N\}$,
we write $s=|\mathbf M|$ and specialize
\[
\mathbf u=(w_{m_1},\ldots,w_{m_s},
\underbrace{1,\ldots,1}_{n-s}),
\qquad 0\le s\le\min(n,N).
\]
Thus $r$ specifies the label cardinality, whereas $s$ specifies
the number of selected $w$-variables in the specialization.
We consider both $s=r$ and $s\ne r$.

\subsection{Type $C$}

\subsubsection{General setup}
\label{sec:typeC-specialization}

Let
\[
\mathbf{M}=\{m_1<\cdots<m_s\}\subset\{1,\dots,N\},
\qquad 0\le s\le n,
\qquad
\mathbf{M}^{\mathrm c}=\{1,\dots,N\}\setminus\mathbf{M},
\]
and set
\[
\mathbf{w}_{\mathbf{M}}:=
(w_{m_1},\ldots,w_{m_s},\underbrace{1,\ldots,1}_{n-s}).
\]
Repeated application of \eqref{stabilityWC} gives, if $s\ge r$,
\begin{equation}
\widetilde{WC}_{N,n,r}(\mathbf{w}_{\mathbf{M}}|\mathbf{w}|\mathbf{I})
=
\widetilde{WC}_{N,s,r}(w_{m_1},\ldots,w_{m_s}|\mathbf{w}|\mathbf{I}),
\label{eq:typeC-stability-s-ge-r}
\end{equation}
whereas if $s<r$,
\begin{equation}
\widetilde{WC}_{N,n,r}(\mathbf{w}_{\mathbf{M}}|\mathbf{w}|\mathbf{I})
=
\widetilde{WC}_{N,r,r}\Bigl(
w_{m_1},\ldots,w_{m_s},\underbrace{1,\ldots,1}_{r-s}
|\mathbf{w}|\mathbf{I}\Bigr).
\label{eq:typeC-stability-s-lt-r}
\end{equation}
We organize the consequences in the order $s<r$, $s=r$, and $s>r$.

\subsubsection{The case $s<r$}

\begin{prop}
If $|\mathbf{M}|<|\mathbf{I}|=r$, then
\begin{equation}
\widetilde{WC}_{N,n,r}(\mathbf{w}_{\mathbf{M}}|\mathbf{w}|\mathbf{I})=0.
\label{eq:typeC-lower-cardinality-vanishing}
\end{equation}
\end{prop}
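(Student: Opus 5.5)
The plan is to reduce to the case $n=r$ via the stability relation \eqref{eq:typeC-stability-s-lt-r}, and then use the factorization at $r=n$. By \eqref{eq:typeC-stability-s-lt-r}, the left-hand side of \eqref{eq:typeC-lower-cardinality-vanishing} equals
\[
\widetilde{WC}_{N,r,r}\bigl(w_{m_1},\ldots,w_{m_s},1,\ldots,1\,|\,\mathbf{w}\,|\,\mathbf{I}\bigr),
\]
with $r-s\ge1$ entries equal to $1$. This is the value of
$WC_{N,r,r}\cdot(\mathcal{E}^C_{N,r,r})^{-1}$ at this point. We may apply \eqref{stabilityWC} repeatedly here because $r\le n-1$ holds at each step from $n$ down to $r+1$.

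Next, apply the factorization \eqref{typeCnrFactorization}:
\[
WC_{N,r,r}(\mathbf{u}|\mathbf{w}|\mathbf{I})=\prod_{j=1}^r(1-u_j^{-2})\prod_{1\le i<j\le r}(1-u_i^{-1}u_j^{-1})\,W_{N,r}(\mathbf{u}|\mathbf{w}|\mathbf{I}).
\]
At our point $u_r=1$, so the factor $1-u_r^{-2}$ vanishes. The type $A$ function $W_{N,r}$ is a Laurent polynomial in $\mathbf{u}$, since it is a partition function. Hence the whole product vanishes. Equivalently, one can cite Theorem~\ref{ffunctionproperties}(i) in the case $s<r$, since $\mathbf{M}\not\ge\mathbf{I}$ when $s<r$. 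That gives $WC_{N,n,r}(\mathbf{w}_{\mathbf{M}}|\mathbf{w}|\mathbf{I})=0$ directly.

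The only delicate step is to check that dividing by the normalization factor does not create a $0/\infty$ ambiguity. I would verify that $\mathcal{E}^C_{N,r,r}$, or $\mathcal{E}^C_{N,n,r}$ if we work directly, is finite and nonzero at $\mathbf{u}=\mathbf{w}_{\mathbf{M}}$, as a rational function in generic $\mathbf{w}$ and $q$.
\begin{itemize}
\item Setting $u_j=1$ or $u_j=w_{m_j}$ in \eqref{typeCnormalizationfactor} gives numerator factors such as $1-q$, $1-qw_k$, $1-qw_{m_j}w_k$ and $1-qw_{m_j}^{\pm1}w_{m_k}^{\pm1}$.
\item It gives denominator factors such as $1-qu_ju_k$ and $1-qw_jw_k$.
\end{itemize}
None of these vanishes identically for generic $q$, and the prefactor $(1-q)^{-(n-r)}$ is harmless. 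Therefore the normalized specialization is $0$ times a well-defined nonzero rational factor, which proves \eqref{eq:typeC-lower-cardinality-vanishing}.
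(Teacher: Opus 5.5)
Your proposal is correct and follows essentially the paper's argument: reduce to $\widetilde{WC}_{N,r,r}$ by stability, kill the unnormalized function with the factor $1-u_j^{-2}$ from the $r=n$ factorization, and note that $\mathcal{E}^C_{N,r,r}$ is finite and nonzero at the specialization for generic parameters. Your two additions are valid refinements not spelled out in the paper: that $W_{N,r}$ is a Laurent polynomial, so the product really vanishes, and that one could instead apply Theorem~\ref{ffunctionproperties}(i) together with $\mathcal{E}^C_{N,n,r}$ directly.
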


\begin{proof}
By \eqref{eq:typeC-stability-s-lt-r}, it is enough to consider
$\widetilde{WC}_{N,r,r}$ at a specialization containing at least one variable
$u_j=1$.  The factorization \eqref{typeCnrFactorization} shows that the
corresponding unnormalized function $WC_{N,r,r}$ vanishes because it contains
the factor $1-u_j^{-2}$.  By the normalized definition
\eqref{typeCnormalizeddefinition} and the normalization factor
\eqref{typeCnormalizationfactor}, $\mathcal E^C_{N,r,r}$ is finite and
nonzero at this specialization for generic parameters.  Hence the normalized
value also vanishes.
\end{proof}

\subsubsection{Equal cardinality: $s=r$}
For $|\mathbf{M}|=r$, the relation with the normalized type $A$ weight function is exactly \eqref{relationwithAnormalized}.  This is precisely the cardinality considered in Theorem~\ref{typeCmainproperties}.  Together with the support condition and Newton degree results below, the equal-cardinality specialization recovers $\mathrm{(i)}$--$\mathrm{(iii)}$ of that theorem; we summarize this at the end of the type $C$ subsection.

\subsubsection{The case $s>r$: branching formula}

Let
\[
\mathbf{M}=\{m_1<\cdots<m_s\}\subset\{1,\dots,N\},
\qquad
\mathbf{M}^{\mathrm c}=\{1,\dots,N\}\setminus\mathbf{M},
\qquad r\le s\le n.
\]
For each
$
\mathbf{K}=\{k_1<\cdots<k_r\}\subset\mathbf{M},
$
write
$
\mathbf{L}=\mathbf{M}\setminus\mathbf{K}
=\{l_1<\cdots<l_{s-r}\}.
$

\begin{thm}
For every $s=|\mathbf{M}|$ with $r=|\mathbf{I}|\le s\le n$,
\begin{align}
\widetilde{WC}_{N,n,r}(\mathbf{w}_{\mathbf{M}}|\mathbf{w}|\mathbf{I})
&=(1-q)^{s-r}
\prod_{\substack{j\le k\\j,k\in\mathbf{M}^{\mathrm c}}}
(1-qw_jw_k)
\sum_{\substack{\mathbf{K}\subset\mathbf{M}\\|\mathbf{K}|=r}}
\frac{
W_{N,r}(w_{k_1},\ldots,w_{k_r}|\mathbf{w}|\mathbf{I})
}{
\displaystyle
\prod_{a,b=1}^{r}(1-qw_{k_a}^{-1}w_{k_b})
}
\notag\\
&\quad\times
\frac{
\displaystyle
\prod_{1\le a\le b\le r}
(1-w_{k_a}^{-1}w_{k_b}^{-1})
\prod_{a=1}^{r}\prod_{b=1}^{s-r}
(1-w_{k_a}^{-1}w_{l_b}^{-1})
}{
\displaystyle
\prod_{a=1}^{r}\prod_{b=1}^{s-r}
(1-w_{k_a}^{-1}w_{l_b})
}
\notag\\
&\quad\times
\prod_{1\le a<b\le s-r}
(1-qw_{l_a}^{-1}w_{l_b}^{-1})
\prod_{a=1}^{s-r}
\prod_{j\in\mathbf{M}^{\mathrm c}}
(1-qw_{l_a}^{-1}w_j).
\label{eq:typeC-cross-sector-branching}
\end{align}
For $r=0$, set $W_{N,0}=1$, and interpret every empty product as $1$.
\end{thm}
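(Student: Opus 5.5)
The plan is to compute the specialization directly from the symmetrization formula of Theorem~\ref{explicitFformula} after reducing the number of spectral variables. By \eqref{eq:typeC-stability-s-ge-r} it suffices to treat $n=s$, i.e.\ $\mathbf u=(w_{m_1},\dots,w_{m_s})$ with no variables set to $1$. For the unnormalized function we use \eqref{Motherfunction} at $\gamma=0,\beta=1$ with $n=s$. The normalized function is then obtained by dividing by $\mathcal E^C_{N,s,r}(\mathbf w_{\mathbf M},\mathbf w)$, read off from \eqref{typeCnormalizationfactor}. This factor is symmetric in $\mathbf u$, so it can be pulled out of the sum.

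\textbf{Step 1 (regrouping the sum).} In \eqref{Motherfunction} split each $\sigma\in S_s$ according to the set $\mathbf K=\{u_{\sigma(1)},\dots,u_{\sigma(r)}\}\subset\mathbf M$ of variables in the first $r$ slots; its complement is $\mathbf L=\mathbf M\setminus\mathbf K$. Write $\sigma$ as a pair consisting of an ordering $\tau$ of $\mathbf K$ and an ordering $\rho$ of $\mathbf L$.
\begin{itemize}
\item The factors $\prod_{j>r}\prod_{k=1}^N(u_j-qw_k)$, $\prod_{r<j<k}(u_ju_k-q)$, $\prod_{j>r}\beta u_j$ and the cross factor $\prod_{j>r}\prod_{k\le r}\frac{qu_j-u_k}{u_j-u_k}(u_ju_k-1)$ are each symmetric in the $\mathbf L$-variables and in the $\mathbf K$-variables separately.
\item Hence the sum over $\rho$ gives $(s-r)!$, which cancels the prefactor $1/(n-r)!$ with $n=s$.
\item The sum over $\tau$ involves only the factors indexed by $j\le r$. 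Among these, $\prod_{j\le r}(1-q)(u_j^2-1)$ and $\prod_{j<k\le r}(u_ju_k-1)$ are symmetric. What remains is exactly the symmetrization defining $W_{N,r}(w_{k_1},\dots,w_{k_r}|\mathbf w|\mathbf I)$, up to the monomial prefactor $\prod u^{-N}$ and the extra factor $\prod_{a}u_{k_a}$ coming from the $(1-q)u_j$ in the definition of $W_{N,r}$.
\end{itemize}
So each $\mathbf K$ contributes $W_{N,r}(\mathbf w_{\mathbf K}|\mathbf w|\mathbf I)$ times an explicit product over $\mathbf K\times\mathbf K$, $\mathbf K\times\mathbf L$, $\mathbf L\times\mathbf L$ and $\mathbf L\times\{1,\dots,N\}$.

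\textbf{Step 2 (dividing by the normalization).} Split $\mathcal E^C_{N,s,r}(\mathbf w_{\mathbf M},\mathbf w)$ blockwise. Write $\prod_{j,k\le s}(1-qu_j^{-1}u_k)$ as the $\mathbf K\times\mathbf K$ block, which supplies the type $A$ denominator $\prod_{a,b}(1-qw_{k_a}^{-1}w_{k_b})$, together with the $\mathbf K\times\mathbf L$, $\mathbf L\times\mathbf K$ and $\mathbf L\times\mathbf L$ blocks. Treat $\prod_{j\le s}\prod_k(1-qu_jw_k)$ and $\prod_{j<k\le s}(1-qu_ju_k)$ likewise. The denominator $\prod_{j\le k\le N}(1-qw_jw_k)$ splits into the $\mathbf M^{\mathrm c}$ block, which gives the prefactor $\prod_{j\le k\in\mathbf M^{\mathrm c}}(1-qw_jw_k)$, and blocks meeting $\mathbf M$, which cancel against $\prod_{j\le s}\prod_k(1-qu_jw_k)$ and $\prod_{j<k\le s}(1-qu_ju_k)$.

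Then match factor by factor:
\begin{itemize}
\item In $\prod_{l\in\mathbf L}\prod_{k=1}^N(w_l-qw_k)$, the diagonal terms $k=l$ give $(1-q)^{s-r}$ up to monomials. The terms $k\in\mathbf M^{\mathrm c}$ give $\prod(1-qw_{l_a}^{-1}w_j)$. The terms $k\in\mathbf K$ and $k\in\mathbf L\setminus\{l\}$ should cancel against the off-diagonal blocks of $\prod(1-qu_j^{-1}u_k)$.
\item The cross factor $\frac{qw_l-w_k}{w_l-w_k}(w_lw_k-1)$ becomes $\frac{1-w_{k}^{-1}w_{l}^{-1}}{1-w_{k}^{-1}w_{l}}$ after absorbing the $\mathbf K\times\mathbf L$ factor $(1-qw_l^{-1}w_k)$ up to monomials.
\item $\prod_{r<j<k}(u_ju_k-q)$, divided by the $\mathbf L\times\mathbf L$ part of $\prod_{j<k}(1-qu_ju_k)$, should leave $\prod_{a<b}(1-qw_{l_a}^{-1}w_{l_b}^{-1})$ once combined with the leftover $\mathbf L\times\mathbf L$ pieces.
\item The symmetric factors $\prod_{j\le r}(u_j^2-1)\prod_{j<k\le r}(u_ju_k-1)$ give $\prod_{a\le b}(1-w_{k_a}^{-1}w_{k_b}^{-1})$.
\end{itemize}
The power $(1-q)^{-(n-r)}$ in $\mathcal E^C$ and the $(1-q)^r$ in \eqref{Motherfunction} must combine with these to leave exactly $(1-q)^{s-r}$.

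\textbf{Main obstacle.} The difficulty is pure bookkeeping: verifying that all powers of $w$ cancel exactly and that every $q$-factor in the $\mathbf L\times\mathbf L$ and $\mathbf K\times\mathbf L$ blocks cancels rather than leaving a residual rational function. I would do this first for $s=r+1$, where $\mathbf L$ is a single index, and then check that the general case factorizes over pairs. As consistency checks, $s=r$ must reproduce \eqref{relationwithAnormalized}, and $r=0$ must reproduce the purely $\mathbf L$-dependent product, which can be checked against Lemma~\ref{deltalem} and the stability relation \eqref{stabilityWC}.
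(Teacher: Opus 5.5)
Your plan is essentially the paper's proof: you reduce to $n=s$ by stability, group the permutations in \eqref{Motherfunction} by the subset $\mathbf K$, let the $\rho$-sum cancel $1/(s-r)!$, and recognize the $\tau$-sum as type $A$. The only variation is in that last step, where the paper goes through $WC_{N,r,r}$, \eqref{relationwithAnormalized} and the ratio $\mathcal E^C_{N,r,r}/\mathcal E^C_{N,s,r}$, while you identify $W_{N,r}$ directly and divide by $\mathcal E^C_{N,s,r}$; the two are equivalent.

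Two slips in your factor matching need fixing. Step~2 already uses up all of $\prod_{j<k\le s}(1-qu_ju_k)$: at $\mathbf u=\mathbf w_{\mathbf M}$ one gets $\mathcal E^C_{N,s,r}=(1-q)^{-(s-r)}\prod_{j,k\in\mathbf M}(1-qw_j^{-1}w_k)/\prod_{j\le k,\ j,k\in\mathbf M^{\mathrm c}}(1-qw_jw_k)$. So $w_lw_{l'}-q=w_lw_{l'}(1-qw_l^{-1}w_{l'}^{-1})$ should not be divided by anything further, contrary to your third bullet. Also, the cross factor absorbs the $\mathbf K\times\mathbf L$ entry $(1-qw_k^{-1}w_l)$, not $(1-qw_l^{-1}w_k)$; the latter is cancelled by $\prod_{c\in\mathbf K}(w_l-qw_c)$.
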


\medskip
\noindent\textbf{Remark.}

If $s=r$, then $\mathbf{K}=\mathbf{M}$ is the only term.
All products involving $\mathbf{L}$ are empty and $(1-q)^{s-r}=1$.
Thus~\eqref{eq:typeC-cross-sector-branching} reduces exactly to
\eqref{relationwithAnormalized}.

\medskip

\begin{proof}
By \eqref{eq:typeC-stability-s-ge-r},
$
\widetilde{WC}_{N,n,r}
(\mathbf{w}_{\mathbf{M}}|\mathbf{w}|\mathbf{I})
=
\widetilde{WC}_{N,s,r}
(w_{m_1},\ldots,w_{m_s}|\mathbf{w}|\mathbf{I}).
$
Therefore it is enough to decompose
$WC_{N,s,r}(w_{m_1},\ldots,w_{m_s}|\mathbf{w}|\mathbf{I})$.
Fix
$
\mathbf{K}=\{k_1<\cdots<k_r\}\subset\mathbf{M}$
and $
\mathbf{L}=\mathbf{M}\setminus\mathbf{K}
=\{l_1<\cdots<l_{s-r}\}.
$
By the definition \eqref{WCfunction},
$WC_{N,s,r}$ is obtained from the explicit formula \eqref{Motherfunction}
for $F_{N,s,r}$ by setting $\gamma=0$ and $\beta=1$.
We apply this specialized form of \eqref{Motherfunction} and collect the
permutations for which the variables in the first $r$ positions are precisely
$w_{k_1},\ldots,w_{k_r}$ and those in the remaining $s-r$ positions are
$w_{l_1},\ldots,w_{l_{s-r}}$.  The factors coupling these two sets are
invariant under permutations within either set.  Consequently, before any
further simplification, the specialized form of \eqref{Motherfunction} can be
written as the following sum over subsets:
\begin{align}
&WC_{N,s,r}(w_{m_1},\ldots,w_{m_s}|\mathbf{w}|\mathbf{I})
\notag\\
&=\frac{1}{(s-r)!}
\sum_{\substack{\mathbf{K}\subset\mathbf{M}\\|\mathbf{K}|=r}}
\prod_{a=1}^{r}w_{k_a}^{-(N+s)}
\prod_{b=1}^{s-r}w_{l_b}^{-(N+s)}
\notag\\
&\quad\times
\sum_{\sigma\in S_r}\sigma\Bigg[
\prod_{j=1}^{r}
\left\{
(1-q)(u_j^2-1)
\prod_{c=i_j+1}^{N}(u_j-qw_c)
\prod_{c=1}^{i_j-1}(u_j-w_c)
\right\}
\notag\\
&\hspace{40mm}\times
\prod_{1\le j<k\le r}
\frac{qu_j-u_k}{u_j-u_k}(u_ju_k-1)
\Bigg]_{(u_1,\ldots,u_r)=(w_{k_1},\ldots,w_{k_r})}
\notag\\
&\quad\times
\sum_{\rho\in S_{s-r}}\rho\Bigg[
\prod_{b=1}^{s-r}
\left\{
 u_{r+b}\prod_{c=1}^{N}(u_{r+b}-qw_c)
\right\}
\notag\\
&\quad\times
\prod_{a=1}^{r}\prod_{b=1}^{s-r}
\frac{qu_{r+b}-w_{k_a}}{u_{r+b}-w_{k_a}}
(w_{k_a}u_{r+b}-1)
\prod_{1\le a<b\le s-r}(u_{r+a}u_{r+b}-q)
\Bigg]_{\substack{
(u_{r+1},\ldots,u_s)\\
=(w_{l_1},\ldots,w_{l_{s-r}})}}.
\label{eq:typeC-subset-sum-before-grouping}
\end{align}
We first evaluate the sum over $\rho$.  The action of $\rho$ permutes only
$u_{r+1},\ldots,u_s$.  Since each of the three products below is symmetric in
these variables, all $(s-r)!$ terms are equal.  Therefore
\begin{align}
&\frac{1}{(s-r)!}
\sum_{\rho\in S_{s-r}}\rho\Bigg[
\prod_{b=1}^{s-r}
\left\{
 u_{r+b}\prod_{c=1}^{N}(u_{r+b}-qw_c)
\right\}
\notag\\
&\times
\prod_{a=1}^{r}\prod_{b=1}^{s-r}
\frac{qu_{r+b}-w_{k_a}}{u_{r+b}-w_{k_a}}
(w_{k_a}u_{r+b}-1)
\prod_{1\le a<b\le s-r}(u_{r+a}u_{r+b}-q)
\Bigg]_{\substack{
(u_{r+1},\ldots,u_s)\\
=(w_{l_1},\ldots,w_{l_{s-r}})}}
\notag\\
&=
\prod_{b=1}^{s-r}
\left\{
 w_{l_b}\prod_{c=1}^{N}(w_{l_b}-qw_c)
\right\}
\prod_{a=1}^{r}\prod_{b=1}^{s-r}
\frac{qw_{l_b}-w_{k_a}}{w_{l_b}-w_{k_a}}
(w_{k_a}w_{l_b}-1)
\prod_{1\le a<b\le s-r}(w_{l_a}w_{l_b}-q).
 \notag
\end{align}
Thus the subset decomposition
\eqref{eq:typeC-subset-sum-before-grouping} becomes the following completely
explicit formula:
\begin{align}
&WC_{N,s,r}(w_{m_1},\ldots,w_{m_s}|\mathbf{w}|\mathbf{I})
=
\sum_{\substack{\mathbf{K}\subset\mathbf{M}\\|\mathbf{K}|=r}}
\prod_{a=1}^{r}w_{k_a}^{-(N+s)}
\prod_{b=1}^{s-r}w_{l_b}^{-(N+s)}
\notag\\
&\quad\times
\sum_{\sigma\in S_r}\sigma\Bigg[
\prod_{j=1}^{r}
\left\{
(1-q)(u_j^2-1)
\prod_{c=i_j+1}^{N}(u_j-qw_c)
\prod_{c=1}^{i_j-1}(u_j-w_c)
\right\}
\notag\\
&\quad\times
\prod_{1\le j<k\le r}
\frac{qu_j-u_k}{u_j-u_k}(u_ju_k-1)
\Bigg]_{(u_1,\ldots,u_r)=(w_{k_1},\ldots,w_{k_r})}
\prod_{b=1}^{s-r}
\left\{
w_{l_b}\prod_{c=1}^{N}(w_{l_b}-qw_c)
\right\}
\notag\\
&\quad\times
\prod_{a=1}^{r}\prod_{b=1}^{s-r}
\frac{qw_{l_b}-w_{k_a}}{w_{l_b}-w_{k_a}}
(w_{k_a}w_{l_b}-1)
\prod_{1\le a<b\le s-r}(w_{l_a}w_{l_b}-q).
\label{eq:typeC-subset-sum-after-rho}
\end{align}
We next identify the $\sigma$-sum in each summand of
\eqref{eq:typeC-subset-sum-after-rho}.  Applying \eqref{Motherfunction}
with $n=r$, $\gamma=0$, and $\beta=1$, and then using the definition
\eqref{WCfunction},
\begin{align}
&\prod_{a=1}^{r}w_{k_a}^{-(N+s)}
\sum_{\sigma\in S_r}\sigma\Bigg[
\prod_{j=1}^{r}
\left\{
(1-q)(u_j^2-1)
\prod_{c=i_j+1}^{N}(u_j-qw_c)
\prod_{c=1}^{i_j-1}(u_j-w_c)
\right\}
\notag\\
&\hspace{34mm}\times
\prod_{1\le j<k\le r}
\frac{qu_j-u_k}{u_j-u_k}(u_ju_k-1)
\Bigg]_{(u_1,\ldots,u_r)=(w_{k_1},\ldots,w_{k_r})}
\notag\\
&\qquad=
WC_{N,r,r}
(w_{k_1},\ldots,w_{k_r}|\mathbf{w}|\mathbf{I})
\prod_{a=1}^{r}w_{k_a}^{-(s-r)}.
\label{eq:typeC-first-r-contribution}
\end{align}
Indeed, the monomial prefactor in $WC_{N,r,r}$ is
$\prod_{a=1}^{r}w_{k_a}^{-(N+r)}$, whereas the monomial prefactor inherited
from $WC_{N,s,r}$ is $\prod_{a=1}^{r}w_{k_a}^{-(N+s)}$.
Substituting~\eqref{eq:typeC-first-r-contribution} into
\eqref{eq:typeC-subset-sum-after-rho}, we obtain
\begin{align}
&WC_{N,s,r}(w_{m_1},\ldots,w_{m_s}|\mathbf{w}|\mathbf{I})
\notag\\
&=
\sum_{\substack{\mathbf{K}\subset\mathbf{M}\\|\mathbf{K}|=r}}
WC_{N,r,r}(w_{k_1},\ldots,w_{k_r}|\mathbf{w}|\mathbf{I})
\prod_{a=1}^{r}w_{k_a}^{-(s-r)}
\prod_{b=1}^{s-r}w_{l_b}^{-(N+s)}
\notag\\
&\quad\times
\prod_{b=1}^{s-r}
\left\{
w_{l_b}\prod_{c=1}^{N}(w_{l_b}-qw_c)
\right\}
\prod_{a=1}^{r}\prod_{b=1}^{s-r}
\frac{qw_{l_b}-w_{k_a}}{w_{l_b}-w_{k_a}}
(w_{k_a}w_{l_b}-1)
\prod_{1\le a<b\le s-r}(w_{l_a}w_{l_b}-q).
\label{eq:typeC-sum-through-WCrr}
\end{align}

We use
\begin{align}
&w_{l_b}^{-(N+s)}w_{l_b}
\prod_{c=1}^{N}(w_{l_b}-qw_c)
=
w_{l_b}^{1-s}
\prod_{k=1}^{N}(1-qw_{l_b}^{-1}w_k),
 \notag\\
&\frac{qw_{l_b}-w_{k_a}}{w_{l_b}-w_{k_a}}
(w_{k_a}w_{l_b}-1)
=
w_{k_a}w_{l_b}
\frac{
(1-qw_{k_a}^{-1}w_{l_b})
(1-w_{k_a}^{-1}w_{l_b}^{-1})
}{
1-w_{k_a}^{-1}w_{l_b}
},
 \notag\\
&w_{l_a}w_{l_b}-q
=
w_{l_a}w_{l_b}
(1-qw_{l_a}^{-1}w_{l_b}^{-1}).
 \notag
\end{align}

Applying~\eqref{relationwithAnormalized} with $n=r$ and $\mathbf{M}=\mathbf{K}$, and then using the normalized definition \eqref{typeCnormalizeddefinition}, gives
\begin{align}
&WC_{N,r,r}(w_{k_1},\ldots,w_{k_r}|\mathbf{w}|\mathbf{I})
\notag\\
&=
\mathcal E^C_{N,r,r}(w_{k_1},\ldots,w_{k_r},\mathbf{w})
\prod_{1\le a\le b\le r}
(1-w_{k_a}^{-1}w_{k_b}^{-1})
\prod_{\substack{j\le k\\j,k\in\mathbf{K}^{\mathrm c}}}
(1-qw_jw_k)
\frac{
W_{N,r}(w_{k_1},\ldots,w_{k_r}|\mathbf{w}|\mathbf{I})
}{
\displaystyle
\prod_{a,b=1}^{r}(1-qw_{k_a}^{-1}w_{k_b})
}.
 \notag
\end{align}
Here $\mathbf{K}^{\mathrm c}=\{1,\dots,N\}\setminus\mathbf{K}$.

Moreover,
\begin{align}
\frac{
\mathcal E^C_{N,r,r}(w_{k_1},\ldots,w_{k_r},\mathbf{w})
}{
\mathcal E^C_{N,s,r}(w_{m_1},\ldots,w_{m_s},\mathbf{w})
}
&=(1-q)^{s-r}
\frac{
\displaystyle
\prod_{a=1}^{r}\prod_{b=1}^{s-r}
(1-qw_{k_a}w_{l_b})
\prod_{1\le a<b\le s-r}
(1-qw_{l_a}w_{l_b})
}{
\displaystyle
\prod_{a=1}^{r}\prod_{b=1}^{s-r}
(1-qw_{k_a}^{-1}w_{l_b})
\prod_{a=1}^{s-r}\prod_{b=1}^{r}
(1-qw_{l_a}^{-1}w_{k_b})
}
\notag\\
&\qquad\times
\frac{1}{
\displaystyle
\prod_{a,b=1}^{s-r}(1-qw_{l_a}^{-1}w_{l_b})
\prod_{a=1}^{s-r}\prod_{j=1}^{N}(1-qw_{l_a}w_j)
}.
 \notag
\end{align}

Substituting these identities into the summand of
\eqref{eq:typeC-sum-through-WCrr} indexed by $\mathbf{K}$, and then dividing
by the normalization factor
$
\mathcal E^C_{N,s,r}(w_{m_1},\ldots,w_{m_s},\mathbf{w}),
$
we obtain
\begin{align}
&(1-q)^{s-r}
\prod_{\substack{j\le k\\j,k\in\mathbf{M}^{\mathrm c}}}
(1-qw_jw_k)
\frac{
W_{N,r}(w_{k_1},\ldots,w_{k_r}|\mathbf{w}|\mathbf{I})
}{
\displaystyle
\prod_{a,b=1}^{r}(1-qw_{k_a}^{-1}w_{k_b})
}
\notag\\
&\quad\times
\frac{
\displaystyle
\prod_{1\le a\le b\le r}
(1-w_{k_a}^{-1}w_{k_b}^{-1})
\prod_{a=1}^{r}\prod_{b=1}^{s-r}
(1-w_{k_a}^{-1}w_{l_b}^{-1})
}{
\displaystyle
\prod_{a=1}^{r}\prod_{b=1}^{s-r}
(1-w_{k_a}^{-1}w_{l_b})
}
\notag\\
&\quad\times
\prod_{1\le a<b\le s-r}
(1-qw_{l_a}^{-1}w_{l_b}^{-1})
\prod_{a=1}^{s-r}
\prod_{j\in\mathbf{M}^{\mathrm c}}
(1-qw_{l_a}^{-1}w_j).
\label{eq:typeC-one-branch-explicit}
\end{align}
Summing~\eqref{eq:typeC-one-branch-explicit} over all
$\mathbf{K}\subset\mathbf{M}$ with $|\mathbf{K}|=r$ proves the formula.
\end{proof}

\subsubsection{Support condition}

Define the strict partitions
\[
\lambda(\mathbf{I})=(i_r>\cdots>i_1),
\qquad
\lambda(\mathbf{M})=(m_s>\cdots>m_1).
\]

\begin{prop}
Let $s=|\mathbf{M}|$ and assume $r\le s\le n$. Then
\begin{equation*}
\widetilde{WC}_{N,n,r}(\mathbf{w}_{\mathbf{M}}|\mathbf{w}|\mathbf{I})\ne0
\end{equation*}
only if
\begin{equation}
i_j\le m_{s-r+j}
\qquad(j=1,\ldots,r).
\label{eq:typeC-shifted-dominance}
\end{equation}
This is equivalent to $\lambda(\mathbf{I})\subseteq\lambda(\mathbf{M})$.
\end{prop}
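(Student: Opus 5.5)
The plan is to deduce the support condition from the branching formula \eqref{eq:typeC-cross-sector-branching}, combined with the equal-cardinality vanishing of the type $A$ weight functions. By \eqref{eq:typeC-cross-sector-branching}, $\widetilde{WC}_{N,n,r}(\mathbf{w}_{\mathbf{M}}|\mathbf{w}|\mathbf{I})$ is a sum over $r$-element subsets $\mathbf{K}=\{k_1<\cdots<k_r\}\subset\mathbf{M}$. Each summand contains the factor $W_{N,r}(w_{k_1},\ldots,w_{k_r}|\mathbf{w}|\mathbf{I})$, multiplied by factors that are finite for generic $\mathbf{w}$. The denominators $1-w_{k_a}^{-1}w_{l_b}$ and $1-qw_{k_a}^{-1}w_{k_b}$ are nonzero for generic parameters, since $k_a\neq l_b$.

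The first step is to show that $W_{N,r}(w_{k_1},\ldots,w_{k_r}|\mathbf{w}|\mathbf{I})=0$ unless $k_j\ge i_j$ for all $j$. This is exactly the vanishing of the rectangular partition functions proved in the graphical proof of the equal-cardinality part of Theorem~\ref{ffunctionproperties}~(i). Alternatively, one can read it off directly from the symmetrization formula: a summand indexed by $\sigma\in S_r$ contains $\prod_j\prod_{c<i_j}(w_{k_{\sigma(j)}}-w_c)$, so it survives only if $k_{\sigma(j)}\ge i_j$ for every $j$. The counting argument in the proof of Theorem~\ref{ffunctionproperties}~(i) then gives $k_j\ge i_j$, using that at most $j-1$ elements of $\mathbf{K}$ lie below $i_j$.

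The second step uses that $\mathbf{K}$ is an $r$-element subsequence of $\mathbf{M}=(m_1<\cdots<m_s)$. Hence $k_j\le m_{s-r+j}$, because $k_j$ is followed within $\mathbf{M}$ by at least $r-j$ larger elements of $\mathbf{K}$. So if some summand is nonzero, then $i_j\le k_j\le m_{s-r+j}$, which is \eqref{eq:typeC-shifted-dominance}. Conversely, if \eqref{eq:typeC-shifted-dominance} fails, every summand vanishes, and therefore so does the sum.

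The final step is the equivalence with $\lambda(\mathbf{I})\subseteq\lambda(\mathbf{M})$. Here the $j$-th largest part of $\lambda(\mathbf{I})$ is $i_{r+1-j}$, and that of $\lambda(\mathbf{M})$ is $m_{s+1-j}$. Containment of shifted diagrams amounts to $\ell(\lambda(\mathbf{I}))\le\ell(\lambda(\mathbf{M}))$ together with the row-wise comparison $i_{r+1-j}\le m_{s+1-j}$ for $j=1,\ldots,r$. Reindexing turns this into exactly \eqref{eq:typeC-shifted-dominance}.

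The only real subtlety is the genericity needed to treat the denominators as harmless. I would handle it by noting that the left-hand side is a Laurent polynomial, so vanishing on a Zariski-dense set of $\mathbf{w}$ suffices. The alternative is to cite Theorem~\ref{ffunctionproperties}~(i) directly, since $\mathcal E^C$ is generically finite and nonzero at $\mathbf{w}_{\mathbf{M}}$; that gives a one-line proof, which I would present as the main argument.
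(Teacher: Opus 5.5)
Your proposal is correct. The branching-formula argument you spell out is the paper's own proof. The paper kills each $\mathbf{K}$-term of \eqref{eq:typeC-cross-sector-branching} by type $A$ triangularity, and then notes that $\mathbf{M}$ contains an $r$-element $\mathbf{K}\ge\mathbf{I}$ only when \eqref{eq:typeC-shifted-dominance} holds. Your subsequence bound $k_j\le m_{s-r+j}$ and the reindexing of shifted diagrams supply exactly the details the paper leaves implicit.

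The route you say you would actually present is genuinely different, and it is also valid. The order $\mathbf{M}\ge\mathbf{I}$ in Theorem~\ref{ffunctionproperties} is defined for $s\ge r$ precisely as $m_{s-r+j}\ge i_j$. So part (i) of that theorem, which passes to $WC$ by setting $\gamma=0,\beta=1$, already gives $WC_{N,n,r}(\mathbf{w}_{\mathbf{M}}|\mathbf{w}|\mathbf{I})=0$ off the support. Moreover, $\mathcal{E}^C_{N,n,r}(\mathbf{w}_{\mathbf{M}},\mathbf{w})$ is built from factors $1-q$, $1-qw_a^{\pm1}$, $1-qw_a^{\pm1}w_b^{\pm1}$, so it is a nonzero rational function and dividing by it preserves vanishing. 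This is exactly the argument the paper itself uses for $s=r$ in the proof of Theorem~\ref{typeCmainproperties}~(i), here extended to $s>r$.

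The two routes trade off as follows.
\begin{itemize}
\item Your direct route is shorter and does not depend on the branching theorem. It applies the same ``at most $j-1$ elements of $\mathbf{K}$ lie below $i_j$'' count to the full type $C$ symmetrization formula, rather than to the type $A$ factor.
\item The paper's route yields termwise information: each individual $\mathbf{K}$-summand of the branching expansion vanishes unless $\mathbf{K}\ge\mathbf{I}$.
\item The paper's route also keeps all the $s>r$ results of that section uniformly derived from \eqref{eq:typeC-cross-sector-branching}.
\end{itemize}
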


\begin{proof}
The $\mathbf{K}$-term of~\eqref{eq:typeC-cross-sector-branching} contains
\[
\frac{W_{N,r}(w_{k_1},\ldots,w_{k_r}|\mathbf{w}|\mathbf{I})}
{\displaystyle\prod_{a,b=1}^{r}(1-qw_{k_a}^{-1}w_{k_b})}.
\]
By type $A$ triangularity (see the argument following \eqref{rectdiagonalfactor}), this factor vanishes unless $\mathbf{K}\ge \mathbf{I}$.
The necessary and sufficient condition for $\mathbf{M}$ to contain an $r$-element subset $\mathbf{K}\ge \mathbf{I}$ is
$
\{m_{s-r+1},\ldots,m_s\}\ge \mathbf{I}.
$
This is precisely~\eqref{eq:typeC-shifted-dominance}.
\end{proof}

\subsubsection{Laurent polynomiality}

We give a short geometric localization argument for Laurent polynomiality.
We first specify the $\lambda_y$-class used below. For an equivariant vector bundle $E$ and a formal parameter $y$, define
\begin{equation*}
\lambda_y(E)
:=
\sum_{j=0}^{\operatorname{rank}E}
y^j\left[\bigwedge^j E\right].
\end{equation*}
If the weights of $E$ at a fixed point are
$\chi_1,\ldots,\chi_{\operatorname{rank}E}$, then
\begin{equation}
\lambda_y(E)=
\prod_{j=1}^{\operatorname{rank}E}(1+y\chi_j).
\label{eq:typeC-lambda-y-weights}
\end{equation}
Below we use in particular $\lambda_{-1}$ and $\lambda_{-q}$.
Throughout this geometric discussion, for a vector bundle $E$, the notation
$E^*$ denotes its dual bundle. Thus, if the equivariant weights of $E$ at a fixed point are
$\chi_1,\ldots,\chi_{\operatorname{rank}E}$, then the weights of $E^*$ are
$\chi_1^{-1},\ldots,\chi_{\operatorname{rank}E}^{-1}$. In particular,
$\mathscr S^*$ and $\mathscr Q^*$ denote the dual tautological and dual quotient bundles,
respectively, and $T_{\mathbf K}^*X_{\mathbf M}$ denotes the cotangent space of
$X_{\mathbf M}$ at the fixed point $\mathbf K$.

\begin{prop}
For every $\mathbf{M}=\{m_1<\cdots<m_s\}\subset\{1,\dots,N\}$ with $s\le n$,
\begin{equation}
\widetilde{WC}_{N,n,r}(\mathbf{w}_{\mathbf{M}}|\mathbf{w}|\mathbf{I})
\in
\mathbb{C}(q)[w_1^{\pm1},\ldots,w_N^{\pm1}].
\label{eq:typeC-laurent-polynomiality}
\end{equation}
Moreover, this Laurent polynomial is divisible by
\begin{equation}
\prod_{\substack{j\le k\\j,k\in\mathbf{M}^{\mathrm c}}}
(1-qw_jw_k).
\label{eq:typeC-complement-factor-general-s}
\end{equation}
\end{prop}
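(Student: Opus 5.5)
The plan is to play two expressions for the specialization against each other. One is the branching formula \eqref{eq:typeC-cross-sector-branching}, which puts only $q$-independent denominators into view. The other is the definition \eqref{typeCnormalizeddefinition}, which puts only $q$-dependent denominators into view. Since the two kinds of factors are coprime in the unique factorization domain $\mathbb{C}(q)[w_1^{\pm1},\ldots,w_N^{\pm1}]$, the value must be a Laurent polynomial.

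First, dispose of $s<r$: there the value is $0$ by \eqref{eq:typeC-lower-cardinality-vanishing}, and both claims are trivial. For $s\ge r$, reduce to $n=s$ by \eqref{eq:typeC-stability-s-ge-r}, so we must study
\[
\widetilde{WC}_{N,s,r}(w_{m_1},\ldots,w_{m_s}|\mathbf w|\mathbf I)
=WC_{N,s,r}(w_{m_1},\ldots,w_{m_s}|\mathbf w|\mathbf I)\,
\mathcal E^C_{N,s,r}(w_{m_1},\ldots,w_{m_s},\mathbf w)^{-1}.
\]

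\emph{View 1 (definition).} $WC_{N,s,r}$ is a partition function with Laurent-polynomial local weights, so its specialization is a Laurent polynomial. The factor $\mathcal E^C_{N,s,r}(\mathbf w_{\mathbf M},\mathbf w)^{-1}$ from \eqref{typeCnormalizationfactor} has denominators that are products of factors $1-qw_jw_k$ and $1-qw_j^{-1}w_k$ with $j\ne k$; the diagonal factors $1-q$ are constants. Hence the product has, in lowest terms, a denominator consisting only of such $q$-dependent irreducible factors.

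\emph{View 2 (branching).} In \eqref{eq:typeC-cross-sector-branching}, the normalized type $A$ factor $W_{N,r}/\prod_{a,b}(1-qw_{k_a}^{-1}w_{k_b})$ is a Laurent polynomial by \eqref{identificationtypeA} and Theorem~\ref{RTVtheorem}, and all other factors except $\prod_{a,b}(1-w_{k_a}^{-1}w_{l_b})^{-1}$ are Laurent polynomials. So after putting the sum over $\mathbf K$ over a common denominator, that denominator divides $\prod_{k\ne l\in\mathbf M}(1-w_k^{-1}w_l)$. Its irreducible factors $1-w_k^{-1}w_l$ (or $w_l-w_k$) do not involve $q$, so they are coprime to every $1-qw_jw_k$ and $1-qw_j^{-1}w_k$. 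Comparing the two reduced forms in the UFD forces the reduced denominator to be a unit, which proves \eqref{eq:typeC-laurent-polynomiality}.

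For the divisibility by \eqref{eq:typeC-complement-factor-general-s}, note that this product is a common factor of every summand in \eqref{eq:typeC-cross-sector-branching}. Let $g$ denote the value divided by this product. Applying View 2 to $g$ shows that its denominators are again only $q$-free factors $1-w_k^{-1}w_l$. Applying View 1 to $g$ shows that its denominators are only $q$-dependent: it is the value (already a Laurent polynomial, by the first part) divided by a product of factors $1-qw_jw_k$. The same coprimality argument then shows $g$ is a Laurent polynomial.

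I expect the main obstacle to be View 1. One must check carefully that, after specialization $\mathbf u=\mathbf w_{\mathbf M}$, the factor $\mathcal E^C$ does not vanish identically. In particular, the numerator factors $1-qu_j^{-1}u_k$ must not collapse in a way that introduces anything other than $q$-dependent denominators or constants; the diagonal factors give only $(1-q)^s$. Should this bookkeeping be delicate, the fallback is the localization route indicated in the text. In that route one reads the $\mathbf K$-sum as fixed-point localization on the Grassmannian $X_{\mathbf M}$ of $r$-planes in $\mathbb C^{\mathbf M}$, with $\lambda_{-1}(T^*_{\mathbf K}X_{\mathbf M})=\prod_{a,b}(1-w_{k_a}^{-1}w_{l_b})$, and shows each numerator is the restriction of an equivariant class built from $\lambda_{-1}$ and $\lambda_{-q}$ of $\mathscr S^*$, $\mathscr Q$ and their tensor products. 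The pushforward to a point is then a Laurent polynomial.
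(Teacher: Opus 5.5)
Your argument is correct, but it differs from the paper's.

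\textbf{The paper's route.} The paper proves Laurent polynomiality geometrically. It reads the $\mathbf K$-sum in \eqref{eq:typeC-cross-sector-branching} as a localization sum on $X_{\mathbf M}=\operatorname{Gr}(r,\bigoplus_j\mathbb C_{w_{m_j}})$. Each numerator is the fixed-point restriction of one global class: the restricted RTV class times $\lambda_{-1}$ and $\lambda_{-q}$ classes built from $\mathscr S^*$, $\mathscr Q^*$ and $\bigoplus_{k\in\mathbf M^{\mathrm c}}\mathbb C_{w_k}$. The denominator is $\lambda_{-1}(T^*_{\mathbf K}X_{\mathbf M})$ up to a sign and a line-bundle twist. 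So the sum is a pushforward to a point, and divisibility by the prefactor is read off.

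\textbf{Your route.} You never show that the $\mathbf K$-sum is a pushforward. Instead you compare two presentations of the same element of $\mathbb C(q)(w_1,\ldots,w_N)$.
\begin{itemize}
\item The branching formula has only the $q$-free denominators $w_l-w_k$. This uses only that the RTV functions are Laurent polynomials at fixed points.
\item The definition $WC_{N,s,r}\cdot(\mathcal E^C)^{-1}$, with $WC$ Laurent by the lattice realization, has only non-unit denominators of the forms $1-qw_aw_b$, $w_a-qw_b$, and $1-qw_{m_j}^2$. The last comes from $k=m_j$ in $\prod_{j,k}(1-qu_jw_k)$; it is also harmless.
\end{itemize}
None of these is associate to any $w_l-w_k$ in the UFD $\mathbb C(q)[w_1^{\pm1},\ldots,w_N^{\pm1}]$, so the value is a Laurent polynomial. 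Running the same comparison after dividing by $\prod_{j\le k,\ j,k\in\mathbf M^{\mathrm c}}(1-qw_jw_k)$ gives the divisibility. Both steps are sound.

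\textbf{What each route buys.}
\begin{itemize}
\item Yours is more elementary. It needs no localization theorem and no knowledge that the RTV class on $T^*\operatorname{Gr}(r,N)$ restricts to a class on the sub-Grassmannian $X_{\mathbf M}$.
\item Yours does lean on the unnormalized function being Laurent and on the specific shape of $\mathcal E^C$. It works over the field $\mathbb C(q)$, which is exactly what the statement asks.
\item The paper's route shows directly that the $\mathbf K$-sum alone is a Laurent polynomial, independent of the normalization. It gives the branching sum a geometric meaning as an equivariant Euler characteristic on $X_{\mathbf M}$. It lands in the representation ring, so the coefficients are naturally Laurent in $q$.
\end{itemize}

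One slip in your fallback: $\prod_{a,b}(1-w_{k_a}^{-1}w_{l_b})$ is $\lambda_{-1}$ of the tangent space $\mathscr S^*\otimes\mathscr Q$ at $\mathbf K$. It equals $\lambda_{-1}(T^*_{\mathbf K}X_{\mathbf M})$ only after the factor $(-1)^{r(s-r)}(\det\mathscr Q)^r(\det\mathscr S)^{-(s-r)}$ of \eqref{eq:typeC-euler-denominator}. This does not affect your main argument.
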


\begin{proof}
If $|\mathbf{M}|<r$, the value is zero by~\eqref{eq:typeC-lower-cardinality-vanishing}.
Assume henceforth that $s=|\mathbf{M}|\ge r$, and set
\[
X_{\mathbf{M}}:=\operatorname{Gr}
\left(r,\bigoplus_{j=1}^{s}\mathbb{C}_{w_{m_j}}\right).
\]
Here $\mathbb{C}_{w_{m_j}}$ is the one-dimensional $T$-representation of weight
$w_{m_j}$. Denote the tautological and quotient bundles by
$\mathscr S$ and $\mathscr Q$, respectively.
At the fixed point
$\mathbf{K}=\{k_1<\cdots<k_r\}$, write
$
\mathbf{L}=\mathbf{M}\setminus\mathbf{K}
=\{l_1<\cdots<l_{s-r}\}.
$
The weights of $\mathscr S$ are $w_{k_1},\ldots,w_{k_r}$, and the weights of
$\mathscr Q$ are $w_{l_1},\ldots,w_{l_{s-r}}$.

The factor
\[
\frac{
W_{N,r}(w_{k_1},\ldots,w_{k_r}|\mathbf{w}|\mathbf{I})
}{
\displaystyle
\prod_{a,b=1}^{r}(1-qw_{k_a}^{-1}w_{k_b})
}
\]
is the restriction to $X_{\mathbf{M}}$ of the normalized type $A$ weight function class~\cite{RTV}, using
$K_T(T^*\operatorname{Gr}(r,N))\simeq K_T(\operatorname{Gr}(r,N))$.
By~\eqref{eq:typeC-lambda-y-weights}, the remaining numerator products are
\begin{align}
&\prod_{1\le a\le b\le r}
(1-w_{k_a}^{-1}w_{k_b}^{-1})
=
\lambda_{-1}(\operatorname{Sym}^2\mathscr S^*)|_{\mathbf{K}},
\notag\\
&\prod_{a=1}^{r}\prod_{b=1}^{s-r}
(1-w_{k_a}^{-1}w_{l_b}^{-1})
=
\lambda_{-1}(\mathscr S^*\otimes\mathscr Q^*)|_{\mathbf{K}},
\notag\\
&\prod_{1\le a<b\le s-r}
(1-qw_{l_a}^{-1}w_{l_b}^{-1})
=
\lambda_{-q}(\bigwedge^2\mathscr Q^*)|_{\mathbf{K}},
\notag\\
&\prod_{a=1}^{s-r}\prod_{j\in\mathbf{M}^{\mathrm c}}
(1-qw_{l_a}^{-1}w_j)
=
\lambda_{-q}
\left(
\mathscr Q^*\otimes
\bigoplus_{k\in\mathbf{M}^{\mathrm c}}\mathbb{C}_{w_k}
\right) \Bigg|_{\mathbf{K}}.
 \notag
\end{align}
Thus the numerator is the fixed-point restriction of a single global equivariant
$K$-class.

For the denominator,
\begin{align}
&\prod_{a=1}^{r}\prod_{b=1}^{s-r}
(1-w_{k_a}^{-1}w_{l_b})
=(-1)^{r(s-r)}
\frac{(\det\mathscr Q|_{\mathbf{K}})^r}
{(\det\mathscr S|_{\mathbf{K}})^{s-r}}
\lambda_{-1}(T_{\mathbf{K}}^*X_{\mathbf{M}}).
\label{eq:typeC-euler-denominator}
\end{align}
Therefore, the fixed-point sum in~\eqref{eq:typeC-cross-sector-branching} is the
pushforward of a single equivariant $K$-class with a line-bundle twist.
Its value lies in the representation ring and is therefore a Laurent polynomial.
The divisibility follows from the factor outside the sum in
\eqref{eq:typeC-cross-sector-branching}.
\end{proof}

\subsubsection{Newton degrees}

Throughout this subsection, all degree intervals and Newton degrees are taken with respect to $\varphi_{\mathbf M}$. For the degree counts below, we use the grading on $\mathbb C(q)[w_1^{\pm1},\ldots,w_N^{\pm1}]$ determined by
\[
\deg_{\mathbf M}(w_k)=
\begin{cases}
1,&k\in\mathbf M,\\
2,&k\notin\mathbf M,
\end{cases}
\]
with $\mathbb C(q)$ in degree $0$. Thus, for
$\nu=(\nu_1,\ldots,\nu_N)\in\mathbb Z^N$,
\[
\deg_{\mathbf M}(w_1^{\nu_1}\cdots w_N^{\nu_N})
=\sum_{j\in\mathbf M}\nu_j+2\sum_{j\notin\mathbf M}\nu_j
=\varphi_{\mathbf M}(\nu).
\]
For a nonzero Laurent polynomial $f$, its degree interval is $\varphi_{\mathbf M}(\mathcal N(f))$.

Assume below that $r\le s=|\mathbf{M}|\le n$.
After the substitution $u_j=w_{m_j}$, every $u_j$ has degree $1$.
Consequently, to prove the following proposition, we use the explicit formula
\eqref{Motherfunction} for $F_{N,s,r}$ and its type $C$ specialization
$\gamma=0$, $\beta=1$ from \eqref{WCfunction}. It is enough to find
uniform lower and upper bounds for the degrees of the homogeneous components in each summand of
this specialized form of \eqref{Motherfunction}. We compute these bounds
factor by factor.

\begin{prop}
For every $s=|\mathbf{M}|$ with $r\le s\le n$,
\begin{equation}
\varphi_{\mathbf{M}}\left(
\mathcal N\left(
WC_{N,s,r}(w_{m_1},\ldots,w_{m_s}|\mathbf{w}|\mathbf{I})
\right)
\right)
\subset
[-s^2+s-2r,\ s(N-s)-r+|\mathbf{I}\cap \mathbf{M}|].
\label{eq:typeC-raw-newton-bound}
\end{equation}
\end{prop}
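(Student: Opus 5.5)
The plan is a direct degree count on the symmetrization formula \eqref{Motherfunction}. Take $n=s$, $\gamma=0$, $\beta=1$, and substitute $u_j=w_{m_j}$, so that every $u_j$ has $\deg_{\mathbf M}=1$. Use the grading $\deg_{\mathbf M}$ introduced above, so that $\varphi_{\mathbf M}(\mathcal N(f))$ is the interval spanned by the degrees of the nonzero homogeneous components of $f$.

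The only non-polynomial pieces are the quotients $(qu_j-u_k)/(u_j-u_k)$. To handle them, multiply the whole $\sigma$-sum by the antisymmetric homogeneous polynomial $D=\prod_{1\le j<k\le s}(u_j-u_k)$, which has degree $\binom{s}{2}$. Each term $\sigma[\cdots]\cdot D$ is then a Laurent polynomial. In it, every quotient is replaced by its numerator $qu_j-u_k$, which is homogeneous of degree $1$, up to the signs and the remaining factors of $D$, which are also homogeneous of degree $1$. Consequently the degree interval of each term is exactly the interval one would obtain by treating each quotient as homogeneous of degree $0$, shifted by $\binom{s}{2}$. Since $WC_{N,s,r}(w_{m_1},\dots,w_{m_s}|\mathbf w|\mathbf I)\cdot D$ is a sum of such terms, its homogeneous components lie in the union of these intervals. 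Dividing by the homogeneous $D$ subtracts $\binom{s}{2}$ from every degree. So it suffices to bound each summand of \eqref{Motherfunction} factor by factor, with every quotient counted as degree $0$. Permuting $\sigma$ only relabels which $w_{m}$ sits in which slot, and all slots have degree $1$, so the bounds are uniform in $\sigma$.

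The factor-by-factor count goes as follows.
\begin{itemize}
\item The monomial prefactor $\prod_k u_k^{-(N+s)}$ contributes exactly $-s(N+s)$.
\item Each $(u_j^2-1)$ has degrees in $[0,2]$, and there are $r$ of them.
\item Each binomial $u_j-qw_k$ or $u_j-w_k$ has degrees in $[1,\deg_{\mathbf M} w_k]$. For fixed $j\le r$ the index $k$ runs over $\{1,\dots,N\}\setminus\{i_j\}$, giving $[\,N-1,\ 2N-s-\deg_{\mathbf M} w_{i_j}\,]$.
\item Summing over $j\le r$ and using $\sum_j\deg_{\mathbf M}w_{i_j}=2r-|\mathbf I\cap\mathbf M|$, these factors give $[\,r(N-1),\ r(2N-s)-2r+|\mathbf I\cap\mathbf M|\,]$.
\item The factors $(u_ju_k-1)$ and $(u_ju_k-q)$ each lie in $[0,2]$. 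There are $\binom r2+r(s-r)+\binom{s-r}2$ of them, so they contribute $[0,\,s^2-s-\ldots]$, more precisely $[0,\ s^2-s-2\binom{r}{2}\cdot 0]$ after combining, i.e.\ $[0,\ s^2-s]$ since $r(r-1)+2r(s-r)+(s-r)(s-r-1)=s^2-s$.
\item The products $\prod_{j>r}\prod_{k=1}^N(u_j-qw_k)$ lie in $[(s-r)N,\ (s-r)(2N-s)]$.
\item The factors $\beta u_j-\gamma=u_j$ contribute exactly $s-r$.
\end{itemize}
Adding the lower endpoints gives $-s(N+s)+r(N-1)+(s-r)N+(s-r)=-s^2+s-2r$. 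Adding the upper endpoints gives $-s(N+s)+2r+r(2N-s)-2r+|\mathbf I\cap\mathbf M|+s^2-s+(s-r)(2N-s)+(s-r)$, which simplifies to $s(N-s)-r+|\mathbf I\cap\mathbf M|$. This is exactly \eqref{eq:typeC-raw-newton-bound}.

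The main point requiring care is the justification in the first step that a sum of rational summands has its Newton degrees controlled summand-wise. The common-denominator argument with the homogeneous $D$ is what makes this rigorous, together with the fact that every quotient has homogeneous numerator and denominator of the same degree after the specialization. This is precisely where $\deg_{\mathbf M} u_j=1$ for all $j$ is used. The rest is bookkeeping of the upper bound for the $w$-dependent binomials, where one must track whether $i_j\in\mathbf M$; that is the source of the term $|\mathbf I\cap\mathbf M|$.
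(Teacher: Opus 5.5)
Your proposal is correct and is essentially the paper's argument: a factor-by-factor $\deg_{\mathbf M}$ count on the specialized form of \eqref{Motherfunction} with $n=s$, $\gamma=0$, $\beta=1$, reaching the same endpoints. The only bookkeeping difference is that you split the paper's blocks $u_j\prod_k(u_j-qw_k)$ and $(u_j^2-1)\prod_{k\ne i_j}(\cdots)$ into separate pieces. Multiplying by the homogeneous Vandermonde $D$ is a more explicit form of the paper's remark that each quotient is homogeneous of degree $0$, so the summand-wise bounds pass to the Laurent-polynomial sum. The garbled intermediate expression in your count of the $u_ju_k$-factors should simply read $[0,s(s-1)]$.
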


\begin{proof}
By \eqref{WCfunction}, we use \eqref{Motherfunction} with $n=s$ and
$\gamma=0$, $\beta=1$. After specializing
$(u_1,\ldots,u_s)=(w_{m_1},\ldots,w_{m_s})$, each $u_j$ occurring in every
summand of this specialized formula is one of
$w_{m_1},\ldots,w_{m_s}$. Hence $\deg_{\mathbf M}u_j=1$ for $j=1,\ldots,s$.

It follows that $u_j^2-1$ has degree interval $[0,2]$. Each factor of the form $u_j-qw_k$ or $u_j-w_k$ is homogeneous of degree $1$ when $k\in\mathbf{M}$, while its two terms have degrees $1$ and $2$ when $k\notin\mathbf{M}$. Moreover, each quotient $(qu_j-u_k)/(u_j-u_k)$ is homogeneous of degree $0$, since its numerator and denominator are homogeneous of the same degree. Finally, each factor $u_ju_k-1$ or $u_ju_k-q$ has degree interval $[0,2]$.

We now apply these observations to the four parts of a fixed summand of the
specialized form of \eqref{Motherfunction}. First, the monomial prefactor
$\prod_{j=1}^{s}u_j^{-(N+s)}$ has the single degree $-s(N+s)$, and hence
degree interval $[-s(N+s),-s(N+s)]$.

Next, fix $j\in\{1,\ldots,r\}$ and consider
\[
(u_j^2-1)
\prod_{k=i_j+1}^{N}(u_j-qw_k)
\prod_{k=1}^{i_j-1}(u_j-w_k).
\]
There are $N-1$ factors of the form $u_j-qw_k$ or $u_j-w_k$, since the factor with $k=i_j$ is omitted. Every one of these factors contributes at least degree $1$, so the lower endpoint is $N-1$.

Among the indices $k\ne i_j$, the number of indices outside $\mathbf{M}$ is
\[
\begin{cases}
N-s, & i_j\in\mathbf{M},\\
N-s-1, & i_j\notin\mathbf{M}.
\end{cases}
\]
Indeed, if $i_j\in\mathbf{M}$, the omitted index $i_j$ is not one of the
$N-s$ indices outside $\mathbf{M}$, whereas if $i_j\notin\mathbf{M}$, one of
those $N-s$ indices is omitted. Each remaining index outside $\mathbf{M}$ can
increase the upper degree by one, and the factor $u_j^2-1$ contributes at most
degree $2$. Therefore the upper endpoint is
\[
\begin{cases}
2N-s+1, & i_j\in\mathbf{M},\\
2N-s, & i_j\notin\mathbf{M}.
\end{cases}
\]
When these contributions are multiplied over $j=1,\ldots,r$, the first case
occurs exactly $|\mathbf{I}\cap\mathbf{M}|$ times. Hence the upper endpoint
of the product is
\[
r(2N-s)+|\mathbf{I}\cap\mathbf{M}|.
\]
Thus we obtain
\begin{align}
&\prod_{j=1}^{r}
\left\{
(u_j^2-1)
\prod_{k=i_j+1}^{N}(u_j-qw_k)
\prod_{k=1}^{i_j-1}(u_j-w_k)
\right\} \notag
\end{align}
has degree interval contained in
$[r(N-1),\,r(2N-s)+|\mathbf{I}\cap\mathbf{M}|]$.

For each $j=r+1,\ldots,s$, consider $u_j\prod_{k=1}^{N}(u_j-qw_k)$. The factor $u_j$ has degree $1$. Among the other $N$ factors, the $s$ factors indexed by $k\in\mathbf{M}$ are homogeneous of degree $1$, whereas the $N-s$ factors indexed by $k\notin\mathbf{M}$ have degree interval $[1,2]$. Hence the degree interval for one such $j$ is contained in $[1+N,1+s+2(N-s)]=[N+1,2N-s+1]$. Multiplying over $j=r+1,\ldots,s$ gives the degree interval $[(s-r)(N+1),(s-r)(2N-s+1)]$.

It remains to consider the factors involving two $u$-variables:
\begin{align*}
&\prod_{1\le j<k\le r}
\frac{qu_j-u_k}{u_j-u_k}(u_ju_k-1)
\prod_{j=r+1}^{s}\prod_{k=1}^{r}
\frac{qu_j-u_k}{u_j-u_k}(u_ju_k-1)
\prod_{r+1\le j<k\le s}(u_ju_k-q).
\end{align*}
Every quotient has degree $0$, while every remaining factor has degree interval $[0,2]$. The total number of these remaining factors is $\binom r2+r(s-r)+\binom{s-r}{2}=\binom s2$. Consequently, their product has degree interval $\Big[0,2\binom s2 \Big]=[0,s(s-1)]$.

Adding the lower endpoints gives $-s(N+s)+r(N-1)+(s-r)(N+1)=-s^2+s-2r$. Adding the upper endpoints gives
\begin{align*}
&-s(N+s)+r(2N-s)+|\mathbf{I}\cap\mathbf{M}|
 +(s-r)(2N-s+1)+s(s-1) \notag \\
=&s(N-s)-r+|\mathbf{I}\cap\mathbf{M}|.
\end{align*}
Thus each summand is a finite sum of homogeneous rational functions of degrees between $-s^2+s-2r$ and $s(N-s)-r+|\mathbf{I}\cap\mathbf{M}|$. Since the sum over the permutations in the specialized form of \eqref{Motherfunction} is a Laurent polynomial, its homogeneous components lie in the same range. Therefore its degree interval is contained in $[-s^2+s-2r,\,s(N-s)-r+|\mathbf{I}\cap\mathbf{M}|]$, and~\eqref{eq:typeC-raw-newton-bound} follows.

\end{proof}

\begin{thm}
For every $s=|\mathbf{M}|$ with $r=|\mathbf{I}|\le s\le n$,
\begin{align}
&\varphi_{\mathbf{M}}\left(
\mathcal N\left(
\widetilde{WC}_{N,n,r}(\mathbf{w}_{\mathbf{M}}|\mathbf{w}|\mathbf{I})
\right)
\right)
\subset
\left[
-s^2+s-2r,
(N-s)(2N-s+2)-r+|\mathbf{I}\cap \mathbf{M}|
\right].
\label{eq:typeC-strong-newton-bound}
\end{align}
\end{thm}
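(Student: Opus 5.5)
The plan is to reduce to $n=s$, clear the denominator of the normalization factor, and use the fact that lower and upper Newton degrees with respect to $\varphi_{\mathbf M}$ are additive under multiplication. Together with the raw bound \eqref{eq:typeC-raw-newton-bound} for $WC_{N,s,r}$, this is bookkeeping of degree intervals.

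\emph{Step 1: reduction to $n=s$.} By \eqref{eq:typeC-stability-s-ge-r} we have
$\widetilde{WC}_{N,n,r}(\mathbf w_{\mathbf M}|\mathbf w|\mathbf I)=\widetilde{WC}_{N,s,r}(w_{m_1},\dots,w_{m_s}|\mathbf w|\mathbf I)$.
By definition \eqref{typeCnormalizeddefinition}, this equals $WC_{N,s,r}(w_{m_1},\dots,w_{m_s}|\mathbf w|\mathbf I)/\mathcal E^C_{N,s,r}(w_{m_1},\dots,w_{m_s},\mathbf w)$.
We may assume the value is nonzero, otherwise there is nothing to prove.

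\emph{Step 2: additivity identity.} Write $\mathcal E^C_{N,s,r}=(1-q)^{-(s-r)}A/B$, where $A$ and $B$ are the polynomial numerator and denominator in \eqref{typeCnormalizationfactor} after the substitution $u_j=w_{m_j}$. Then
\[
(1-q)^{s-r}\,B\cdot WC_{N,s,r}=A\cdot\widetilde{WC}_{N,s,r}
\]
is an identity in the domain $\mathbb C(q)[w^{\pm1}]$. The functional $\varphi_{\mathbf M}$ induces a $\mathbb Z$-grading, and the lowest (resp.\ highest) homogeneous component of a product is the product of the lowest (resp.\ highest) components, which is nonzero because we are in a domain. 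Hence the lower and upper Newton degrees add. Consequently the lower Newton degree of $\widetilde{WC}$ is at least $\mathrm{low}(WC)+\mathrm{low}(B)-\mathrm{low}(A)$, and its upper Newton degree is at most $\mathrm{up}(WC)+\mathrm{up}(B)-\mathrm{up}(A)$. Here $\mathrm{low}$ and $\mathrm{up}$ of $A$ and $B$ are their exact Newton degrees, and for $WC$ we use the bounds from \eqref{eq:typeC-raw-newton-bound}.

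\emph{Step 3: exact degree intervals of $A$ and $B$.} Every factor of $A$ and $B$ has the form $1-q\cdot(\text{monomial})$, so all lower endpoints are $0$. The upper endpoints are as follows.
\begin{itemize}
\item In $A$, the factors $1-qu_j^{-1}u_k$ have degree $0$. The factors $1-qu_jw_k$ contribute $2$ for $k\in\mathbf M$ and $3$ for $k\notin\mathbf M$. The total is $\mathrm{up}(A)=2s^2+3s(N-s)$.
\item In $B$, the factors $1-qu_ju_k$ with $j<k$ give $s(s-1)$.
\item In $\prod_{j\le k}(1-qw_jw_k)$, pairs with both indices in $\mathbf M$ give $s(s+1)$, mixed pairs give $3s(N-s)$, and pairs with both indices in $\mathbf M^{\mathrm c}$ give $2(N-s)(N-s+1)$.
\end{itemize}
Thus $\mathrm{up}(B)-\mathrm{up}(A)=2(N-s)(N-s+1)$.

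\emph{Step 4: combine.} The lower endpoint is unchanged at $-s^2+s-2r$. The upper endpoint becomes
\[
s(N-s)-r+|\mathbf I\cap\mathbf M|+2(N-s)(N-s+1)=(N-s)(2N-s+2)-r+|\mathbf I\cap\mathbf M|.
\]
This is exactly \eqref{eq:typeC-strong-newton-bound}.

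\emph{Main obstacle.} The step needing care is Step 2. One must make sure the comparison is done with Laurent polynomials on both sides, which holds by \eqref{eq:typeC-laurent-polynomiality}. One must also make sure the degree intervals of $A$ and $B$ are exact, not merely bounds. This holds because each factor has nonzero constant term $1$ and nonzero top term $-q\cdot(\text{monomial})$. Since we multiply by $B$ and divide by $A$, we need the exact upper degree of $A$, and a mere upper bound for $A$ would not suffice.
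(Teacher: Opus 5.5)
Your proof is correct and follows the paper's route. Both arguments reduce to $n=s$ by stability, write $\widetilde{WC}$ as $WC_{N,s,r}$ times the specialized normalization factor, and add degree intervals to the raw bound \eqref{eq:typeC-raw-newton-bound}. The only difference is presentational: the paper first cancels common factors in $\mathcal E^C_{N,s,r}$ at $u_j=w_{m_j}$, leaving $(1-q)^{s-r}\prod_{j\le k,\ j,k\in\mathbf M^{\mathrm c}}(1-qw_jw_k)$ over the degree-$0$ product $\prod_{j,k}(1-qw_{m_j}^{-1}w_{m_k})$, whereas you keep the unsimplified $A$ and $B$, compute their exact top degrees, and obtain the same net upper shift $2(N-s)(N-s+1)$.
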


\begin{proof}
Using the stability relation \eqref{eq:typeC-stability-s-ge-r}, the normalized definition \eqref{typeCnormalizeddefinition}, and then substituting $u_j=w_{m_j}$ into the normalization factor \eqref{typeCnormalizationfactor}, we obtain
\begin{align}
&\widetilde{WC}_{N,n,r}(\mathbf{w}_{\mathbf{M}}|\mathbf{w}|\mathbf{I})
=(1-q)^{s-r}
\prod_{\substack{j\le k\\j,k\in\mathbf{M}^{\mathrm c}}}
(1-qw_jw_k)
\frac{
WC_{N,s,r}(w_{m_1},\ldots,w_{m_s}|\mathbf{w}|\mathbf{I})
}{
\displaystyle
\prod_{j,k=1}^{s}(1-qw_{m_j}^{-1}w_{m_k})
}.
 \notag
\end{align}
The degree intervals of the products appearing here are
\begin{align*}
(1-q)^{s-r}&:\ [0,0],\\
\prod_{j,k=1}^{s}(1-qw_{m_j}^{-1}w_{m_k})&:\ [0,0],\\
\prod_{\substack{j\le k\\j,k\in\{1,\dots,N\}\setminus \mathbf{M}}}(1-qw_jw_k)
&:\ \left[0,4\tbinom{N-s+1}{2} \right]
=\left[0,2(N-s)(N-s+1) \right].
\end{align*}
Thus the lower endpoint is unchanged, while the upper endpoint becomes
\[
s(N-s)-r+|\mathbf{I}\cap \mathbf{M}|+2(N-s)(N-s+1)
=(N-s)(2N-s+2)-r+|\mathbf{I}\cap \mathbf{M}|.
\]
\end{proof}

\begin{corollary}
Let $s=|\mathbf{M}|$ satisfy $r\le s\le n$. If $\mathbf{I}\not\subseteq \mathbf{M}$ as sets, then $|\mathbf{I}\cap \mathbf{M}|\le r-1$, and therefore
\begin{align}
&\varphi_{\mathbf{M}}\left(
\mathcal N\left(
\widetilde{WC}_{N,n,r}(\mathbf{w}_{\mathbf{M}}|\mathbf{w}|\mathbf{I})
\right)
\right)
\subset[-s(s+1),(N-s)(2N-s+2)-1].
\label{eq:typeC-strict-smallness-sufficient-condition}
\end{align}
\end{corollary}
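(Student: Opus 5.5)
The corollary follows from the preceding theorem, \eqref{eq:typeC-strong-newton-bound}, by comparing endpoints. No new analysis of the weight function is needed. If $\widetilde{WC}_{N,n,r}(\mathbf{w}_{\mathbf{M}}|\mathbf{w}|\mathbf{I})=0$, then its Newton polytope is $\mathcal N(0)=\emptyset$ by convention, and the inclusion \eqref{eq:typeC-strict-smallness-sufficient-condition} holds trivially. So we may assume the value is nonzero. By \eqref{eq:typeC-strong-newton-bound}, its degree interval is then contained in
\[
\left[-s^2+s-2r,\ (N-s)(2N-s+2)-r+|\mathbf{I}\cap\mathbf{M}|\right].
\]
It therefore suffices to show that this interval lies inside $[-s(s+1),(N-s)(2N-s+2)-1]$.

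\textbf{Upper endpoint.} Since $\mathbf{I}\not\subseteq\mathbf{M}$ as sets, at least one of the $r$ elements of $\mathbf{I}$ lies outside $\mathbf{M}$. Hence $|\mathbf{I}\cap\mathbf{M}|\le r-1$, which gives $-r+|\mathbf{I}\cap\mathbf{M}|\le -1$. The upper endpoint is therefore at most $(N-s)(2N-s+2)-1$.

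\textbf{Lower endpoint.} We have
\[
(-s^2+s-2r)-(-s(s+1))=2s-2r\ge 0,
\]
because $s\ge r$ by hypothesis. So $-s^2+s-2r\ge -s(s+1)$.

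Combining the two comparisons gives \eqref{eq:typeC-strict-smallness-sufficient-condition}. There is no real obstacle here. The only points to watch are the empty-polytope convention for the zero value and the use of the standing hypothesis $r\le s$ in the lower-endpoint comparison.
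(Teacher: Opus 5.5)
Your proposal is correct and follows the paper's own proof exactly: it plugs $|\mathbf{I}\cap\mathbf{M}|\le r-1$ into the upper endpoint of \eqref{eq:typeC-strong-newton-bound} and uses $s\ge r$ to weaken the lower endpoint $-s^2+s-2r$ to $-s(s+1)$. Your separate treatment of the zero value via $\mathcal N(0)=\emptyset$ is a harmless extra detail that the paper leaves implicit.
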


\begin{proof}
Use $|\mathbf{I}\cap\mathbf{M}|\le r-1$ in
\eqref{eq:typeC-strong-newton-bound}. Since $s\ge r$, we also have
$-s^2+s-2r\ge -s(s+1)$, which gives the displayed weaker lower bound.
\end{proof}

\medskip
\noindent\textbf{Recovery of Theorem~\ref{typeCmainproperties} when $s=r$.}
We record explicitly how the preceding specialization results reduce at $s=r$ to the three equal-cardinality properties of Theorem~\ref{typeCmainproperties}.  First, setting $s=r$ in the support condition \eqref{eq:typeC-shifted-dominance} gives $m_j\ge i_j$ for $1\le j\le r$, and hence recovers the vanishing statement in $\mathrm{(i)}$.  For $\mathbf{M}=\mathbf{I}$, the $s=r$ specialization of \eqref{eq:typeC-cross-sector-branching} is \eqref{relationwithAnormalized}; together with \eqref{identificationtypeA}, Theorem~\ref{RTVtheorem} $\mathrm{(ii)}$, and the explicit type $C$ root products \eqref{typecfirstblock} and \eqref{typecsecondblock}, this gives the diagonal value \eqref{normalizationWC}.  Second, in the same equal-cardinality relation \eqref{relationwithAnormalized}, the first two factors are explicit, while Theorem~\ref{RTVtheorem} $\mathrm{(i)}$ supplies the vertical factor; this recovers the full divisibility in $\mathrm{(ii)}$.  Finally, if $s=r$ and $\mathbf{M}\ne\mathbf{I}$, then $\mathbf{I}\not\subseteq\mathbf{M}$, so \eqref{eq:typeC-strict-smallness-sufficient-condition} becomes
\[
\varphi_{\mathbf{M}}\bigl(\mathcal N(\widetilde{WC}_{N,n,r}(\mathbf{w}_{\mathbf{M}}|\mathbf{w}|\mathbf{I}))\bigr)
\subset[-r(r+1),(N-r)(2N-r+2)-1],
\]
which is exactly the type $C$ $\mathbf{M}$-smallness assertion in $\mathrm{(iii)}$.

\subsection{Type $B$}

\subsubsection{General setup}
\label{sec:typeB-specialization}

The proofs of the type $B$ statements below are parallel to the corresponding type $C$ arguments, with the type $B$ specialization and normalization factors in place of the type $C$ ones, and are therefore omitted.

Repeated application of \eqref{stabilityWB} gives, if $s\ge r$,
\begin{equation*}
\widetilde{WB}_{N,n,r}(\mathbf{w}_{\mathbf{M}}|\mathbf{w}|\mathbf{I})
=
\widetilde{WB}_{N,s,r}(w_{m_1},\ldots,w_{m_s}|\mathbf{w}|\mathbf{I}),
\end{equation*}
and, if $s<r$,
\begin{equation*}
\widetilde{WB}_{N,n,r}(\mathbf{w}_{\mathbf{M}}|\mathbf{w}|\mathbf{I})
=
\widetilde{WB}_{N,r,r}\Bigl(
w_{m_1},\ldots,w_{m_s},\underbrace{1,\ldots,1}_{r-s}
|\mathbf{w}|\mathbf{I}\Bigr).
\end{equation*}
We again proceed in the order $s<r$, $s=r$, and $s>r$.

\subsubsection{The case $s<r$}

\begin{prop}
If $|\mathbf{M}|<|\mathbf{I}|=r$, then
\begin{equation}
\widetilde{WB}_{N,n,r}(\mathbf{w}_{\mathbf{M}}|\mathbf{w}|\mathbf{I})=0.
\label{eq:typeB-lower-cardinality-vanishing}
\end{equation}
\end{prop}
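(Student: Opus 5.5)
The plan is to copy the type $C$ argument for \eqref{eq:typeC-lower-cardinality-vanishing} with the type $B$ data substituted. First, the stability relation \eqref{stabilityWB}, applied $n-r$ times, reduces the claim. By the $s<r$ case of the type $B$ setup,
\[
\widetilde{WB}_{N,n,r}(\mathbf{w}_{\mathbf{M}}|\mathbf{w}|\mathbf{I})
=\widetilde{WB}_{N,r,r}\Bigl(w_{m_1},\ldots,w_{m_s},\underbrace{1,\ldots,1}_{r-s}\Bigr|\mathbf{w}\Bigl|\mathbf{I}\Bigr).
\]
Since $s<r$, at least one argument, say $u_r$, equals $1$. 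So it suffices to show that $\widetilde{WB}_{N,r,r}$ vanishes at any specialization where some $u_j=1$ and the remaining $u$'s are distinct $w$-variables.

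Next, I apply the factorization \eqref{typeBnrFactorization}. It shows that the unnormalized function
\[
WB_{N,r,r}(\mathbf{u}|\mathbf{w}|\mathbf{I})=\prod_{j=1}^r(1-u_j^{-1})\prod_{1\le i<j\le r}(1-u_i^{-1}u_j^{-1})\,W_{N,r}(\mathbf{u}|\mathbf{w}|\mathbf{I})
\]
contains the factor $1-u_j^{-1}$. This factor vanishes at $u_j=1$. The type $A$ function $W_{N,r}$ is a Laurent polynomial in $\mathbf{u}$, because it is a partition function and hence has no poles. Therefore $WB_{N,r,r}=0$ at this specialization.

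Finally, I check that dividing by $\mathcal E^B_{N,r,r}$ does not create an indeterminate form. From \eqref{typeBnormalizationfactor} with $n=r$, the numerator factors are $1-qu_j^{-1}u_k$, $1-qu_jw_k$ and $1-qu_j$. The denominator factors are $1-qu_ju_k$, $1-qu_j^2$, $1-qw_jw_k$ and $1-qw_k$. After substituting $u_j\in\{w_{m_a}\}\cup\{1\}$, every one of these factors has the form $1-q\cdot(\text{monomial})$. Each is therefore a nonzero element of $\mathbb{C}(q)[w^{\pm1}]$; for instance $1-q\cdot 1=1-q\neq0$ and $1-q\cdot1^2=1-q$. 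So $\mathcal E^B_{N,r,r}$ is finite and nonzero at the specialization, and the normalized value is $0$.

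The main point needing care is this last step. The type $B$ normalization has the extra factors $(1-qu_j)$ and $(1-qu_j^2)$, and I must make sure that at $u_j=1$ these give $1-q$ rather than a zero that could cancel the vanishing. Since $q$ is a generic parameter, this is clear. As an alternative cross-check, Theorem~\ref{ffunctionproperties}(i) with $s<r$ already gives $F_{N,n,r}(\mathbf{w}_{\mathbf{M}}|\mathbf{w}|\mathbf{I})=0$ directly. Together with the definition \eqref{WBfunction}, where the factor $(1+u_k^{-1})^{-1}$ is finite at $u_k=1$ and at $u_k=w_m$, this yields the same conclusion.
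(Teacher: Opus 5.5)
Your proof is correct and is the paper's own argument: the type $C$ proof of \eqref{eq:typeC-lower-cardinality-vanishing} (stability reduction to $n=r$, the vanishing factor from the $r=n$ factorization, and nonvanishing of the normalization factor) transposed to type $B$, with \eqref{typeBnrFactorization} supplying the factor $1-u_j^{-1}$. You also check explicitly that the extra type $B$ factors $1-qu_j$ and $1-qu_j^2$ in \eqref{typeBnormalizationfactor} become $1-q\neq0$ at $u_j=1$, which the paper leaves implicit, and you handle it correctly.
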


\subsubsection{Equal cardinality: $s=r$}
For $|\mathbf{M}|=r$, the relation with the normalized type $A$ weight function is exactly \eqref{relationwithAnormalizedtypeB}.  This is precisely the cardinality considered in Theorem~\ref{typeBmainproperties}.  As in type $C$, the results below recover $\mathrm{(i)}$--$\mathrm{(iii)}$ when $s=r$.

\subsubsection{The case $s>r$: branching formula}

Let
\[
\mathbf{M}=\{m_1<\cdots<m_s\}\subset\{1,\dots,N\},
\qquad
\mathbf{M}^{\mathrm c}=\{1,\dots,N\}\setminus\mathbf{M},
\qquad r\le s\le n.
\]
For each
$
\mathbf{K}=\{k_1<\cdots<k_r\}\subset\mathbf{M},
$
write
$
\mathbf{L}=\mathbf{M}\setminus\mathbf{K}
=\{l_1<\cdots<l_{s-r}\}.
$

\begin{thm}
For every $s=|\mathbf{M}|$ with $r=|\mathbf{I}|\le s\le n$,
\begin{align}
&\widetilde{WB}_{N,n,r}(\mathbf{w}_{\mathbf{M}}|\mathbf{w}|\mathbf{I})
\notag\\
&=(1-q)^{s-r}
\prod_{j\in\mathbf{M}^{\mathrm c}}(1-qw_j)
\prod_{\substack{j<k\\j,k\in\mathbf{M}^{\mathrm c}}}(1-qw_jw_k)
\sum_{\substack{\mathbf{K}\subset\mathbf{M}\\|\mathbf{K}|=r}}
\frac{
W_{N,r}(w_{k_1},\ldots,w_{k_r}|\mathbf{w}|\mathbf{I})
}{
\displaystyle
\prod_{a,b=1}^{r}(1-qw_{k_a}^{-1}w_{k_b})
}
\notag\\
&\quad\times
\frac{
\displaystyle
\prod_{a=1}^{r}(1-w_{k_a}^{-1})
\prod_{1\le a<b\le r}(1-w_{k_a}^{-1}w_{k_b}^{-1})
\prod_{a=1}^{r}\prod_{b=1}^{s-r}(1-w_{k_a}^{-1}w_{l_b}^{-1})
}{
\displaystyle
\prod_{a=1}^{r}\prod_{b=1}^{s-r}(1-w_{k_a}^{-1}w_{l_b})
}
\notag\\
&\quad\times
\prod_{1\le a<b\le s-r}(1-qw_{l_a}^{-1}w_{l_b}^{-1})
\prod_{a=1}^{s-r}\prod_{j\in\mathbf{M}^{\mathrm c}}
(1-qw_{l_a}^{-1}w_j).
\label{eq:typeB-cross-sector-branching}
\end{align}
For $r=0$, set $W_{N,0}=1$, and interpret every empty product as $1$.
If $s=r$, only the term $\mathbf{K}=\mathbf{M}$ remains, and the formula agrees with
\eqref{relationwithAnormalizedtypeB}.
\end{thm}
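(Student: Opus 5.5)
The plan is to follow the type $C$ proof of \eqref{eq:typeC-cross-sector-branching} line by line, replacing the type $C$ specialization and normalization by the type $B$ ones. First, the type $B$ stability relation reduces the claim to $\widetilde{WB}_{N,s,r}(w_{m_1},\ldots,w_{m_s}|\mathbf{w}|\mathbf{I})$. By \eqref{WBfunction}, $WB_{N,s,r}$ is $\prod_{k=1}^s(1+u_k^{-1})^{-1}$ times \eqref{Motherfunction} at $\gamma=-1$, $\beta=1$. The boundary factor $\prod_{j=r+1}^s(\beta u_j-\gamma)=\prod_{j>r}(u_j+1)$ combines with the prefactor. After the sum it contributes $u_j$ for each $j>r$, exactly as in type $C$, and leaves $\prod_{a=1}^{r}(1+u_a^{-1})^{-1}$ on the first $r$ variables.

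Next, I group the permutations by the $r$-subset $\mathbf{K}\subset\mathbf{M}$ occupying the first $r$ slots. The $\rho$-sum over the complement $\mathbf{L}$ collapses to $(s-r)!$ equal terms, precisely as in \eqref{eq:typeC-subset-sum-after-rho}. The $\sigma$-sum, together with the factor $\prod_{a}(1+w_{k_a}^{-1})^{-1}$, is identified with $WB_{N,r,r}(w_{k_1},\ldots,w_{k_r}|\mathbf{w}|\mathbf{I})\prod_a w_{k_a}^{-(s-r)}$. The argument is the same as \eqref{eq:typeC-first-r-contribution}, using \eqref{WBfunction} with $n=r$, where no $\beta u-\gamma$ factors occur. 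The three elementary rewritings displayed in the type $C$ proof convert the $\mathbf{L}$-factors into $\prod_b w_{l_b}^{1-s}\prod_k(1-qw_{l_b}^{-1}w_k)$, the coupling factors, and $\prod(1-qw_{l_a}^{-1}w_{l_b}^{-1})$; all monomials cancel against $\prod_a w_{k_a}^{-(s-r)}$.

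Then I substitute \eqref{relationwithAnormalizedtypeB} with $n=r$ and $\mathbf{M}=\mathbf{K}$, together with \eqref{typeBnormalizeddefinition}, for $WB_{N,r,r}$. This replaces $WB_{N,r,r}$ by $\mathcal{E}^B_{N,r,r}(w_{\mathbf{K}},\mathbf{w})$ times the explicit $\mathbf{K}$-complement products and the normalized type $A$ factor. After that, I divide by $\mathcal{E}^B_{N,s,r}(w_{\mathbf{M}},\mathbf{w})$. The key computation, and the main obstacle, is the ratio $\mathcal{E}^B_{N,r,r}(w_{\mathbf{K}})/\mathcal{E}^B_{N,s,r}(w_{\mathbf{M}})$. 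Compared to type $C$, it carries additional factors
\[
\prod_{b=1}^{s-r}\frac{1-qw_{l_b}^2}{1-qw_{l_b}},
\]
coming from $\prod_j(1-qu_j)/\prod_j(1-qu_j^2)$. These must cancel against the discrepancy between the $\mathbf{K}^{\mathrm c}$-products $\prod_{j\in\mathbf{K}^{\mathrm c}}(1-qw_j)\prod_{j<k\in\mathbf{K}^{\mathrm c}}(1-qw_jw_k)$ and their $\mathbf{M}^{\mathrm c}$ versions. Splitting $\mathbf{K}^{\mathrm c}=\mathbf{L}\sqcup\mathbf{M}^{\mathrm c}$ produces $\prod_b(1-qw_{l_b})$, the cross terms $\prod_{b}\prod_{j\in\mathbf{M}^{\mathrm c}}(1-qw_{l_b}w_j)$, and $\prod_{a<b}(1-qw_{l_a}w_{l_b})$. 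In type $C$, the diagonal $j=k$ terms $(1-qw_{l_b}^2)$ were absorbed by $\prod_{j\le k}$. Here they are instead supplied by the $\prod(1-qu_j^2)$ in the denominator of $\mathcal{E}^B$, which precisely compensates the strict inequality $j<k$.

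I will check this bookkeeping carefully. Once it is done, the remaining factors match the type $C$ ones: the $\prod_{a,b}(1-qw_{l_a}^{-1}w_{l_b})$ and mixed $(1-qw^{-1}w)$ factors from $\prod_{j,k}(1-qu_j^{-1}u_k)$, and the $(1-q)^{s-r}$. The single change is that $\prod_{1\le a\le b\le r}(1-w_{k_a}^{-1}w_{k_b}^{-1})$ is replaced by $\prod_a(1-w_{k_a}^{-1})\prod_{a<b}(1-w_{k_a}^{-1}w_{k_b}^{-1})$, inherited from \eqref{relationwithAnormalizedtypeB}. Summing the resulting one-branch expression over $\mathbf{K}$ gives \eqref{eq:typeB-cross-sector-branching}. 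The case $s=r$ is immediate because only $\mathbf{K}=\mathbf{M}$ survives.
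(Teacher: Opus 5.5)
Your proposal is correct and follows the paper's intended route; the paper omits the type $B$ proof as parallel to the type $C$ one. Your adaptation supplies the needed bookkeeping, and it checks out: the prefactor $\prod_k(1+u_k^{-1})^{-1}$ absorbs $\prod_{j>r}(u_j+1)$, the $\sigma$-sum is identified with $WB_{N,r,r}$, and the extra factor $\prod_{b}(1-qw_{l_b}^2)/(1-qw_{l_b})$ in $\mathcal{E}^B_{N,r,r}/\mathcal{E}^B_{N,s,r}$ turns the $\mathbf{K}^{\mathrm c}$-products into the $\mathbf{M}^{\mathrm c}$-products.
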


\subsubsection{Support condition}

Define the strict partitions corresponding to
$\mathbf{I}=\{i_1<\cdots<i_r\}$ and $\mathbf{M}=\{m_1<\cdots<m_s\}$ by
\[
\lambda(\mathbf{I})=(i_r,\ldots,i_1),
\qquad
\lambda(\mathbf{M})=(m_s,\ldots,m_1).
\]

\begin{prop}
Let $s=|\mathbf{M}|$ and assume $r\le s\le n$. Then
\[
\widetilde{WB}_{N,n,r}(\mathbf{w}_{\mathbf{M}}|\mathbf{w}|\mathbf{I})\ne0
\]
only if
\begin{equation}
i_j\le m_{s-r+j}
\qquad(1\le j\le r).
\label{eq:typeB-generalized-support-componentwise}
\end{equation}
This condition is equivalent to $\lambda(\mathbf{I})\subseteq\lambda(\mathbf{M})$.
\end{prop}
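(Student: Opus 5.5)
The plan mirrors the type $C$ support argument, using the type $B$ branching formula \eqref{eq:typeB-cross-sector-branching} as the starting point. By that theorem, $\widetilde{WB}_{N,n,r}(\mathbf{w}_{\mathbf{M}}|\mathbf{w}|\mathbf{I})$ is an explicit prefactor times a sum over $r$-element subsets $\mathbf{K}\subset\mathbf{M}$. Each $\mathbf{K}$-term contains the normalized type $A$ factor
\[
\frac{W_{N,r}(w_{k_1},\ldots,w_{k_r}|\mathbf{w}|\mathbf{I})}{\prod_{a,b=1}^{r}(1-qw_{k_a}^{-1}w_{k_b})}
\]
multiplied by further products that are finite for generic $\mathbf{w}$. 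Because the denominators $1-w_{k_a}^{-1}w_{l_b}$ never vanish identically, it suffices to show that every such type $A$ factor vanishes unless \eqref{eq:typeB-generalized-support-componentwise} holds.

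For this I will invoke type $A$ triangularity, namely the graphical argument following \eqref{rectdiagonalfactor}. It shows that $W_{N,r}(w_{k_1},\ldots,w_{k_r}|\mathbf{w}|\mathbf{I})=0$ unless $\mathbf{K}\ge\mathbf{I}$ componentwise, i.e. $k_j\ge i_j$ for all $j$. The type $A$ weight function is the same object in types $B$ and $C$; only the prefactors differ. So the vanishing transfers verbatim, and the value is zero unless some $\mathbf{K}\subset\mathbf{M}$ with $|\mathbf{K}|=r$ satisfies $\mathbf{K}\ge\mathbf{I}$.

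The remaining combinatorial step is to show that such a $\mathbf{K}$ exists if and only if $\{m_{s-r+1},\ldots,m_s\}\ge\mathbf{I}$, which is \eqref{eq:typeB-generalized-support-componentwise}. Any $r$-subset $\mathbf{K}=\{k_1<\cdots<k_r\}$ of $\mathbf{M}$ satisfies $k_j\le m_{s-r+j}$, because at most $r-j$ elements of $\mathbf{M}$ exceed $k_j$. Hence $\mathbf{K}\ge\mathbf{I}$ forces $m_{s-r+j}\ge k_j\ge i_j$. Conversely, the top $r$ elements of $\mathbf{M}$ form an admissible $\mathbf{K}$, although only the forward direction is needed here.

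Finally, I will check that the condition is equivalent to containment of strict partitions. With $\lambda(\mathbf{I})_p=i_{r+1-p}$ and $\lambda(\mathbf{M})_p=m_{s+1-p}$, the inequality $\lambda(\mathbf{I})_p\le\lambda(\mathbf{M})_p$ for $p=1,\ldots,r$, together with $\ell(\lambda(\mathbf{I}))=r\le s$, is exactly $i_j\le m_{s-r+j}$ under the substitution $j=r+1-p$. Since shifted diagrams of strict partitions are nested iff their parts are, this gives $\lambda(\mathbf{I})\subseteq\lambda(\mathbf{M})$. No step is a real obstacle; the only point needing care is confirming that the type $B$ extra factors, such as $\prod_a(1-w_{k_a}^{-1})$, do not alter the argument, and they cannot, since the argument only uses vanishing of the type $A$ factor.
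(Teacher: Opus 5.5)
Your proposal is correct and follows the paper's argument, which is the type $C$ proof transferred to type $B$. Each $\mathbf K$-term of the branching formula vanishes by type $A$ triangularity unless $\mathbf K\ge\mathbf I$, and an $r$-subset $\mathbf K\ge\mathbf I$ of $\mathbf M$ exists iff $\{m_{s-r+1},\ldots,m_s\}\ge\mathbf I$.

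One wording fix: $k_j\le m_{s-r+j}$ holds because \emph{at least} $r-j$ elements of $\mathbf M$ exceed $k_j$ (namely $k_{j+1},\ldots,k_r$), not ``at most''.
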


\subsubsection{Laurent polynomiality}

\begin{prop}
For every $\mathbf{M}=\{m_1<\cdots<m_s\}\subset\{1,\dots,N\}$ with $s\le n$,
\[
\widetilde{WB}_{N,n,r}(\mathbf{w}_{\mathbf{M}}|\mathbf{w}|\mathbf{I})
\in
\mathbb{C}(q)[w_1^{\pm1},\ldots,w_N^{\pm1}].
\]
\end{prop}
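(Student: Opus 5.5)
The plan is to follow the type $C$ Laurent polynomiality argument verbatim, with the type $B$ data substituted. If $s<r$, the value vanishes by \eqref{eq:typeB-lower-cardinality-vanishing}, so there is nothing to prove. Assume therefore $r\le s\le n$, and start from the type $B$ branching formula \eqref{eq:typeB-cross-sector-branching}.

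The prefactor
\[
(1-q)^{s-r}\prod_{j\in\mathbf{M}^{\mathrm c}}(1-qw_j)\prod_{\substack{j<k\\ j,k\in\mathbf{M}^{\mathrm c}}}(1-qw_jw_k)
\]
is manifestly a Laurent polynomial. It therefore suffices to show that the sum over $\mathbf{K}\subset\mathbf{M}$ is a Laurent polynomial. For this, interpret the sum as an equivariant localization sum on the Grassmannian
\[
X_{\mathbf{M}}=\operatorname{Gr}\Bigl(r,\bigoplus_{j=1}^{s}\mathbb{C}_{w_{m_j}}\Bigr),
\]
whose torus fixed points are exactly the $r$-subsets $\mathbf{K}\subset\mathbf{M}$.

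At the fixed point $\mathbf{K}$, let $\mathscr S$ have weights $w_{k_a}$ and $\mathscr Q$ have weights $w_{l_b}$. Identify each factor of the $\mathbf{K}$-summand as the restriction of a global class:
\begin{itemize}
\item The normalized type $A$ factor $W_{N,r}/\prod_{a,b}(1-qw_{k_a}^{-1}w_{k_b})$ is the restriction of the RTV stable envelope class, exactly as in type $C$.
\item The new factor $\prod_a(1-w_{k_a}^{-1})\prod_{a<b}(1-w_{k_a}^{-1}w_{k_b}^{-1})$ is $\lambda_{-1}(\mathscr S^*\oplus\bigwedge^2\mathscr S^*)|_{\mathbf{K}}$, by \eqref{eq:typeC-lambda-y-weights}. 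This replaces the $\operatorname{Sym}^2\mathscr S^*$ of type $C$; it is still a $\lambda_{-1}$ of a genuine equivariant bundle.
\item The remaining numerator factors are identical to type $C$: $\lambda_{-1}(\mathscr S^*\otimes\mathscr Q^*)$, $\lambda_{-q}(\bigwedge^2\mathscr Q^*)$, and $\lambda_{-q}\bigl(\mathscr Q^*\otimes\bigoplus_{k\in\mathbf{M}^{\mathrm c}}\mathbb{C}_{w_k}\bigr)$.
\end{itemize}
The denominator $\prod_{a,b}(1-w_{k_a}^{-1}w_{l_b})$ is, by \eqref{eq:typeC-euler-denominator}, a sign times a ratio of determinant line bundles times $\lambda_{-1}(T^*_{\mathbf{K}}X_{\mathbf{M}})$, which is the $K$-theoretic Euler class.

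By the $K$-theoretic localization (Atiyah--Bott/Lefschetz) formula, the sum over $\mathbf{K}$ equals $\chi\bigl(X_{\mathbf{M}},\mathcal{F}\bigr)$ for the global class
\[
\mathcal F=\text{(stable envelope)}\otimes\lambda_{-1}(\cdots)\otimes\lambda_{-q}(\cdots)\otimes(\det\mathscr Q)^{-r}(\det\mathscr S)^{s-r}.
\]
Since $X_{\mathbf{M}}$ is smooth and proper, this Euler characteristic lies in the representation ring $\mathbb{Z}[q^{\pm1}][w^{\pm1}]$ of the torus.

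The main obstacle is justifying that the normalized type $A$ factor is the restriction of a single global class on $X_{\mathbf{M}}$, rather than on $T^*\operatorname{Gr}(r,N)$. For this I would first pull back along the $T$-equivariant embedding $X_{\mathbf{M}}\hookrightarrow\operatorname{Gr}(r,N)$ and use $K_T(T^*\operatorname{Gr}(r,N))\simeq K_T(\operatorname{Gr}(r,N))$, exactly as in the type $C$ proof. The fixed points of $X_{\mathbf{M}}$ map to the fixed points $\mathbf{K}\subset\mathbf{M}$ of $\operatorname{Gr}(r,N)$, so the restrictions match. Since the $\mathbf{K}$-dependent part differs from type $C$ only by a bundle-valued $\lambda_{-1}$, no further issue arises.
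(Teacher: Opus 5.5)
Your proposal is correct and follows the paper's own route. The paper omits the type $B$ proof as parallel to type $C$, and you transport the type $C$ localization argument to type $B$ in the same way: branching formula, numerator factors written as $\lambda_y$-classes, and the denominator written as a sign times a determinant twist times $\lambda_{-1}(T^*_{\mathbf K}X_{\mathbf M})$. The only type-dependent change is $\lambda_{-1}(\mathscr S^*\oplus\bigwedge^2\mathscr S^*)$ in place of $\lambda_{-1}(\operatorname{Sym}^2\mathscr S^*)$, which is right, and your explicit pullback of the RTV class along $X_{\mathbf M}\hookrightarrow\operatorname{Gr}(r,N)$ spells out a step the paper leaves implicit.
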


\begin{prop}
For every $s=|\mathbf{M}|$ with $r\le s\le n$, the Laurent polynomial
\[
\widetilde{WB}_{N,n,r}(\mathbf{w}_{\mathbf{M}}|\mathbf{w}|\mathbf{I})
\]
is divisible by
\begin{equation}
\prod_{j\in\mathbf{M}^{\mathrm c}}(1-qw_j)
\prod_{\substack{j<k\\j,k\in\mathbf{M}^{\mathrm c}}}(1-qw_jw_k).
\label{eq:typeB-complement-factor}
\end{equation}
\end{prop}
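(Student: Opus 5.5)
The plan is to read the divisibility directly off the type $B$ branching formula \eqref{eq:typeB-cross-sector-branching}. This mirrors the type $C$ argument, where the factor \eqref{eq:typeC-complement-factor-general-s} appeared outside the sum in \eqref{eq:typeC-cross-sector-branching}. By the type $B$ stability relation, for $r\le s\le n$ we have $\widetilde{WB}_{N,n,r}(\mathbf{w}_{\mathbf{M}}|\mathbf{w}|\mathbf{I})=\widetilde{WB}_{N,s,r}(w_{m_1},\ldots,w_{m_s}|\mathbf{w}|\mathbf{I})$. The branching formula then expresses this as
\[
(1-q)^{s-r}\prod_{j\in\mathbf{M}^{\mathrm c}}(1-qw_j)\prod_{\substack{j<k\\j,k\in\mathbf{M}^{\mathrm c}}}(1-qw_jw_k)\cdot S_{\mathbf{M}},
\]
where $S_{\mathbf{M}}$ is the sum over $\mathbf{K}\subset\mathbf{M}$ with $|\mathbf{K}|=r$. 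The prefactor is exactly \eqref{eq:typeB-complement-factor}. So it suffices to show that $S_{\mathbf{M}}$ is itself a Laurent polynomial in $\mathbb{C}(q)[w_1^{\pm1},\ldots,w_N^{\pm1}]$, not merely a rational function.

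For this I will rerun the localization argument used for the type $C$ Laurent polynomiality, now on $X_{\mathbf{M}}=\operatorname{Gr}(r,\bigoplus_{j=1}^s\mathbb{C}_{w_{m_j}})$. At a fixed point $\mathbf{K}$, the normalized type $A$ factor $W_{N,r}/\prod_{a,b}(1-qw_{k_a}^{-1}w_{k_b})$ is the restriction of the global RTV class, by \eqref{identificationtypeA} and Theorem~\ref{RTVtheorem}. The only change from type $C$ is in the remaining numerator factors. The factor $\prod_{a}(1-w_{k_a}^{-1})\prod_{a<b}(1-w_{k_a}^{-1}w_{k_b}^{-1})$ equals $\lambda_{-1}(\mathscr S^*\oplus\bigwedge^2\mathscr S^*)|_{\mathbf{K}}$ instead of $\lambda_{-1}(\operatorname{Sym}^2\mathscr S^*)|_{\mathbf{K}}$. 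The factors $\lambda_{-1}(\mathscr S^*\otimes\mathscr Q^*)$, $\lambda_{-q}(\bigwedge^2\mathscr Q^*)$ and $\lambda_{-q}(\mathscr Q^*\otimes\bigoplus_{k\in\mathbf{M}^{\mathrm c}}\mathbb{C}_{w_k})$ are as in type $C$. The denominator is again \eqref{eq:typeC-euler-denominator}: the $K$-theoretic Euler class of the cotangent space, up to a sign and a line-bundle twist. Hence $S_{\mathbf{M}}$ is the equivariant pushforward $\chi(X_{\mathbf{M}},\mathcal F)$ of one global class $\mathcal F\in K_T(X_{\mathbf{M}})\otimes\mathbb{C}(q)$, and so it lies in the representation ring, i.e.\ it is a Laurent polynomial.

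The product of the prefactor with $S_{\mathbf{M}}$ then gives the divisibility. The factor $(1-q)^{s-r}$ is a unit in $\mathbb{C}(q)$, so it causes no issue.

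The main obstacle is the bookkeeping in the localization step, namely checking that all numerator factors globalize. The pieces involving both $\mathbf{K}$ and $\mathbf{L}$ must be expressed through $\mathscr S,\mathscr Q$ alone, with no stray dependence on the ordering of $\mathbf{K}$. This holds because each factor is symmetric within $\mathbf{K}$ and within $\mathbf{L}$.

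A fallback, if the geometric argument feels too loose, is purely algebraic. Each potential pole of $S_{\mathbf{M}}$ lies at $w_{k}=w_{l}$ with $k\in\mathbf{K}$ and $l\in\mathbf{L}$. The terms indexed by $\mathbf{K}$ and by $(\mathbf{K}\setminus\{k\})\cup\{l\}$ are exchanged by $w_k\leftrightarrow w_l$, and the two denominators differ by a sign. So their residues cancel pairwise, which is the standard argument that a symmetrized sum with these denominators is polynomial.
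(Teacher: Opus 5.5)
Your proposal is correct and follows the paper's route: the paper (through the type $C$ argument it declares parallel) reads the divisibility off the prefactor of the branching formula \eqref{eq:typeB-cross-sector-branching}, and gets Laurent polynomiality of the remaining $\mathbf{K}$-sum from the same fixed-point localization on $X_{\mathbf M}$; your replacement of $\lambda_{-1}(\operatorname{Sym}^2\mathscr S^*)$ by $\lambda_{-1}(\mathscr S^*\oplus\bigwedge^2\mathscr S^*)$ is exactly the type $B$ change needed. In your algebraic fallback, the $\mathbf K$-term and the $(\mathbf K\setminus\{k\})\cup\{l\}$-term are not literally interchanged by $w_k\leftrightarrow w_l$, but after removing the single singular factor their remaining factors coincide on the hyperplane $w_k=w_l$, and that is all the residue cancellation needs.
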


\subsubsection{Newton degrees}

\begin{prop}
For every $s=|\mathbf{M}|$ with $r\le s\le n$,
\begin{equation}
\varphi_{\mathbf{M}}\left(
\mathcal N\left(
WB_{N,s,r}(w_{m_1},\ldots,w_{m_s}|\mathbf{w}|\mathbf{I})
\right)
\right)
\subset
[-s^2+s-r,\ s(N-s)-r+|\mathbf{I}\cap\mathbf{M}|].
\label{eq:typeB-raw-newton-bound}
\end{equation}
\end{prop}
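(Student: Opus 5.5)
We follow the type $C$ proof of \eqref{eq:typeC-raw-newton-bound} factor by factor, changing only the pieces that depend on the boundary $K$-matrix. By \eqref{WBfunction}, $WB_{N,s,r}$ equals $\prod_{k=1}^{s}(1+u_k^{-1})^{-1}$ times the explicit formula \eqref{Motherfunction} with $n=s$, $\gamma=-1$, $\beta=1$. In that specialization the last product in each summand is $\prod_{j=r+1}^{s}(u_j+1)$. We write
\[
(1+u_k^{-1})^{-1}=\frac{u_k}{u_k+1}.
\]
The $s-r$ factors $u_j+1$ with $j>r$ are symmetric under $S_s$, so they cancel against the corresponding factors of the symmetric prefactor $\prod_k u_k/(u_k+1)$. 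We then specialize $(u_1,\dots,u_s)=(w_{m_1},\dots,w_{m_s})$, so that $\deg_{\mathbf M}u_j=1$ for all $j$.

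After this cancellation the prefactor becomes
\[
\prod_{k=1}^{s}u_k\cdot\prod_{j=1}^{r}\sigma(u_j+1)^{-1}.
\]
We absorb each $(u_{\sigma(j)}+1)^{-1}$ into the factor $u_{\sigma(j)}^2-1$ of the same summand, which gives $u_{\sigma(j)}-1$. Hence every summand is a genuine product of the following pieces.
\begin{itemize}
\item The monomial $\prod_k u_k^{1-(N+s)}$, of single degree $-s(N+s)+s$.
\item For each $j\le r$, the factor $(1-q)(u_j-1)\prod_{k\ne i_j}(\cdots)$. The factor $u_j-1$ has degree interval $[0,1]$ instead of $[0,2]$. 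The remaining $N-1$ factors give the same lower endpoint $N-1$, and an upper endpoint with the same case split according to whether $i_j\in\mathbf M$.
\item For each $j>r$, the factor $u_j\prod_{k=1}^N(u_j-qw_k)$, exactly as in type $C$, with interval $[N+1,2N-s+1]$. This is because the factor $u_j$ is still present in \eqref{Motherfunction} after the cancellation.
\item The two-variable factors, which are unchanged and have interval $[0,s(s-1)]$.
\end{itemize}

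Adding the lower endpoints gives
\[
-s(N+s)+s+r(N-1)+(s-r)(N+1)=-s^2+2s-2r.
\]
This is at least $-s^2+s-r$ exactly when $s\ge r$, which holds, so the claimed lower bound follows, with room to spare. For the upper endpoint, each $j\le r$ loses one degree compared with type $C$, while the prefactor gains $s$. The total is therefore
\[
s(N-s)-r+|\mathbf I\cap\mathbf M|+s-r.
\]
This exceeds the claimed value by $s-r$. So the naive count fails, and the main obstacle is to recover these $s-r$ degrees.

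Our first attempt is to redo the cancellation without using the symmetric prefactor $\prod_k u_k$. We keep $u_k^{-1}$ inside $(1+u_k^{-1})^{-1}$ and rewrite
\[
\frac{u_{\sigma(j)}+1}{1+u_{\sigma(j)}^{-1}}=u_{\sigma(j)}\qquad(j>r).
\]
This shows that each $j>r$ factor effectively carries $u_j\prod_k(u_j-qw_k)$ with an extra $u_j$ relative to type $C$, and no additional power of $u$ elsewhere. With this bookkeeping, only the $r$ factors $(1+u_{\sigma(j)}^{-1})^{-1}(u_{\sigma(j)}^2-1)=u_{\sigma(j)}(u_{\sigma(j)}-1)$ change. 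Their degree interval is $[1,2]$, compared with $[0,2]$ in type $C$.

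We will recompute the totals carefully with this second grouping, since the discrepancy is purely a matter of where the monomial degrees are assigned. Under this grouping:
\begin{itemize}
\item the upper endpoint should come out as $s(N-s)-r+|\mathbf I\cap\mathbf M|$, because the $j\le r$ factors now have the same upper endpoint $2$ as in type $C$ and the $j>r$ factors have the type $C$ interval;
\item the lower endpoint shifts to $-s^2+s-2r+r=-s^2+s-r$, the claimed value.
\end{itemize}
Finally, as in type $C$, the full sum is a Laurent polynomial, namely the specialized partition function. Each summand is a finite sum of homogeneous rational pieces in this degree range, so the homogeneous components of the sum lie in the same range, which gives \eqref{eq:typeB-raw-newton-bound}.
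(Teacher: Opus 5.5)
Your final argument, the second grouping, is correct. It is the route the paper intends: the paper omits the type $B$ proof as parallel to the type $C$ factor-by-factor count. You pair $(1+u_{\sigma(j)}^{-1})^{-1}$ with $u_{\sigma(j)}+1$ for $j>r$, which gives $u_{\sigma(j)}$, exactly the type $C$ value of $\beta u_j-\gamma$. For $j\le r$ you pair it with $u_{\sigma(j)}^2-1$, which gives $u_{\sigma(j)}(u_{\sigma(j)}-1)$ with interval $[1,2]$. This leaves every upper endpoint as in type $C$ and raises the lower endpoint by $r$, which is the claim.

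Your first count, however, is wrong, and your explanation of the discrepancy is not the right one. In \eqref{Motherfunction} there is no standalone factor $u_j$ for $j>r$. The $u_j$ in the type $C$ factor $u_j\prod_k(u_j-qw_k)$ is $\beta u_j-\gamma$ at $\gamma=0$. In type $B$ that factor is $u_j+1$, which you had already cancelled.

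So after moving $\prod_k u_k$ into the prefactor, the $j>r$ piece is just $\prod_{k=1}^N(u_j-qw_k)$, with interval $[N,2N-s]$. The first grouping then gives the lower endpoint $-s(N+s)+s+r(N-1)+(s-r)N=-s^2+s-r$ and exactly the claimed upper endpoint.

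The excess $s-r$ was a double count, not something to be ``recovered''. A correct degree count cannot depend on how the monomials are grouped. In fact the lower bound $-s^2+2s-2r$ from your first count is false: for $N=s=1$, $r=0$ one gets $WB_{1,1,0}(w_1|w_1|\emptyset)=1-q$, which has degree $0$.

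In a write-up, drop the first attempt and the remark that the discrepancy is ``purely a matter of where the monomial degrees are assigned''. In the second attempt, replace ``with an extra $u_j$ relative to type $C$'' by ``with the same $u_j$ as in type $C$''.
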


\begin{thm}
For every $s=|\mathbf{M}|$ with $r=|\mathbf{I}|\le s\le n$,
\begin{align}
&\varphi_{\mathbf{M}}\left(
\mathcal N\left(
\widetilde{WB}_{N,n,r}(\mathbf{w}_{\mathbf{M}}|\mathbf{w}|\mathbf{I})
\right)
\right)
\subset
\left[
-s^2+s-r,
(N-s)(2N-s)-r+|\mathbf{I}\cap\mathbf{M}|
\right].
\label{eq:typeB-strong-newton-bound}
\end{align}
\end{thm}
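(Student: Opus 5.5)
We follow the type $C$ argument for \eqref{eq:typeC-strong-newton-bound} word for word. The first step is to write $\widetilde{WB}_{N,n,r}(\mathbf w_{\mathbf M}|\mathbf w|\mathbf I)$ in terms of the unnormalized function $WB_{N,s,r}$. By the type $B$ stability relation we have $\widetilde{WB}_{N,n,r}(\mathbf w_{\mathbf M}|\mathbf w|\mathbf I)=\widetilde{WB}_{N,s,r}(w_{m_1},\dots,w_{m_s}|\mathbf w|\mathbf I)$. We then divide by the normalization factor \eqref{typeBnormalizationfactor} evaluated at $u_j=w_{m_j}$ ($1\le j\le s$). In this evaluation $\prod_{j=1}^s\prod_{k=1}^N(1-qw_{m_j}w_k)$ cancels against the part of $\prod_{j<k\le N}(1-qw_jw_k)\prod_k(1-qw_k)$ involving $\mathbf M$, together with $\prod_{j<k\le s}(1-qw_{m_j}w_{m_k})\prod_j(1-qw_{m_j}^2)$. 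Likewise $\prod_j(1-qw_{m_j})$ cancels against the $\mathbf M$-part of $\prod_k(1-qw_k)$. This should give
\[
\widetilde{WB}_{N,n,r}(\mathbf w_{\mathbf M}|\mathbf w|\mathbf I)=(1-q)^{s-r}\prod_{j\in\mathbf M^{\mathrm c}}(1-qw_j)\prod_{\substack{j<k\\ j,k\in\mathbf M^{\mathrm c}}}(1-qw_jw_k)\,\frac{WB_{N,s,r}(w_{m_1},\dots,w_{m_s}|\mathbf w|\mathbf I)}{\prod_{j,k=1}^s(1-qw_{m_j}^{-1}w_{m_k})}.
\]
We would verify this by a direct bookkeeping of factors, as in the type $C$ proof. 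It is also consistent with the prefactor of \eqref{eq:typeB-cross-sector-branching}.

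The second step is to compute the degree intervals under $\varphi_{\mathbf M}$.
\begin{itemize}
\item $(1-q)^{s-r}$ has interval $[0,0]$.
\item The denominator $\prod_{j,k}(1-qw_{m_j}^{-1}w_{m_k})$ is homogeneous of degree $0$, so its interval is $[0,0]$.
\item Each factor $1-qw_j$ with $j\notin\mathbf M$ contributes $[0,2]$, so the product contributes $[0,2(N-s)]$.
\item Each factor $1-qw_jw_k$ with $j<k$ in $\mathbf M^{\mathrm c}$ contributes $[0,4]$, so the product contributes $[0,4\binom{N-s}{2}]=[0,2(N-s)(N-s-1)]$.
\end{itemize}
The total extra upper degree is therefore $2(N-s)+2(N-s)(N-s-1)=2(N-s)^2$. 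Adding this to the raw bound \eqref{eq:typeB-raw-newton-bound} gives upper endpoint $s(N-s)-r+|\mathbf I\cap\mathbf M|+2(N-s)^2=(N-s)(2N-s)-r+|\mathbf I\cap\mathbf M|$, and leaves the lower endpoint $-s^2+s-r$ unchanged.

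The remaining point is to justify that degree intervals add. We use Newton polytopes multiplying by Minkowski sum, $\mathcal N(fg)=\mathcal N(f)+\mathcal N(g)$, together with the linearity of $\varphi_{\mathbf M}$. Dividing by the degree-$0$ homogeneous denominator does not shift the interval: the left side is a Laurent polynomial by the type $B$ Laurent polynomiality proposition, and multiplying it back by the denominator adds only $[0,0]$. The main obstacle is the exact cancellation in the normalization factor, in particular the type $B$ terms $\prod_j(1-qu_j)/\prod_j(1-qu_j^2)$ and $\prod_k(1-qw_k)$, which must leave precisely $\prod_{j\in\mathbf M^{\mathrm c}}(1-qw_j)$. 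We would check this first.
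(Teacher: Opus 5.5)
Your proposal is correct and follows the paper's route exactly: it is the type $C$ proof of \eqref{eq:typeC-strong-newton-bound} transposed to type $B$. You use stability, then evaluate $\mathcal E^B_{N,s,r}$ at $\mathbf u=(w_{m_1},\dots,w_{m_s})$, which does leave precisely $(1-q)^{s-r}\prod_{j\in\mathbf M^{\mathrm c}}(1-qw_j)\prod_{j<k,\ j,k\in\mathbf M^{\mathrm c}}(1-qw_jw_k)$ over the degree-zero factor $\prod_{j,k=1}^{s}(1-qw_{m_j}^{-1}w_{m_k})$, and then add $[0,2(N-s)^2]$ to the raw bound \eqref{eq:typeB-raw-newton-bound}. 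The only blemish is wording: $\prod_{k=1}^N(1-qw_k)$ plays no role in cancelling $\prod_{j=1}^{s}\prod_{k=1}^{N}(1-qw_{m_j}w_k)$, which uses only $\prod_{j<k\le N}(1-qw_jw_k)$, $\prod_{j<k\le s}(1-qw_{m_j}w_{m_k})$ and $\prod_j(1-qw_{m_j}^2)$. That product only absorbs $\prod_j(1-qw_{m_j})$, as your next sentence correctly says.
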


\begin{corollary}
Let $s=|\mathbf{M}|$ satisfy $r\le s\le n$. If $\mathbf{I}\not\subseteq\mathbf{M}$ as sets, then $|\mathbf{I}\cap\mathbf{M}|\le r-1$, and therefore
\begin{align}
&\varphi_{\mathbf{M}}\left(
\mathcal N\left(
\widetilde{WB}_{N,n,r}(\mathbf{w}_{\mathbf{M}}|\mathbf{w}|\mathbf{I})
\right)
\right)
\subset[-s^2,(N-s)(2N-s)-1].
\label{eq:typeB-strict-smallness-sufficient-condition}
\end{align}
\end{corollary}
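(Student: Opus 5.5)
This follows from the type $B$ strong Newton bound \eqref{eq:typeB-strong-newton-bound}, in the same way that the type $C$ corollary follows from \eqref{eq:typeC-strong-newton-bound}. I will take the type $B$ strong bound as given; the paper omits its proof as parallel to type $C$.

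\emph{Step 1: the upper endpoint.} Since $\mathbf I\not\subseteq\mathbf M$, at least one element of $\mathbf I$ lies outside $\mathbf M$, so $|\mathbf I\cap\mathbf M|\le r-1$. Substituting this into the upper endpoint $(N-s)(2N-s)-r+|\mathbf I\cap\mathbf M|$ of \eqref{eq:typeB-strong-newton-bound} gives at most $(N-s)(2N-s)-1$.

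\emph{Step 2: the lower endpoint.} We must check that $-s^2+s-r\ge -s^2$, which is equivalent to $s\ge r$. This holds by hypothesis. The interval in \eqref{eq:typeB-strong-newton-bound} is therefore contained in $[-s^2,(N-s)(2N-s)-1]$, which is \eqref{eq:typeB-strict-smallness-sufficient-condition}.

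\emph{Step 3: the zero case.} If the specialization vanishes, the Newton polytope is empty by the convention $\mathcal N(0)=\emptyset$, and the inclusion holds trivially.

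No step here is a real obstacle. If one wants the argument self-contained, the only substantive input is the omitted type $B$ raw bound \eqref{eq:typeB-raw-newton-bound}. It would be proved exactly as in type $C$ by counting degrees factor by factor in \eqref{Motherfunction} at $\gamma=-1$, $\beta=1$, with two changes:
\begin{itemize}
\item each of the $r$ factors $(1+u_j^{-1})^{-1}(u_j^2-1)=u_j(u_j-1)$ has degree interval $[1,2]$ instead of $[0,2]$;
\item the extra factors $(u_j+1)/(1+u_j^{-1})=u_j$ for $j>r$ are homogeneous of degree $1$.
\end{itemize}
The first change raises the lower endpoint by $r$, from $-s^2+s-2r$ to $-s^2+s-r$, while the upper endpoint is unchanged. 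Passing from \eqref{eq:typeB-raw-newton-bound} to \eqref{eq:typeB-strong-newton-bound} then uses the type $B$ normalization, which contributes $\prod_{j\in\mathbf M^{\mathrm c}}(1-qw_j)\prod_{j<k,\ j,k\in\mathbf M^{\mathrm c}}(1-qw_jw_k)$. This factor has degree interval $[0,2(N-s)+4\binom{N-s}{2}]=[0,2(N-s)^2]$. Adding this to $s(N-s)$ gives $(N-s)(2N-s)$, consistent with \eqref{eq:typeB-strong-newton-bound}.
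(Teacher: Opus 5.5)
Your argument is correct and is exactly the paper's: substitute $|\mathbf I\cap\mathbf M|\le r-1$ into the upper endpoint of \eqref{eq:typeB-strong-newton-bound}, and use $s\ge r$ to weaken the lower endpoint $-s^2+s-r$ to $-s^2$, just as in the type $C$ corollary. Your optional sketch of the omitted type $B$ raw and strong bounds is also consistent with the paper. The factors $u_j(u_j-1)$ raise the lower endpoint by $r$, and the complement factor has degree interval $[0,2(N-s)^2]$.
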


\medskip
\noindent\textbf{Recovery of Theorem~\ref{typeBmainproperties} when $s=r$.}
The same comparison as in type $C$ applies.  The support condition with $s=r$ gives the vanishing statement in $\mathrm{(i)}$, while the specialization $\mathbf{M}=\mathbf{I}$ of the equal-cardinality relation \eqref{relationwithAnormalizedtypeB}, together with Theorem~\ref{RTVtheorem} $\mathrm{(ii)}$ and the explicit type $B$ root products above, gives the diagonal value \eqref{normalizationWB}.  In \eqref{relationwithAnormalizedtypeB}, the type $B$ factors are explicit, and Theorem~\ref{RTVtheorem} $\mathrm{(i)}$ supplies the remaining vertical factor, recovering $\mathrm{(ii)}$.  Finally, if $s=r$ and $\mathbf{M}\ne\mathbf{I}$, then \eqref{eq:typeB-strict-smallness-sufficient-condition} becomes
\[
\varphi_{\mathbf{M}}\bigl(\mathcal N(\widetilde{WB}_{N,n,r}(\mathbf{w}_{\mathbf{M}}|\mathbf{w}|\mathbf{I}))\bigr)
\subset[-r^2,(N-r)(2N-r)-1],
\]
which is exactly the type $B$ $\mathbf{M}$-smallness assertion in $\mathrm{(iii)}$.

\subsection{Orthogonality relations}
\label{subsec:BC-orthogonality-branching-formula-inversion}

We prove an orthogonality relation in each fixed-cardinality sector and an
all-cardinality biorthogonality relation, and then derive an inversion formula
for the branching coefficients.

Throughout this subsection, set
\begin{equation*}
[N]:=\{1,\ldots,N\},
\qquad
\mathbf A^{\mathrm c}:=[N]\setminus\mathbf A
\quad(\mathbf A\subset[N]).
\end{equation*}
If $\mathbf A=\{a_1<\cdots<a_s\}\subset[N]$ with $s\le n$, set
\begin{equation}
\mathbf w_{\mathbf A}
:=(w_{a_1},\ldots,w_{a_s},\underbrace{1,\ldots,1}_{n-s}).
\label{eq:BC-orth-common-wA}
\end{equation}
In expressions involving $\widetilde{WX}_{N,n,r}$,
the tuple $\mathbf w_{\mathbf A}$ has $n$ entries,
with $n-|\mathbf A|$ trailing entries equal to $1$.
\subsubsection{Orthogonality in the equal-cardinality case}

Let $X\in\{B,C\}$, and write $\widetilde{WX}$ for $\widetilde{WC}$ when
$X=C$ and for $\widetilde{WB}$ when $X=B$.
To prepare for a matrix formulation of the specialized
weight functions, we introduce notation for the prefactors
in the equal-cardinality factorizations
\eqref{relationwithAnormalized} and
\eqref{relationwithAnormalizedtypeB}.
These factors depend on the specialization subset $\mathbf M$
but not on the label $\mathbf I$.
For an $r$-element subset $\mathbf M\subset[N]$, set
\begin{equation}
D^C_r(\mathbf M;\mathbf w,q)
:=
\prod_{\substack{a\le b\\a,b\in\mathbf M}}
(1-w_a^{-1}w_b^{-1})
\prod_{\substack{a\le b\\a,b\in\mathbf M^{\mathrm c}}}
(1-qw_aw_b),
\label{eq:local-DC-orth}
\end{equation}
and
\begin{align}
D^B_r(\mathbf M;\mathbf w,q)
:={}&
\prod_{a\in\mathbf M}(1-w_a^{-1})
\prod_{\substack{a<b\\a,b\in\mathbf M}}
(1-w_a^{-1}w_b^{-1})
\prod_{a\in\mathbf M^{\mathrm c}}(1-qw_a)
\prod_{\substack{a<b\\a,b\in\mathbf M^{\mathrm c}}}
(1-qw_aw_b).
\label{eq:local-DB-orth}
\end{align}
We arrange the specializations of the normalized type $A$
weight functions into a matrix, with rows indexed by
$r$-element subsets $\mathbf M\subset[N]$ and columns by
$r$-element subsets $\mathbf I\subset[N]$.
Writing $\mathbf M=\{m_1<\cdots<m_r\}$, define its entries by
\begin{equation}
\Phi_{\mathbf M,\mathbf I}(\mathbf w,q)
:=
\widetilde W^{\mathrm{RTV}}_{\sigma_0,\mathcal I}
(w_{m_1},\ldots,w_{m_r},\mathbf w,q),
\qquad
\mathcal I=(\mathbf I,\mathbf I^{\mathrm c}).
\label{eq:PhiX-def}
\end{equation}
Denote this matrix by
\begin{equation}
\Phi^{(r)}(\mathbf w,q)
:=
\bigl(\Phi_{\mathbf M,\mathbf I}(\mathbf w,q)\bigr)_{|\mathbf M|=|\mathbf I|=r}.
\label{eq:Phi-fixed-cardinality-matrix}
\end{equation}
By \eqref{relationwithAnormalized},
\eqref{identificationtypeA},
and
\eqref{relationwithAnormalizedtypeB}, for $X\in\{B,C\}$,
\begin{equation}
\widetilde{WX}_{N,n,r}
(\mathbf w_{\mathbf M}\,|\mathbf w\,|\mathbf I)
=
D^X_r(\mathbf M;\mathbf w,q)\,
\Phi_{\mathbf M,\mathbf I}(\mathbf w,q).
\label{eq:PhiX-is-RTV}
\end{equation}
Thus the specialized type $B/C$ weight functions form a matrix
obtained from $\Phi^{(r)}$ by multiplying each row
by the corresponding factor $D_r^X$.

Let
\[
\rho(a)=N+1-a,
\qquad
\rho(\mathbf I)=\{N+1-i:i\in\mathbf I\},
\]
and set
\[
\mathbf w^{\rho}=(w_N,\ldots,w_1),
\qquad
\mathbf w^{-1}=(w_1^{-1},\ldots,w_N^{-1}).
\]
For a subset $\mathbf A\subset[N]$, the notation
$(\mathbf w^{\rho})_{\rho(\mathbf A)}$ and
$(\mathbf w^{-1})_{\mathbf A}$ means that the specialization convention
\eqref{eq:BC-orth-common-wA} is applied to
$\mathbf w^{\rho}$ and $\mathbf w^{-1}$, respectively.
Likewise, $D^X_r(\mathbf M;\mathbf w^{-1},q^{-1})$ denotes simultaneous
substitution $(\mathbf w,q)\mapsto(\mathbf w^{-1},q^{-1})$ in the defining
product for $D^X_r$.
Moreover,
\begin{equation}
D^X_r(\rho(\mathbf M);\mathbf w^{\rho},q)
=D^X_r(\mathbf M;\mathbf w,q).
\label{eq:local-D-reversal}
\end{equation}
We use the notation
$P_{\mathbf M}(\mathbf w)$, $R_{\mathbf M}(\mathbf w)$,
$Q_{\mathbf M}(\mathbf w,q)$, and $p(\mathbf M)$ from
\eqref{eq:RTV-Gr-P}--\eqref{eq:RTV-Gr-p}.

\begin{thm}
\label{thm:BC-equal-cardinality-orthogonality}
For $X\in\{B,C\}$ and any $r$-element subsets
$\mathbf J,\mathbf K\subset[N]$,
\begin{align}
&\sum_{\substack{\mathbf M\subset[N]\\|\mathbf M|=r}}
\frac{
q^{p(\mathbf K)}P_{\mathbf M}(\mathbf w)
}{
R_{\mathbf M}(\mathbf w)
Q_{\mathbf M}(\mathbf w,q)
D^X_r(\mathbf M;\mathbf w,q)
D^X_r(\mathbf M;\mathbf w^{-1},q^{-1})
}
\notag\\
&\qquad\times
\widetilde{WX}_{N,n,r}
\bigl((\mathbf w^{\rho})_{\rho(\mathbf M)}
\,|\mathbf w^{\rho}\,|\rho(\mathbf J)\bigr)
\left.
\widetilde{WX}_{N,n,r}
\bigl((\mathbf w^{-1})_{\mathbf M}
\,|\mathbf w^{-1}\,|\mathbf K\bigr)
\right|_{q\mapsto q^{-1}}
=\delta_{\mathbf J,\mathbf K}.
\label{eq:BC-equal-cardinality-orthogonality}
\end{align}
\end{thm}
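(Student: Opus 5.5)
The plan is to reduce the statement to the type $A$ orthogonality of Rim\'anyi--Tarasov--Varchenko recalled in Appendix~A. The notation $P_{\mathbf M},R_{\mathbf M},Q_{\mathbf M},p(\mathbf M)$ of \eqref{eq:RTV-Gr-P}--\eqref{eq:RTV-Gr-p} is used there for the Grassmannian orthogonality of normalized weight functions. That relation should read
\[
\sum_{|\mathbf M|=r}
\frac{q^{p(\mathbf K)}P_{\mathbf M}(\mathbf w)}{R_{\mathbf M}(\mathbf w)Q_{\mathbf M}(\mathbf w,q)}
\,\Phi_{\rho(\mathbf M),\rho(\mathbf J)}(\mathbf w^{\rho},q)\,
\Phi_{\mathbf M,\mathbf K}(\mathbf w^{-1},q^{-1})=\delta_{\mathbf J,\mathbf K},
\]
with the precise order of $\mathbf w^{\rho}$ and $\mathbf w^{-1}$ as in the appendix. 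The whole proof then consists of showing that the type $B/C$ factors $D^X_r$ cancel exactly.

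\textbf{Step 1: the first weight function.} By stability (\eqref{stabilityWC}, \eqref{stabilityWB}), the trailing $1$'s in $(\mathbf w^{\rho})_{\rho(\mathbf M)}$ are harmless. Applying \eqref{eq:PhiX-is-RTV} with $\mathbf w$ replaced by $\mathbf w^{\rho}$, and noting that the entries of $(\mathbf w^\rho)_{\rho(\mathbf M)}$ are $w_{m}$ for $m\in\mathbf M$ listed in decreasing order, symmetry in $\mathbf u$ gives
\[
\widetilde{WX}_{N,n,r}\bigl((\mathbf w^{\rho})_{\rho(\mathbf M)}|\mathbf w^{\rho}|\rho(\mathbf J)\bigr)
=D^X_r(\rho(\mathbf M);\mathbf w^{\rho},q)\,\Phi_{\rho(\mathbf M),\rho(\mathbf J)}(\mathbf w^{\rho},q).
\]
By \eqref{eq:local-D-reversal}, the prefactor here equals $D^X_r(\mathbf M;\mathbf w,q)$.

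\textbf{Step 2: the second weight function.} Since \eqref{eq:PhiX-is-RTV} is an identity of rational functions in $(\mathbf w,q)$, substituting $\mathbf w\mapsto\mathbf w^{-1}$ and then $q\mapsto q^{-1}$ gives
\[
\widetilde{WX}_{N,n,r}\bigl((\mathbf w^{-1})_{\mathbf M}|\mathbf w^{-1}|\mathbf K\bigr)\big|_{q\mapsto q^{-1}}
=D^X_r(\mathbf M;\mathbf w^{-1},q^{-1})\,\Phi_{\mathbf M,\mathbf K}(\mathbf w^{-1},q^{-1}).
\]
One point needs care: the specialization $\mathbf u=\mathbf w_{\mathbf M}$ must commute with the substitution. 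It does, because the trailing entries equal $1$ and $1^{-1}=1$.

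\textbf{Step 3: cancellation.} Insert both factorizations into the left-hand side of \eqref{eq:BC-equal-cardinality-orthogonality}. The two $D^X_r$ factors in the denominator cancel the two prefactors from Steps 1 and 2. What remains is exactly the type $A$ sum displayed above, which equals $\delta_{\mathbf J,\mathbf K}$.

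\textbf{Main obstacle.} The cancellation itself is immediate. The real work is matching conventions with Appendix~A. One must check which reversed or inverted variables appear in the RTV orthogonality, and that the $q^{p(\mathbf K)}$ factor and the $q\mapsto q^{-1}$ placement agree with the identification \eqref{identificationtypeA}. If the appendix instead states the dual form, with $\mathbf w^{-1}$ applied to the first factor, I would first use the reversal symmetry $\Phi_{\rho(\mathbf M),\rho(\mathbf J)}(\mathbf w^{\rho},q)$, obtained from \eqref{recrelthree} together with $\rho$, to rewrite it into the needed form. I would also check the generic invertibility of $D^X_r$.
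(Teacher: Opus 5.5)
Your proposal is correct and follows the paper's proof. You factor both specialized type $B/C$ functions via \eqref{eq:PhiX-is-RTV}, using \eqref{eq:local-D-reversal} for the reversed one, cancel the $D^X_r$ factors, and recognize the remaining sum as the type $A$ orthogonality \eqref{eq:RTV-Gr-orthogonality} at $\mathbf z=\mathbf w$, $h=q$.

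The one convention you left open is settled by \eqref{eq:RTV-id-via-reversal}. That identity is a reindexing of the defining formula and gives $\widetilde W^{\mathrm{RTV}}_{\mathrm{id},\mathcal J}(w_{m_1},\ldots,w_{m_r},\mathbf w,q)=\Phi_{\rho(\mathbf M),\rho(\mathbf J)}(\mathbf w^{\rho},q)$. It does not follow from \eqref{recrelthree}, which only swaps adjacent $w$'s lying both in or both outside $\mathbf I$, so your suggested fallback via \eqref{recrelthree} would not work.
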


\begin{proof}
This is a direct rewriting of the type $A$ orthogonality relation
\eqref{eq:RTV-Gr-orthogonality}.  Setting $\mathbf z=\mathbf w$ and $h=q$
in that relation gives
\begin{align*}
\delta_{\mathbf J,\mathbf K}
={}&
\sum_{\substack{\mathbf M\subset[N]\\ |\mathbf M|=r}}
q^{p(\mathbf K)}
\frac{
P_{\mathbf M}(\mathbf w)
}{
R_{\mathbf M}(\mathbf w)Q_{\mathbf M}(\mathbf w,q)
}
\\
&\qquad\times
\widetilde W^{\mathrm{RTV}}_{\mathrm{id},\mathcal J}
(w_{m_1},\ldots,w_{m_r},\mathbf w,q)
\,
\widetilde W^{\mathrm{RTV}}_{\sigma_0,\mathcal K}
(w_{m_1}^{-1},\ldots,w_{m_r}^{-1},
 \mathbf w^{-1},q^{-1}),
\end{align*}
where
$
\mathcal J=(\mathbf J,\mathbf J^{\mathrm c}), \
\mathcal K=(\mathbf K,\mathbf K^{\mathrm c}).$
By \eqref{eq:PhiX-def} and
\eqref{eq:RTV-id-via-reversal},
\begin{equation}
\widetilde W^{\mathrm{RTV}}_{\mathrm{id},\mathcal J}
(w_{m_1},\ldots,w_{m_r},\mathbf w,q)
=
\Phi_{\rho(\mathbf M),\rho(\mathbf J)}
(\mathbf w^{\rho},q).
\label{eq:PhiX-op-is-id}
\end{equation}
Using \eqref{eq:PhiX-is-RTV} with
$(\mathbf M,\mathbf I,\mathbf w)$ replaced by
$(\rho(\mathbf M),\rho(\mathbf J),\mathbf w^{\rho})$, and then
\eqref{eq:local-D-reversal}, we obtain
\[
\widetilde W^{\mathrm{RTV}}_{\mathrm{id},\mathcal J}
(w_{m_1},\ldots,w_{m_r},\mathbf w,q)
=
\frac{
\widetilde{WX}_{N,n,r}
\bigl(
(\mathbf w^{\rho})_{\rho(\mathbf M)}
\,|\mathbf w^{\rho}\,|\rho(\mathbf J)
\bigr)
}{
D^X_r(\mathbf M;\mathbf w,q)
}.
\]
Similarly, \eqref{eq:PhiX-def} and \eqref{eq:PhiX-is-RTV}, after the
substitution $(\mathbf w,q)\mapsto(\mathbf w^{-1},q^{-1})$, give
\[
\widetilde W^{\mathrm{RTV}}_{\sigma_0,\mathcal K}
(w_{m_1}^{-1},\ldots,w_{m_r}^{-1},
 \mathbf w^{-1},q^{-1})
=
\frac{
\left.
\widetilde{WX}_{N,n,r}
\bigl(
(\mathbf w^{-1})_{\mathbf M}
\,|\mathbf w^{-1}\,|\mathbf K
\bigr)
\right|_{q\mapsto q^{-1}}
}{
D^X_r(\mathbf M;\mathbf w^{-1},q^{-1})
}.
\]
Substituting these two identities into
\eqref{eq:RTV-Gr-orthogonality} gives
\eqref{eq:BC-equal-cardinality-orthogonality}.
\end{proof}

\begin{corollary}
\label{cor:Phi-inverse}
The matrix $\Phi^{(r)}(\mathbf w,q)$ is invertible.  Its inverse is given by
\begin{align}
\bigl(\Phi^{(r)}(\mathbf w,q)^{-1}\bigr)_{\mathbf I,\mathbf K}
={}&
q^{-p(\mathbf I)}
\frac{
P_{\mathbf K}(\mathbf w^{-1})
}{
R_{\mathbf K}(\mathbf w^{-1})
Q_{\mathbf K}(\mathbf w^{-1},q^{-1})
}
\Phi_{\rho(\mathbf K),\rho(\mathbf I)}
\bigl((\mathbf w^{-1})^{\rho},q^{-1}\bigr).
\label{eq:Phi-inverse}
\end{align}
\end{corollary}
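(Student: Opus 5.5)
The idea is to read the type $A$ orthogonality relation \eqref{eq:RTV-Gr-orthogonality}, with $\mathbf z=\mathbf w$ and $h=q$ as in the proof of Theorem~\ref{thm:BC-equal-cardinality-orthogonality}, as a matrix identity, and then use the fact that a one-sided inverse of a finite square matrix is a two-sided inverse.

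Let $\Psi^{(r)}$ be the matrix with rows indexed by $\mathbf I$ and columns by $\mathbf K$, whose entries are the right-hand side of \eqref{eq:Phi-inverse}. We want $\Psi^{(r)}\Phi^{(r)}=\mathrm{Id}$, that is,
\[
\sum_{|\mathbf K|=r}\bigl(\Psi^{(r)}\bigr)_{\mathbf I,\mathbf K}\,\Phi_{\mathbf K,\mathbf J}(\mathbf w,q)=\delta_{\mathbf I,\mathbf J}.
\]
The first step is to rewrite \eqref{eq:RTV-Gr-orthogonality} entirely in terms of $\Phi$. Using \eqref{eq:PhiX-op-is-id}, we have $\widetilde W^{\mathrm{RTV}}_{\mathrm{id},\mathcal J}(\mathbf w_{\mathbf M},\mathbf w,q)=\Phi_{\rho(\mathbf M),\rho(\mathbf J)}(\mathbf w^{\rho},q)$. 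By \eqref{eq:PhiX-def} with $(\mathbf w,q)\mapsto(\mathbf w^{-1},q^{-1})$, we have $\widetilde W^{\mathrm{RTV}}_{\sigma_0,\mathcal K}(\mathbf w^{-1}_{\mathbf M},\mathbf w^{-1},q^{-1})=\Phi_{\mathbf M,\mathbf K}(\mathbf w^{-1},q^{-1})$. The orthogonality relation then reads
\[
\sum_{\mathbf M} q^{p(\mathbf K)}\frac{P_{\mathbf M}(\mathbf w)}{R_{\mathbf M}(\mathbf w)Q_{\mathbf M}(\mathbf w,q)}\,
\Phi_{\rho(\mathbf M),\rho(\mathbf J)}(\mathbf w^{\rho},q)\,\Phi_{\mathbf M,\mathbf K}(\mathbf w^{-1},q^{-1})=\delta_{\mathbf J,\mathbf K}.
\]
This identity holds for all $\mathbf w$ and $q$ as rational functions. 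So we may substitute $(\mathbf w,q)\mapsto(\mathbf w^{-1},q^{-1})$ and swap the roles of the labels, $\mathbf J\to\mathbf I$ and $\mathbf K\to\mathbf J$, while renaming the summation index $\mathbf M\to\mathbf K$. Since $(\mathbf w^{\rho})^{-1}=(\mathbf w^{-1})^{\rho}$, this gives
\[
\sum_{\mathbf K} q^{-p(\mathbf J)}\frac{P_{\mathbf K}(\mathbf w^{-1})}{R_{\mathbf K}(\mathbf w^{-1})Q_{\mathbf K}(\mathbf w^{-1},q^{-1})}\,
\Phi_{\rho(\mathbf K),\rho(\mathbf I)}\bigl((\mathbf w^{-1})^{\rho},q^{-1}\bigr)\,\Phi_{\mathbf K,\mathbf J}(\mathbf w,q)=\delta_{\mathbf I,\mathbf J}.
\]

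The only discrepancy with $\Psi^{(r)}\Phi^{(r)}$ is the power of $q$: the identity carries $q^{-p(\mathbf J)}$, while $\Psi^{(r)}$ carries $q^{-p(\mathbf I)}$. This power does not depend on the summation index $\mathbf K$, so it factors out of the sum. The right-hand side is $\delta_{\mathbf I,\mathbf J}$, so the factor is harmless: off the diagonal both sides vanish, and on the diagonal $p(\mathbf I)=p(\mathbf J)$. Hence the sum equals $q^{-p(\mathbf I)}\delta_{\mathbf I,\mathbf J}$ times $q^{p(\mathbf I)}$, i.e.\ $\delta_{\mathbf I,\mathbf J}$, and $\Psi^{(r)}\Phi^{(r)}=\mathrm{Id}$.

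Both matrices are square of size $\binom Nr$ over the field $\mathbb C(q)(\mathbf w)$. A left inverse of such a matrix is automatically a two-sided inverse, so $\Phi^{(r)}$ is invertible with $(\Phi^{(r)})^{-1}=\Psi^{(r)}$, which is \eqref{eq:Phi-inverse}.

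The main obstacle is bookkeeping: checking that the substitution $(\mathbf w,q)\mapsto(\mathbf w^{-1},q^{-1})$ acts on all arguments consistently. In particular, one must confirm that the specialization $\mathbf w_{\mathbf M}$ inverts together with $\mathbf w$, and that reversal commutes with inversion, $(\mathbf w^{\rho})^{-1}=(\mathbf w^{-1})^{\rho}$, as used above. One must also confirm that $P$, $R$, $Q$ and $p$ in \eqref{eq:RTV-Gr-P}--\eqref{eq:RTV-Gr-p} transform exactly as written. Since these are explicit products, this is a routine check.
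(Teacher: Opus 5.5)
Your proof is correct and follows essentially the paper's route. The paper applies its type $B/C$ orthogonality theorem after $(\mathbf w,q)\mapsto(\mathbf w^{-1},q^{-1})$ and cancels the $D^X_r$ factors, which comes to the same thing as your direct use of the type $A$ relation \eqref{eq:RTV-Gr-orthogonality}. You also spell out two points the paper leaves implicit: why the mismatch between $q^{-p(\mathbf J)}$ and $q^{-p(\mathbf I)}$ is harmless (the Kronecker delta), and why a left inverse of a square matrix over a field is a two-sided inverse.
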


\begin{proof}
Apply Theorem~\ref{thm:BC-equal-cardinality-orthogonality} after the
substitution $(\mathbf w,q)\mapsto(\mathbf w^{-1},q^{-1})$, and use
\eqref{eq:PhiX-is-RTV} to replace the two normalized type $B/C$ weight
functions by the corresponding $\Phi$-factors.  After cancelling the
$D^X_r$-factors, the resulting orthogonality identity is precisely the
matrix-inverse relation for $\Phi^{(r)}(\mathbf w,q)$, with the inverse
entries given by the right-hand side of \eqref{eq:Phi-inverse}.
\end{proof}

\medskip
\noindent\textbf{Example.}
Take $N=2$, $n=r=1$, and $X=C$, and let
$
\mathbf J=\{2\}, \ \mathbf K=\{1\}.
$
Then
$
\rho(\mathbf J)=\{1\},\ p(\mathbf K)=1.
$
The sum in Theorem~\ref{thm:BC-equal-cardinality-orthogonality}
has two terms, corresponding to
$\mathbf M=\{1\}$ and $\mathbf M=\{2\}$.
From \eqref{eq:RTV-Gr-P}--\eqref{eq:RTV-Gr-Q},
\begin{align*}
P_{\{1\}}(\mathbf w)&=-\frac{w_2}{w_1},&
R_{\{1\}}(\mathbf w)&=1-\frac{w_2}{w_1},&
Q_{\{1\}}(\mathbf w,q)&=1-q\frac{w_2}{w_1},\\
P_{\{2\}}(\mathbf w)&=-\frac{w_1}{w_2},&
R_{\{2\}}(\mathbf w)&=1-\frac{w_1}{w_2},&
Q_{\{2\}}(\mathbf w,q)&=1-q\frac{w_1}{w_2}.
\end{align*}
Moreover,
\begin{align*}
D^C_1(\{1\};\mathbf w,q)&=(1-w_1^{-2})(1-qw_2^2),\\
D^C_1(\{2\};\mathbf w,q)&=(1-w_2^{-2})(1-qw_1^2),\\
D^C_1(\{1\};\mathbf w^{-1},q^{-1})
&=(1-w_1^2)(1-q^{-1}w_2^{-2}),\\
D^C_1(\{2\};\mathbf w^{-1},q^{-1})
&=(1-w_2^2)(1-q^{-1}w_1^{-2}).
\end{align*}
Using \eqref{eq:PhiX-is-RTV}, the four values appearing in the two summands are
\begin{align}
\widetilde{WC}_{2,1,1}
\bigl(w_1\,|w_2,w_1\,|\{1\}\bigr)
&=D^C_1(\{1\};\mathbf w,q)(1-q),
 \notag\\
\widetilde{WC}_{2,1,1}
\bigl(w_2\,|w_2,w_1\,|\{1\}\bigr)
&=D^C_1(\{2\};\mathbf w,q)
\left(1-q\frac{w_1}{w_2}\right),
 \notag\\
\left.\widetilde{WC}_{2,1,1}
\bigl(w_1^{-1}\,|w_1^{-1},w_2^{-1}\,|\{1\}\bigr)
\right|_{q\mapsto q^{-1}}
&=D^C_1(\{1\};\mathbf w^{-1},q^{-1})
\left(1-q^{-1}\frac{w_1}{w_2}\right),
 \notag\\
\left.\widetilde{WC}_{2,1,1}
\bigl(w_2^{-1}\,|w_1^{-1},w_2^{-1}\,|\{1\}\bigr)
\right|_{q\mapsto q^{-1}}
&=D^C_1(\{2\};\mathbf w^{-1},q^{-1})(1-q^{-1}).
 \notag
\end{align}
Substituting these expressions into
Theorem~\ref{thm:BC-equal-cardinality-orthogonality},
the factors $D^C_1$ cancel.  The contribution from
$\mathbf M=\{1\}$ is
\begin{align}
T_{\{1\}}
&=q\frac{-w_2/w_1}
{(1-w_2/w_1)(1-qw_2/w_1)}
(1-q)\left(1-q^{-1}\frac{w_1}{w_2}\right)=\frac{1-q}{1-w_2/w_1},
 \notag
\end{align}
whereas the contribution from $\mathbf M=\{2\}$ is
\begin{align}
T_{\{2\}}
&=q\frac{-w_1/w_2}
{(1-w_1/w_2)(1-qw_1/w_2)}
\left(1-q\frac{w_1}{w_2}\right)(1-q^{-1})=-\frac{1-q}{1-w_2/w_1}.
 \notag
\end{align}
Therefore
$
T_{\{1\}}+T_{\{2\}}=0
=\delta_{\mathbf J,\mathbf K},
$
as required.

\subsubsection{All-cardinality biorthogonality}
\label{subsubsec:BC-all-cardinality-biorthogonality}

We first introduce the notation and matrices needed to state
the all-cardinality biorthogonality relation.
The branching formulas will express the matrix of specialized
type $B/C$ weight functions as a product of a branching matrix
and a block diagonal matrix formed from the type $A$ matrices.

For $\mathbf A\subset[N]$, we use
$|\lambda(\mathbf A)|$ for the size of the strict partition
$\lambda(\mathbf A)$ introduced in Subsection~2.2.2; thus
\[
|\lambda(\mathbf A)|=\sum_{a\in\mathbf A}a.
\]
Here $|\lambda(\mathbf A)|$ denotes the sum of the parts,
whereas $|\mathbf A|$ denotes the number of elements of
$\mathbf A$, equivalently the number of parts of
$\lambda(\mathbf A)$.

Let
\[
\mathbf M=\{m_1<\cdots<m_s\},\qquad
\mathbf K=\{k_1<\cdots<k_r\}\subset\mathbf M,
\qquad
\Omega:=\mathbf M\setminus\mathbf K
=\{l_1<\cdots<l_{s-r}\}.
\]
Recall that $\Phi^{(r)}(\mathbf w,q)$ is the matrix of
specializations of the normalized type $A$ weight functions,
with entries $\Phi_{\mathbf K,\mathbf I}(\mathbf w,q)$
for $|\mathbf K|=|\mathbf I|=r$.
By \eqref{identificationtypeA}, these entries can be written as
\begin{equation*}
\frac{
W_{N,r}(w_{k_1},\ldots,w_{k_r}\,|\mathbf w\,|\mathbf I)
}{
\displaystyle
\prod_{a,b=1}^{r}(1-qw_{k_a}^{-1}w_{k_b})
}
=
\Phi_{\mathbf K,\mathbf I}(\mathbf w,q).
\end{equation*}
Combining the prefactor preceding the sum with the remaining factors in
the summand 
in the right hand side of \eqref{eq:typeC-cross-sector-branching} and
\eqref{eq:typeB-cross-sector-branching}
indexed by $\mathbf K$, and rewriting the result in terms of
the subsets $\mathbf K$ and $\Omega$, define
\begin{align}
&\operatorname{Br}^C_{\mathbf M,\mathbf K}(\mathbf w,q)
:={}
(1-q)^{|\Omega|}
\prod_{\substack{a\le b\\a,b\in\mathbf M^{\mathrm c}}}
(1-qw_aw_b)
\notag\\
&\times
\prod_{\substack{a\le b\\a,b\in\mathbf K}}
(1-w_a^{-1}w_b^{-1})
\prod_{\substack{k\in\mathbf K\\t\in\Omega}}
\frac{1-w_k^{-1}w_t^{-1}}{1-w_k^{-1}w_t}
\prod_{\substack{a<b\\a,b\in\Omega}}
(1-qw_a^{-1}w_b^{-1})
\prod_{\substack{t\in\Omega\\j\in\mathbf M^{\mathrm c}}}
(1-qw_t^{-1}w_j),
\label{eq:allcard-Br-C}
\end{align}
and
\begin{align}
&\operatorname{Br}^B_{\mathbf M,\mathbf K}(\mathbf w,q)
:={}
(1-q)^{|\Omega|}
\prod_{j\in\mathbf M^{\mathrm c}}(1-qw_j)
\prod_{\substack{a<b\\a,b\in\mathbf M^{\mathrm c}}}
(1-qw_aw_b)
\notag\\
&\times
\prod_{k\in\mathbf K}(1-w_k^{-1})
\prod_{\substack{a<b\\a,b\in\mathbf K}}
(1-w_a^{-1}w_b^{-1})
\prod_{\substack{k\in\mathbf K\\t\in\Omega}}
\frac{1-w_k^{-1}w_t^{-1}}{1-w_k^{-1}w_t}
\prod_{\substack{a<b\\a,b\in\Omega}}
(1-qw_a^{-1}w_b^{-1})
\prod_{\substack{t\in\Omega\\j\in\mathbf M^{\mathrm c}}}
(1-qw_t^{-1}w_j).
\label{eq:allcard-Br-B}
\end{align}
With this notation, the branching formulas
\eqref{eq:typeC-cross-sector-branching} and
\eqref{eq:typeB-cross-sector-branching} become
\begin{equation}
\widetilde{WX}_{N,n,r}
(\mathbf w_{\mathbf M}\,|\mathbf w\,|\mathbf I)
=
\sum_{\substack{\mathbf K\subset\mathbf M\\|\mathbf K|=r}}
\operatorname{Br}^X_{\mathbf M,\mathbf K}(\mathbf w,q)
\Phi_{\mathbf K,\mathbf I}(\mathbf w,q),
\qquad X\in\{B,C\},
\label{eq:allcard-branching-factorization}
\end{equation}
for $r=|\mathbf I|\le |\mathbf M|\le n$.

To treat all subsets of $[N]$ simultaneously, we now choose
$n=N$. Accordingly, $\mathbf w_{\mathbf A}$ is padded with
$N-|\mathbf A|$ entries equal to $1$.
By stability, any common choice $n\ge N$ gives the same
specialized values.
For $\mathbf A\subset[N]$, define
\begin{equation}
\Delta^X_{\mathbf A}(\mathbf w,q)
:=
\widetilde{WX}_{N,N,|\mathbf A|}
(\mathbf w_{\mathbf A}\,|\mathbf w\,|\mathbf A)
=
D^X_{|\mathbf A|}(\mathbf A;\mathbf w,q)\,
\Phi_{\mathbf A,\mathbf A}(\mathbf w,q),
\label{eq:allcard-Delta-factor}
\end{equation}
where the second equality follows from \eqref{eq:PhiX-is-RTV}.
In particular,
\begin{equation*}
\operatorname{Br}^X_{\mathbf M,\mathbf M}(\mathbf w,q)
=D^X_{|\mathbf M|}(\mathbf M;\mathbf w,q).
\end{equation*}

We now introduce a further normalization at the matrix level
so that all diagonal entries are $1$.
We divide each row of the type $A$ specialization matrices
by its diagonal entry and adjust the branching coefficients
accordingly, preserving the branching factorization.
All matrices below are indexed by subsets of $[N]$.
Let
$\widehat\Phi(\mathbf w,q)
=(\widehat\Phi_{\mathbf K,\mathbf I}(\mathbf w,q))$ be given by
\begin{equation}
\widehat\Phi_{\mathbf K,\mathbf I}(\mathbf w,q)
:=
\begin{cases}
\displaystyle
\frac{\Phi_{\mathbf K,\mathbf I}(\mathbf w,q)}
{\Phi_{\mathbf K,\mathbf K}(\mathbf w,q)},
&|\mathbf K|=|\mathbf I|,\\[3mm]
0,&|\mathbf K|\ne|\mathbf I|,
\end{cases}
\label{eq:allcard-Phihat-def}
\end{equation}
and let
$\widehat{\mathrm{Br}}{}^X(\mathbf w,q)
=(\widehat{\mathrm{Br}}{}^X_{\mathbf M,\mathbf K}(\mathbf w,q))$
be given by
\begin{equation}
\widehat{\mathrm{Br}}{}^X_{\mathbf M,\mathbf K}(\mathbf w,q)
:=
\begin{cases}
\displaystyle
\frac{
\operatorname{Br}^X_{\mathbf M,\mathbf K}(\mathbf w,q)
\Phi_{\mathbf K,\mathbf K}(\mathbf w,q)
}{
\Delta^X_{\mathbf M}(\mathbf w,q)
},
&\mathbf K\subset\mathbf M,\\[3mm]
0,&\mathbf K\not\subset\mathbf M.
\end{cases}
\label{eq:allcard-Bhat-def}
\end{equation}
The matrix $\widehat\Phi(\mathbf w,q)$ is block diagonal with respect to cardinality,
while $\widehat{\mathrm{Br}}{}^X(\mathbf w,q)$ is unitriangular with respect to inclusion;
both have diagonal entries $1$.

Finally, assemble the specialized type $B/C$ weight functions
into a matrix and normalize each row by its diagonal value.
Let $\mathsf A^X(\mathbf w,q)
=(\mathsf A^X_{\mathbf M,\mathbf I}(\mathbf w,q))$ be the resulting matrix, with entries
\begin{equation}
\mathsf A^X_{\mathbf M,\mathbf I}(\mathbf w,q)
:=
\frac{
\widetilde{WX}_{N,N,|\mathbf I|}
(\mathbf w_{\mathbf M}\,|\mathbf w\,|\mathbf I)
}{
\Delta^X_{\mathbf M}(\mathbf w,q)
}.
\label{eq:allcard-A-def}
\end{equation}
If $|\mathbf I|>|\mathbf M|$, then
$\mathsf A^X_{\mathbf M,\mathbf I}(\mathbf w,q)=0$ by
\eqref{eq:typeC-lower-cardinality-vanishing} and
\eqref{eq:typeB-lower-cardinality-vanishing}.  The corresponding entry of
$\widehat{\mathrm{Br}}{}^X\widehat\Phi$ is also zero, since a nonzero
summand would require both $\mathbf K\subset\mathbf M$ and
$|\mathbf K|=|\mathbf I|$.  If $|\mathbf I|\le|\mathbf M|$, the definitions
above and \eqref{eq:allcard-branching-factorization} give
\begin{align*}
\bigl(\widehat{\mathrm{Br}}{}^X(\mathbf w,q)\widehat\Phi(\mathbf w,q)\bigr)_{\mathbf M,\mathbf I}
&=
\sum_{\mathbf K\subset[N]}
\widehat{\mathrm{Br}}{}^X_{\mathbf M,\mathbf K}(\mathbf w,q)
\widehat\Phi_{\mathbf K,\mathbf I}(\mathbf w,q) \\
&=
\frac{
\widetilde{WX}_{N,N,|\mathbf I|}
(\mathbf w_{\mathbf M}\,|\mathbf w\,|\mathbf I)
}{
\Delta^X_{\mathbf M}(\mathbf w,q)
} 
=
\mathsf A^X_{\mathbf M,\mathbf I}(\mathbf w,q).
\end{align*}
Hence, for all $\mathbf M,\mathbf I\subset[N]$,
\begin{equation}
\mathsf A^X(\mathbf w,q)
=\widehat{\mathrm{Br}}{}^X(\mathbf w,q)\widehat\Phi(\mathbf w,q).
\label{eq:allcard-P-factorization}
\end{equation}

\begin{thm}
\label{thm:BC-all-cardinality-biorthogonality}
For $X\in\{B,C\}$ and arbitrary subsets $\mathbf M,\mathbf L\subset[N]$,
\begin{equation}
\sum_{\mathbf I\subset[N]}
q^{|\lambda(\mathbf I)|-|\lambda(\mathbf L)|}
\mathsf A^X_{\mathbf M,\mathbf I}(\mathbf w,q)
\mathsf A^X_{\mathbf L^{\mathrm c},\mathbf I^{\mathrm c}}
(\mathbf w,q^{-1})
=\delta_{\mathbf M,\mathbf L}.
\label{eq:BC-all-cardinality-biorthogonality-matrix}
\end{equation}
Equivalently,
\begin{align}
&\sum_{\mathbf I\subset[N]}
q^{|\lambda(\mathbf I)|-|\lambda(\mathbf L)|}
\frac{
\widetilde{WX}_{N,N,|\mathbf I|}
(\mathbf w_{\mathbf M}\,|\mathbf w\,|\mathbf I)
}{\Delta^X_{\mathbf M}(\mathbf w,q)}
\frac{
\left.
\widetilde{WX}_{N,N,N-|\mathbf I|}
(\mathbf w_{\mathbf L^{\mathrm c}}\,|\mathbf w\,|\mathbf I^{\mathrm c})
\right|_{q\mapsto q^{-1}}
}{
\Delta^X_{\mathbf L^{\mathrm c}}(\mathbf w,q^{-1})
}
=\delta_{\mathbf M,\mathbf L}.
\label{eq:BC-all-cardinality-biorthogonality}
\end{align}
\end{thm}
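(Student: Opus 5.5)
The plan is to turn \eqref{eq:BC-all-cardinality-biorthogonality-matrix} into a statement about matrix inverses and then use the factorization \eqref{eq:allcard-P-factorization}. Let $\mathsf C$ be the permutation matrix with $\mathsf C_{\mathbf I,\mathbf J}=\delta_{\mathbf J,\mathbf I^{\mathrm c}}$, and let $\mathsf Q$ be the diagonal matrix with entries $q^{|\lambda(\mathbf I)|}$. Then the left-hand side of \eqref{eq:BC-all-cardinality-biorthogonality-matrix} is the $(\mathbf M,\mathbf L)$ entry of
\[
\mathsf A^X(\mathbf w,q)\,\mathsf Q\,\bigl(\mathsf C\,\mathsf A^X(\mathbf w,q^{-1})\,\mathsf C\bigr)^{T}\mathsf Q^{-1}.
\]
So the theorem is equivalent to
\[
\mathsf A^X(\mathbf w,q)^{-1}=\mathsf Q\,\bigl(\mathsf C\,\mathsf A^X(\mathbf w,q^{-1})\,\mathsf C\bigr)^{T}\mathsf Q^{-1}.
\]
Both sides make sense: by the support condition \eqref{eq:typeC-shifted-dominance} (and its type $B$ analogue) together with \eqref{eq:typeC-lower-cardinality-vanishing}, $\mathsf A^X$ is unitriangular with respect to the order $\lambda(\mathbf I)\subseteq\lambda(\mathbf M)$. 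Conjugating by $\mathsf C$ and transposing yields a matrix that is unitriangular for the same order. Hence it suffices to check that the product is the identity, and the diagonal entries are automatically $1$.

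Next I substitute $\mathsf A^X=\widehat{\mathrm{Br}}{}^X\widehat\Phi$ on both sides. Since $\mathsf C$ and $\mathsf Q$ are diagonal or permutation matrices, the right-hand factor becomes
\[
\mathsf Q\,(\mathsf C\widehat\Phi(q^{-1})\mathsf C)^T\,(\mathsf C\widehat{\mathrm{Br}}{}^X(q^{-1})\mathsf C)^T\,\mathsf Q^{-1}.
\]
The claim then splits into two identities.

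\begin{enumerate}
\item[(a)] \emph{The type $A$ block.} Inside each cardinality block, $\widehat\Phi(q)^{-1}$ should equal the complemented and transposed $\widehat\Phi(q^{-1})$ conjugated by $\mathsf Q$. The inverse of $\Phi^{(r)}$ is known from Corollary~\ref{cor:Phi-inverse}. The remaining task is to identify $\Phi_{\rho(\mathbf K),\rho(\mathbf I)}((\mathbf w^{-1})^\rho,q^{-1})$ with the entry of $\Phi^{(N-r)}$ indexed by the complements, up to the diagonal rescalings. This should follow from the Grassmannian duality $\operatorname{Gr}(r,N)\cong\operatorname{Gr}(N-r,N)$ for RTV weight functions recalled in Appendix~A. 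The power $q^{p(\mathbf I)}$ should combine with the diagonal entries $\Phi_{\mathbf K,\mathbf K}$ to give $q^{|\lambda(\mathbf I)|}$ up to a factor depending only on $|\mathbf I|$; such a factor cancels against the $\mathbf Q^{-1}$ side within each block.

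\item[(b)] \emph{The branching block.} I need the inversion
\[
\sum_{\mathbf K:\ \mathbf L\subset\mathbf K\subset\mathbf M} q^{|\lambda(\mathbf K)|-|\lambda(\mathbf L)|}\,\widehat{\mathrm{Br}}{}^X_{\mathbf M,\mathbf K}(\mathbf w,q)\,\widehat{\mathrm{Br}}{}^X_{\mathbf L^{\mathrm c},\mathbf K^{\mathrm c}}(\mathbf w,q^{-1})=\delta_{\mathbf M,\mathbf L}.
\]
This is a purely explicit rational identity, because the factors \eqref{eq:allcard-Br-C} and \eqref{eq:allcard-Br-B}, divided by $\Delta^X$, are products of binomials.
\end{enumerate}

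I expect step (b) to be the main obstacle. My first attempt will be a localization argument, as in the Laurent polynomiality proof. For fixed $\mathbf L\subsetneq\mathbf M$, set $\Omega=\mathbf M\setminus\mathbf L$. The summand depends on $\mathbf K$ only through the splitting of $\Omega$ into $\mathbf K\setminus\mathbf L$ and $\mathbf M\setminus\mathbf K$. The cross factors $\prod(1-w_k^{-1}w_t)^{-1}$ should assemble into $\lambda_{-1}(T^*)$ of $\operatorname{Gr}(j,\Omega)$, summed over $j$. The sum then becomes a sum of equivariant pushforwards, hence a Laurent polynomial in $\mathbf w$. I would then show that this polynomial vanishes by a Newton degree bound: the degree interval computed as in the Newton degree propositions above should be empty, or should be forced to $\{0\}$, in which case I evaluate the constant by sending one $w_t\to 0$ or $w_t\to\infty$ and obtain a telescoping cancellation between $j$ and $j+1$.

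If the geometric route is unmanageable, the fallback is induction on $|\Omega|$. I isolate the largest element $t\in\Omega$ and note that the terms with $t\in\mathbf K$ and with $t\notin\mathbf K$ differ by an explicit factor of the form $(1-qw_t\cdots)/(1-w_t\cdots)$. Summing these two terms should then reduce the identity to the case of $\Omega\setminus\{t\}$.
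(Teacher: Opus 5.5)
Your overall architecture is the paper's: reduce the theorem to the inverse formula \eqref{eq:allcard-A-inverse-goal}, factor $\mathsf A^X=\widehat{\mathrm{Br}}{}^X\widehat\Phi$, and invert the two factors separately. Your step (a) is Lemma~\ref{lem:allcard-Phihat-inverse}, with one correction. The complement duality \eqref{eq:allcard-Phi-complement-duality} is not recalled in Appendix~A; it has to be proved. The paper does this by checking conditions (i)--(iii) of Theorem~\ref{RTVtheorem} after $\mathbf z\mapsto\mathbf z^{-1}$, $(\mathbf M,\mathbf M^{\mathrm c})\mapsto(\mathbf M^{\mathrm c},\mathbf M)$, and then invoking uniqueness. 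The smallness check uses that every monomial of a fixed-point restriction has total degree $0$.

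The genuine gap is step (b), which is the core of the proof. Neither of your two routes closes it.

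First you need the actual reduction. Put $\mathcal D=\mathbf M\setminus\mathbf L=\{d_1<\cdots<d_m\}$ and $x_j=w_{d_j}$, and let $\epsilon$ be the indicator of $\mathbf K\setminus\mathbf L$ in $\mathcal D$. By the product form \eqref{eq:allcard-Bhat-product-C}, the factors pairing $d_j$ with indices in $\mathbf L$ or $\mathbf M^{\mathrm c}$ also change with $\mathbf K$, but only by $q^{-(d_j-j)\epsilon_j}$. These powers cancel exactly against three other contributions:
\begin{itemize}
\item the weight $q^{|\lambda(\mathbf K)|-|\lambda(\mathbf L)|}$;
\item the ratio $(1-q^{-1})/(1-q)=-q^{-1}$, one for each element of $\mathbf K\setminus\mathbf L$;
\item the factors $q^{-\epsilon_j}$ coming from the pair terms.
\end{itemize}
After normalizing by the $\mathbf K=\mathbf L$ term, the summand becomes $(-1)^{|\epsilon|}\prod_{i<j}H_{\epsilon_i\epsilon_j}(x_i,x_j)/H_{00}(x_i,x_j)$, with $H$ as in \eqref{eq:allcard-hweights}. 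So (b) is equivalent to
$\sum_{\epsilon\in\{0,1\}^m}(-1)^{|\epsilon|}\prod_{i<j}H_{\epsilon_i\epsilon_j}(x_i,x_j)=0$.

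Your localization idea does give polynomiality, since the residues at $x_m=x_i$ cancel in pairs. A Newton degree count cannot finish the argument, however. Pairing $\epsilon_m=0$ with $\epsilon_m=1$ kills the top coefficient in $x_m$ (because $H_{ab}(x,y)=q^axy+O(1)$) and the constant term (because $H_{ab}(x,0)=-q^{1-a}$). That leaves $1\le\deg_{x_m}\le m-2$, which is a nonempty range for $m\ge3$. The subleading coefficients carry factors $q^{\epsilon_i}$ that depend on the other bits, so they do not cancel term by term.

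Your fallback induction fails for the same reason. The ratio of the terms with $d_m\in\mathbf K$ and $d_m\notin\mathbf K$ is $-\prod_{i<m}H_{\epsilon_i1}(x_i,x_m)/H_{\epsilon_i0}(x_i,x_m)$. This depends on every other bit $\epsilon_i$, so summing each pair does not give a fixed multiple of the $(m-1)$-variable summand.

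The missing idea is to manufacture extra zeros. Since $H_{01}(x,x^{-1})=H_{10}(x,x^{-1})=0$, the specialization $x_m=x_k^{-1}$ forces $\epsilon_k=\epsilon_m$. By \eqref{eq:allcard-fusion}, the product $H_{\eta,s}(u,x_k)H_{\eta,s}(u,x_k^{-1})$ is then independent of the remaining bit $\eta$, so the specialized sum factors through the $(m-2)$-variable identity and vanishes by induction. A polynomial in $x_m$ of degree at most $m-2$ with the $m-1$ distinct zeros $x_1^{-1},\ldots,x_{m-1}^{-1}$ is identically zero. This is Lemma~\ref{lem:allcard-universal-cancellation}, and it is what makes Proposition~\ref{prop:allcard-Bhat-inverse}, your step (b), go through.
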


The theorem is equivalent to the following formula for the inverse of
$\mathsf A^X(\mathbf w,q)$:
\begin{equation}
\bigl(\mathsf A^X(\mathbf w,q)^{-1}\bigr)_{\mathbf I,\mathbf L}
=
q^{|\lambda(\mathbf I)|-|\lambda(\mathbf L)|}
\mathsf A^X_{\mathbf L^{\mathrm c},\mathbf I^{\mathrm c}}
(\mathbf w,q^{-1}).
\label{eq:allcard-A-inverse-goal}
\end{equation}
Indeed, substituting \eqref{eq:allcard-A-inverse-goal} into the
$(\mathbf M,\mathbf L)$ entry of
$\mathsf A^X(\mathbf w,q)\mathsf A^X(\mathbf w,q)^{-1}=\mathrm{Id}$
gives exactly
\eqref{eq:BC-all-cardinality-biorthogonality-matrix}.

We therefore compute $\mathsf A^X(\mathbf w,q)^{-1}$ using
\eqref{eq:allcard-P-factorization}.  Since
\[
\mathsf A^X(\mathbf w,q)
=
\widehat{\mathrm{Br}}{}^X(\mathbf w,q)\widehat\Phi(\mathbf w,q),
\]
it is enough to determine the inverses of
$\widehat\Phi(\mathbf w,q)$ and
$\widehat{\mathrm{Br}}{}^X(\mathbf w,q)$.

\begin{lem}
\label{lem:allcard-Phihat-inverse}
For arbitrary subsets $\mathbf I,\mathbf K\subset[N]$,
\begin{equation}
\bigl(\widehat\Phi(\mathbf w,q)^{-1}\bigr)_{\mathbf I,\mathbf K}
=
q^{|\lambda(\mathbf I)|-|\lambda(\mathbf K)|}
\widehat\Phi_{\mathbf K^{\mathrm c},\mathbf I^{\mathrm c}}
(\mathbf w,q^{-1}).
\label{eq:allcard-Phihat-inverse-components}
\end{equation}
Both sides are zero unless $|\mathbf I|=|\mathbf K|$.
\end{lem}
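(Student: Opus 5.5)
Since $\widehat\Phi(\mathbf w,q)$ is block diagonal with respect to cardinality, it suffices to prove \eqref{eq:allcard-Phihat-inverse-components} blockwise. The right-hand side vanishes for $|\mathbf I|\ne|\mathbf K|$, because then $|\mathbf K^{\mathrm c}|\ne|\mathbf I^{\mathrm c}|$. So we fix $r$ and show that, for $|\mathbf I|=|\mathbf K|=r$, the matrix with entries $q^{|\lambda(\mathbf I)|-|\lambda(\mathbf K)|}\widehat\Phi_{\mathbf K^{\mathrm c},\mathbf I^{\mathrm c}}(\mathbf w,q^{-1})$ is a right inverse of the $r$-th block $\widehat\Phi^{(r)}$. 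We have
\[
\widehat\Phi^{(r)}=\operatorname{diag}\bigl(\Phi_{\mathbf K,\mathbf K}\bigr)^{-1}\Phi^{(r)},
\]
and Corollary~\ref{cor:Phi-inverse} gives $(\Phi^{(r)})^{-1}$ explicitly. Hence
\[
\bigl((\widehat\Phi^{(r)})^{-1}\bigr)_{\mathbf I,\mathbf K}=\bigl((\Phi^{(r)})^{-1}\bigr)_{\mathbf I,\mathbf K}\,\Phi_{\mathbf K,\mathbf K}(\mathbf w,q),
\]
and the task reduces to identifying this product with the claimed expression.

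\textbf{Step 1: rewrite the inverse in terms of complements.} Substitute \eqref{eq:Phi-inverse}. The inverse entry involves $\Phi_{\rho(\mathbf K),\rho(\mathbf I)}((\mathbf w^{-1})^{\rho},q^{-1})$, that is, a type $A$ weight function of $\operatorname{Gr}(r,N)$ with the opposite chamber and inverted variables. We want to turn it into $\Phi_{\mathbf K^{\mathrm c},\mathbf I^{\mathrm c}}(\mathbf w,q^{-1})$, a function on $\operatorname{Gr}(N-r,N)$. For this we use the Grassmann duality $\operatorname{Gr}(r,N)\cong\operatorname{Gr}(N-r,N)$, $V\mapsto V^{\perp}$. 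Under this map, torus weights are inverted, fixed points $\mathbf K$ go to $\mathbf K^{\mathrm c}$, and the order of the coordinates is reversed, which exchanges the chambers $\mathrm{id}$ and $\sigma_0$. The stable envelopes correspond up to explicit normalizations. I would state this as a duality identity for the RTV weight functions, presumably available in Appendix~A alongside \eqref{eq:RTV-id-via-reversal}. If it is not there, I would prove it from the uniqueness characterization in Theorem~\ref{RTVtheorem}: both sides satisfy the same support condition and the same divisibility, their Newton polytopes agree after the duality map, and their diagonal values agree up to a monomial factor.

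\textbf{Step 2: cancel the prefactors.} After the duality, the inverse entry equals $\Phi_{\mathbf K^{\mathrm c},\mathbf I^{\mathrm c}}(\mathbf w,q^{-1})$ multiplied by
\[
q^{-p(\mathbf I)}\,\frac{P_{\mathbf K}(\mathbf w^{-1})}{R_{\mathbf K}(\mathbf w^{-1})\,Q_{\mathbf K}(\mathbf w^{-1},q^{-1})}
\]
and by the duality normalization factor. Multiplying by $\Phi_{\mathbf K,\mathbf K}(\mathbf w,q)$, which is a product of $R_{\mathbf K}$- and $Q_{\mathbf K}$-type factors by Theorem~\ref{RTVtheorem}(ii) together with \eqref{rectdiagonalfactor}, and dividing by $\Phi_{\mathbf K^{\mathrm c},\mathbf K^{\mathrm c}}(\mathbf w,q^{-1})$, all rational factors should cancel. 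What remains should be the monomial $q^{|\lambda(\mathbf I)|-|\lambda(\mathbf K)|}$. Here we use $p(\mathbf I)=|\lambda(\mathbf I)|-r(r+1)/2$ or a similar formula from \eqref{eq:RTV-Gr-p}; the $r$-dependent constant cancels because $|\mathbf I|=|\mathbf K|$. The factor $q^{-p(\mathbf K)}$ arises from converting $Q_{\mathbf K}(\mathbf w^{-1},q^{-1})$ into $Q_{\mathbf K}(\mathbf w,q)$ and from the $q$-power in the diagonal value. As a sanity check, the example $N=2$, $r=1$ given above fixes the signs and the powers of $q$.

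\textbf{Main obstacle.} The main difficulty is Step 1: establishing the exact duality relation between $\widetilde W^{\mathrm{RTV}}_{\sigma_0}$ on $\operatorname{Gr}(r,N)$ with inverted variables and the weight functions of $\operatorname{Gr}(N-r,N)$ on complements, including the precise monomial and $q$-power normalization. Once that relation is fixed, the row normalization by diagonal entries is designed precisely to absorb all non-monomial factors, so Step 2 is bookkeeping.
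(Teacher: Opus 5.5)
Your route is the paper's: reduce to one cardinality block, write $(\widehat\Phi^{-1})_{\mathbf I,\mathbf K}=\bigl((\Phi^{(r)})^{-1}\bigr)_{\mathbf I,\mathbf K}\Phi_{\mathbf K,\mathbf K}(\mathbf w,q)$, insert Corollary~\ref{cor:Phi-inverse}, convert $\Phi_{\rho(\mathbf K),\rho(\mathbf I)}((\mathbf w^{-1})^{\rho},q^{-1})$ by the $\mathcal U\mapsto\mathcal U^{\perp}$ duality via the uniqueness in Theorem~\ref{RTVtheorem}, then cancel a scalar. However, the two steps you defer with ``should'' carry the whole proof, and neither closes as written.

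\textbf{Step~1.} You allow the duality to hold ``up to explicit normalizations'' and the diagonal values to agree ``up to a monomial factor''. That is not enough, for two reasons.
\begin{itemize}
\item A correction factor $c(\mathbf K,\mathbf I)$ would survive into the final formula, because the row normalization only divides by $\Phi_{\mathbf K^{\mathrm c},\mathbf K^{\mathrm c}}(\mathbf w,q^{-1})$.
\item A diagonal mismatch by a $\mathbf z$-monomial blocks the uniqueness argument, since rescaling shifts the $\varphi_{\mathcal M}$-degrees and can break smallness.
\end{itemize}
What you need, and what holds, is exact agreement. With $\mathcal I^{\vee}=(\mathbf I^{\mathrm c},\mathbf I)$,
\[
P_{\mathrm{id},\mathcal I}e_{\mathrm{id},\mathcal I}(\mathbf z^{-1},h)=P_{\sigma_0,\mathcal I^{\vee}}e_{\sigma_0,\mathcal I^{\vee}}(\mathbf z,h),
\qquad
e^{\mathrm{vert}}_{\mathrm{id},\mathcal M,-}(\mathbf z^{-1},h)=e^{\mathrm{vert}}_{\sigma_0,\mathcal M^{\vee},-}(\mathbf z,h).
\]
Smallness transfers because every monomial of a specialized weight function has total degree $0$, so $\varphi_{\mathcal M^{\vee}}(-\nu)=\varphi_{\mathcal M}(\nu)$. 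This gives $\Phi_{\rho(\mathbf K),\rho(\mathbf I)}((\mathbf w^{-1})^{\rho},q^{-1})=\Phi_{\mathbf K^{\mathrm c},\mathbf I^{\mathrm c}}(\mathbf w,q^{-1})$ with no extra factor.

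\textbf{Step~2.} Both ingredients you state have the wrong sign.
\begin{itemize}
\item The scalar $P_{\mathbf K}(\mathbf w^{-1})\Phi_{\mathbf K,\mathbf K}(\mathbf w,q)\Phi_{\mathbf K^{\mathrm c},\mathbf K^{\mathrm c}}(\mathbf w,q^{-1})/\bigl(R_{\mathbf K}(\mathbf w^{-1})Q_{\mathbf K}(\mathbf w^{-1},q^{-1})\bigr)$ equals $q^{+p(\mathbf K)}$, not $q^{-p(\mathbf K)}$. Work pair by pair with $k\in\mathbf K$, $j\in\mathbf K^{\mathrm c}$.
  \begin{itemize}
  \item If $j>k$, the pair contributes $q$: $(1-qw_j/w_k)(-w_k/w_j)=q(1-q^{-1}w_k/w_j)$ cancels $Q$, and $(1-w_k/w_j)$ from $\Phi_{\mathbf K^{\mathrm c},\mathbf K^{\mathrm c}}$ cancels $R$.
  \item If $j<k$, the pair contributes $1$: $(1-w_j/w_k)(-w_k/w_j)=1-w_k/w_j$ cancels $R$, and $(1-q^{-1}w_k/w_j)$ cancels $Q$.
  \end{itemize}
\item From \eqref{eq:RTV-Gr-p}, $p(\mathbf A)=rN-\binom r2-|\lambda(\mathbf A)|$. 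Your $|\lambda(\mathbf A)|-\binom{r+1}2$ is the complementary count $r(N-r)-p(\mathbf A)$.
\end{itemize}
With the correct pieces you get $q^{-p(\mathbf I)+p(\mathbf K)}=q^{|\lambda(\mathbf I)|-|\lambda(\mathbf K)|}$, which is \eqref{eq:allcard-scalar-complement} followed by the paper's last step. Your version gives the wrong power. For $N=2$, $r=1$:
\[
(\widehat\Phi^{-1})_{\{2\},\{1\}}=-\frac{1-q}{1-w_1/w_2},
\qquad
\widehat\Phi_{\{2\},\{1\}}(\mathbf w,q^{-1})=\frac{1-q^{-1}}{1-w_1/w_2}.
\]
So the required factor is $q^{+1}$, whereas your formulas produce $q^{-1}$ under any reading.
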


\begin{proof}
By definition, $\widehat\Phi(\mathbf w,q)$ is block diagonal with respect to
cardinality.  Hence its inverse is block diagonal as well, so the left-hand
side of \eqref{eq:allcard-Phihat-inverse-components} vanishes when
$|\mathbf I|\ne|\mathbf K|$.  On the right-hand side,
$\widehat\Phi_{\mathbf K^{\mathrm c},\mathbf I^{\mathrm c}}$ vanishes unless
$|\mathbf K^{\mathrm c}|=|\mathbf I^{\mathrm c}|$, which is equivalent to
$|\mathbf K|=|\mathbf I|$.  It remains to consider
$|\mathbf I|=|\mathbf K|=r$.

By \eqref{eq:allcard-Phihat-def},
\[
\widehat\Phi^{(r)}(\mathbf w,q)
=
\operatorname{diag}
\bigl(\Phi_{\mathbf A,\mathbf A}(\mathbf w,q)^{-1}\bigr)_{|\mathbf A|=r}
\Phi^{(r)}(\mathbf w,q),
\]
and therefore
\[
\bigl(\widehat\Phi(\mathbf w,q)^{-1}\bigr)_{\mathbf I,\mathbf K}
=
\bigl(\Phi^{(r)}(\mathbf w,q)^{-1}\bigr)_{\mathbf I,\mathbf K}
\Phi_{\mathbf K,\mathbf K}(\mathbf w,q).
\]
Together with Corollary~\ref{cor:Phi-inverse}, this gives
\begin{align}
\bigl(\widehat\Phi(\mathbf w,q)^{-1}\bigr)_{\mathbf I,\mathbf K}
={}&
q^{-p(\mathbf I)}
\frac{
P_{\mathbf K}(\mathbf w^{-1})
\Phi_{\mathbf K,\mathbf K}(\mathbf w,q)
}{
R_{\mathbf K}(\mathbf w^{-1})
Q_{\mathbf K}(\mathbf w^{-1},q^{-1})
}
\Phi_{\rho(\mathbf K),\rho(\mathbf I)}
\bigl((\mathbf w^{-1})^{\rho},q^{-1}\bigr).
\label{eq:allcard-Phihat-inverse-before-duality}
\end{align}

For a subspace $\mathcal U\subset\mathbb C^N$, put
$
\mathcal U^{\perp}
:=\{f\in(\mathbb C^N)^*:f|_{\mathcal U}=0\}.
$
Then $\mathcal U\mapsto\mathcal U^{\perp}$ identifies
$\operatorname{Gr}(r,\mathbb C^N)$ with
$\operatorname{Gr}(N-r,(\mathbb C^N)^*)$.
The coordinate fixed point indexed by $\mathbf M$ is sent to the one indexed
by $\mathbf M^{\mathrm c}$, and the torus weights on $(\mathbb C^N)^*$ are
$z_1^{-1},\ldots,z_N^{-1}$.  This identification lifts to the cotangent
bundles and, after inversion of the torus characters, induces the corresponding
equivariant $K$-theory isomorphism.  Hence the transformed fixed-point
restrictions below come from an equivariant $K$-class on
$T^*\operatorname{Gr}(N-r,(\mathbb C^N)^*)$.
Let $h$ be the parameter in Theorem~\ref{RTVtheorem}, and put
$
\mathcal M=(\mathbf M,\mathbf M^{\mathrm c}),
\
\mathcal M^{\vee}=(\mathbf M^{\mathrm c},\mathbf M).
$
We check conditions $\mathrm{(i)}$--$\mathrm{(iii)}$ of
Theorem~\ref{RTVtheorem} after the substitution
$\mathbf z\mapsto\mathbf z^{-1}$ and the replacement
$\mathcal M\mapsto\mathcal M^{\vee}$.
For condition $\mathrm{(i)}$,
\begin{align}
e_{\mathrm{id},\mathcal M,-}^{\mathrm{vert}}
(\mathbf z^{-1},h)
&=
\prod_{\substack{a\in\mathbf M,\ b\in\mathbf M^{\mathrm c}\\ b<a}}
\left(1-h\frac{z_a}{z_b}\right)
=
 e_{\sigma_0,\mathcal M^{\vee},-}^{\mathrm{vert}}
(\mathbf z,h).
 \notag
\end{align}
For condition $\mathrm{(ii)}$, put
$\mathcal I^{\vee}=(\mathbf I^{\mathrm c},\mathbf I)$.  At the diagonal fixed
point, \eqref{general-diagonal-factor} gives
\begin{align}
&P_{\mathrm{id},\mathcal I}(\mathbf z^{-1})
 e_{\mathrm{id},\mathcal I}(\mathbf z^{-1},h)
=
\prod_{\substack{a\in\mathbf I,\ b\in\mathbf I^{\mathrm c}\\ a<b}}
\left(1-\frac{z_a}{z_b}\right)
\prod_{\substack{a\in\mathbf I,\ b\in\mathbf I^{\mathrm c}\\ b<a}}
\left(1-h\frac{z_a}{z_b}\right)
\notag\\
&=
P_{\sigma_0,\mathcal I^{\vee}}(\mathbf z)
 e_{\sigma_0,\mathcal I^{\vee}}(\mathbf z,h).
 \notag
\end{align}

It remains to check the smallness condition $\mathrm{(iii)}$.  Suppose
$\mathcal M\ne\mathcal I$ and set
$
f_{\mathcal M}(\mathbf z)
:=
\widetilde W_{\mathrm{id},\mathcal I}^{\mathrm{RTV}}
(\mathbf z_{\mathbf M},\mathbf z,h).
$
By \eqref{eq:general-sigma-W} and \eqref{eq:general-Wtilde-def}, after the
fixed-point specialization $\mathbf t=\mathbf z_{\mathbf M}$ all the
$z$-variables occur through ratios $z_a/z_b$.  Hence every Laurent monomial
$
\mathbf z^{\nu}=z_1^{\nu_1}\cdots z_N^{\nu_N}
$
occurring in $f_{\mathcal M}(\mathbf z)$ satisfies
\begin{align}
\sum_{j=1}^N\nu_j=0. \label{sumtozero}
\end{align}
Under $\mathbf z\mapsto\mathbf z^{-1}$ the monomial $\mathbf z^\nu$ becomes
$\mathbf z^{-\nu}$, and
\begin{align}
\varphi_{\mathcal M^{\vee}}(-\nu)
&=-\sum_{j\in\mathbf M^{\mathrm c}}\nu_j
  -2\sum_{j\in\mathbf M}\nu_j
=\sum_{j\in\mathbf M}\nu_j
  +2\sum_{j\in\mathbf M^{\mathrm c}}\nu_j
=\varphi_{\mathcal M}(\nu),
 \notag
\end{align}
where the second equality uses \eqref{sumtozero}.  Moreover,
\[
\mathcal N\bigl(f_{\mathcal M}(\mathbf z^{-1})\bigr)
=-\mathcal N\bigl(f_{\mathcal M}(\mathbf z)\bigr).
\]
Therefore
\[
\varphi_{\mathcal M^{\vee}}
\bigl(\mathcal N(f_{\mathcal M}(\mathbf z^{-1}))\bigr)
=
\varphi_{\mathcal M}
\bigl(\mathcal N(f_{\mathcal M}(\mathbf z))\bigr)
\subset[0,r(N-r)-1].
\]
Since $\mathcal M\ne\mathcal I$ implies
$\mathcal M^{\vee}\ne\mathcal I^{\vee}$, and the smallness interval for
$\operatorname{Gr}(N-r,N)$ is again $[0,r(N-r)-1]$, condition
$\mathrm{(iii)}$ is preserved.

The uniqueness statement in Theorem~\ref{RTVtheorem} therefore gives, for
all $\mathbf M\subset[N]$ with $|\mathbf M|=r$,
\begin{equation}
\widetilde W_{\mathrm{id},\mathcal I}^{\mathrm{RTV}}
(\mathbf z_{\mathbf M}^{-1},\mathbf z^{-1},h)
=
\widetilde W_{\sigma_0,\mathcal I^{\vee}}^{\mathrm{RTV}}
(\mathbf z_{\mathbf M^{\mathrm c}},\mathbf z,h).
\label{eq:allcard-RTV-complement-duality}
\end{equation}
Taking $\mathbf M=\mathbf K$, $\mathbf z=\mathbf w$, and $h=q^{-1}$, and
using \eqref{eq:PhiX-def} and \eqref{eq:PhiX-op-is-id}, we obtain
\begin{equation}
\Phi_{\rho(\mathbf K),\rho(\mathbf I)}
\bigl((\mathbf w^{-1})^{\rho},q^{-1}\bigr)
=
\Phi_{\mathbf K^{\mathrm c},\mathbf I^{\mathrm c}}
(\mathbf w,q^{-1}).
\label{eq:allcard-Phi-complement-duality}
\end{equation}

By \eqref{eq:RTV-Gr-P}--\eqref{eq:RTV-Gr-Q},
Theorem~\ref{RTVtheorem} $\mathrm{(ii)}$, and
\eqref{general-diagonal-factor}, after simplification we obtain
\begin{equation}
\frac{
P_{\mathbf K}(\mathbf w^{-1})
\Phi_{\mathbf K,\mathbf K}(\mathbf w,q)
\Phi_{\mathbf K^{\mathrm c},\mathbf K^{\mathrm c}}(\mathbf w,q^{-1})
}{
R_{\mathbf K}(\mathbf w^{-1})
Q_{\mathbf K}(\mathbf w^{-1},q^{-1})
}
=q^{p(\mathbf K)}.
\label{eq:allcard-scalar-complement}
\end{equation}
Substituting \eqref{eq:allcard-Phi-complement-duality} and
\eqref{eq:allcard-scalar-complement} into
\eqref{eq:allcard-Phihat-inverse-before-duality} yields
\begin{align}
\bigl(\widehat\Phi(\mathbf w,q)^{-1}\bigr)_{\mathbf I,\mathbf K}
&=
q^{-p(\mathbf I)+p(\mathbf K)}
\frac{
\Phi_{\mathbf K^{\mathrm c},\mathbf I^{\mathrm c}}(\mathbf w,q^{-1})
}{
\Phi_{\mathbf K^{\mathrm c},\mathbf K^{\mathrm c}}(\mathbf w,q^{-1})
}
=
q^{-p(\mathbf I)+p(\mathbf K)}
\widehat\Phi_{\mathbf K^{\mathrm c},\mathbf I^{\mathrm c}}
(\mathbf w,q^{-1}).
\label{eq:allcard-Psihat-before-lambda}
\end{align}
If $\mathbf A=\{a_1<\cdots<a_r\}$, then by \eqref{eq:RTV-Gr-p} and
$|\lambda(\mathbf A)|=\sum_{j=1}^r a_j$,
\begin{align}
p(\mathbf A)+|\lambda(\mathbf A)|
&=
\sum_{j=1}^r(N-r+j-a_j)+\sum_{j=1}^r a_j
=
r(N-r)+\frac{r(r+1)}2
=
rN-\binom r2.
 \notag
\end{align}
Since the right-hand side depends only on $r$,
\[
p(\mathbf K)-p(\mathbf I)
=
|\lambda(\mathbf I)|-|\lambda(\mathbf K)|.
\]
Substituting this into \eqref{eq:allcard-Psihat-before-lambda} proves
\eqref{eq:allcard-Phihat-inverse-components}.
\end{proof}

\begin{lem}
\label{lem:allcard-Bhat-product}
Let $\mathbf K\subset\mathbf M$ and put
$\Omega=\mathbf M\setminus\mathbf K$.  Then
\begin{align}
\widehat{\mathrm{Br}}{}^C_{\mathbf M,\mathbf K}(\mathbf w,q)
={}&
(1-q)^{|\Omega|}
\prod_{t\in\Omega}\frac{1}{1-w_t^{-2}}
\prod_{\substack{a<b\\a,b\in\Omega}}
\frac{1-qw_a^{-1}w_b^{-1}}{1-w_a^{-1}w_b^{-1}}
\notag\\
&\times
\prod_{\substack{k\in\mathbf K,\ t\in\Omega\\k<t}}
\frac{1-qw_k^{-1}w_t}{1-w_k^{-1}w_t}
\prod_{\substack{t\in\Omega,\ j\in\mathbf M^{\mathrm c}\\j<t}}
\frac{1-qw_t^{-1}w_j}{1-w_t^{-1}w_j},
\label{eq:allcard-Bhat-product-C}
\end{align}
and
\begin{align}
\widehat{\mathrm{Br}}{}^B_{\mathbf M,\mathbf K}(\mathbf w,q)
={}&
(1-q)^{|\Omega|}
\prod_{t\in\Omega}\frac{1}{1-w_t^{-1}}
\prod_{\substack{a<b\\a,b\in\Omega}}
\frac{1-qw_a^{-1}w_b^{-1}}{1-w_a^{-1}w_b^{-1}}
\notag\\
&\times
\prod_{\substack{k\in\mathbf K,\ t\in\Omega\\k<t}}
\frac{1-qw_k^{-1}w_t}{1-w_k^{-1}w_t}
\prod_{\substack{t\in\Omega,\ j\in\mathbf M^{\mathrm c}\\j<t}}
\frac{1-qw_t^{-1}w_j}{1-w_t^{-1}w_j}.
\label{eq:allcard-Bhat-product-B}
\end{align}
\end{lem}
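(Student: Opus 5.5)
The identity is a direct computation from the definition \eqref{eq:allcard-Bhat-def}:
\[
\widehat{\mathrm{Br}}{}^X_{\mathbf M,\mathbf K}
=\operatorname{Br}^X_{\mathbf M,\mathbf K}\,\Phi_{\mathbf K,\mathbf K}/\Delta^X_{\mathbf M},
\qquad
\Delta^X_{\mathbf M}=D^X_{|\mathbf M|}(\mathbf M;\mathbf w,q)\,\Phi_{\mathbf M,\mathbf M}.
\]
The plan has three steps. First, write the two diagonal type $A$ values explicitly. Second, cancel the explicit products in $\operatorname{Br}^X_{\mathbf M,\mathbf K}/D^X_{|\mathbf M|}(\mathbf M)$. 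Third, match what remains against the right-hand side of \eqref{eq:allcard-Bhat-product-C} or \eqref{eq:allcard-Bhat-product-B} factor by factor. I treat type $C$ in detail; type $B$ differs only in the diagonal factors.

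For the diagonal entries, I would use \eqref{identificationtypeA} together with \eqref{rectdiagonalfactor} (equivalently Theorem~\ref{RTVtheorem} (ii)). The factor $\prod_{j,k}(1-qw_{i_k}^{-1}w_{i_j})$ cancels against the denominator of \eqref{identificationtypeA}. This leaves
\[
\Phi_{\mathbf A,\mathbf A}=\prod_{j\in\mathbf A^{\mathrm c}}\Bigl(\prod_{a\in\mathbf A,\,a>j}(1-w_a^{-1}w_j)\prod_{a\in\mathbf A,\,a<j}(1-qw_a^{-1}w_j)\Bigr).
\]
In the ratio $\Phi_{\mathbf K,\mathbf K}/\Phi_{\mathbf M,\mathbf M}$, pairs $(a,j)$ with $a\in\mathbf K$ and $j\in\mathbf M^{\mathrm c}$ appear in both and cancel. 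What survives is:
\begin{itemize}
\item in the numerator, pairs $a\in\mathbf K$, $j=t\in\Omega$, which give $\prod_{k<t}(1-qw_k^{-1}w_t)\prod_{k>t}(1-w_k^{-1}w_t)$;
\item in the denominator, pairs $a=t\in\Omega$, $j\in\mathbf M^{\mathrm c}$, which give $\prod_{j<t}(1-w_t^{-1}w_j)\prod_{j>t}(1-qw_t^{-1}w_j)$.
\end{itemize}

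Next, divide \eqref{eq:allcard-Br-C} by $D^C(\mathbf M)$ from \eqref{eq:local-DC-orth}. The factor $\prod_{a\le b\in\mathbf M^{\mathrm c}}(1-qw_aw_b)$ cancels outright. Splitting $\prod_{a\le b\in\mathbf M}(1-w_a^{-1}w_b^{-1})$ over $\mathbf K$, $\Omega$ and mixed pairs cancels the $\mathbf K$-block and the mixed numerator $\prod(1-w_k^{-1}w_t^{-1})$. This leaves
\[
\prod_{t\in\Omega}\frac{1}{1-w_t^{-2}}\;\prod_{a<b\in\Omega}\frac{1}{1-w_a^{-1}w_b^{-1}}.
\]
Combining these with $\prod_{a<b\in\Omega}(1-qw_a^{-1}w_b^{-1})$ and $(1-q)^{|\Omega|}$ yields the first two products of \eqref{eq:allcard-Bhat-product-C}.

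The remaining bookkeeping, which I expect to be the only delicate step, is to merge three sets of factors: the mixed denominator $\prod_{k,t}(1-w_k^{-1}w_t)^{-1}$, the factor $\prod_{t,j}(1-qw_t^{-1}w_j)$, and the survivors of the $\Phi$-ratio.
\begin{itemize}
\item For $k>t$, $(1-w_k^{-1}w_t)$ from the $\Phi$-ratio cancels the corresponding mixed denominator. For $k<t$, $(1-qw_k^{-1}w_t)$ remains over $(1-w_k^{-1}w_t)$.
\item For $j>t$, the $\Phi$-denominator $(1-qw_t^{-1}w_j)$ cancels against the explicit $\prod_{t,j}(1-qw_t^{-1}w_j)$. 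For $j<t$, $(1-qw_t^{-1}w_j)$ remains over $(1-w_t^{-1}w_j)$.
\end{itemize}
This produces exactly the last two products in \eqref{eq:allcard-Bhat-product-C}.

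For type $B$ the same cancellations apply, with $\prod_{a\in\mathbf M}(1-w_a^{-1})$ replacing the diagonal $a=b$ terms. This gives the factor $\prod_{t\in\Omega}(1-w_t^{-1})^{-1}$, and \eqref{eq:allcard-Bhat-product-B} follows in the same way.
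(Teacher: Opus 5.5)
Your proposal is correct and follows the same route as the paper. You write $\widehat{\mathrm{Br}}{}^X_{\mathbf M,\mathbf K}=\operatorname{Br}^X_{\mathbf M,\mathbf K}\Phi_{\mathbf K,\mathbf K}/\bigl(D^X_{|\mathbf M|}(\mathbf M;\mathbf w,q)\Phi_{\mathbf M,\mathbf M}\bigr)$, insert the explicit diagonal type $A$ values, and simplify using $\mathbf M=\mathbf K\sqcup\Omega$ and $\mathbf K^{\mathrm c}=\Omega\sqcup\mathbf M^{\mathrm c}$; this is the simplification the paper leaves implicit. Your bookkeeping checks out: the $(\mathbf K,\mathbf M^{\mathrm c})$ pairs cancel in the $\Phi$-ratio, and the $k>t$ and $j>t$ factors cancel against the explicit products, leaving exactly the stated formulas.
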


\begin{proof}
We give the calculation for type $C$.  By
\eqref{eq:allcard-Bhat-def} and \eqref{eq:allcard-Delta-factor},
\[
\widehat{\mathrm{Br}}{}^C_{\mathbf M,\mathbf K}(\mathbf w,q)
=
\frac{
\operatorname{Br}^C_{\mathbf M,\mathbf K}(\mathbf w,q)
\Phi_{\mathbf K,\mathbf K}(\mathbf w,q)
}{D^C_{|\mathbf M|}(\mathbf M;\mathbf w,q)
\Phi_{\mathbf M,\mathbf M}(\mathbf w,q)
}
.
\]
Put $\Omega=\mathbf M\setminus\mathbf K$.  The four factors on the
right-hand side are as follows.  First,
\begin{align*}
\operatorname{Br}^C_{\mathbf M,\mathbf K}(\mathbf w,q)
={}&
(1-q)^{|\Omega|}
\prod_{\substack{a\le b\\a,b\in\mathbf M^{\mathrm c}}}
(1-qw_aw_b)
\prod_{\substack{a\le b\\a,b\in\mathbf K}}
(1-w_a^{-1}w_b^{-1})
\\
&\times
\prod_{\substack{k\in\mathbf K\\t\in\Omega}}
\frac{1-w_k^{-1}w_t^{-1}}{1-w_k^{-1}w_t}
\prod_{\substack{a<b\\a,b\in\Omega}}
(1-qw_a^{-1}w_b^{-1})
\prod_{\substack{t\in\Omega\\j\in\mathbf M^{\mathrm c}}}
(1-qw_t^{-1}w_j),
\end{align*}
and
\[
D^C_{|\mathbf M|}(\mathbf M;\mathbf w,q)
=
\prod_{\substack{a\le b\\a,b\in\mathbf M}}
(1-w_a^{-1}w_b^{-1})
\prod_{\substack{a\le b\\a,b\in\mathbf M^{\mathrm c}}}
(1-qw_aw_b).
\]
Next, if
$\mathbf K=\{k_1<\cdots<k_r\}$ and
$\mathbf M=\{m_1<\cdots<m_s\}$, then by \eqref{eq:PhiX-def},
Theorem~\ref{RTVtheorem} $\mathrm{(ii)}$, and
\eqref{general-diagonal-factor},
\begin{align*}
\Phi_{\mathbf K,\mathbf K}(\mathbf w,q)
&=
\widetilde W^{\mathrm{RTV}}_{\sigma_0,\mathcal K}
(w_{k_1},\ldots,w_{k_r},\mathbf w,q)\\
&=
\prod_{\substack{k\in\mathbf K,\ a\in\mathbf K^{\mathrm c}\\a<k}}
(1-w_k^{-1}w_a)
\prod_{\substack{k\in\mathbf K,\ a\in\mathbf K^{\mathrm c}\\k<a}}
(1-qw_k^{-1}w_a),
\\[1mm]
\Phi_{\mathbf M,\mathbf M}(\mathbf w,q)
&=
\widetilde W^{\mathrm{RTV}}_{\sigma_0,\mathcal M}
(w_{m_1},\ldots,w_{m_s},\mathbf w,q)\\
&=
\prod_{\substack{m\in\mathbf M,\ j\in\mathbf M^{\mathrm c}\\j<m}}
(1-w_m^{-1}w_j)
\prod_{\substack{m\in\mathbf M,\ j\in\mathbf M^{\mathrm c}\\m<j}}
(1-qw_m^{-1}w_j),
\end{align*}
where
$\mathcal K=(\mathbf K,\mathbf K^{\mathrm c})$ and
$\mathcal M=(\mathbf M,\mathbf M^{\mathrm c})$.
Substituting these four expressions and simplifying, using
\[
\mathbf M=\mathbf K\sqcup\Omega,
\qquad
\mathbf K^{\mathrm c}=\Omega\sqcup\mathbf M^{\mathrm c},
\]
gives
\eqref{eq:allcard-Bhat-product-C}.  The type $B$ formula is obtained
by the same simplification from \eqref{eq:allcard-Br-B} and
\eqref{eq:local-DB-orth}.
\end{proof}

For two variables $x,y$, define
\begin{align}
H_{00}(x,y)&:=xy-q,\ \ \
H_{01}(x,y):=\frac{(qx-y)(xy-1)}{x-y},\notag\\
H_{10}(x,y)&:=\frac{(x-qy)(xy-1)}{x-y},\ \ \
H_{11}(x,y):=qxy-1.
\label{eq:allcard-hweights}
\end{align}
They satisfy $H_{ab}(x,y)=H_{ba}(y,x)$.

\begin{lem}
\label{lem:allcard-universal-cancellation}
For $m\ge1$ and generic variables $x_1,\ldots,x_m$,
\begin{equation}
\sum_{\epsilon\in\{0,1\}^m}
(-1)^{m-|\epsilon|}
\prod_{1\le i<j\le m}
H_{\epsilon_i,\epsilon_j}(x_i,x_j)=0,
\label{eq:allcard-universal-cancellation}
\end{equation}
where $|\epsilon|=\epsilon_1+\cdots+\epsilon_m$.
\end{lem}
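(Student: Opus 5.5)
The plan is to treat the left-hand side of \eqref{eq:allcard-universal-cancellation},
\[
S_m(x_1,\ldots,x_m):=\sum_{\epsilon\in\{0,1\}^m}(-1)^{m-|\epsilon|}\prod_{1\le i<j\le m}H_{\epsilon_i,\epsilon_j}(x_i,x_j),
\]
as a rational function in one variable $x_m$ and show it vanishes identically. I would induct on $m$. The cases $m=1$ and $m=2$ are immediate: for $m=2$ one uses
\[
H_{01}(x,y)+H_{10}(x,y)=(1+q)(xy-1),
\]
so that $(xy-q)+(qxy-1)-(1+q)(xy-1)=0$.

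\textbf{Step 1 (regularity).} First I would show that $S_m$ is symmetric in $x_1,\ldots,x_m$. Since $H_{ab}(x,y)=H_{ba}(y,x)$, swapping $x_i\leftrightarrow x_j$ together with $\epsilon_i\leftrightarrow\epsilon_j$ permutes the summands, so the sum is symmetric. The only possible poles are simple poles along $x_i=x_j$, coming from $H_{01}$ and $H_{10}$. A symmetric function has zero residue there, so $S_m$ is a Laurent polynomial. Counting degrees, each $H_{ab}(x_i,x_m)$ has degree at most $1$ in $x_m$ and no negative powers, so $S_m$ is a polynomial in $x_m$ of degree at most $m-1$.

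\textbf{Step 2 (the two ends).} At $x_m=0$ one computes
\[
H_{0b}(x,0)=-q,\qquad H_{1b}(x,0)=-1,\qquad b\in\{0,1\},
\]
so each factor $H_{\epsilon_i,\epsilon_m}(x_i,0)$ depends only on $\epsilon_i$. Hence the summands with $\epsilon_m=0$ and $\epsilon_m=1$ cancel pairwise, because their signs are opposite, and $S_m|_{x_m=0}=0$. Similarly, as $x_m\to\infty$ the leading coefficient of $H_{\epsilon_i,\epsilon_m}(x_i,x_m)$ is $x_i$ when $\epsilon_i=0$ and $qx_i$ when $\epsilon_i=1$, independent of $\epsilon_m$. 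So the coefficient of $x_m^{m-1}$ also cancels, and $S_m$ is a polynomial in $x_m$ supported in degrees $1,\ldots,m-2$.

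\textbf{Step 3 (interior zeros: the main obstacle).} It remains to produce $m-2$ further zeros in $x_m$, and this is the step I expect to be hardest. The natural candidates are $x_m=x_i^{-1}$ for $1\le i\le m-1$; at these points
\[
H_{00}=1-q,\qquad H_{11}=q-1,\qquad H_{01}=H_{10}=0,
\]
so only summands with $\epsilon_i=\epsilon_m$ survive. I would try to show the surviving terms cancel after the change of variables that regroups them into two copies of $S_{m-2}(x_1,\ldots,\widehat{x_i},\ldots,x_{m-1})$-type sums, then use the induction hypothesis. This mirrors the supersymmetry reduction \eqref{reductionrelation}, where $u_iu_j=1$ removes two rows. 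Because of the ambiguity in this regrouping, I would keep a fallback: interpret $\prod_{i<j}H_{\epsilon_i,\epsilon_j}$ as the two-variable weights in \eqref{Motherfunction} (with $\epsilon_j=1$ marking the first $r$ positions and $\epsilon_j=0$ the $u_{r+1},\ldots,u_n$ positions). The alternating sum over $\epsilon$ is then the coefficient extraction of a partition function with a boundary matrix whose diagonal entries are $\pm1$, and I would try to show it vanishes via the reflection equation \eqref{eq:reflection} and the ice rule \eqref{eq:ice-rule}, in the spirit of Lemma~\ref{deltalem}. With $m-2$ interior zeros, Steps 1--2 force $S_m\equiv0$.
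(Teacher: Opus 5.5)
Your Steps 1 and 2 are sound. The simultaneous swap of $(x_i,\epsilon_i)$ and $(x_j,\epsilon_j)$ makes $S_m$ symmetric, so its at most simple poles on the diagonals cancel; this is a valid substitute for the paper's pairing of the residues of $H_{01}$ and $H_{10}$. The leading-coefficient cancellation giving $\deg_{x_m}S_m\le m-2$ is exactly the paper's argument. The extra zero at $x_m=0$ is correct but not needed.

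The gap is Step 3, which is the heart of the proof and which you leave as ``I would try to show.'' Knowing that only terms with $\epsilon_i=\epsilon_m=s$ survive at $x_m=x_i^{-1}$ is not enough. Using $H_{ab}(x,y)=H_{ba}(y,x)$, each surviving summand still contains, for every $j\ne i,m$, the pair of factors $H_{\epsilon_j,s}(x_j,x_i)\,H_{\epsilon_j,s}(x_j,x_i^{-1})$. Until you know how these depend on $\epsilon_j$, nothing lets you pull out an $S_{m-2}$. The missing idea is the fusion identity: for fixed $s$, this product is independent of $\eta\in\{0,1\}$,
\[
H_{\eta,0}(u,x)H_{\eta,0}(u,x^{-1})=\frac{(ux-q)(u-qx)}{x},\qquad
H_{\eta,1}(u,x)H_{\eta,1}(u,x^{-1})=\frac{(qu-x)(qux-1)}{x}.
\]
With it, all factors involving $x_i$ or $x_m$ come out of the alternating sum over the remaining bits $\epsilon'$. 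This includes the constant $H_{ss}(x_i,x_i^{-1})=\pm(1-q)$. The sign $(-1)^{m-|\epsilon|}$ equals $(-1)^{(m-2)-|\epsilon'|}$ for both values of $s$. What remains is a multiple of $S_{m-2}(x_1,\ldots,\widehat{x_i},\ldots,x_{m-1})$, which vanishes by induction. This gives the $m-1$ roots $x_1^{-1},\ldots,x_{m-1}^{-1}$, more than your degree bound requires, and the proof closes.

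Your fallback does not rescue the step. The identification with the two-variable weights of \eqref{Motherfunction} already fails for pairs with $\epsilon_j=\epsilon_k=1$. There the weight is $\frac{(qu_j-u_k)(u_ju_k-1)}{u_j-u_k}=H_{01}(u_j,u_k)$, not $H_{11}(u_j,u_k)=qu_ju_k-1$. You also give no concrete reflection-equation argument.
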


\begin{proof}
Denote the left-hand side of
\eqref{eq:allcard-universal-cancellation} by
$F_m(x_1,\ldots,x_m)$ and regard it as a rational function of $x_m$.
The only possible finite poles are $x_m=x_i$ with $i<m$.  The residues of
the two off-diagonal factors are
\begin{align}
\operatorname*{Res}_{y=x}H_{01}(x,y)
&=x(1-q)(x^2-1),\ \ \
\operatorname*{Res}_{y=x}H_{10}(x,y)
=-x(1-q)(x^2-1).
 \notag
\end{align}
Fix $\epsilon_j$ for $j\ne i,m$.  The two terms with
$(\epsilon_i,\epsilon_m)=(0,1)$ and $(1,0)$ have the same alternating sign,
and at $x_m=x_i$ all other pair factors agree after interchanging
$\epsilon_i$ and $\epsilon_m$, using
$H_{a,b}(x,y)=H_{b,a}(y,x)$.  Their residues therefore cancel.  Hence
$F_m$ is a polynomial in $x_m$.

To determine the degree in $x_m$, keep $x_1,\ldots,x_{m-1}$ fixed and
set $y=x_m$.  From the definitions of the four functions $H_{a,b}$,
\begin{equation*}
H_{a,b}(x,y)=q^a x y+O(1)
\qquad (y\to\infty),
\qquad a,b\in\{0,1\}.
\end{equation*}
In particular, the coefficient of the leading term is independent of the
second index $b$.  Fix $\epsilon_1,\ldots,\epsilon_{m-1}$.  The factors in
the corresponding summand of $F_m$ that contain $x_m$ satisfy
\begin{align}
\prod_{i=1}^{m-1}
H_{\epsilon_i,\epsilon_m}(x_i,x_m)
={}&
q^{\epsilon_1+\cdots+\epsilon_{m-1}}
\left(\prod_{i=1}^{m-1}x_i\right)x_m^{m-1}
+O(x_m^{m-2}).
 \notag
\end{align}
The leading term on the right-hand side is the same for
$\epsilon_m=0$ and $\epsilon_m=1$, whereas the alternating signs of these
two terms in $F_m$ are opposite.  Hence their coefficients of
$x_m^{m-1}$ cancel.  Therefore
\begin{equation}
\deg_{x_m}F_m\le m-2.
\label{eq:allcard-degree}
\end{equation}

We now prove that $F_m$ has $m-1$ roots.  The case $m=1$ is immediate.
For $m=2$, \eqref{eq:allcard-degree} shows that $F_2$ is constant, while
\[
F_2(x_1,x_1^{-1})
=H_{00}(x_1,x_1^{-1})+H_{11}(x_1,x_1^{-1})=0.
\]
Hence $F_2=0$.  Assume $m\ge3$ and that the identity is known for $m-2$.
Fix $k<m$ and
specialize $x_m=x_k^{-1}$.  Since
$
H_{01}(x_k,x_k^{-1})=H_{10}(x_k,x_k^{-1})=0,
$
only the terms with
$
\epsilon_k=\epsilon_m=:s,
\ s\in\{0,1\},
$
survive.  Let $j\ne k,m$, put $u=x_j$, and write
$\eta=\epsilon_j$.  Using
$H_{ab}(x,y)=H_{ba}(y,x)$ when necessary, the product of the two factors
involving $x_j$ and the variables $x_k,x_m$ can be written as
$H_{\eta,s}(u,x_k)H_{\eta,s}(u,x_k^{-1})$.  For
$\eta\in\{0,1\}$ one has
\begin{align}
H_{\eta,0}(u,x)H_{\eta,0}(u,x^{-1})
&=\frac{(ux-q)(u-qx)}{x}, \
H_{\eta,1}(u,x)H_{\eta,1}(u,x^{-1})
=\frac{(qu-x)(qux-1)}{x}.
\label{eq:allcard-fusion}
\end{align}
For fixed $s$, the right-hand side is independent of the remaining bit
$\eta=\epsilon_j$.  Hence all factors involving $k$ or $m$ are independent
of $\epsilon_j$ for $j\ne k,m$ and may be taken outside the alternating sum
over the remaining bits.  The remaining product and alternating signs are
exactly those defining $F_{m-2}$ in the variables
$x_1,\ldots,\widehat{x_k},\ldots,x_{m-1}$.  By the induction hypothesis,
this sum vanishes for each $s$.  Therefore
\begin{equation*}
F_m\big|_{x_m=x_k^{-1}}=0
\qquad(k=1,\ldots,m-1).
\end{equation*}
The polynomial $F_m$ therefore has degree at most $m-2$ and the $m-1$
distinct roots $x_1^{-1},\ldots,x_{m-1}^{-1}$.  Hence $F_m=0$
identically.
\end{proof}

\begin{prop}
\label{prop:allcard-Bhat-inverse}
For $X\in\{B,C\}$,
\begin{equation}
\bigl(\widehat{\mathrm{Br}}{}^X(\mathbf w,q)^{-1}\bigr)_{\mathbf I,\mathbf M}
=
q^{|\lambda(\mathbf I)|-|\lambda(\mathbf M)|}
\widehat{\mathrm{Br}}{}^X_{\mathbf M^{\mathrm c},\mathbf I^{\mathrm c}}
(\mathbf w,q^{-1}).
\label{eq:allcard-Bhat-inverse}
\end{equation}
\end{prop}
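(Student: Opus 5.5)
Since $\widehat{\mathrm{Br}}{}^X(\mathbf w,q)$ is unitriangular with respect to inclusion, so is its inverse. It therefore suffices to check that the proposed matrix on the right-hand side of \eqref{eq:allcard-Bhat-inverse} is a right inverse. First, note that $\widehat{\mathrm{Br}}{}^X_{\mathbf M^{\mathrm c},\mathbf I^{\mathrm c}}$ vanishes unless $\mathbf I^{\mathrm c}\subset\mathbf M^{\mathrm c}$, i.e.\ unless $\mathbf I\subset\mathbf M$. Its diagonal entries are $1$, which matches the diagonal of the inverse. Thus the claim reduces to the identity
\[
\sum_{\mathbf I\subset\mathbf K\subset\mathbf M}
q^{|\lambda(\mathbf K)|-|\lambda(\mathbf I)|}
\widehat{\mathrm{Br}}{}^X_{\mathbf M,\mathbf K}(\mathbf w,q)\,
\widehat{\mathrm{Br}}{}^X_{\mathbf K^{\mathrm c},\mathbf I^{\mathrm c}}(\mathbf w,q^{-1})=0,
\qquad \mathbf I\subsetneq\mathbf M.
\]
Write $\Theta=\mathbf M\setminus\mathbf I=\{t_1<\cdots<t_m\}$ with $m\ge1$, and parametrize $\mathbf K=\mathbf I\sqcup\{t_i:\epsilon_i=1\}$ by $\epsilon\in\{0,1\}^m$.

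Next I will insert the explicit products of Lemma~\ref{lem:allcard-Bhat-product} for both factors. In $\widehat{\mathrm{Br}}{}^X_{\mathbf M,\mathbf K}(q)$ the set $\Omega=\mathbf M\setminus\mathbf K$ consists of the $t_i$ with $\epsilon_i=0$. In $\widehat{\mathrm{Br}}{}^X_{\mathbf K^{\mathrm c},\mathbf I^{\mathrm c}}(q^{-1})$ the role of $\Omega$ is played by $\mathbf I^{\mathrm c}\setminus\mathbf K^{\mathrm c}=\{t_i:\epsilon_i=1\}$; the role of the ``$\mathbf K$''-set is played by $\mathbf M^{\mathrm c}\cup\{t_i:\epsilon_i=0\}$, and the role of the complement by $\mathbf I$. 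I will sort the factors into three groups.

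\textbf{(a)} Factors coupling some $t_i$ to a fixed index in $\mathbf I$ or $\mathbf M^{\mathrm c}$. In the two factors these appear with $q$ and $q^{-1}$ respectively, and with $t_i$ on complementary sides. I expect that for each pair $(t_i,j)$ the contribution, after combining with the $q$-power, is independent of $\epsilon_i$. The mechanism is the identity $q\,(1-q^{-1}x)/(1-x)=(q-x)/(1-x)$ together with the ordering conditions $k<t$ and $j<t$ in Lemma~\ref{lem:allcard-Bhat-product}. The bookkeeping uses $|\lambda(\mathbf K)|-|\lambda(\mathbf I)|=\sum_i\epsilon_i t_i$, which distributes a factor $q^{t_i}$ over each selected $t_i$. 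This accounts for exactly the number of indices $j<t_i$ outside $\Theta$, plus the pairs inside $\Theta$.

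\textbf{(b)} One-variable factors $(1-q)/(1-w_t^{-2})$ (type $C$) or $(1-q)/(1-w_t^{-1})$ (type $B$), together with their $q^{-1}$ counterparts. These combine to a common factor times the sign $(-1)^{m-|\epsilon|}$, using $(1-q^{-1})=-q^{-1}(1-q)$.

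\textbf{(c)} Pair factors with $t_a<t_b$ both in $\Theta$. These I will rewrite, after clearing a common $\epsilon$-independent denominator such as $\prod_{a<b}(x_a-x_b)(x_ax_b-1)$, as $H_{\epsilon_a,\epsilon_b}(x_a,x_b)$ from \eqref{eq:allcard-hweights} with $x_a=w_{t_a}$ (possibly $w_{t_a}^{-1}$). The four cases $(\epsilon_a,\epsilon_b)$ correspond respectively to both in $\Omega$ (giving $1-qw_a^{-1}w_b^{-1}$, i.e.\ $H_{00}$), both selected (giving $q^{-1}$ analogue, i.e.\ $H_{11}$), and the mixed $k<t$ cases (giving $H_{01},H_{10}$).

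Once the summand is identified with $C\,(-1)^{m-|\epsilon|}\prod_{a<b}H_{\epsilon_a\epsilon_b}(x_a,x_b)$ with $C$ independent of $\epsilon$, Lemma~\ref{lem:allcard-universal-cancellation} gives the vanishing. The type $B$ case is identical, since only group (b) changes.

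The main obstacle is step (c), together with the precise matching of $q$-powers in (a). One must verify that the mixed factors $(1-qw_k^{-1}w_t)/(1-w_k^{-1}w_t)$ coming from the first matrix, and the analogous $q^{-1}$ factors from the second, reproduce exactly $H_{01}$ and $H_{10}$ up to the common normalization, rather than their inverses. I would test this first on $m=1$ and $m=2$ with $\mathbf I=\emptyset$, $N=2$. If the variable choice $x=w_t$ fails, I would try $x=w_t^{-1}$, and if needed adjust by the symmetry $H_{ab}(x,y)=H_{ba}(y,x)$.
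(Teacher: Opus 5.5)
Your proposal is correct and follows essentially the same route as the paper: you verify a right inverse, parametrize the intermediate sets $\mathbf I\subset\mathbf K\subset\mathbf M$ by $\epsilon\in\{0,1\}^m$, and insert the products of Lemma~\ref{lem:allcard-Bhat-product}. The single-index factors and $q^{\sum_i\epsilon_it_i}$ then reduce to a sign times an $\epsilon$-independent factor, and with $x_a=w_{t_a}$ the pair factors are exactly $q^{-\epsilon_b}H_{\epsilon_a\epsilon_b}(x_a,x_b)/(x_ax_b-1)$, so Lemma~\ref{lem:allcard-universal-cancellation} finishes the proof.

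Fix a few slips. The second factor in your displayed sum should be $\widehat{\mathrm{Br}}{}^X_{\mathbf I^{\mathrm c},\mathbf K^{\mathrm c}}(\mathbf w,q^{-1})$; as written it would force $\mathbf K=\mathbf I$, and your later description of its $\Omega$ already assumes the corrected indices. Also, $\mathbf I^{\mathrm c}\subset\mathbf M^{\mathrm c}$ means $\mathbf M\subset\mathbf I$, not $\mathbf I\subset\mathbf M$. Finally, each $q^{t_i}$ is used up as $(t_i-i)+(i-1)+1$: the last unit is absorbed by $(1-q^{-1})=-q^{-1}(1-q)$ in your step (b).
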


\begin{proof}
We prove the type $C$ case. The type $B$ case can be proved in the same way.
It is enough to multiply
$\widehat{\mathrm{Br}}{}^C(\mathbf w,q)$ by the matrix on the right-hand
side of \eqref{eq:allcard-Bhat-inverse} for $X=C$ and show that the product
is the identity.  Its $(\mathbf M,\mathbf L)$ entry is
\begin{equation}
\sum_{\mathbf K\subset[N]}
\widehat{\mathrm{Br}}{}^C_{\mathbf M,\mathbf K}(\mathbf w,q)
q^{|\lambda(\mathbf K)|-|\lambda(\mathbf L)|}
\widehat{\mathrm{Br}}{}^C_{\mathbf L^{\mathrm c},\mathbf K^{\mathrm c}}
(\mathbf w,q^{-1}).
\label{eq:allcard-boundary-product}
\end{equation}
By triangularity, a summand can be nonzero only when
$\mathbf L\subset\mathbf K\subset\mathbf M$.  If $\mathbf M=\mathbf L$,
only $\mathbf K=\mathbf M$ occurs, and the value is $1$ because the
diagonal entries of $\widehat{\mathrm{Br}}{}^C$ are $1$.

Assume now that $\mathbf L\subsetneq\mathbf M$ and write
\[
\mathcal D:=\mathbf M\setminus\mathbf L
=\{d_1<\cdots<d_m\}.
\]
Every intermediate subset $\mathbf K$ with
$\mathbf L\subset\mathbf K\subset\mathbf M$ is uniquely of the form
\[
\mathbf K=\mathbf K(\mathbf U):=\mathbf L\cup\mathbf U,
\qquad
\mathbf U\subset\mathcal D.
\]
For $\mathbf U\subset\mathcal D$, let
$\epsilon=(\epsilon_1,\ldots,\epsilon_m)\in\{0,1\}^m$ be defined by
\[
\epsilon_i=
\begin{cases}
1,& d_i\in\mathbf U,\\
0,& d_i\notin\mathbf U.
\end{cases}
\]
Since $|\lambda(\mathbf A)|=\sum_{a\in\mathbf A}a$, we have
\begin{equation*}
|\lambda(\mathbf K(\mathbf U))|-|\lambda(\mathbf L)|
=
\sum_{d\in\mathbf U}d
=
\sum_{i=1}^m d_i\epsilon_i.
\end{equation*}
Thus, if we set
\begin{equation}
S(\mathbf U)
:=
\widehat{\mathrm{Br}}{}^C_{\mathbf M,\mathbf K(\mathbf U)}(\mathbf w,q)
q^{\sum_{d\in\mathbf U}d}
\widehat{\mathrm{Br}}{}^C_{\mathbf L^{\mathrm c},
\mathbf K(\mathbf U)^{\mathrm c}}(\mathbf w,q^{-1}),
\label{eq:allcard-SU-def}
\end{equation}
then \eqref{eq:allcard-boundary-product} is simply
$\displaystyle
\sum_{\mathbf U\subset\mathcal D}S(\mathbf U).
$

Before treating the general case, it is useful to see explicitly what
happens when $|\mathcal D|=1$.  Write
$
\mathcal D=\{d\},
\
\mathbf M=\mathbf L\sqcup\{d\}.
$
There are only two subsets of $\mathcal D$, namely $\emptyset$ and
$\{d\}$.  If $\mathbf U=\emptyset$, then
$\mathbf K(\mathbf U)=\mathbf L$, and hence
\begin{align*}
S(\emptyset)
&=
\widehat{\mathrm{Br}}{}^C_{\mathbf M,\mathbf L}(\mathbf w,q)
\widehat{\mathrm{Br}}{}^C_{\mathbf L^{\mathrm c},\mathbf L^{\mathrm c}}
(\mathbf w,q^{-1})=
\widehat{\mathrm{Br}}{}^C_{\mathbf M,\mathbf L}(\mathbf w,q),
\end{align*}
since the diagonal entries of $\widehat{\mathrm{Br}}{}^C$ are $1$.
If $\mathbf U=\{d\}$, then $\mathbf K(\mathbf U)=\mathbf M$, so
\begin{align*}
S(\{d\})
&=
\widehat{\mathrm{Br}}{}^C_{\mathbf M,\mathbf M}(\mathbf w,q)
q^d
\widehat{\mathrm{Br}}{}^C_{\mathbf L^{\mathrm c},\mathbf M^{\mathrm c}}
(\mathbf w,q^{-1})=
q^d
\widehat{\mathrm{Br}}{}^C_{\mathbf L^{\mathrm c},\mathbf M^{\mathrm c}}
(\mathbf w,q^{-1}).
\end{align*}
Using \eqref{eq:allcard-Bhat-product-C}, these two nontrivial matrix entries
are
\begin{align*}
\widehat{\mathrm{Br}}{}^C_{\mathbf M,\mathbf L}(\mathbf w,q)
={}&
\frac{1-q}{1-w_d^{-2}}
\prod_{\substack{l\in\mathbf L\\ l<d}}
\frac{1-qw_l^{-1}w_d}{1-w_l^{-1}w_d}
\prod_{\substack{a\in\mathbf M^{\mathrm c}\\ a<d}}
\frac{1-qw_d^{-1}w_a}{1-w_d^{-1}w_a},\\
\widehat{\mathrm{Br}}{}^C_{\mathbf L^{\mathrm c},\mathbf M^{\mathrm c}}
(\mathbf w,q^{-1})
={}&
\frac{1-q^{-1}}{1-w_d^{-2}}
\prod_{\substack{a\in\mathbf M^{\mathrm c}\\ a<d}}
\frac{1-q^{-1}w_a^{-1}w_d}{1-w_a^{-1}w_d}
\prod_{\substack{l\in\mathbf L\\ l<d}}
\frac{1-q^{-1}w_d^{-1}w_l}{1-w_d^{-1}w_l}.
\end{align*}
For every index smaller than $d$,
\begin{align*}
\frac{1-q^{-1}w_a^{-1}w_d}{1-w_a^{-1}w_d}
&=
q^{-1}
\frac{1-qw_d^{-1}w_a}{1-w_d^{-1}w_a},\ \ \
\frac{1-q^{-1}w_d^{-1}w_l}{1-w_d^{-1}w_l}
=
q^{-1}
\frac{1-qw_l^{-1}w_d}{1-w_l^{-1}w_d}.
\end{align*}
Moreover,
$\displaystyle
\frac{1-q^{-1}}{1-q}=-q^{-1}.
$
Since $\mathbf M=\mathbf L\sqcup\{d\}$, every index smaller
than $d$ belongs to exactly one of $\mathbf L$ and
$\mathbf M^{\mathrm c}$.
Thus the two products contribute $d-1$ factors of $q^{-1}$
in total. Therefore
\begin{equation*}
\widehat{\mathrm{Br}}{}^C_{\mathbf L^{\mathrm c},\mathbf M^{\mathrm c}}
(\mathbf w,q^{-1})
=
-q^{-d}
\widehat{\mathrm{Br}}{}^C_{\mathbf M,\mathbf L}(\mathbf w,q).
\end{equation*}
Consequently,
\[
S(\emptyset)+S(\{d\})
=
\widehat{\mathrm{Br}}{}^C_{\mathbf M,\mathbf L}(\mathbf w,q)
+q^d
\widehat{\mathrm{Br}}{}^C_{\mathbf L^{\mathrm c},\mathbf M^{\mathrm c}}
(\mathbf w,q^{-1})
=0.
\]

We now prove the general case. Fix $\mathbf U\subseteq\mathcal D$, where
\[
\mathcal D=\mathbf M\setminus\mathbf L=\{d_1<\cdots<d_m\},
\qquad \mathbf K(\mathbf U)=\mathbf L\sqcup\mathbf U.
\]
Put $x_j=w_{d_j}$, and let $\epsilon_j=1$ if $d_j\in\mathbf U$, and
$\epsilon_j=0$ otherwise. Recall that
\[
S(\mathbf U)=
\widehat{\mathrm{Br}}{}^C_{\mathbf M,\mathbf K(\mathbf U)}(\mathbf w,q)
q^{\sum_jd_j\epsilon_j}
\widehat{\mathrm{Br}}{}^C_{\mathbf L^{\mathrm c},\mathbf K(\mathbf U)^{\mathrm c}}
(\mathbf w,q^{-1}).
\]
We first expand both branching coefficients, organize their factors, and
then compare the resulting expression with $S(\emptyset)$.
All quotients below are taken in $\mathbb C(q,w_1,\ldots,w_N)$.

Since $\mathbf M\setminus\mathbf K(\mathbf U)=\mathcal D\setminus\mathbf U$,
applying \eqref{eq:allcard-Bhat-product-C} to the first coefficient gives
\begin{align}
&\widehat{\mathrm{Br}}{}^C_{\mathbf M,\mathbf K(\mathbf U)}(\mathbf w,q)
\notag\\
&=(1-q)^{m-|\mathbf U|}
\prod_{t\in\mathcal D\setminus\mathbf U}\frac{1}{1-w_t^{-2}}
\prod_{\substack{a<b\\a,b\in\mathcal D\setminus\mathbf U}}
\frac{1-qw_a^{-1}w_b^{-1}}{1-w_a^{-1}w_b^{-1}}
\notag\\
&\quad\times
\prod_{\substack{k\in\mathbf L\sqcup\mathbf U,\ t\in\mathcal D\setminus\mathbf U\\k<t}}
\frac{1-qw_k^{-1}w_t}{1-w_k^{-1}w_t}
\prod_{\substack{t\in\mathcal D\setminus\mathbf U,\ b\in\mathbf M^{\mathrm c}\\b<t}}
\frac{1-qw_t^{-1}w_b}{1-w_t^{-1}w_b}.
\label{eq:allcard-SU-first-expanded}
\end{align}
For the second coefficient, note that
\[
[N]=\mathbf M^{\mathrm c}\sqcup\mathbf L\sqcup\mathcal D,
\qquad
\mathbf K(\mathbf U)=\mathbf L\sqcup\mathbf U,
\]
where $\mathbf U\subset\mathcal D$. Taking complements in $[N]$ and using
$A^{\mathrm c}\setminus B^{\mathrm c}=B\setminus A$ gives
\begin{gather*}
\mathbf L^{\mathrm c}\setminus\mathbf K(\mathbf U)^{\mathrm c}
=\mathbf K(\mathbf U)\setminus\mathbf L=\mathbf U,
\qquad
\mathbf K(\mathbf U)^{\mathrm c}
=\mathbf M^{\mathrm c}\sqcup(\mathcal D\setminus\mathbf U),
\qquad
(\mathbf L^{\mathrm c})^{\mathrm c}=\mathbf L.
\end{gather*}
Thus \eqref{eq:allcard-Bhat-product-C}, with $q$ replaced by $q^{-1}$, gives
\begin{align}
&\widehat{\mathrm{Br}}{}^C_{\mathbf L^{\mathrm c},\mathbf K(\mathbf U)^{\mathrm c}}
(\mathbf w,q^{-1})\notag\\
&=(1-q^{-1})^{|\mathbf U|}
\prod_{t\in\mathbf U}\frac{1}{1-w_t^{-2}}
\prod_{\substack{a<b\\a,b\in\mathbf U}}
\frac{1-q^{-1}w_a^{-1}w_b^{-1}}{1-w_a^{-1}w_b^{-1}}
\notag\\
&\quad\times
\prod_{\substack{k\in\mathbf M^{\mathrm c}\sqcup(\mathcal D\setminus\mathbf U),\ t\in\mathbf U\\k<t}}
\frac{1-q^{-1}w_k^{-1}w_t}{1-w_k^{-1}w_t}
\prod_{\substack{t\in\mathbf U,\ b\in\mathbf L\\b<t}}
\frac{1-q^{-1}w_t^{-1}w_b}{1-w_t^{-1}w_b}.
\label{eq:allcard-SU-second-expanded}
\end{align}
In these formulas, the index in the last product is denoted by $b$ to
avoid confusion with the index $j$ in $d_j$.

The prefactors and the single-index products in
\eqref{eq:allcard-SU-first-expanded} and \eqref{eq:allcard-SU-second-expanded},
together with the explicit power of $q$ in $S(\mathbf U)$, multiply to
\[
q^{\sum_jd_j\epsilon_j}
(1-q)^{m-|\mathbf U|}(1-q^{-1})^{|\mathbf U|}
\prod_{j=1}^{m}\frac{1}{1-x_j^{-2}}.
\]
Here the two single-index products combine into the product over
$\mathcal D$, because $\mathcal D=(\mathcal D\setminus\mathbf U)\sqcup\mathbf U$.

We now organize the remaining two-index factors in
\eqref{eq:allcard-SU-first-expanded} and \eqref{eq:allcard-SU-second-expanded}.
First fix $d_j\in\mathcal D$ and collect the factors involving $d_j$ and
an index in $\mathbf L$ or $\mathbf M^{\mathrm c}$.
These are precisely the indices not in $\mathcal D$, since
$[N]=\mathbf L\sqcup\mathcal D\sqcup\mathbf M^{\mathrm c}$.

Suppose that $\epsilon_j=0$, so that $d_j\in\mathcal D\setminus\mathbf U$.
In the first mixed product in \eqref{eq:allcard-SU-first-expanded}, set
$t=d_j$ and restrict $k$ to $\mathbf L$.
In the last product in \eqref{eq:allcard-SU-first-expanded}, set $t=d_j$
and retain all $b\in\mathbf M^{\mathrm c}$ satisfying $b<d_j$.
These factors give
\begin{equation}
E_j^{(0)}:=
\prod_{\substack{k\in\mathbf L\\k<d_j}}
\frac{1-qw_k^{-1}x_j}{1-w_k^{-1}x_j}
\prod_{\substack{b\in\mathbf M^{\mathrm c}\\b<d_j}}
\frac{1-qx_j^{-1}w_b}{1-x_j^{-1}w_b}.
\label{eq:allcard-E0-def}
\end{equation}
The remaining choices $k\in\mathbf U$ in the first mixed product
(recall that $\mathbf K(\mathbf U)=\mathbf L\sqcup\mathbf U$) involve
two indices in $\mathcal D$ and will be considered below.
There is no factor involving $d_j$ and an index in $\mathbf L$ or
$\mathbf M^{\mathrm c}$ in \eqref{eq:allcard-SU-second-expanded}:
every two-index factor in \eqref{eq:allcard-SU-second-expanded} contains
an index in $\mathbf U$, whereas neither member of such a pair belongs
to $\mathbf U$.

Suppose instead that $\epsilon_j=1$, so that $d_j\in\mathbf U$.
In the first mixed product in \eqref{eq:allcard-SU-second-expanded}, set
$t=d_j$ and restrict $k$ to $\mathbf M^{\mathrm c}$.
In the last product in \eqref{eq:allcard-SU-second-expanded}, set $t=d_j$
and retain all $b\in\mathbf L$ satisfying $b<d_j$.
These factors give
\begin{equation}
E_j^{(1)}:=
\prod_{\substack{k\in\mathbf M^{\mathrm c}\\k<d_j}}
\frac{1-q^{-1}w_k^{-1}x_j}{1-w_k^{-1}x_j}
\prod_{\substack{b\in\mathbf L\\b<d_j}}
\frac{1-q^{-1}x_j^{-1}w_b}{1-x_j^{-1}w_b}.
\label{eq:allcard-E1-def}
\end{equation}
The remaining choices $k\in\mathcal D\setminus\mathbf U$ in the first
mixed product (recall that $\mathbf K(\mathbf U)^{\mathrm c}
=\mathbf M^{\mathrm c}\sqcup(\mathcal D\setminus\mathbf U)$) likewise
involve two indices in $\mathcal D$.
There is no factor involving $d_j$ and an index in $\mathbf L$ or
$\mathbf M^{\mathrm c}$ in \eqref{eq:allcard-SU-first-expanded}, because
every two-index factor in \eqref{eq:allcard-SU-first-expanded} contains
an index in $\mathcal D\setminus\mathbf U$.

We next collect the factors involving two indices in $\mathcal D$,
including the choices left aside above.
Fix $d_i<d_j$, and write $x=x_i$, $y=x_j$.

If $(\epsilon_i,\epsilon_j)=(0,0)$, both indices belong to
$\mathcal D\setminus\mathbf U$. In the product
\[
\prod_{\substack{a<b\\a,b\in\mathcal D\setminus\mathbf U}}
\frac{1-qw_a^{-1}w_b^{-1}}{1-w_a^{-1}w_b^{-1}}
\]
in \eqref{eq:allcard-SU-first-expanded}, take $a=d_i$ and $b=d_j$.
The resulting factor is
\[
\frac{1-qw_{d_i}^{-1}w_{d_j}^{-1}}{1-w_{d_i}^{-1}w_{d_j}^{-1}}
=\frac{xy-q}{xy-1}=:\mathsf P_{00}(x,y).
\]
No mixed product in \eqref{eq:allcard-SU-first-expanded} contains this
pair, since both indices lie in $\mathcal D\setminus\mathbf U$.
Nor does \eqref{eq:allcard-SU-second-expanded} contain a factor involving
this pair, since neither index belongs to $\mathbf U$.

If $(\epsilon_i,\epsilon_j)=(1,1)$, both indices belong to $\mathbf U$.
In the product
\[
\prod_{\substack{a<b\\a,b\in\mathbf U}}
\frac{1-q^{-1}w_a^{-1}w_b^{-1}}{1-w_a^{-1}w_b^{-1}}
\]
in \eqref{eq:allcard-SU-second-expanded}, take $a=d_i$ and $b=d_j$.
This gives
\[
\frac{1-q^{-1}w_{d_i}^{-1}w_{d_j}^{-1}}{1-w_{d_i}^{-1}w_{d_j}^{-1}}
=\frac{xy-q^{-1}}{xy-1}=:\mathsf P_{11}(x,y).
\]
No mixed product in \eqref{eq:allcard-SU-second-expanded} contains this
pair, and \eqref{eq:allcard-SU-first-expanded} contributes no factor
involving the pair because neither index belongs to
$\mathcal D\setminus\mathbf U$.

If $(\epsilon_i,\epsilon_j)=(1,0)$, take $k=d_i\in\mathbf U$ and
$t=d_j\in\mathcal D\setminus\mathbf U$ in the first mixed product in
\eqref{eq:allcard-SU-first-expanded}.
The condition $k<t$ holds, and the factor is
\[
\frac{1-qw_{d_i}^{-1}w_{d_j}}{1-w_{d_i}^{-1}w_{d_j}}
=\frac{x-qy}{x-y}=:\mathsf P_{10}(x,y).
\]
This is one of the choices $k\in\mathbf U$ left aside when defining
$E_j^{(0)}$. In the first mixed product in
\eqref{eq:allcard-SU-second-expanded}, the pair would require $k=d_j$
and $t=d_i$, which fails $k<t$.
The pair occurs in none of the other products: the two indices are in
different parts of $\mathcal D$, and neither belongs to $\mathbf L$ or
$\mathbf M^{\mathrm c}$.

Finally, if $(\epsilon_i,\epsilon_j)=(0,1)$, take
$k=d_i\in\mathcal D\setminus\mathbf U$ and $t=d_j\in\mathbf U$ in the
first mixed product in \eqref{eq:allcard-SU-second-expanded}.
This gives
\[
\frac{1-q^{-1}w_{d_i}^{-1}w_{d_j}}{1-w_{d_i}^{-1}w_{d_j}}
=\frac{x-q^{-1}y}{x-y}=:\mathsf P_{01}(x,y).
\]
This is one of the choices $k\in\mathcal D\setminus\mathbf U$ left aside
when defining $E_j^{(1)}$.
In the first mixed product in \eqref{eq:allcard-SU-first-expanded},
the pair would require $k=d_j$ and $t=d_i$, again contrary to $k<t$.
As in the preceding case, none of the other products contains this pair.

Every two-index factor in \eqref{eq:allcard-SU-first-expanded} and
\eqref{eq:allcard-SU-second-expanded} has now been included either in
some $E_j^{(\epsilon_j)}$ or in some
$\mathsf P_{\epsilon_i\epsilon_j}(x_i,x_j)$.
By the definition of $S(\mathbf U)$ in
\eqref{eq:allcard-SU-def}, combining the factors computed above gives
\begin{align*}
S(\mathbf U)
={}&q^{\sum_jd_j\epsilon_j}
(1-q)^{m-|\mathbf U|}(1-q^{-1})^{|\mathbf U|}
\prod_{j=1}^{m}\frac{1}{1-x_j^{-2}}\prod_{j=1}^{m}E_j^{(\epsilon_j)}
\prod_{i<j}\mathsf P_{\epsilon_i\epsilon_j}(x_i,x_j).
\end{align*}
For $\mathbf U=\emptyset$, all $\epsilon_j$ are zero, so
\[
S(\emptyset)=(1-q)^m
\prod_{j=1}^{m}\frac{1}{1-x_j^{-2}}
\prod_{j=1}^{m}E_j^{(0)}\prod_{i<j}\mathsf P_{00}(x_i,x_j).
\]
Dividing these expressions, we obtain
\begin{equation}
\frac{S(\mathbf U)}{S(\emptyset)}
=(-1)^{|\mathbf U|}q^{\sum_j(d_j-1)\epsilon_j}
\prod_{j=1}^{m}\frac{E_j^{(\epsilon_j)}}{E_j^{(0)}}
\prod_{i<j}\frac{\mathsf P_{\epsilon_i\epsilon_j}(x_i,x_j)}
{\mathsf P_{00}(x_i,x_j)}.
\label{eq:allcard-SU-factor-ratio}
\end{equation}

We simplify the two products in this quotient separately.
We compare $E_j^{(1)}$ in \eqref{eq:allcard-E1-def} with
$E_j^{(0)}$ in \eqref{eq:allcard-E0-def} in the same way as in the
one-element case above. The product over $\mathbf L$ in
\eqref{eq:allcard-E1-def} is compared with the product over $\mathbf L$
in \eqref{eq:allcard-E0-def}, and likewise for $\mathbf M^{\mathrm c}$.
Each corresponding pair of factors again gives a ratio of $q^{-1}$.
For an element $a\in\mathbf L$ with $a<d_j$, the corresponding factors satisfy
\[
\frac{1-q^{-1}x_j^{-1}w_a}{1-x_j^{-1}w_a}
=q^{-1}\frac{1-qw_a^{-1}x_j}{1-w_a^{-1}x_j}.
\]
For an element $a\in\mathbf M^{\mathrm c}$ with $a<d_j$, they satisfy
\[
\frac{1-q^{-1}w_a^{-1}x_j}{1-w_a^{-1}x_j}
=q^{-1}\frac{1-qx_j^{-1}w_a}{1-x_j^{-1}w_a}.
\]
As in the one-element case, we count the indices in $\mathbf L$ or
$\mathbf M^{\mathrm c}$ smaller than $d_j$.
Of the $d_j-1$ smaller indices, $j-1$ now belong to $\mathcal D$,
namely $d_1,\ldots,d_{j-1}$, leaving $d_j-j$ factors of $q^{-1}$.
Hence
\[
\frac{E_j^{(\epsilon_j)}}{E_j^{(0)}}=q^{-(d_j-j)\epsilon_j}.
\]

From these expressions and the definitions \eqref{eq:allcard-hweights}
of $H_{ab}$, it is easy to check that
\[
\frac{\mathsf P_{\epsilon_i\epsilon_j}(x_i,x_j)}{\mathsf P_{00}(x_i,x_j)}
=q^{-\epsilon_j}
\frac{H_{\epsilon_i,\epsilon_j}(x_i,x_j)}{H_{00}(x_i,x_j)}.
\]
For each fixed $j$, there are $j-1$ choices of $i<j$.
Multiplying over all pairs therefore gives
\[
\prod_{i<j}\frac{\mathsf P_{\epsilon_i\epsilon_j}(x_i,x_j)}{\mathsf P_{00}(x_i,x_j)}
=q^{-\sum_j(j-1)\epsilon_j}
\prod_{i<j}\frac{H_{\epsilon_i,\epsilon_j}(x_i,x_j)}{H_{00}(x_i,x_j)}.
\]
Substituting the two product ratios into
\eqref{eq:allcard-SU-factor-ratio}, the total exponent of $q$ is
\[
\sum_j(d_j-1)\epsilon_j-\sum_j(d_j-j)\epsilon_j
-\sum_j(j-1)\epsilon_j=0.
\]
We conclude that
\[
\frac{S(\mathbf U)}{S(\emptyset)}
=(-1)^{|\mathbf U|}
\prod_{i<j}\frac{H_{\epsilon_i,\epsilon_j}(x_i,x_j)}{H_{00}(x_i,x_j)}.
\]
The denominator is independent of $\mathbf U$, and the correspondence
between $\mathbf U\subseteq\mathcal D$ and $\epsilon\in\{0,1\}^m$ is a
bijection, with $|\mathbf U|=|\epsilon|$. Therefore,
\[
\sum_{\mathbf U\subseteq\mathcal D}S(\mathbf U)
=\frac{S(\emptyset)}{\prod_{i<j}H_{00}(x_i,x_j)}
\sum_{\epsilon\in\{0,1\}^m}(-1)^{|\epsilon|}
\prod_{i<j}H_{\epsilon_i,\epsilon_j}(x_i,x_j).
\]
The final sum vanishes by Lemma~\ref{lem:allcard-universal-cancellation}, since
$(-1)^{|\epsilon|}=(-1)^m(-1)^{m-|\epsilon|}$.
Thus the matrix product \eqref{eq:allcard-boundary-product} has entry
zero when $\mathbf L\subsetneq\mathbf M$.
If $\mathbf L\not\subseteq\mathbf M$, every summand vanishes by
triangularity, while the diagonal entries are $1$, as shown above.
This proves \eqref{eq:allcard-Bhat-inverse} in type $C$.

\end{proof}

\begin{proof}[Proof of Theorem~\ref{thm:BC-all-cardinality-biorthogonality}]
From \eqref{eq:allcard-P-factorization},
\[
\mathsf A^X(\mathbf w,q)^{-1}
=
\widehat\Phi(\mathbf w,q)^{-1}
\widehat{\mathrm{Br}}{}^X(\mathbf w,q)^{-1}.
\]
The first equality below is the $(\mathbf I,\mathbf L)$ entry of the
matrix product. The second follows from Lemma~\ref{lem:allcard-Phihat-inverse}
and Proposition~\ref{prop:allcard-Bhat-inverse}; the last equality uses
\eqref{eq:allcard-P-factorization}:
\begin{align}
\bigl(\mathsf A^X(\mathbf w,q)^{-1}\bigr)_{\mathbf I,\mathbf L}
&=
\sum_{\mathbf K\subset[N]}
\bigl(\widehat\Phi(\mathbf w,q)^{-1}\bigr)_{\mathbf I,\mathbf K}
\bigl(\widehat{\mathrm{Br}}{}^X(\mathbf w,q)^{-1}\bigr)_{\mathbf K,\mathbf L}
\notag\\
&=
q^{|\lambda(\mathbf I)|-|\lambda(\mathbf L)|}
\sum_{\mathbf K\subset[N]}
\widehat{\mathrm{Br}}{}^X_{\mathbf L^{\mathrm c},\mathbf K^{\mathrm c}}
(\mathbf w,q^{-1})
\widehat\Phi_{\mathbf K^{\mathrm c},\mathbf I^{\mathrm c}}
(\mathbf w,q^{-1})
\notag\\
&=
q^{|\lambda(\mathbf I)|-|\lambda(\mathbf L)|}
\left(\widehat{\mathrm{Br}}{}^X(\mathbf w,q^{-1})
\widehat\Phi(\mathbf w,q^{-1})\right)_{\mathbf L^{\mathrm c},\mathbf I^{\mathrm c}}
\notag\\
&=
q^{|\lambda(\mathbf I)|-|\lambda(\mathbf L)|}
\mathsf A^X_{\mathbf L^{\mathrm c},\mathbf I^{\mathrm c}}
(\mathbf w,q^{-1}).
\end{align}
The identity
$\mathsf A^X(\mathbf w,q)\mathsf A^X(\mathbf w,q)^{-1}=\mathrm{Id}$ is exactly
\eqref{eq:BC-all-cardinality-biorthogonality-matrix}; expanding the definition
\eqref{eq:allcard-A-def} gives
\eqref{eq:BC-all-cardinality-biorthogonality}.
\end{proof}

\medskip
\noindent\textbf{Example.}
Take $N=1$ and $X=C$, and write $\mathbf w=(w_1)$.
By \eqref{eq:allcard-Delta-factor}, the diagonal values are
\begin{align*}
\Delta^C_{\emptyset}(\mathbf w,q)
&=\widetilde{WC}_{1,1,0}
(1\,|w_1\,|\emptyset)
=1-qw_1^2,\\
\Delta^C_{\{1\}}(\mathbf w,q)
&=\widetilde{WC}_{1,1,1}
(w_1\,|w_1\,|\{1\})
=1-w_1^{-2}.
\end{align*}
The remaining value needed below is
\[
\widetilde{WC}_{1,1,0}
(w_1\,|w_1\,|\emptyset)=1-q.
\]

Take $\mathbf M=\{1\}$ and $\mathbf L=\emptyset$.
Then
$
\mathbf L^{\mathrm c}=\{1\},
\ |\lambda(\mathbf L)|=0.
$
The two summation subsets in
\eqref{eq:BC-all-cardinality-biorthogonality} are
$\mathbf I=\emptyset$ and $\mathbf I=\{1\}$,
with complements $\{1\}$ and $\emptyset$, respectively,
and the corresponding powers of $q$ are $1$ and $q$.

The contribution from $\mathbf I=\emptyset$ is
\begin{align*}
T_{\emptyset}
&=
\frac{
\widetilde{WC}_{1,1,0}
(w_1\,|w_1\,|\emptyset)
}{
\Delta^C_{\{1\}}(\mathbf w,q)
}
\frac{
\left.
\widetilde{WC}_{1,1,1}
(w_1\,|w_1\,|\{1\})
\right|_{q\mapsto q^{-1}}
}{
\Delta^C_{\{1\}}(\mathbf w,q^{-1})
}=\frac{1-q}{1-w_1^{-2}}.
\end{align*}
The contribution from $\mathbf I=\{1\}$ is
\begin{align*}
T_{\{1\}}
&=
q\,
\frac{
\widetilde{WC}_{1,1,1}
(w_1\,|w_1\,|\{1\})
}{
\Delta^C_{\{1\}}(\mathbf w,q)
}
\frac{
\left.
\widetilde{WC}_{1,1,0}
(w_1\,|w_1\,|\emptyset)
\right|_{q\mapsto q^{-1}}
}{
\Delta^C_{\{1\}}(\mathbf w,q^{-1})
}=
q\,\frac{1-q^{-1}}{1-w_1^{-2}}
=-\frac{1-q}{1-w_1^{-2}}.
\end{align*}
Consequently,
\[
T_{\emptyset}+T_{\{1\}}
=0=\delta_{\{1\},\emptyset}.
\]
The two summands are generically nonzero and belong to
different cardinalities.

\medskip
\noindent\textbf{Example.}
Take $N=2$ and $X=C$.  By \eqref{eq:allcard-Delta-factor},
Theorem~\ref{RTVtheorem} $\mathrm{(ii)}$, and
\eqref{general-diagonal-factor},
\begin{align}
\Delta^C_{\{1\}}(\mathbf w,q)
&=(1-w_1^{-2})(1-qw_2^2)
\left(1-q\frac{w_2}{w_1}\right),
 \notag\\
\Delta^C_{\{2\}}(\mathbf w,q)
&=(1-w_2^{-2})(1-qw_1^2)
\left(1-\frac{w_1}{w_2}\right),
 \notag\\
\Delta^C_{\{2\}}(\mathbf w,q^{-1})
&=(1-w_2^{-2})(1-q^{-1}w_1^2)
\left(1-\frac{w_1}{w_2}\right).
 \notag
\end{align}
In terms of the normalized type $C$ functions, these are
\begin{align*}
\Delta^C_{\{1\}}(\mathbf w,q)
&=\widetilde{WC}_{2,2,1}
\bigl(w_1,1\,|\mathbf w\,|\{1\}\bigr),\\
\Delta^C_{\{2\}}(\mathbf w,q)
&=\widetilde{WC}_{2,2,1}
\bigl(w_2,1\,|\mathbf w\,|\{2\}\bigr),\\
\Delta^C_{\{2\}}(\mathbf w,q^{-1})
&=\left.\widetilde{WC}_{2,2,1}
\bigl(w_2,1\,|\mathbf w\,|\{2\}\bigr)
\right|_{q\mapsto q^{-1}}.
\end{align*}
The remaining values needed below are
\begin{align}
\widetilde{WC}_{2,2,1}
\bigl(w_2,1\,|\mathbf w\,|\{1\}\bigr)
&=(1-w_2^{-2})(1-qw_1^2)(1-q),
 \notag\\
\left.\widetilde{WC}_{2,2,1}
\bigl(w_2,1\,|\mathbf w\,|\{1\}\bigr)
\right|_{q\mapsto q^{-1}}
&=(1-w_2^{-2})(1-q^{-1}w_1^2)(1-q^{-1}).
 \notag
\end{align}
The support and lower-cardinality vanishing relations give
\begin{align}
\widetilde{WC}_{2,2,1}
\bigl(w_1,1\,|\mathbf w\,|\{2\}\bigr)&=0,
\label{eq:allcard-N2-WC12zero}\\
\widetilde{WC}_{2,2,2}
\bigl(w_i,1\,|\mathbf w\,|\{1,2\}\bigr)&=0
\qquad(i=1,2).
\label{eq:allcard-N2-WC2zero}
\end{align}

First take $\mathbf M=\mathbf L=\{1\}$.  Then
$\mathbf L^{\mathrm c}=\{2\}$ and
$|\lambda(\mathbf L)|=1$.  The four summation subsets are
$
\emptyset,\ \{1\},\ \{2\},\ \{1,2\},
$
with complements
$
\{1,2\},\ \{2\},\ \{1\},\ \emptyset
$
and $q$-powers $q^{-1},1,q,q^2$, respectively.  The
$\mathbf I=\emptyset$ term is zero because its second factor is
\[
\left.\widetilde{WC}_{2,2,2}
\bigl(w_2,1\,|\mathbf w\,|\{1,2\}\bigr)
\right|_{q\mapsto q^{-1}}=0.
\]
For $\mathbf I=\{1\}$, both quotients in
\eqref{eq:BC-all-cardinality-biorthogonality} are equal to $1$, so this
term is $1$.  The $\mathbf I=\{2\}$ term is zero by
\eqref{eq:allcard-N2-WC12zero}, and the $\mathbf I=\{1,2\}$ term is zero by
\eqref{eq:allcard-N2-WC2zero}.  Hence
\[
0+1+0+0=1=\delta_{\{1\},\{1\}}.
\]

Next, keep $\mathbf L=\{1\}$ and take $\mathbf M=\{2\}$.  Again the $\mathbf I=\emptyset$ and
$\mathbf I=\{1,2\}$ terms vanish.  The $\mathbf I=\{1\}$ term is
\begin{align}
T_{\{1\}}
&=
\frac{
\widetilde{WC}_{2,2,1}
\bigl(w_2,1\,|\mathbf w\,|\{1\}\bigr)
}{\Delta^C_{\{2\}}(\mathbf w,q)}
\frac{\Delta^C_{\{2\}}(\mathbf w,q^{-1})}
{\Delta^C_{\{2\}}(\mathbf w,q^{-1})}
=\frac{1-q}{1-w_1/w_2},
 \notag
\end{align}
whereas the $\mathbf I=\{2\}$ term is
\begin{align}
T_{\{2\}}
&=q
\frac{\Delta^C_{\{2\}}(\mathbf w,q)}
{\Delta^C_{\{2\}}(\mathbf w,q)}
\frac{
\left.\widetilde{WC}_{2,2,1}
\bigl(w_2,1\,|\mathbf w\,|\{1\}\bigr)
\right|_{q\mapsto q^{-1}}
}{\Delta^C_{\{2\}}(\mathbf w,q^{-1})}
=q\frac{1-q^{-1}}{1-w_1/w_2}
=-\frac{1-q}{1-w_1/w_2}.
 \notag
\end{align}
Consequently
\[
T_{\{1\}}+T_{\{2\}}=0
=\delta_{\{2\},\{1\}}.
\]

For the following inversion formula, we return to arbitrary
$n$ with $r\le n$. The specialization $\mathbf w_{\mathbf M}$
is again padded with $n-|\mathbf M|$ entries equal to $1$.

\begin{corollary}
\label{cor:BC-branching-formula-inversion}
Let $X\in\{B,C\}$ and $0\le r\le\min(n,N)$.
Let $\mathbf K\subset[N]$ be an $r$-element subset, and let
$\mathbf M\subset[N]$ have cardinality
$0\le s=|\mathbf M|\le\min(n,N)$.
Then
\begin{align}
&\sum_{\substack{\mathbf I\subset[N]\\|\mathbf I|=r}}
\widetilde{WX}_{N,n,r}
(\mathbf w_{\mathbf M}\,|\mathbf w\,|\mathbf I)
\bigl(\Phi^{(r)}(\mathbf w,q)^{-1}\bigr)_{\mathbf I,\mathbf K}
=
\begin{cases}
\operatorname{Br}^X_{\mathbf M,\mathbf K}(\mathbf w,q),
&s\ge r\text{ and }\mathbf K\subset\mathbf M,\\[2mm]
0,&\text{otherwise}.
\end{cases}
\label{eq:allcard-branching-inversion}
\end{align}
\end{corollary}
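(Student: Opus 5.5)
The plan is to split according to whether $s\ge r$ or $s<r$ and, in the first case, to multiply the branching factorization \eqref{eq:allcard-branching-factorization} on the right by $\Phi^{(r)}(\mathbf w,q)^{-1}$, whose existence is Corollary~\ref{cor:Phi-inverse}.

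\emph{The case $s<r$.} Here the left-hand side of \eqref{eq:allcard-branching-inversion} vanishes termwise. Indeed, every summand contains $\widetilde{WX}_{N,n,r}(\mathbf w_{\mathbf M}|\mathbf w|\mathbf I)$ with $|\mathbf M|<|\mathbf I|$, which is zero by \eqref{eq:typeC-lower-cardinality-vanishing} and \eqref{eq:typeB-lower-cardinality-vanishing}. The right-hand side is $0$ by definition, so this case is immediate.

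\emph{The case $s\ge r$.} First, by the stability relations \eqref{stabilityWC} and \eqref{stabilityWB}, the specialized value does not depend on the padding length $n$. So the branching formulas \eqref{eq:typeC-cross-sector-branching} and \eqref{eq:typeB-cross-sector-branching} apply for any admissible $n\ge s$, in the repackaged form \eqref{eq:allcard-branching-factorization}. Fix $\mathbf M$ and consider the row vector $\bigl(\widetilde{WX}_{N,n,r}(\mathbf w_{\mathbf M}|\mathbf w|\mathbf I)\bigr)_{|\mathbf I|=r}$. Then \eqref{eq:allcard-branching-factorization} says exactly that this row equals $b_{\mathbf M}\,\Phi^{(r)}(\mathbf w,q)$. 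Here $b_{\mathbf M}$ is the row vector indexed by $r$-subsets $\mathbf K'$, with entry $\operatorname{Br}^X_{\mathbf M,\mathbf K'}$ if $\mathbf K'\subset\mathbf M$ and $0$ otherwise. Multiplying on the right by $\Phi^{(r)}(\mathbf w,q)^{-1}$ and exchanging the two finite sums gives
\[
\sum_{|\mathbf I|=r}\sum_{\substack{\mathbf K'\subset\mathbf M\\|\mathbf K'|=r}}\operatorname{Br}^X_{\mathbf M,\mathbf K'}\Phi_{\mathbf K',\mathbf I}\bigl(\Phi^{(r)}(\mathbf w,q)^{-1}\bigr)_{\mathbf I,\mathbf K}
=\sum_{\substack{\mathbf K'\subset\mathbf M\\|\mathbf K'|=r}}\operatorname{Br}^X_{\mathbf M,\mathbf K'}\,\delta_{\mathbf K',\mathbf K}.
\]
This equals $\operatorname{Br}^X_{\mathbf M,\mathbf K}$ when $\mathbf K\subset\mathbf M$ and $0$ otherwise, as claimed.

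The only point needing care, rather than a real obstacle, is bookkeeping. One must confirm that the $\operatorname{Br}^X_{\mathbf M,\mathbf K}$ defined in \eqref{eq:allcard-Br-C} and \eqref{eq:allcard-Br-B} coincide exactly with the coefficients of $\Phi_{\mathbf K,\mathbf I}$ in \eqref{eq:typeC-cross-sector-branching} and \eqref{eq:typeB-cross-sector-branching}. This is what was used to write \eqref{eq:allcard-branching-factorization}, and it includes the prefactor outside the sum, the identification \eqref{identificationtypeA}, and the degenerate cases $r=0$ (where $\Phi^{(0)}=1$) and $s=r$ (where only $\mathbf K=\mathbf M$ contributes and $\operatorname{Br}^X_{\mathbf M,\mathbf M}=D^X_r$). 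One must also note that all entries lie in $\mathbb C(q,w_1,\dots,w_N)$, where $\Phi^{(r)}$ is invertible by Corollary~\ref{cor:Phi-inverse}, so the matrix manipulations are legitimate.
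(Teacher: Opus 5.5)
Your proposal is correct and is essentially the paper's own proof. The case $s<r$ follows from the lower-cardinality vanishing \eqref{eq:typeC-lower-cardinality-vanishing} and \eqref{eq:typeB-lower-cardinality-vanishing}; for $s\ge r$ you substitute the branching factorization \eqref{eq:allcard-branching-factorization}, interchange the two finite sums, and use $\Phi^{(r)}(\mathbf w,q)\Phi^{(r)}(\mathbf w,q)^{-1}=\mathrm{Id}$, with your added bookkeeping remarks (independence of $n$, the degenerate cases $r=0$ and $s=r$) being harmless.
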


\begin{proof}
If $s<r$, the normalized type $C$ and type $B$ functions vanish by
\eqref{eq:typeC-lower-cardinality-vanishing} and
\eqref{eq:typeB-lower-cardinality-vanishing}.  If $s\ge r$, substitute
\eqref{eq:allcard-branching-factorization}, interchange the two
finite sums, and use
$\Phi^{(r)}(\mathbf w,q)\Phi^{(r)}(\mathbf w,q)^{-1}=\mathrm{Id}$.
\end{proof}

\section{Discussion}
In this paper, we introduced type $B$ and type $C$ trigonometric weight functions
and derived their algebraic properties using integrability.
The weight functions are $q$-deformations of the $GB$- and $GQ$-functions, respectively,
and their normalized versions satisfy properties
that can be viewed as type $B$ and type $C$ analogues
of the properties for type $A$ in \cite{RTV}.
The weight functions introduced in this paper are closely related to
geometric objects such as motivic Chern classes and stable bases
\cite{MO,Okounkov,RTV,RTV2,SZZ,FRW,AMSS,KonckiWeber,MNS}.
In some cases they agree with the corresponding geometric classes,
while in other cases a correction is required.
For example, comparison with the twisted motivic Chern classes
for the Lagrangian Grassmannian $LG(2,4)$ computed in
\cite[Chapter~6]{KonckiThesis} shows that our normalized
type $C$ weight functions agree with the corresponding
fixed-point restrictions in fifteen of the sixteen entries.
We plan to discuss these relations in detail in a subsequent paper.

\section*{Acknowledgements}
We thank Takeshi Ikeda for many helpful discussions
and suggestions for this work.
This work was partially supported by Grant-in-Aid for Scientific Research (C) 26K06839.

\section*{Appendix A: Trigonometric weight functions for the type $A$ Grassmannian}
\setcounter{equation}{0}
\renewcommand{\theequation}{A.\arabic{equation}}
\renewcommand{\theHequation}{A.\arabic{equation}}
\setcounter{thm}{0}
\renewcommand{\thethm}{A.\arabic{thm}}
\renewcommand{\theHthm}{A.\arabic{thm}}

In this appendix, we recall the trigonometric weight functions of
Rim\'anyi--Tarasov--Varchenko and specialize their results to the
Grassmannian $\operatorname{Gr}(r,N)$.
The notation used in this appendix is chosen so as not to conflict with
the notation used in the main body of the paper.
In particular, the fixed-point label used by
Rim\'anyi--Tarasov--Varchenko will be denoted by
$\mathcal I=(\mathbf{I},\mathbf{I}^{\mathrm{c}})$
rather than simply by $I$. Similarly, another fixed point will be
denoted by
$\mathcal M=(\mathbf{M},\mathbf{M}^{\mathrm{c}})$.
Here,
\[
\mathbf{I},\mathbf{M}\subset\{1,\dots,N\},
\qquad
|\mathbf{I}|=|\mathbf{M}|=r,
\]
and
\[
\mathbf{I}^{\mathrm{c}} =
\{1,\dots,N\}\setminus\mathbf{I},
\qquad
\mathbf{M}^{\mathrm{c}} =
\{1,\dots,N\}\setminus\mathbf{M}.
\]
The notation $N$ in the present paper corresponds to the number of
equivariant variables in the Grassmannian. In the general notation of
Rim\'anyi--Tarasov--Varchenko, the relevant specialization is
\[
N_{\mathrm{RTV}}=2,
\qquad
n_{\mathrm{RTV}}=N,
\qquad
\lambda=(r,N-r).
\]
Let $\sigma\in S_N$ be an arbitrary permutation and
write
$
\sigma^{-1}(\mathbf{I})=\{j_1<j_2<\cdots<j_r\}.
$
Let
$
\mathbf{t}=(t_1,\dots,t_r),
\
\mathbf{z}=(z_1,\dots,z_N),
$
and let $h$ be a complex variable.
The type $A$ Grassmannian
trigonometric weight function is
\begin{align}
&W_{\sigma,\mathcal I}^{\mathrm{RTV}}
(\mathbf{t},\mathbf{z},h)
:=
W_{\sigma^{-1}(\mathcal I)}^{\mathrm{RTV}}
\bigl(\mathbf{t},z_{\sigma(1)},\dots,z_{\sigma(N)},h\bigr)
\notag \\
&=
(1-h)^r
\operatorname{Sym}_{\boldsymbol t}
\Biggl[
\prod_{a=1}^r
\left\{
\prod_{c<j_a}
\left(1-h\frac{z_{\sigma(c)}}{t_a}\right)
\prod_{c>j_a}
\left(1-\frac{z_{\sigma(c)}}{t_a}\right)
\right\} \prod_{1\le a<b\le r}
\frac{1-h t_b/t_a}{1-t_b/t_a}
\Biggr].
\label{eq:general-sigma-W}
\end{align}
Here,
\[
\operatorname{Sym}_{\boldsymbol t}f(t_1,\dots,t_r)
:=
\sum_{\tau\in S_r}
f(t_{\tau(1)},\dots,t_{\tau(r)}).
\]
Define the normalization factor
\begin{equation}
E(\mathbf{t},h)
:=
\prod_{a,b=1}^r
\left(1-h t_a^{-1} t_b \right).
\label{eq:E-normalization}
\end{equation}
The normalized weight
function is
\begin{equation}
\widetilde W_{\sigma,\mathcal I}^{\mathrm{RTV}}
(\mathbf{t},\mathbf{z},h)
:=
\frac{
W_{\sigma,\mathcal I}^{\mathrm{RTV}}
(\mathbf{ t},\mathbf{ z},h)
}{
E(\mathbf{ t},h)
}.
\label{eq:general-Wtilde-def}
\end{equation}
For
\[
\mathcal M=(\mathbf{M},\mathbf{M}^{\mathrm{c}}),
\qquad
\mathbf{M}=\{m_1<\cdots<m_r\},
\]
the fixed-point specialization is obtained by setting
\[
\mathbf{t}=\mathbf{z}_{\mathbf{M}}
:=
(z_{m_1},\dots,z_{m_r}),
\]
and we write
$
W_{\sigma,\mathcal I}^{\mathrm{RTV}}
(\mathbf{ z}_{\mathbf{M}},\mathbf{ z},h)
$
and
$
\widetilde W_{\sigma,\mathcal I}^{\mathrm{RTV}}
(\mathbf{ z}_{\mathbf{M}},\mathbf{ z},h).
$
Let
\[
\sigma_0(a)=N+1-a
\qquad
(a=1,\dots,N)
\]
be the longest permutation.
The type $A$ function $W_{N,r}(\mathbf{ u}|\mathbf{ w}| \mathbf{I})$
used in this paper is
$W_{\sigma,\mathcal I}^{\mathrm{RTV}}
(\mathbf{ u},\mathbf{ w},q)$
corresponding to $\sigma_0$:
\begin{equation}
W_{N,r}(\mathbf{ u}|\mathbf{ w}| \mathbf{I}) =
W_{\sigma_0,\mathcal I}^{\mathrm{RTV}}
(\mathbf{ u},\mathbf{ w},q),
\qquad
\mathcal I=(\mathbf{I},\mathbf{I}^{\mathrm{c}}),
\label{WNr-RTV}
\end{equation}
hence
\begin{align}
\frac{
W_{N,r}(\mathbf{ u}|\mathbf{ w}| \mathbf{I})
}{
\displaystyle
\prod_{j,k=1}^r
\left(1-q u_j^{-1} u_k \right)
} &=
\widetilde W_{\sigma_0,\mathcal I}^{\mathrm{RTV}}
(\mathbf{ u},\mathbf{ w},q),
\label{normalized-WNr-RTV} \\
\frac{
W_{N,r}(\mathbf{ w}_{\mathbf{M}} |\mathbf{ w}| \mathbf{I})
}{
\displaystyle
\prod_{j,k=1}^r
\left(1-qw_{m_j}^{-1}w_{m_k}\right)
} &=
\widetilde W_{\sigma_0,\mathcal I}^{\mathrm{RTV}}
(\boldsymbol w_{\mathbf{M}},\mathbf{ w},q).
 \notag
\end{align}
For a permutation $\sigma$, write
\[
\sigma^{-1}(\mathbf{I})=\{i_1'<\cdots<i_r'\},
\qquad
\sigma^{-1}(\mathbf{M})=\{m_1'<\cdots<m_r'\}.
\]
The order in \cite{RTV} is defined as
$
\mathcal M\le_\sigma\mathcal I
$
if
$
m_a'\le i_a',
\
a=1,\dots,r.
$
Note that for
\[
\mathbf{I}=\{i_1<\cdots<i_r\},
\qquad
\mathbf{M}=\{m_1<\cdots<m_r\}
\]
and $\sigma=\sigma_0$,
\[
\mathcal M\le_{\sigma_0}\mathcal I
\quad\Longleftrightarrow\quad
m_a\ge i_a
\quad(a=1,\dots,r),
\]
and the order is equivalent to
$
\mathbf{M} \ge \mathbf{I}
$
used in the main text.
Define
\begin{align}
e_{\sigma,\mathcal M,-}^{\mathrm{hor}}
&:=
\prod_{\substack{
\sigma(a)\in \mathbf{M}, \sigma(b)\in \mathbf{M}^{\mathrm{c}} \\
b>a
}}
\left(1-\frac{z_{\sigma(a)}}{z_{\sigma(b)}}\right),
\label{eq:general-ehor} \\
e_{\sigma,\mathcal M,-}^{\mathrm{vert}}
&:=
\prod_{\substack{
\sigma(a)\in \mathbf{M}, \sigma(b)\in \mathbf{M}^{\mathrm{c}} \\
b<a
}}
\left(1-h\frac{z_{\sigma(b)}}{z_{\sigma(a)}}\right),
\label{eq:general-evert}
\end{align}
and
\[
e_{\sigma,\mathcal M}
:=
e_{\sigma,\mathcal M,-}^{\mathrm{hor}}
e_{\sigma,\mathcal M,-}^{\mathrm{vert}}.
\]
The polarization factor is
\begin{equation}
P_{\sigma,\mathcal M}
:=
\prod_{\substack{
\sigma(a)\in \mathbf{M}, \sigma(b)\in \mathbf{M}^{\mathrm{c}} \\
b>a
}}
\left(-\frac{z_{\sigma(b)}}{z_{\sigma(a)}}\right).
\label{eq:general-polarization}
\end{equation}
The product of $P_{\sigma,\mathcal M}$ and $e_{\sigma,\mathcal M}$
can be written as
\begin{align}
P_{\sigma,\mathcal M}e_{\sigma,\mathcal M}
&=
\prod_{\substack{
\sigma(a)\in \mathbf{M}, \sigma(b)\in \mathbf{M}^{\mathrm{c}} \\
b<a
}}
\left(1-h\frac{z_{\sigma(b)}}{z_{\sigma(a)}}\right)
\prod_{\substack{
\sigma(a)\in \mathbf{M}, \sigma(b)\in \mathbf{M}^{\mathrm{c}} \\
b>a
}}
\left(1-\frac{z_{\sigma(b)}}{z_{\sigma(a)}}\right).
\label{general-diagonal-factor}
\end{align}

For a Laurent polynomial
$\displaystyle f(\mathbf{z}) = \sum_{\nu \in \mathbb{Z}^N} c_{\nu}(h) z_1^{\nu_1} \cdots z_N^{\nu_N} $
(where $c_{\nu}(h) \in \mathbb{C}(h)$), we define its Newton polytope with respect to the variables $z_1, \dots, z_N$ by
\[ \mathcal{N}(f) = \operatorname{Conv}\{ \nu \in \mathbb{Z}^N \mid c_{\nu}(h) \neq 0 \} \subset \mathbb{R}^N. \]
We set $\mathcal N(0)=\emptyset$.
For $\mathcal{M} = (\mathbf{M}, \mathbf{M}^{\mathrm{c}})$, define the linear functional
$ \varphi_{\mathcal{M}} : \mathbb{R}^N \longrightarrow \mathbb{R} $
by
\[
\varphi_{\mathcal{M}}(\nu)
= \sum_{j \in \mathbf{M}} \nu_j + 2 \sum_{j \in \mathbf{M}^{\mathrm{c}}} \nu_j,
\qquad
\nu=(\nu_1,\dots,\nu_N)\in\mathbb{R}^N.
\]
For $f\neq0$, the image $\varphi_{\mathcal M}(\mathcal N(f))$ is a closed interval. We call it the \emph{degree interval} of $f$ with respect to $\varphi_{\mathcal M}$, and call its lower and upper endpoints the \emph{lower Newton degree} and \emph{upper Newton degree} of $f$, respectively.

A Laurent polynomial $f$ is called \emph{$\mathcal M$-small} if
\begin{equation}
\varphi_{\mathcal M}\bigl(\mathcal{N}(f)\bigr)
\subset
[0,r(N-r)-1].
\label{M-small}
\end{equation}
Note
\begin{equation*}
\varphi_{\mathcal M}
\left(
\mathcal{N}\left(P_{\sigma,\mathcal M}e_{\sigma,\mathcal M}\right)
\right) =
[0,r(N-r)].
\end{equation*}

\begin{thm}\cite[Theorems 3.1 and 6.7, Grassmannian specialization]{RTV}
\label{RTVtheorem}
Let
\[
T=(\mathbb C^\times)^N\times\mathbb C^\times
\]
act on $T^*\operatorname{Gr}(r,N)$, where the last factor scales the cotangent
fibers with character $h$.
Let
\[
\Gamma_1=\{\gamma_{1,1},\dots,\gamma_{1,r}\},
\qquad
\Gamma_2=\{\gamma_{2,1},\dots,\gamma_{2,N-r}\}.
\]
The equivariant $K$-theory ring is
\begin{align}
K_T\bigl(T^*\operatorname{Gr}(r,N)\bigr)
\simeq
\frac{
\mathbb C[\Gamma_1^{\pm1},\Gamma_2^{\pm1}]^{S_r\times S_{N-r}}
\otimes
\mathbb C[z_1^{\pm1},\dots,z_N^{\pm1},h^{\pm1}]
}{
\left\langle
f(\Gamma_1,\Gamma_2)-f(z_1,\dots,z_N)
\ \middle|
f\in
\mathbb C[z_1^{\pm1},\dots,z_N^{\pm1}]^{S_N}
\right\rangle
}.
\label{eq:KT-Grassmannian}
\end{align}
Here $\Gamma_1$ and $\Gamma_2$ are the Chern roots of the tautological
subbundle and the tautological quotient bundle, respectively.
For $\sigma\in S_N$ and
$\mathcal I=(\mathbf{I},\mathbf{I}^{\mathrm{c}})$,
there exists a unique class
$
\kappa_{\sigma,\mathcal I}
\in
K_T\bigl(T^*\operatorname{Gr}(r,N)\bigr)
$
satisfying the following properties.

$\mathrm{(i)}$
For every fixed point
$
\mathcal M=(\mathbf{M},\mathbf{M}^{\mathrm{c}}),
$
the restriction
$
\kappa_{\sigma,\mathcal I}\big|_{\mathcal M}
$
is divisible by $e_{\sigma,\mathcal M,-}^{\mathrm{vert}}$.

$\mathrm{(ii)}$
At the diagonal fixed point,
\[
\kappa_{\sigma,\mathcal I}\big|_{\mathcal I} =
P_{\sigma,\mathcal I}e_{\sigma,\mathcal I}.
\]

$\mathrm{(iii)}$
If
$
\mathcal M\ne\mathcal I,
$
then
$
\kappa_{\sigma,\mathcal I}\big|_{\mathcal M}
$
is $\mathcal M$-small, that is,
\[
\varphi_{\mathcal M}
\left(
\mathcal{N}\left(
\kappa_{\sigma,\mathcal I}\big|_{\mathcal M}
\right)
\right)
\subset
[0,r(N-r)-1].
\]
Moreover,
\begin{equation}
\kappa_{\sigma,\mathcal I}\big|_{\mathcal M} =
\widetilde W_{\sigma,\mathcal I}^{\mathrm{RTV}}
(\boldsymbol z_{\mathbf{M}},\boldsymbol z,h).
\label{kappa-Wtilde}
\end{equation}
\end{thm}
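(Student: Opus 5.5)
The plan is to deduce the statement from the general results of \cite{RTV} by specializing their partial flag data to $N_{\mathrm{RTV}}=2$, $\lambda=(r,N-r)$, and to check directly that each ingredient reduces to the displayed Grassmannian formulas. The proof splits into an existence part, where $\widetilde W^{\mathrm{RTV}}_{\sigma,\mathcal I}$ is shown to define a class $\kappa_{\sigma,\mathcal I}$ with properties (i)--(iii), and a uniqueness part. For existence, I would first note that \eqref{eq:general-sigma-W}, divided by $E(\mathbf t,h)$, is symmetric in $\mathbf t$ and has denominators only of the form $1-t_b/t_a$. The symmetrization removes these, so the quotient is a Laurent polynomial in $\Gamma_1$, and it defines an element of \eqref{eq:KT-Grassmannian} via $\mathbf t\mapsto\Gamma_1$. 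Its restriction to the fixed point $\mathcal M$ is then obtained by $\mathbf t=\mathbf z_{\mathbf M}$, which gives \eqref{kappa-Wtilde}.

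Next I would verify (i)--(iii) from the formula, as in the graphical computation following \eqref{rectdiagonalfactor}. After $\mathbf t=\mathbf z_{\mathbf M}$, only the permutations $\tau$ compatible with the order $\le_\sigma$ survive, because of the factors $1-z_{\sigma(c)}/t_a$. This gives vanishing unless $\mathcal M\le_\sigma\mathcal I$. At $\mathcal M=\mathcal I$ exactly one term survives. Dividing it by $E$ cancels the internal $\mathbf t$--$\mathbf t$ factors together with $(1-h)^r$, and it leaves \eqref{general-diagonal-factor}, which is $P_{\sigma,\mathcal I}e_{\sigma,\mathcal I}$. For (i), every surviving term contains, for each $\sigma(a)\in\mathbf M$ and $\sigma(b)\in\mathbf M^{\mathrm c}$ with $b<a$, a factor $1-h z_{\sigma(b)}/z_{\sigma(a)}$. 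This is the vertical factor. For (iii), I would bound degrees factor by factor under $\varphi_{\mathcal M}$, as in the proof of \eqref{eq:typeC-raw-newton-bound}. Each factor $1-z/t$ or $1-hz/t$ contributes an interval of the form $[0,\,\deg z-\deg t]$. The off-diagonal requirement $\mathcal M\ne\mathcal I$ forces at least one factor to lose its top degree, and this yields the strict upper bound $r(N-r)-1$.

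For uniqueness, I would repeat verbatim the argument of the type $C$ uniqueness proposition above. Take the difference $\Delta$ of two candidate classes and choose a fixed point $\mathcal M$ that is minimal in $\le_\sigma$ among those with $\Delta|_{\mathcal M}\neq0$. GKM-type exchange congruences, which hold for any class in \eqref{eq:KT-Grassmannian} because restrictions at adjacent fixed points agree modulo $1-z_j/z_k$, show that $\Delta|_{\mathcal M}$ is divisible by $e^{\mathrm{hor}}_{\sigma,\mathcal M,-}$. Condition (i) gives divisibility by $e^{\mathrm{vert}}_{\sigma,\mathcal M,-}$, and the two sets of factors are coprime. Hence $\Delta|_{\mathcal M}$ is divisible by $e_{\sigma,\mathcal M}$, whose degree interval under $\varphi_{\mathcal M}$ is $[0,r(N-r)]$. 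This is incompatible with the smallness interval $[0,r(N-r)-1]$ unless $\Delta|_{\mathcal M}=0$, a contradiction. Finally, equivariant localization (injectivity of restriction to fixed points for $T^*\operatorname{Gr}(r,N)$) gives $\Delta=0$.

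I expect the main obstacle to be the strict Newton bound (iii). The termwise degree count easily yields $[0,r(N-r)]$, but the $-1$ needs cancellation or a careful choice of which factor fails to reach top degree when $\mathcal M\neq\mathcal I$. I would first try the approach of the proof of \eqref{eq:typeC-raw-newton-bound}, tracking $|\mathbf I\cap\mathbf M|\le r-1$ in each surviving term. If that fails, I would fall back on the argument in \cite[Section 6]{RTV}, which compares the weight function with the $K$-theoretic stable envelope of \cite{Okounkov}.
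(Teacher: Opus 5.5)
The paper does not prove this theorem; it imports it from \cite{RTV}. Your overall plan is sound: construct $\kappa_{\sigma,\mathcal I}$ from $\widetilde W^{\mathrm{RTV}}_{\sigma,\mathcal I}$, check (i)--(iii) at fixed points, and prove uniqueness by the Newton-polytope argument. The existence step, however, rests on a false claim. Symmetrization clears only the denominators $1-t_b/t_a$. Dividing by $E(\mathbf t,h)$ from \eqref{eq:E-normalization} cancels $(1-h)^r$ but leaves $\prod_{a\ne b}(1-ht_b/t_a)$ in the denominator, and this product does not divide $W^{\mathrm{RTV}}_{\sigma,\mathcal I}$. Already for $N=r=2$ and $\sigma=\mathrm{id}$, one finds $W^{\mathrm{RTV}}_{\sigma,\mathcal I}/(1-h)^2=(1+h)(1-z_1/t_2)(1-z_2/t_2)\neq0$ on $t_1=ht_2$.

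So $\mathbf t\mapsto\Gamma_1$ only gives a class in localized $K$-theory, and membership in \eqref{eq:KT-Grassmannian} is exactly what must be proved. Two ingredients are needed.
\begin{itemize}
\item Laurent polynomiality of every restriction. In a summand surviving at $\mathbf t=\mathbf z_{\mathbf M}$, write $t_a=z_{\sigma(p_a)}$; survival forces $p_a\le j_a$. Then for $a<b$ the summand contains $1-ht_b/t_a$ from the cross ratio and $1-ht_a/t_b$ from the row of $t_b$ (take $c=p_a<j_b$). Hence $\widetilde W^{\mathrm{RTV}}_{\sigma,\mathcal I}(\mathbf z_{\mathbf M},\mathbf z,h)$ has only denominators $1-z_m/z_{m'}$ with $m,m'\in\mathbf M$. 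Written as $W/E$, it has only denominators $1-hz_m/z_{m'}$. Coprimality then makes it a Laurent polynomial.
\item The GKM description of the image of localization for $\operatorname{Gr}(r,N)$, with the exchange congruences supplied by the $\mathbf t$-symmetry.
\end{itemize}
This integrality is what the paper takes from \cite{RTV}.

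Two further points concern uniqueness and (iii).

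In the uniqueness step you must choose $\mathcal M$ \emph{maximal}, not minimal, in $\le_\sigma$ among the fixed points where $\Delta\neq0$. The neighbours $(\mathbf M\setminus\{\sigma(a)\})\cup\{\sigma(b)\}$ with $b>a$, which produce the factors of $e^{\mathrm{hor}}_{\sigma,\mathcal M,-}$, lie strictly above $\mathcal M$. The paper's type $C$ proof picks a componentwise minimal $\mathbf M$, which is $\le_{\sigma_0}$-maximal. With your choice, the vanishing neighbours only give factors $1-z_{\sigma(b)}/z_{\sigma(a)}$ with $\sigma(a)\in\mathbf M$, $\sigma(b)\notin\mathbf M$, $b<a$. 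These run over the same pairs as $e^{\mathrm{vert}}_{\sigma,\mathcal M,-}$, and the degree count no longer forces a contradiction. Also, $\varphi_{\mathcal M}(\mathcal N(e_{\sigma,\mathcal M}))=[-n_{\mathrm{hor}},n_{\mathrm{vert}}]$, not $[0,r(N-r)]$; the latter is the interval of $P_{\sigma,\mathcal M}e_{\sigma,\mathcal M}$. This slip is harmless, since only the length $n_{\mathrm{hor}}+n_{\mathrm{vert}}=r(N-r)$ is used.

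On the other hand, (iii) is easier than you expect, and your first approach works without any cancellation. After $\mathbf t=\mathbf z_{\mathbf M}$, all $\mathbf t$--$\mathbf t$ factors and $E(\mathbf z_{\mathbf M},h)$ are $\varphi_{\mathcal M}$-homogeneous of degree $0$. Each factor $1-z_{\sigma(c)}/t_a$ or $1-hz_{\sigma(c)}/t_a$ has interval $[0,1]$ if $\sigma(c)\notin\mathbf M$ and $\{0\}$ otherwise. The omitted index $c=j_a$ removes one degree-one factor for each $\sigma(j_a)\in\mathbf I\setminus\mathbf M$. Hence every summand lies in $[0,r(N-r)-|\mathbf I\setminus\mathbf M|]\subset[0,r(N-r)-1]$, exactly as in the proof of \eqref{eq:typeC-raw-newton-bound}.
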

For later use, we record the Grassmannian specialization of the orthogonality
formula of \cite{RTV}.  For an $r$-element subset
$\mathbf M\subset\{1,\ldots,N\}$, write
\begin{align}
P_{\mathbf M}(\mathbf z)
&:=
\prod_{a\in\mathbf M}\prod_{b\in\mathbf M^{\mathrm c}}
\left(-\frac{z_b}{z_a}\right),
\label{eq:RTV-Gr-P}\\
R_{\mathbf M}(\mathbf z)
&:=
\prod_{a\in\mathbf M}\prod_{b\in\mathbf M^{\mathrm c}}
\left(1-\frac{z_b}{z_a}\right),
\label{eq:RTV-Gr-R}\\
Q_{\mathbf M}(\mathbf z,h)
&:=
\prod_{a\in\mathbf M}\prod_{b\in\mathbf M^{\mathrm c}}
\left(1-h\frac{z_b}{z_a}\right).
\label{eq:RTV-Gr-Q}
\end{align}
If
$\mathbf K=\{k_1<\cdots<k_r\}$, define
\begin{equation}
p(\mathbf K)
:=
\left|\{(a,b)\in\mathbf K\times\mathbf K^{\mathrm c}:b>a\}\right|
=
\sum_{j=1}^{r}(N-r+j-k_j).
\label{eq:RTV-Gr-p}
\end{equation}
This is the number of inversions in the ordered sequence
$\mathbf K^{\mathrm c},\mathbf K$ appearing in the specialization
$\lambda=(r,N-r)$ of the notation of \cite{RTV}.
For a subset $\mathbf M=\{m_1<\cdots<m_r\}$, put
\[
\mathbf z_{\mathbf M}^{-1}
=(z_{m_1}^{-1},\ldots,z_{m_r}^{-1}),
\qquad
\mathbf z^{-1}=(z_1^{-1},\ldots,z_N^{-1}).
\]

\begin{thm}\cite[Theorem 6.6, Grassmannian specialization]{RTV}
\label{thm:RTV-Gr-orthogonality}
For any two $r$-element subsets
$\mathbf J,\mathbf K\subset\{1,\ldots,N\}$, with
$\mathcal J=(\mathbf J,\mathbf J^{\mathrm c})$ and
$\mathcal K=(\mathbf K,\mathbf K^{\mathrm c})$, one has
\begin{align}
&\sum_{\substack{\mathbf M\subset\{1,\ldots,N\}\\|\mathbf M|=r}}
\frac{
h^{p(\mathbf K)}P_{\mathbf M}(\mathbf z)
}{
R_{\mathbf M}(\mathbf z)
Q_{\mathbf M}(\mathbf z,h)
}
\widetilde W_{\mathrm{id},\mathcal J}^{\mathrm{RTV}}
(\mathbf z_{\mathbf M},\mathbf z,h)
\widetilde W_{\sigma_0,\mathcal K}^{\mathrm{RTV}}
(\mathbf z_{\mathbf M}^{-1},\mathbf z^{-1},h^{-1})
=\delta_{\mathbf J,\mathbf K}.
\label{eq:RTV-Gr-orthogonality}
\end{align}
\end{thm}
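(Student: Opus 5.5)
The statement is the Grassmannian case of \cite[Theorem 6.6]{RTV}. One option is simply to specialize that theorem to $N_{\mathrm{RTV}}=2$, $\lambda=(r,N-r)$. To keep the appendix self-contained, I would instead give the standard stable-envelope proof, which uses only the properties listed in Theorem~\ref{RTVtheorem} for $\sigma=\mathrm{id}$ and $\sigma=\sigma_0$. Denote the left-hand side of \eqref{eq:RTV-Gr-orthogonality} by $G_{\mathbf J,\mathbf K}(\mathbf z,h)$.

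\textbf{Step 1: localization.} The tangent weights of $T^*\operatorname{Gr}(r,N)$ at the fixed point $\mathcal M$ are $z_b/z_a$ and $h z_a/z_b$, for $a\in\mathbf M$ and $b\in\mathbf M^{\mathrm c}$. Hence $R_{\mathbf M}(\mathbf z)\,Q_{\mathbf M}(\mathbf z,h)$, up to the monomial $P_{\mathbf M}$ and an $h$-power, is $\lambda_{-1}$ of the cotangent space at $\mathcal M$. This lets me read $G_{\mathbf J,\mathbf K}$ as the localization formula for the equivariant Euler characteristic of
\[
\kappa_{\mathrm{id},\mathcal J}\otimes\iota^*\kappa_{\sigma_0,\mathcal K},
\]
where $\iota$ is the involution $\mathbf z\mapsto\mathbf z^{-1}$, $h\mapsto h^{-1}$ combined with dualization, and a polarization line bundle is absorbed into $h^{p(\mathbf K)}P_{\mathbf M}$. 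The classes $\kappa_{\mathrm{id},\mathcal J}$ and $\kappa_{\sigma_0,\mathcal K}$ are supported on the full attracting sets for the two opposite chambers. Their intersection is proper, so the pairing is a genuine Laurent polynomial in $\mathbf z$ with coefficients in $\mathbb C(h)$. In particular $G_{\mathbf J,\mathbf K}$ has no poles along $z_a=z_b$ or $z_a=h z_b$.

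\textbf{Step 2: triangularity.} Theorem~\ref{RTVtheorem} together with its support property gives the following constraints on the $\mathbf M$-term. It vanishes unless $\mathcal M\le_{\mathrm{id}}\mathcal J$, and also unless $\mathcal M\le_{\sigma_0}\mathcal K$. The latter says $\mathbf M\ge\mathbf K$ componentwise, while the former says $\mathbf M\le\mathbf J$. So only $\mathbf K\le\mathbf M\le\mathbf J$ contribute, and $G_{\mathbf J,\mathbf K}=0$ unless $\mathbf K\le\mathbf J$. When $\mathbf J=\mathbf K$, only $\mathbf M=\mathbf J$ survives. In that case I would use Theorem~\ref{RTVtheorem}~(ii) and \eqref{general-diagonal-factor} for both $\sigma=\mathrm{id}$ and $\sigma=\sigma_0$, the second after the inversion. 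The product of the two diagonal values should be exactly $R_{\mathbf J}Q_{\mathbf J}\,P_{\mathbf J}^{-1}h^{-p(\mathbf J)}$, with the $h^{-p}$ coming from inverting the vertical factors. This gives $G_{\mathbf J,\mathbf J}=1$ by direct bookkeeping.

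\textbf{Step 3: $\mathbf J\ne\mathbf K$, the main obstacle.} Here $G_{\mathbf J,\mathbf K}$ is a Laurent polynomial by Step 1, and I must show it vanishes. I would take the limit along a generic one-parameter subgroup $z_i=\zeta^{c_i}$ with $c_1<\dots<c_N$, letting $\zeta\to\infty$ and then $\zeta\to0$. In each term, the smallness condition~(iii) for $\kappa_{\mathrm{id},\mathcal J}$ and for the inverted $\kappa_{\sigma_0,\mathcal K}$ bounds the Newton polytope strictly inside the polytope of the localization denominator. This is expressed through the functional $\varphi_{\mathcal M}$ and the bound $[0,r(N-r)-1]$ against $[0,r(N-r)]$. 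The expected consequence is that every term stays bounded in both limits and tends to $0$ in at least one of them, once one uses that $\mathcal M$ differs from $\mathcal J$ or from $\mathcal K$. A Laurent polynomial bounded in both limits along a generic cocharacter is constant. Since its limit is $0$, we get $G_{\mathbf J,\mathbf K}=0$.

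The delicate point is matching the smallness window, which is asymmetric because of the weights $1$ and $2$ in $\varphi_{\mathcal M}$, with the correct $h$-shift in the denominator and the prefactor $h^{p(\mathbf K)}P_{\mathbf M}$. This is where the choice of polarization matters. If the bookkeeping fails, I would fall back on quoting \cite[Theorem 6.6]{RTV} and only verify that its notation specializes to $P_{\mathbf M}$, $R_{\mathbf M}$, $Q_{\mathbf M}$, $p(\mathbf K)$ as defined in \eqref{eq:RTV-Gr-P}--\eqref{eq:RTV-Gr-p}.
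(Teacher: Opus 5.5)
Your Steps 1 and 2 are fine. In fact no polarization twist is needed. The ratio $P_{\mathbf M}/(R_{\mathbf M}Q_{\mathbf M})=\prod_{a\in\mathbf M,\,b\in\mathbf M^{\mathrm c}}\bigl((1-z_a/z_b)(1-hz_b/z_a)\bigr)^{-1}$ is exactly $1/\lambda_{-1}$ of the cotangent space of $T^*\operatorname{Gr}(r,N)$ at $\mathcal M$. Inverting all characters already is dualization, so $G_{\mathbf J,\mathbf K}=h^{p(\mathbf K)}\chi\bigl(T^*\operatorname{Gr}(r,N),\kappa_{\mathrm{id},\mathcal J}\otimes\kappa_{\sigma_0,\mathcal K}^{\vee}\bigr)$. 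The support constraint $\mathbf K\le\mathbf M\le\mathbf J$ is right. Your diagonal bookkeeping $R_{\mathbf J}Q_{\mathbf J}P_{\mathbf J}^{-1}h^{-p(\mathbf J)}$ checks out and gives $G_{\mathbf J,\mathbf J}=1$. The properness and support input comes from the geometric stable envelope, not from the list in Theorem~\ref{RTVtheorem}, but that is acceptable.

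The gap is Step 3, and it is more than bookkeeping. Condition (iii) of Theorem~\ref{RTVtheorem} controls only the single functional $\varphi_{\mathcal M}$. Every monomial in a restriction has total degree $0$, so $\varphi_{\mathcal M}(\nu)=\sum_{j\in\mathbf M^{\mathrm c}}\nu_j$. It therefore records only the degree under the cocharacter that rescales the $z_j$ with $j\in\mathbf M^{\mathrm c}$.

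This direction changes with $\mathcal M$. The different terms of $G_{\mathbf J,\mathbf K}$ are thus controlled along different one-parameter subgroups, and none is controlled along your generic cocharacter $c_1<\cdots<c_N$. The quantity $\max_{\nu\in\mathcal N(f)}\langle c,\nu\rangle$ is not determined by $\varphi_{\mathcal M}(\mathcal N(f))$. So ``every term stays bounded in both limits'' does not follow from (iii) once $N\ge 3$.

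To run Okounkov's duality argument you need more. You need the full window condition on Newton polytopes in all equivariant directions, with a fractional slope $\mathcal L$ for one chamber and $-\mathcal L$ for the opposite one. Then in the $\mathcal M$-term the $\mathcal M$-dependent shifts cancel. What remains is a uniform shift by $\mathcal L|_{\mathcal J}-\mathcal L|_{\mathcal K}$, which is non-integral when $\mathbf J\ne\mathbf K$. A Laurent polynomial times a non-integral monomial that is bounded in every direction must vanish. Neither that window condition nor the identification of the RTV slope is available from what you cite. Also, with $z_i=\zeta^{c_i}$ and no generic base point you would only get vanishing on a curve.

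As it stands, the only complete argument in your proposal is the fallback. That is: specialize \cite[Theorem 6.6]{RTV} to two blocks and check that its $P$, $R$, $Q$ and inversion statistic reduce to \eqref{eq:RTV-Gr-P}--\eqref{eq:RTV-Gr-p}. This is exactly the paper's proof.
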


\begin{proof}
In the notation of \cite[Theorem~6.6]{RTV}, the ordered set partition has
only two blocks,
$\mathcal M=(\mathbf M,\mathbf M^{\mathrm c})$.
Thus the products $P(\mathbf z_{\mathcal M})$,
$R(\mathbf z_{\mathcal M})$, and $Q(\mathbf z_{\mathcal M},h)$ there reduce
exactly to
\eqref{eq:RTV-Gr-P}--\eqref{eq:RTV-Gr-Q}, and the inversion statistic reduces
to \eqref{eq:RTV-Gr-p}.  Substituting these expressions into
\cite[(6.7)]{RTV} gives \eqref{eq:RTV-Gr-orthogonality}.
\end{proof}

We shall also need to express the weight function for $\sigma=\mathrm{id}$ in terms of that for $\sigma=\sigma_0$.
Let
\[
\rho(a)=N+1-a,
\qquad
\mathbf z^{\rho}=(z_N,z_{N-1},\ldots,z_1),
\qquad
\rho(\mathbf I)=\{N+1-i:i\in\mathbf I\}.
\]
Since the normalized weight function is symmetric in the $t$-variables,
the defining formula \eqref{eq:general-sigma-W} gives
\begin{equation}
\widetilde W_{\mathrm{id},\mathcal I}^{\mathrm{RTV}}
(\mathbf z_{\mathbf M},\mathbf z,h)
=
\widetilde W_{\sigma_0,\rho(\mathcal I)}^{\mathrm{RTV}}
\bigl((\mathbf z^{\rho})_{\rho(\mathbf M)},\mathbf z^{\rho},h\bigr),
\label{eq:RTV-id-via-reversal}
\end{equation}
where
$\rho(\mathcal I)=(\rho(\mathbf I),\rho(\mathbf I)^{\mathrm c})$.
Indeed, after applying $\sigma_0$ to the reversed equivariant variables, the
ordered list of equivariant parameters is again
$(z_1,\ldots,z_N)$; the possible reversal of the specialized $t$-variables is
irrelevant because of their symmetry.

\bigskip
\noindent
(K. Motegi) Faculty of Marine Technology, Tokyo University of Marine Science and Technology,\\
Etchujima 2--1--6, Koto-Ku, Tokyo, 135-8533, Japan

\noindent
\textit{Email address:} \texttt{kmoteg0@kaiyodai.ac.jp}

\end{document}